\documentclass[11pt, reqno]{amsart}

\usepackage[noend]{algpseudocode}
\usepackage{algorithmicx,algorithm}
\usepackage{fullpage}

\usepackage[colorlinks,
            linkcolor=black,
            anchorcolor=blue,
            citecolor=green
            ]{hyperref}
\usepackage{cite}

\usepackage{bm}
\usepackage{amsmath}
\usepackage{amssymb}
\usepackage{amsfonts}
\usepackage{amsthm}
\usepackage{mathrsfs}
  \theoremstyle{plain} \newtheorem{thm}{Theorem}[section]
  \theoremstyle{plain} \newtheorem{lem}[thm]{Lemma}
  \theoremstyle{plain} \newtheorem{prop}[thm]{Proposition}
  \theoremstyle{plain} \newtheorem{cor}[thm]{Corollary}
  \theoremstyle{plain} \newtheorem{defi}[thm]{Definition}
  \theoremstyle{plain} 
    \theoremstyle{plain} \newtheorem{aum}[thm]{Assumption}
  \theoremstyle{remark} \newtheorem{rmk}[thm]{Remark}
  \theoremstyle{remark} 
  \theoremstyle{remark}

\usepackage{paralist}

\newcommand{\beq}{\begin{equation}}
\newcommand{\eeq}{\end{equation}}
\newcommand{\bals}{\begin{align*}}  
\newcommand{\bal}{\begin{align}}

\usepackage{epsfig}

\usepackage{multirow}
\usepackage{subfigure}
\usepackage{graphicx}
\usepackage{float}
\usepackage{booktabs}
\usepackage{enumerate}

\setcounter{tocdepth}{1}
\setcounter{secnumdepth}{3}

\setlength\parskip{.3\baselineskip}

\numberwithin{equation}{section}

\usepackage{fancyhdr}

%
%

\title[On Threshold Solutions of equivariant CSS]{Threshold solutions for the focusing generalized Hartree equations}

\author{Tao Zhou}
\address{School of Mathematical Sciences\\
Peking University\\
Beijing\\ China}
\email{zhoutao@pku.edu.cn}

\thanks{2010 \textit{AMS Mathematics Subject Classification}.  35Q55.}
\thanks{Keywords: Hartree equation, threshold solution, coercivity.}
\begin{document}
\maketitle

\begin{abstract}
	We study the global behavior of solutions to the focusing generalized Hartree equation with $H^1$ data at mass-energy threshold in the inter-range case. In the earlier works of Arora-Roudenko \cite{arora2019global}, the behavior of solutions below the mass-energy threshold was classified. In this paper, we first exhibit three special solutions: $e^{it} Q$, $Q^\pm$, where $Q^\pm$ exponentially approach to the $e^{it} Q$ in the positive time direction, $Q^+$ blows up and $Q^-$ scatters in the negative time direction. Then we classify solutions at this threshold, showing that they behave exactly as the above three special solutions up to symmetries, or scatter or blow up in both time directions. The argument relies on the uniqueness and non-degeneracy of ground state, which we regard as an assumption for the general case.
\end{abstract}

\tableofcontents

	\section{Introduction}
In this paper, we consider the Cauchy problem
\begin{equation}
	\begin{cases}
	i \partial_t u + \Delta u + \left( |\cdot|^{-(N-\gamma)} * |u|^p \right) |u|^{p-2} u =0, \; (t,x) \in \mathbb{R} \times \mathbb{R}^N, \\
	u(0,x)  = u_0(x) \in H^1(\mathbb{R}^N),
	\end{cases}
	\label{Hartree equation}
\end{equation}
where $\gamma \in (0,N)$ and $ p \ge 2$.

The equation (\ref{Hartree equation})  is a generalization of the standard Hartree equation with $p=2$, 
\[
i \partial_t u + \Delta u + \left( |\cdot| ^{-(N-\gamma)}  * |u|^2 \right) u =0, \quad (t,x) \in \mathbb{R} \times  \mathbb{R}^N,
\]
which can be considered as a classical limit of a field equation describing a quantum mechanical non-relativistic many-boson system interacting through a two-body potential $V(x)= \frac{1}{|x|^{N-\gamma}}$, see \cite{ginibre1980class}. How it arises as an effective evolution equation in the mean-field limit of many-body quantum systems can be traced to \cite{hepp1974classical}. Lieb \& Yau \cite{lieb1987chandrasekhar} mentioned it in a context of developing theory for stellar collapse, and in particular, in the boson particles setting. A special case of the convolution with $\frac{1}{|x|}$ in $\mathbb{R}^3$ is referred as the Coulomb potential, which goes back to the work of Lieb \cite{lieb1977existence} and has been intensively studied since then, see reviews \cite{frohlich2002point} and \cite{frohlich2010classical}.

The equation (\ref{Hartree equation}) can be written as the Schrodinger-Poisson system of the form
\[
\begin{cases}
	i \partial_t u + \Delta u + V |u|^{p-2} u=0, \\
	(-\Delta)^\frac{\gamma}{2} V = (N-2) |\mathbb{S}|^{N-1} |u|^p.
\end{cases}
\]
This can be thought of as an electrostatic version of the Maxwell-Schr\"{o}dinger system, describing the interaction between the electromagnetic field and the wave function related to a quantum nonrelativistic charged particle, see \cite{lieb2003thestability} for examples.

The Cauchy problem (\ref{Hartree equation}) is locally wellposed in $H^1(\mathbb{R}^N)$ for
\begin{equation*}
	\begin{cases}
		2 \le p \le 1+ \frac{\gamma+2}{N-2}, & \text{if } N \ge 3, \\
		2 \le p < \infty, & \text{if } N=1,2,
	\end{cases}
\end{equation*}see \cite{arora2019global} for details. We denote the forward lifespan by $[0,T_+)$ and the backward by $(T_-, 0]$. If $T_+(u) < +\infty$, then $\| u(t) \|_{H^1} \to \infty$, and it is said that the solution blows up in finite time. And the same as $T_-(u)> -\infty$. 

When in its lifespan, the solutions to (\ref{Hartree equation}) satisfy mass, energy and momentum conservations:
\begin{align*}
	& M[u(t)] \triangleq \int |u(x,t)|^2 dx = M[u_0], \\
	& E[u(t)] \triangleq \frac{1}{2} \int |\nabla u(x,t)|^2 dx - \frac{1}{2p} \int \left( |\cdot|^{-(N-\gamma)} * |u(\cdot,t)|^p \right) (x) |u(x,t)|^p dx = E[u_0], \\
	& P[u](t) = \Im \int \bar{u} (x,t) \nabla u(x,t) dx = P[u_0].
\end{align*}

The equation (\ref{Hartree equation}) has several invariances: if $u(x,t)$ is a solution to (\ref{Hartree equation}), then\\
$\bullet$ by \textit{invariance under scaling}, so is $\lambda^\frac{\gamma+2}{2(p-1)} u(\lambda x, \lambda^2 t), \; \lambda >0$; \\
$\bullet$ by \textit{invariance under spatial translation}, so is $u(x+x_0,t)$, $x_0 \in \mathbb{R}^N$; \\
$\bullet$ by \textit{invariance under phase rotation}, so is $e^{i \theta_0 } u$, $\theta_0 \in \mathbb{R}$; \\
$\bullet$ by \textit{invariance under Galilean transformation}, so is $e^{i x \cdot \xi_0 } e^{-it |\xi_0|^2} u (x-2\xi_0 t,t ), \; \xi_0 \in \mathbb{R}^N$; \\
$\bullet$ by \textit{invariance under time translation}, so is $u(x,t+t_0)$, $t_0 \in \mathbb{R}$; \\
$\bullet$ by \textit{invariance under time reversal symmetry}, so is $\overline{u(x,-t)}$. 

By scaling invariance,
\[
\| \lambda^\frac{\gamma+2}{2(p-1)} u_0(\lambda x) \|_{\dot{H}^{s_c} (\mathbb{R}^N)} = \| u_0 \|_{\dot{H}^{s_c}(\mathbb{R}^N)},
\]
where 
\begin{equation}
	s_c=\frac{N}{2}- \frac{\gamma+2}{2(p-1)},
	\label{critical parameter sc}
\end{equation}
and here we call the problem (\ref{Hartree equation}) is $\dot{H}^{s_c}$-critical. In particular, when $s_c \in (0,1)$, we say the problem (\ref{Hartree equation}) is in the inter-range case.

Furthermore, as shown in \cite[Section 4]{arora2019global}, the minimization problem
\begin{equation}
\inf_{0 \not = u \in H^1 (\mathbb{R}^N)} \frac{\| u \|_2^{(N+\gamma)- (N-2)p} \| \nabla u \|_2^{Np -(N+\gamma)}}{\int_{\mathbb{R}^N} \left( |\cdot|^{-(N-\gamma)} * |u|^p \right) |u|^p dx}
\label{Weinstein-type minimization problem}
\end{equation}
has a minimizer $Q$, and we call it the ground state of problem (\ref{Weinstein-type minimization problem}). As shown in \cite[Section 4]{arora2019global}, such minimizer $Q$ is a strictly positive, radially symmetric and decreasing function. Moreover, after scaling if necessary, $Q$ satisfies the following Euler-Lagrange equation
\begin{equation}
	-\Delta Q + Q - \left( |\cdot|^{-(N-\gamma)} * Q^p \right) Q^{p-1}=0.
	\label{ground state: equation}
\end{equation}

As for the uniqueness and nondegeneracy of the ground state, where the nondegeneracy means that
\[
\ker L_+ = \text{span} \{ \partial_{x_1} Q,..., \partial_{x_N} Q \} \text{ and } L_+ \text{ is defined in } (\ref{linearized operator L+}),
\]
it is an intricate issue for general $(\gamma, p ,N)$ and  is still an open question at least to the author's knowledge. As for the uniqueness of ground state to (\ref{ground state: equation}) for $(\gamma,p,N)=(2,2,3)$, it dates back to \cite{lieb1977existence}, later it was extended to dimension $N=4$ by Krieger, Lenzmann and Rapha\"{e}l in \cite{krieger2008stability}, and Arora and Roudenko \cite{arora2019global} generalized to $3 \le N \le 5$ with $(\gamma,p)=(2,2)$. In addition, Lenzmann dealt with the non-degeneracy of the ground state when $(\gamma,p,N)=(2,2,3)$ in \cite{lenzmann2009uniqueness}. With the satisfying result for $(\gamma,p)=(2,2)$ as a preliminary, there are also some related results for $(\gamma,p)$ sufficiently closed to $(2,2)$ by the perturbation method. As for $(\gamma,p,N)=(2,2+\varepsilon,3)$ with $0< \varepsilon \ll 1$, Xiang \cite{xiang2016uniqueness} proved the uniqueness and the nondegeneracy of the ground state. In recent work by Li \cite{zexing2022priori}, he extended the results in \cite{xiang2016uniqueness} and proved the uniqueness and nondegeneracy of ground state for $(\gamma,p)$ close to $(2,2)$ when $N \in \{ 3,4,5\}$. Furthermore, when $N \ge 3$, Seok \cite{seok2019limit} checked the validity of the uniqueness and non-degeneracy of the ground state for $p \in \left(2,\frac{2N}{N-2} \right)$ and $\gamma$ sufficiently close to $0$ or $p \in \Big[ 2, \frac{2N}{N-2} \Big)$ and $\gamma$ sufficiently close to $N$.

Next, as in \cite{arora2019global}, Arora and Roudenko consider the global behavior of solutions to  (\ref{Hartree equation}) below the mass-energy threshold. Precisely, if we define

\noindent $\bullet$ \textit{renormalized mass-energy:} $\mathcal{M} \mathcal{E}[u]= \frac{M[u]^\theta E[u]}{M[Q]^\theta E[Q]}$, \\

\noindent $\bullet$ \textit{remormalized gradient}: $\mathcal{G}[u]= \frac{ \| u \|_2^\theta \| \nabla u  \|_2}{ \| Q \|_2^\theta \| \nabla Q \|_2}$, \\

\noindent $\bullet$ \textit{renormalized momentum:} $\mathcal{P}[u]=\frac{\| u \|_2^{\theta-1} P[u]}{\| Q \|_2^\theta \| \nabla Q \|_2}$, \\
where $\theta=\frac{1-s_c}{s_c}$, then
\begin{thm} [\cite{arora2019global}. classification of solutions to (\ref{Hartree equation}) below mass-energy threshold] Assume $\gamma \in (0,N)$, $p \ge 2$ and $s_c \in (0,1)$.  Let $u_0 \in H^1(\mathbb{R}^N)$ with $P[u_0]=0$ and let $u(t)$ be the corresponding solution to (\ref{Hartree equation}) with the maximal time interval of existence $(T_-,T_+)$. Suppose that $\mathcal{M} \mathcal{E}[u_0] <1$.\\
	\noindent $(1)$ If $\mathcal{G}[u_0] < 1$, then the solution exists globally in time with $\mathcal{G}[u(t)]<1$ for all $t \in \mathbb{R}$, and $u$ scatters in $H^1(\mathbb{R}^N)$; \\
	\noindent $(2)$ If $\mathcal{G} [u_0] >1$, then $\mathcal{G}[u(t)] >1$, for all $t \in (T_-,T_+)$. Moreover, if $u_0$ is of finite variance or $u_0 \in L^2(\mathbb{R}^N)$ is radial, then the solution blows up in finite time. If $u_0$ is of infinite variance and nonradial, then either the solution blows up in finite time or there exists a time sequence $t_n\to \infty$ (or $t_n \to -\infty$) such that $\| \nabla 
	u(t_n) \|_{L^2(\mathbb{R}^N)} \to \infty$.
	\label{classification of solution below threshold}
\end{thm}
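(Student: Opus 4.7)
\emph{Plan.} I would prove this by the Kenig--Merle concentration--compactness/rigidity program, adapted to the nonlocal Hartree nonlinearity, starting from the variational ``two--well'' picture furnished by (\ref{Weinstein-type minimization problem}). The minimizer $Q$ produces a sharp Gagliardo--Nirenberg type inequality; combining it with the Pohozaev identities obtained by testing (\ref{ground state: equation}) against $Q$ and $x \cdot \nabla Q$ rewrites the renormalized energy as $\mathcal{M}\mathcal{E}[u] \geq F(\mathcal{G}[u])$, where $F$ has a unique maximum of value $1$ at $y=1$, is strictly increasing on $[0,1]$ and strictly decreasing on $[1,\infty)$. Hence, whenever $\mathcal{M}\mathcal{E}[u_0] < 1$, the sets $\{\mathcal{G} < 1\}$ and $\{\mathcal{G} > 1\}$ are separated in $H^1$; by conservation of mass and energy and continuity of $t \mapsto \mathcal{G}[u(t)]$, the solution is trapped in its initial well on $(T_-,T_+)$. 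The hypothesis $P[u_0]=0$ is natural because a Galilean boost reduces any finite-momentum subthreshold datum to zero momentum while only decreasing $\mathcal{M}\mathcal{E}$.

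\emph{Scattering, case $(1)$.} I would combine local well-posedness and small-data scattering with concentration--compactness: first develop linear and nonlinear profile decompositions in $H^1$ for the Hartree flow, based on weighted Hardy--Littlewood--Sobolev and Strichartz estimates for the Riesz convolution, together with a long-time perturbation/stability lemma. Assuming scattering fails at a critical threshold $\mathcal{M}\mathcal{E}_c < 1$, extract a minimal nonscattering ``critical element'' $u_c$ whose trajectory is precompact in $H^1$ modulo the scaling and spatial-translation symmetries. A localized virial/Morawetz identity, together with the strict coercivity
\[
8\|\nabla u\|_2^2 - \frac{2(Np-(N+\gamma))}{p}\int \bigl(|\cdot|^{-(N-\gamma)} * |u|^p\bigr)|u|^p\,dx \;\geq\; c\,\|\nabla u\|_2^2
\]
valid in the subcritical well, produces a coercive lower bound on the Morawetz quantity that contradicts the boundedness inherited from compactness; hence $u_c \equiv 0$ and scattering holds. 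I expect the main obstacle to lie in the nonlinear profile decomposition for the nonlocal interaction: one must carefully track the cross-terms between widely separated profiles under the Riesz convolution and show that they are negligible in the relevant Strichartz norms, which is the real departure from the standard NLS Kenig--Merle framework.

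\emph{Blow--up, case $(2)$.} In the supercritical well $\mathcal{G}>1$, the same Pohozaev-type identities combined with the conservation laws yield, at each time,
\[
16 E[u_0] - \frac{8(Np-(N+\gamma))}{p}\int \bigl(|\cdot|^{-(N-\gamma)} * |u|^p\bigr)|u|^p\,dx \;\leq\; -C\bigl(\mathcal{M}\mathcal{E}[u_0],\,\mathcal{G}[u_0]\bigr)\,\|\nabla u(t)\|_2^2,
\]
which is precisely the right-hand side of Glassey's virial identity $\tfrac{d^2}{dt^2}\int |x|^2 |u|^2\,dx$. For $u_0$ of finite variance, convexity forces $T_\pm$ finite. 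For radial $u_0 \in L^2$, replace $|x|^2$ by a smooth radial truncation $\phi_R(|x|)$ \`a la Ogawa--Tsutsumi and use radial Sobolev embedding together with Hardy--Littlewood--Sobolev to absorb the truncation error, again producing finite-time blow-up. In the nonradial infinite-variance case only the localized virial is available; it yields a strict growth lower bound along a time sequence, giving either finite-time blow-up or the alternative $\|\nabla u(t_n)\|_2 \to \infty$, with the sign of $t_n$ pinned by time-reversal symmetry.
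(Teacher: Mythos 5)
This theorem is stated in the paper as a cited result from Arora--Roudenko \cite{arora2019global} and is not proved here; the present paper uses it only as background, with its own contributions beginning at the threshold $\mathcal{M}\mathcal{E}[u_0]=1$. Comparing your outline against the cited reference rather than against an in-paper proof: your plan reproduces that argument faithfully — the sharp Gagliardo--Nirenberg inequality extracted from the Weinstein minimization (\ref{Weinstein-type minimization problem}), the Pohozaev identities (\ref{Pohozhaev equality}), the two-well trapping of $\mathcal{G}[u(t)]$ via conservation laws, a Kenig--Merle concentration-compactness and rigidity scheme (profile decomposition, critical element, localized Morawetz) for scattering, and Glassey/Ogawa--Tsutsumi virial arguments for blow-up. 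You also correctly flag the genuine technical difficulty peculiar to the Hartree nonlinearity, namely the decoupling of Riesz-convolution cross terms between separated profiles.

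One concrete slip to fix: the virial constants are off by a factor of two. Using $s_c(p-1)=\tfrac{Np-N-\gamma-2}{2}$, the paper's identity (\ref{second derivative of virial: finite variance 1}) expands to
\[
\ddot y(t)=8\|\nabla u\|_2^2-\frac{4\bigl(Np-(N+\gamma)\bigr)}{p}\int\bigl(|\cdot|^{-(N-\gamma)}*|u|^p\bigr)|u|^p\,dx,
\]
so the potential term in your coercivity inequality should carry coefficient $\tfrac{4(Np-(N+\gamma))}{p}$, not $\tfrac{2(Np-(N+\gamma))}{p}$, and the expression $16E[u_0]-\tfrac{8(Np-(N+\gamma))}{p}\int(\cdots)$ is not literally $\ddot y$. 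These are numerical slips that do not affect the sign dichotomy or the structure of the argument, but they would need to be corrected before the trapping and convexity steps go through quantitatively.
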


When it comes to the dynamics of threshold solutions, Duyckaerts and Merle dealt with the energy-critical NLW and NLS in \cite{duyckaerts2008dynamics} and \cite{duyckaerts2009dynamic} respectively for $N \in \{ 3,4,5 \}$, with high-dimensional generalizations in \cite{li2011dynamics} and \cite{campos2020threshold} respectively. Moreover, Miao, Wu and Xu considered the energy-critical Hartree equations in \cite{miao2015dynamics}. As for the inter-range case, Duyckaerts and Roudenko solved the classification of threshold solutions to cubic NLS in $\mathbb{R}^3$ in  \cite{duyckaerts2010threshold}, and later Campos, Farah and Roudenko extended the results for NLS to all the inter-range cases in \cite{campos2020threshold}. 

The dynamics above the mass-energy threshold is mostly open. As for NLS, \cite{schlag2009stable} constructed a stable manifold near the soliton family  with an improvement to optimal topology in \cite{beceanu2009critical}, and \cite{nakanishi2012globalfornls} clarified the global dynamics of radial solutions slightly above the threshold. When it is above the threshold, after \cite{holmer2010blow} gave two blow-up criteria, \cite{duyckaerts2015going}  showed a scattering versus blow-up dichotomy for finite variance solutions. Moreover, the reader can refer to \cite{beceanu2014center}, \cite{jia2020global}, \cite{krieger2015center} and  to \cite{nakanishi2011global}, \cite{nakanishi2012global} , \cite{nakanishi2012invariant} for the dynamics of NLW and NLKG respectively.

In this paper, we mainly consider the threshold solutions to (\ref{Hartree equation}) for $s_c \in (0,1)$ and $p \ge 2$. The argument needs the uniqueness of ground state, and we ragard it as an assumption for general case:

\begin{aum} (uniqueness of the ground state) Assume $\gamma \in (0,N), p \ge 2$ and $s_c \in (0,1)$, then the minimizer of (\ref{Weinstein-type minimization problem}) is unique up to symmetries.
	\label{assumption: uniqueness of the ground state}
\end{aum}

In addition, the argument in this paper needs the spectral properties of linearized operator $\mathcal{L}$ and its coercivity, requring us figure out the null space of $\mathcal{L}$, which we also have to put as an assumption for general case.
 
\begin{aum}(nondegeneracy of the ground state)
	As for $(\gamma, p, N)$ shown above, the kernel of linearized operator $L_+$ is
	\begin{equation}
		\ker L_+ =\text{span} \Big\{ \partial_{x_1} Q , \partial_{x_2} Q, ..., \partial_{x_N} Q\Big\},
		\label{nondegeneracy of the ground state}
	\end{equation}
	where $L_+$ is defined in (\ref{linearized operator L+}).
	\label{assumption: nondegeneracy of the ground state}
\end{aum}

Next, we clarify the restrictions on parameters $(\gamma,p,N)$. First, the inter-range case means that
\begin{equation}
	s_c \in (0,1), \; p \ge 2 \text{ and } 0 < \gamma < N \quad \Rightarrow \quad 2< Np-N-\gamma <2p, \; p \ge 2 \text{ and } \gamma \in (0,N).
		\label{parameter: range 1}
\end{equation}

However, out of some technical reasons, when it comes to the threshold solution with initial data satisfying $u _0 \in L_{\text{rad}}^2$ and $\mathcal{G}[u_0]>1$ in subsection \ref{subsection: radial solutions}, we have to add more restrictions on the parameters $(\gamma,p,N)$, i.e.
\begin{equation}
     Np \le 3N+\gamma \text{ and } Np \le 2N-p+6, \; N \ge 2.
	\label{parameter: range for radial case}
\end{equation}

\begin{rmk}
	The restriction (\ref{parameter: range for radial case}) has only to be imposed in the case of classification of threshold solutions with initial data satisfying $\mathcal{G}[u_0]>1$ and $u_0 \in L_{rad}^2$, see Theorem \ref{theorem: classification of threshold solutions at zero momentum case} for details.
\end{rmk}
Next, we establish the existence of special solutions to (\ref{Hartree equation}) at the mass-energy threshold
\begin{equation}
	\mathcal{M} \mathcal{E} [u] =1.
	\label{mass-energy threshold}
\end{equation}
\begin{thm}
    Under the Assumption \ref{assumption: uniqueness of the ground state} and Assumption \ref{assumption: nondegeneracy of the ground state}, there exist two radial solutions $Q^+(t,x)$ and $Q^-(t,x)$ in $H^1(\mathbb{R}^N)$ such that \\
	\textit{a.} $M[Q^+]=M[Q^-]=M[Q]$, $E[Q^+]= E[Q^-]=E[Q]$. $[0,+\infty)$ is in the domain of the lifespan of $Q^\pm$ and
	\[
	\| Q^\pm (t) - e^{it} Q \|_{H^1(\mathbb{R}^N)} \le C e^{-e_0 t}, \quad \forall t \ge 0,
	\]
	where $e_0$ is the unique positive eigenvalue of linearized operator $\mathcal{L}$ defined in (\ref{linearized equation}).  \\
	\textit{b.} $\| \nabla Q_0^- \|_2 < \| \nabla Q \|_2$, $Q^-$ is globally defined and scatters in negative time, \\
	\textit{c.} $\| \nabla Q_0^+ \|_2 > \| \nabla Q \|_2$, and the negative time of existence of $Q^+$ is finite.
	\label{theorem: construction of special functions}
\end{thm}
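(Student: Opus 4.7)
The plan is to adapt the Duyckaerts--Merle--Roudenko construction of threshold solutions. Write any solution of (\ref{Hartree equation}) as $u(t,x) = e^{it}(Q(x) + h(t,x))$, so that the perturbation $h$ obeys $\partial_t h = \mathcal{L} h + \mathcal{R}(h)$, with $\mathcal{L}$ the linearization at the soliton (as in (\ref{linearized equation})) and $\mathcal{R}(h)$ collecting the super-linear remainder. Under Assumptions \ref{assumption: uniqueness of the ground state} and \ref{assumption: nondegeneracy of the ground state} together with the spectral analysis of $\mathcal{L}$ carried out earlier in the paper, $\mathcal{L}$ has a simple pair of real eigenvalues $\pm e_0$ with exponentially decaying eigenfunctions $\mathcal{Y}_\pm$, a finite-dimensional generalized null space generated by the symmetry group acting on $Q$, and the rest of its spectrum on $i\mathbb{R}$. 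The mode $\mathcal{Y}_+$ (with eigenvalue $-e_0$) is the stable-in-forward-time direction that will parametrize $Q^\pm$.

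For each small real parameter $a$ I build an approximate solution via the formal expansion
\[
V_K^a(t,x) = e^{it}\Bigl(Q(x) + \sum_{k=1}^K a^k e^{-k e_0 t}\, Z_k(x)\Bigr), \qquad Z_1 = \mathcal{Y}_+,
\]
by solving the cascade $(\mathcal{L}+ke_0)Z_k = F_k(Z_1,\dots,Z_{k-1})$ inductively. Because $-ke_0$ avoids $\operatorname{spec}(\mathcal{L})$ for $k\ge 2$, each $Z_k$ is uniquely defined in a class of exponentially decaying Schwartz functions, so $V_K^a$ solves (\ref{Hartree equation}) modulo an $H^1$ error of size $O(e^{-(K+1)e_0 t})$. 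A fixed-point argument in a forward-in-time Banach space weighted by $e^{(K+1)e_0 t}$ then upgrades $V_K^a$ to a genuine solution $U^a$; the inputs are coercivity of the quadratic form associated with $\mathcal{L}$ on the symplectic complement of the generalized null space and of the span of $\mathcal{Y}_\pm$, together with Strichartz estimates controlling $\mathcal{R}(h)$ and a modulation scheme eliminating the symmetry-group directions. The scaling freedom in $a$ allows us to set $Q^\pm := U^{\pm 1}$, which are radial (every ingredient respects radial symmetry) and satisfy $\|Q^\pm(t) - e^{it}Q\|_{H^1}\lesssim e^{-e_0 t}$ for $t\ge 0$. Passing to the limit $t\to+\infty$ in the conservation laws gives $M[Q^\pm]=M[Q]$ and $E[Q^\pm]=E[Q]$, placing both solutions exactly at the mass-energy threshold.

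To identify the backward dynamics I expand
\[
\|\nabla Q^a(0)\|_2^2 = \|\nabla Q\|_2^2 + 2a\,\langle \nabla Q,\nabla \operatorname{Re}\mathcal{Y}_+\rangle + O(a^2),
\]
and verify that the linear coefficient does not vanish (otherwise $\operatorname{Re}\mathcal{Y}_+$ would be tangent to the Pohozaev manifold at $Q$, which combined with the threshold constraint would force it into $\ker \mathcal{L}$, contradicting $e_0>0$). Choosing the sign of $\mathcal{Y}_+$ so that $a>0$ increases the gradient yields $\mathcal{G}[Q^+(0)]>1>\mathcal{G}[Q^-(0)]$. For $Q^-$, a continuity argument (the map $t\mapsto \mathcal{G}[Q^-(t)]$ cannot touch $1$ at the threshold without forcing $Q^-$ onto a soliton orbit, contradicting its nontrivial exponential approach to $e^{it}Q$) gives $\mathcal{G}[Q^-(t)]<1$ throughout its lifespan, hence global existence; scattering in negative time then follows from a Kenig--Merle concentration-compactness reduction at the threshold. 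For $Q^+$, the opposite inequality $\mathcal{G}[Q^+(t)]>1$ persists by the same continuity argument and a radial localized virial identity drives the solution to finite-time blowup in backward time.

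The main obstacle will be the fixed-point step: the non-self-adjointness of $\mathcal{L}$, the modulation required to quotient out the symmetry directions, and the coercive energy estimates available only on the appropriate stable subspace must be dovetailed carefully so that the weighted Strichartz iteration is genuinely contractive. A close second is verifying the nonvanishing of the first-order coefficient in the $\mathcal{G}$ expansion, which must be extracted from the Euler--Lagrange structure of $Q$ and the eigenvalue equation for $\mathcal{Y}_+$ without relying on any information beyond Assumptions \ref{assumption: uniqueness of the ground state} and \ref{assumption: nondegeneracy of the ground state}.
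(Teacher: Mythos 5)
Your outline follows essentially the same route as the paper: linearize around $e^{it}Q$, build approximate solutions $\mathcal{V}_k = \sum_{j\le k} e^{-je_0 t}\mathcal{Z}_j$ by inverting $\mathcal{L}$ shifted by multiples of $e_0$ (your sign convention for $\mathcal{L}$ and the eigenvalue of $\mathcal{Y}_+$ is the opposite of the paper's, but it is internally consistent), close the construction by a weighted Strichartz fixed-point, set $Q^\pm = U^{\pm1}$ after a time shift, read off $\mathcal{G}[Q^\pm]$ from the first-order expansion of $\|\nabla U^{\pm1}(t)\|_2^2$, and then invoke the threshold classification in the two gradient regimes to get blowup backward for $Q^+$ and scattering backward for $Q^-$. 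Two points where you should not take the execution for granted.

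First, the nonvanishing of $\int\nabla Q\cdot\nabla\mathcal{Y}_1$. Your justification — that if this vanished then $\Re\mathcal{Y}_+$ would be tangent to the Pohozaev manifold at $Q$ and would therefore lie in $\ker\mathcal{L}$ — is not a valid inference; a vector tangent to the level set $\{\|\nabla u\|_2=\|\nabla Q\|_2\}\cap\{M=M[Q]\}$ is an $(\infty-2)$-codimensional condition, not a membership in the finite-dimensional kernel, and the "threshold constraint" adds nothing that forces the kernel. The paper simply assumes the sign via "without loss of generality" and does not supply a proof that the quantity is nonzero either (one can reduce it, using $L_-Q=0$, to the claim $\int Q\,\mathcal{Y}_2\neq 0$, which still requires a spectral argument in the style of Duyckaerts--Roudenko). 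You should either reproduce a genuine proof of this nonvanishing or explicitly flag it as a hypothesis; your stated reason would not survive scrutiny.

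Second, the fixed-point step. You propose iterating in "a forward-in-time Banach space weighted by $e^{(K+1)e_0 t}$" with coercivity and modulation as inputs. Two corrections. (a) Modulation is not needed here: since the Ansatz is built from $Q$, $\mathcal{Y}_+$, and the $\mathcal{Z}_j$ (all radial, none in a symmetry direction), the iteration closes on the full space; modulation is used elsewhere (Sections 4–6), not in this construction. (b) More seriously, for $p\in(2,3)$ a single weighted $S(L^2)$-type space does not close: the difference estimate on $R(f)-R(g)$ produces a factor $\|f-g\|^{p-2}$ whose sub-unit exponent destroys contraction. The paper's remedy is to run the fixed point in a mixed space $X^k$ carrying both $\|\langle\nabla\rangle\cdot\|_{S(L^2)}$ with weight $e^{(k+1/2)e_0 t}$ and $\|\cdot\|_{S(\dot H^{s_c})}$ with the faster weight $e^{(k+1/2)e_0 t/(p-2)}$, and to take the metric only in the $\dot H^{s_c}$-Strichartz component. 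This adjustment is one of the genuinely new technical obstacles in the Hartree setting and your proposal, as written, misses it and would fail for $2<p<3$. Relatedly, you should not assume $Q\in\mathcal{S}$ when running the cascade: the paper has to establish the decay of all derivatives of $Q$ and of $\mathcal{Y}_\pm$ (Lemma B.3) via elliptic comparison before the iteration of resolvents makes sense.

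Finally, your description of the backward dynamics is slightly under-specified: the scattering of $Q^-$ backward in time is obtained in the paper not by a black-box Kenig--Merle reduction but by combining the compactness furnished by Theorem 6.1 applied to $\overline{Q^-(-t)}$ with the virial estimate of Lemma 6.6, which forces $\delta\equiv 0$ if $Q^-$ were non-scattering in both directions; and the finite negative lifespan of $Q^+$ relies on the finite-variance/radial virial analysis of Section 5 plus time-reversal (Lemma 5.2). The skeleton you wrote is correct, but the details are where the work is.
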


Then we classify all solutions to (\ref{Hartree equation}) at the mass-energy critical threshold with zero momentum as follows:
\begin{thm}
	Under the Assumption \ref{assumption: uniqueness of the ground state} and Assumption \ref{assumption: nondegeneracy of the ground state}, let $u$ be a solution to (\ref{Hartree equation}) satisfying (\ref{mass-energy threshold}) and $P[u]=0$. \\
	\textit{a.} If $\mathcal{G}[u_0] <1$, then either $u$ scatters or $u=Q^-$ up to symmetries. \\
	\textit{b.} If $\mathcal{G}[u_0]=1$, then $u=e^{it} Q$ up to the symmetries. \\
	\textit{c.} If $\mathcal{G}[u_0]>1$, and $\begin{cases}
		u_0 \text{ is of finite variance, i.e. } |x| u_0 \in L^2,\\
	 \text{or } u_0 \in L_{rad}^2(\mathbb{R}^N) \text{ and } (N,p,\gamma) \text{ satisfies } (\ref{parameter: range for radial case}) \text{ in addition},
	\end{cases}$ then either the lifespan of $u$ is finite or $u=Q^+$ up to the symmetries.  \\
	The symmetries cited above include all symmetries except for Galilean transformation.
	\label{theorem: classification of threshold solutions at zero momentum case}
\end{thm}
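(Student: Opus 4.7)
The plan is to adapt the Duyckaerts--Merle / Duyckaerts--Roudenko / Campos--Farah--Roudenko rigidity scheme to the Hartree setting, building crucially on the three special solutions $e^{it}Q$ and $Q^\pm$ furnished by Theorem \ref{theorem: construction of special functions}.

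Part \textit{b} is the cheapest: the hypotheses $\mathcal{M}\mathcal{E}[u_0]=1$, $\mathcal{G}[u_0]=1$ and $P[u_0]=0$ together force $u_0$ to saturate the Gagliardo--Nirenberg inequality associated with the minimization problem (\ref{Weinstein-type minimization problem}). Assumption \ref{assumption: uniqueness of the ground state} then identifies $u_0$ with $Q$ up to scaling, translation, phase (and, with $P=0$, no Galilean boost is needed). Uniqueness of the Cauchy problem (\ref{Hartree equation}) promotes this to $u(t)=e^{it}Q$ up to symmetries for all $t$.

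For parts \textit{a} and \textit{c} the argument factors into three ingredients. First, a \emph{compactness up to symmetries} statement: any global-in-time, non-scattering solution in case \textit{a}, or any global-in-time solution in case \textit{c}, admits a sequence $t_n\to +\infty$ or $t_n\to -\infty$ along which, after modulating by scaling, translation and phase, $u(t_n)$ converges strongly in $H^1$ to $Q$. This is proved by the linear profile decomposition for the Hartree propagator (as set up in \cite{arora2019global}) combined with the variational characterization of $Q$ and with the threshold identity $\mathcal{M}\mathcal{E}[u]=1$, which forbids any nontrivial profile from splitting off mass or energy. Second, a \emph{modulation + spectral gap} analysis: once $u$ is close to the orbit of $e^{it}Q$, one writes $u(t)=e^{i\theta(t)}[Q+h(t)](\cdot-x(t))$ with modulation parameters chosen so that $h(t)$ is orthogonal to the directions generated by the symmetry group, then uses Assumption \ref{assumption: nondegeneracy of the ground state} and the coercivity of $L_+$ on this orthogonal subspace, together with the unique positive eigenvalue $e_0$ of $\mathcal{L}$, to upgrade the subsequential convergence into a uniform exponential convergence $\|u(t)-e^{it}Q\|_{H^1}\lesssim e^{-e_0 t}$ in the relevant time direction. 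Third, a \emph{uniqueness of the special solutions}: any $H^1$ solution satisfying such exponential convergence to $e^{it}Q$ is determined, via the one-dimensional unstable/stable spaces of $\mathcal{L}$, by a single real parameter, which is absorbed by scaling; hence the solution must coincide with $Q^+$ or $Q^-$ up to symmetries. The sign of $\mathcal{G}[u_0]-1$ selects the branch ($Q^-$ in case \textit{a}, $Q^+$ in case \textit{c}).

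The remaining alternatives in cases \textit{a} and \textit{c} follow by combining these ingredients with the sub-threshold classification of Theorem \ref{classification of solution below threshold}: in case \textit{a}, persistence of $\mathcal{G}[u(t)]<1$ gives global existence, so if $u$ does not scatter the compactness step applies; in case \textit{c}, the finite-variance hypothesis yields finite-time blow-up through the standard virial identity, while under the radial hypothesis with the extra restriction (\ref{parameter: range for radial case}) one runs a localized (Ogawa--Tsutsumi-type) virial argument to obtain the same blow-up dichotomy, and the non-blow-up alternative is pushed into the compactness machinery which then produces $Q^+$.

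The main obstacle is the compactness/rigidity step. It demands a profile decomposition that handles the nonlocal convolution $|\cdot|^{-(N-\gamma)}*|u|^p$ along noncompact group actions, together with a careful exclusion of Galilean profiles using $P[u]=0$, and a delicate argument showing that at the threshold no profile can have strictly smaller mass with positive renormalized energy. The second-most delicate point is the radial blow-up in case \textit{c}, where the technical restriction (\ref{parameter: range for radial case}) is exactly what is needed to close the truncated virial estimate against the nonlocal nonlinearity.
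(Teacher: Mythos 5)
Your outline matches the paper for part (\textit{b}) and, in broad strokes, for the rigidity/uniqueness and the concluding identification with $Q^\pm$; but it misrepresents the mechanism for case (\textit{c}), and this is not a cosmetic discrepancy. In the paper, when $\mathcal{G}[u_0]>1$ and $u$ is global in forward time, no compactness or profile decomposition is invoked at all. Instead, the sign condition $\ddot y(t) = -8 s_c(p-1)\delta(t) < 0$ combined with the convexity argument ($\dot y > 0$ for all $t\ge 0$) and a Cauchy--Schwarz-type inequality for $\Im\int (\nabla\varphi\cdot\nabla u)\bar u$ (Step~2 of Section~\ref{Finite variance solutions subsections}) yields $(\dot y)^2 \le C(\ddot y)^2 y$, whence $\dot y \le -c\ddot y$, uniform boundedness of $y$, and exponential decay $\dot y(t)\le Ce^{-ct}$. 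This already gives $\int_t^\infty \delta\lesssim e^{-ct}$ and hence exponential convergence via Lemma~\ref{auxilliary lemma}. Likewise, for the radial subcase of (\textit{c}) the paper does not run a separate ``localized virial blow-up dichotomy'': rather it shows, via a truncated virial and the restriction (\ref{parameter: range for radial case}), that the radial solution is actually of \emph{finite variance} (end of Section~\ref{subsection: radial solutions}), and then falls back to the finite-variance argument. Your proposal to ``push the non-blow-up alternative into the compactness machinery'' in case (\textit{c}) is therefore structurally off; the paper reserves concentration compactness (Lemma~\ref{concentration compactness}, control of $x(t)$, convergence in mean, Gronwall) for case (\textit{a}) only, where the sign of $\ddot y_R$ is not definite and a pure virial argument does not close.

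A second, more technical inaccuracy: you claim that the modulation plus the spectral gap $e_0$ directly upgrades subsequential convergence to $\|u(t)-e^{it}Q\|_{H^1}\lesssim e^{-e_0 t}$. In the paper this is a two-step process. The convergence established in Theorems~\ref{theorem: convergence of u to soliton Q above the threshold} and \ref{convergence to Q below the threshold} is only at \emph{some} unknown rate $c>0$, coming from the virial/Gronwall machinery, not from the spectral gap. The rate is then bootstrapped up to $e_0^-$ in Section~\ref{uniqueness: section} (Lemma~\ref{lemma: improve the decay of solution to linearized equation}) by decomposing $h$ along the stable/unstable/root directions of $\mathcal{L}$, estimating each projection, and iterating; the extracted coefficient $A$ of $e^{-e_0 t}\mathcal{Y}_+$ is precisely what the fixed-point construction (Proposition~\ref{proposition: construction of Q(A)}) matches against to conclude $u=U^A$. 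You should treat ``exponential convergence at some rate'' and ``convergence at rate $e_0^-$'' as two distinct steps, and also note that after the initial rescaling $M[u]=M[Q]$, $E[u]=E[Q]$, only translation and phase are modulated --- scaling is frozen. As written, your sketch omits the virial input entirely in case (\textit{c}) and would leave the ``global, non-blow-up'' branch there unproven.
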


As for the threshold solutions to (\ref{Hartree equation}) with nonzero momentum, we take the Galilean transformation into account. Precisely, let $\xi_0 = -\frac{P[u]}{M[u]}$, we get a solution $v$ to (\ref{Hartree equation}) with zero momentum, which is the minimal energy solution among all Galilean transformations of $u$, and $v$ satisfies
\[
M[v]=M[u], \quad E[v] = E[u] - \frac{1}{2} \frac{P[u]^2}{M[u]}, \quad \| \nabla v_0 \|_2^2 =\| \nabla u_0 \|_2^2 - \frac{P[u_0]^2}{M[u_0]}.
\]
Applying Theorem \ref{theorem: classification of threshold solutions at zero momentum case} to $v$, 
\begin{thm}
	Under the same conditions as what in Theorem \ref{theorem: classification of threshold solutions at zero momentum case}, let $u$ be a solution to (\ref{Hartree equation}) with $P[u] \not= 0$ and
	\[
	\mathcal{M} \mathcal{E}[u_0] - \frac{N(p-1)-\gamma}{N(p-1)-\gamma -2} \mathcal{P}[u_0]^2 \le 1.
	\]  
	\textit{a.} If $\mathcal{G}[v_0]^2 = \mathcal{G}[u_0]^2 - \mathcal{P}[u_0]^2 <1$, then either $u$ scatters or $u=Q^-$ up to the symmetries. \\
	\textit{b.} If $\mathcal{G}[v_0]^2 = \mathcal{G}[u_0]^2 - \mathcal{P}[u_0]^2 =1$, then $u=e^{it} Q$ up to the symmetries. \\
	\textit{c.} If $\mathcal{G}[u_0]>1$, and  $\begin{cases}
		u_0 \text{ is of finite variance, i.e. } |x| u_0 \in L^2, \\
	   \text{or } u_0 \in L_{rad}^2(\mathbb{R}^N) \text{ and } (N,p,\gamma) \text{ satisfies } (\ref{parameter: range for radial case}) \text{ in addition},
	\end{cases}$ then either the lifespan of $u$ is finite or $u=Q^+$ up to the symmetries.\\
	The symmetries cited above include all symmetries.
	\label{classification of threshold solutions with nonzero momentum case}
\end{thm}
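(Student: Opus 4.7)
The plan is to reduce to the zero--momentum classification in Theorem \ref{theorem: classification of threshold solutions at zero momentum case} by applying a Galilean boost that kills the momentum, then unfold the resulting conclusions back to $u$ using the full symmetry group. Concretely, I set $\xi_0 = -P[u_0]/M[u_0]$ and take $v_0(x) = e^{ix\cdot\xi_0}u_0(x)$, letting $v(t)$ be the corresponding solution of (\ref{Hartree equation}). By Galilean invariance, $u$ and $v$ share the same maximal interval of existence and the same scattering/blow--up behavior, and a direct computation gives
\[
M[v]=M[u],\quad P[v]=P[u]+\xi_0 M[u]=0,\quad E[v]=E[u]-\frac{|P[u]|^2}{2M[u]},\quad \|\nabla v_0\|_2^2=\|\nabla u_0\|_2^2-\frac{|P[u_0]|^2}{M[u_0]}.
\]

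The next step is to rewrite these identities in the normalized variables used to state the theorem. Combining the two Pohozaev identities obtained by testing (\ref{ground state: equation}) against $Q$ and against $x\cdot\nabla Q$, I get $\|\nabla Q\|_2^2=\frac{N(p-1)-\gamma}{2p}K$ and $E[Q]=\frac{N(p-1)-\gamma-2}{4p}K$ with $K=\int(|\cdot|^{-(N-\gamma)}*Q^p)Q^p$, so that $\|\nabla Q\|_2^2/(2E[Q])=\tfrac{N(p-1)-\gamma}{N(p-1)-\gamma-2}$. Dividing by the appropriate normalizing factors, this yields
\[
\mathcal{M}\mathcal{E}[v_0]=\mathcal{M}\mathcal{E}[u_0]-\frac{N(p-1)-\gamma}{N(p-1)-\gamma-2}\mathcal{P}[u_0]^2,\qquad \mathcal{G}[v_0]^2=\mathcal{G}[u_0]^2-\mathcal{P}[u_0]^2,
\]
so the standing hypothesis on $u$ translates \emph{exactly} into $\mathcal{M}\mathcal{E}[v_0]\le 1$ with $P[v]=0$, putting $v$ in the setting of Theorem \ref{theorem: classification of threshold solutions at zero momentum case}.

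Then I would apply Theorem \ref{theorem: classification of threshold solutions at zero momentum case} to $v$ in each of the three cases, matching the trichotomy in $\mathcal{G}[v_0]\lessgtr 1$ to the stated conditions on $\mathcal{G}[u_0]^2-\mathcal{P}[u_0]^2$ (for (a), (b)) and on $\mathcal{G}[u_0]$ (for (c)); for case (c) I observe that the two sub--hypotheses propagate to $v_0$: finite variance because $|x||v_0|=|x||u_0|$, and radial initial data because a nontrivial radial $u_0\in H^1$ has $P[u_0]=0$ automatically by symmetry, forcing $\xi_0=0$ and $v\equiv u$. The conclusions about $v$ (scattering, being $e^{it}Q$, being $Q^\pm$, finite lifespan) are then transported to $u$ via the inverse Galilean transform, enlarging the allowed symmetry group to include Galilean boosts.

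The only mild obstacle is the algebraic bookkeeping around the Pohozaev identities and checking that the constant $\tfrac{N(p-1)-\gamma}{N(p-1)-\gamma-2}$ appearing in the hypothesis is precisely the one produced by the ratio $\|\nabla Q\|_2^2/(2E[Q])$; once that is verified, the theorem follows as a direct corollary of Theorem \ref{theorem: classification of threshold solutions at zero momentum case} together with Galilean invariance, with no new PDE analysis required.
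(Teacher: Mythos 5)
Your proposal is exactly the paper's approach: the paper presents this theorem as an immediate corollary of Theorem~\ref{theorem: classification of threshold solutions at zero momentum case} via the Galilean boost $\xi_0=-P[u]/M[u]$, recording the same identities for $M[v]$, $E[v]$, $\|\nabla v_0\|_2^2$ and then ``applying Theorem~\ref{theorem: classification of threshold solutions at zero momentum case} to $v$.'' Your verification that $\|\nabla Q\|_2^2/(2E[Q])=\tfrac{N(p-1)-\gamma}{N(p-1)-\gamma-2}$ via the Pohozhaev identity (\ref{Pohozhaev equality}) is the same bookkeeping the paper leaves implicit, and the observation that the radial sub-case of (c) is vacuous when $P[u]\neq 0$ is correct.
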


When dimension $N=5$, if $(\gamma,p)$ is sufficiently close to $(2,2)$, then \cite{zexing2022priori} checks the uniqueness and nondegeneracy of the ground state. In addition, $(\gamma, p)$ also satisfies the restriction (\ref{parameter: range for radial case}). Consequently we can clarify the threshold solutions when $(\gamma,p)$ is sufficiently close to $(2,2)$ when $N=5$.
\begin{cor}
	If $N=5$, $p \ge 2$ and $(\gamma,p)$ is close to $(2,2)$, then the results in Theorem \ref{theorem: classification of threshold solutions at zero momentum case} and Theorem \ref{classification of threshold solutions with nonzero momentum case} hold.
\end{cor}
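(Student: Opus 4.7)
The plan is to reduce the corollary to a verification of the two standing hypotheses and the parameter constraints that govern Theorems \ref{theorem: classification of threshold solutions at zero momentum case} and \ref{classification of threshold solutions with nonzero momentum case}. Both theorems rely on Assumption \ref{assumption: uniqueness of the ground state} and Assumption \ref{assumption: nondegeneracy of the ground state}, and additionally the radial blow-up case (part \textit{c.} in each theorem) invokes the restriction (\ref{parameter: range for radial case}). Since the corollary asserts that the full conclusions of both theorems hold, I must check every one of these inputs at $(\gamma,p,N)=(2,2,5)$ and propagate them to a neighborhood of $(2,2)$ in the $(\gamma,p)$-plane (with $N$ fixed equal to $5$).

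First I would invoke Li's result \cite{zexing2022priori}, which establishes uniqueness and non-degeneracy of the ground state for $(\gamma,p)$ sufficiently close to $(2,2)$ when $N\in\{3,4,5\}$; taking the dimension $N=5$ branch directly verifies both Assumption \ref{assumption: uniqueness of the ground state} and Assumption \ref{assumption: nondegeneracy of the ground state} throughout some open neighborhood $\mathcal{U}$ of $(2,2)$. Next I would verify the basic inter-range hypothesis $s_c\in(0,1)$. At the base point, $s_c=\tfrac{5}{2}-\tfrac{2+2}{2(2-1)}=\tfrac{1}{2}$, which lies in $(0,1)$ with strict inequality, and $s_c=s_c(\gamma,p)$ is continuous in its parameters, hence $s_c\in(0,1)$ persists on a (possibly smaller) neighborhood $\mathcal{U}'\subset\mathcal{U}$. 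The condition $0<\gamma<N=5$ is likewise strict at $\gamma=2$, and $p\ge 2$ is built into the hypothesis.

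It then remains only to check (\ref{parameter: range for radial case}). At $(\gamma,p,N)=(2,2,5)$ one computes
\begin{equation*}
Np=10,\qquad 3N+\gamma=17,\qquad 2N-p+6=14,
\end{equation*}
so both inequalities $Np\le 3N+\gamma$ and $Np\le 2N-p+6$ hold strictly, and by continuity they persist in a neighborhood of $(2,2)$; intersecting with $\mathcal{U}'$ shrinks the neighborhood but does not destroy openness. Hence on this final neighborhood every hypothesis of Theorems \ref{theorem: classification of threshold solutions at zero momentum case} and \ref{classification of threshold solutions with nonzero momentum case} is satisfied, and both conclusions follow verbatim.

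There is no real obstacle here: the corollary is a direct application once the black-box input from \cite{zexing2022priori} is in hand. The only point requiring any care is to keep track of the fact that the radial constraint (\ref{parameter: range for radial case}) is genuinely needed for part \textit{c.} in each theorem under the radial alternative, and to confirm that the neighborhood of $(2,2)$ on which \cite{zexing2022priori} works can be intersected with the neighborhood coming from (\ref{parameter: range for radial case}) and from $s_c\in(0,1)$ without becoming empty, which is immediate because all three conditions hold with strict inequality at $(2,2)$.
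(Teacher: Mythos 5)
Your argument is correct and matches the paper's own justification exactly: invoke Li's result from \cite{zexing2022priori} to obtain Assumptions \ref{assumption: uniqueness of the ground state} and \ref{assumption: nondegeneracy of the ground state} near $(\gamma,p)=(2,2)$ with $N=5$, verify that $s_c=\tfrac{1}{2}\in(0,1)$ and that the restriction (\ref{parameter: range for radial case}) holds with strict inequalities at $(2,2,5)$, and then pass to an open neighborhood by continuity before applying the two theorems. The paper compresses this into a sentence, but the substance is the same; your explicit numerical checks are a welcome clarification.
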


There are several difficulties in dealing with threshold solutions to the generalized Hartree equation. The major one is the nonlocal nature of the nonlinear term. Unlike the power type nonlinear term $|u|^{p-1} u$, the behavior of $\left( |\cdot|^{-(N-\gamma)} * |u|^p \right) |u|^{p-2} u$ at one point is determined by the global behavior of $u$, which motivates us to impose additional restrictions on parameters $(\gamma,p,N)$ out of technical reason when dealing with $L_{rad}^2$ initial data.

Another problem is to deal with the fractional, low power of the parameter $p$. The fractional power makes us unable to expand the nonlinear term directly, always with high order remaining terms. Even worse, if the parameter $p \in (2,3)$, then the mean value theorem is invalid when dealing with the remaining terms, and we need to use the fact that $f(x) = |x|^{-\alpha}$ is H\"{o}lder continuous of $\alpha$ order when $\alpha \in (0,1)$. Moreover, when constructing the special solution, instead of relying on $H^s$ estimates as in \cite{duyckaerts2010threshold}, we choose $\| \left\langle \nabla \right\rangle \cdot \|_{S(L^2)}$ estimates. However, when we are in the case for $p \in (2,3)$, the power of difference term is too low to use the contraction method, which requires to combine $S(\dot{H}^{s_c})$ norm.

Moreover, the previous papers only mention the exponential decay ground state $Q$ and its regularity. We know little about the properties of high order derivatives of ground state $Q$, and we even do not have the a priori upper bound of $|\partial^\alpha Q|$, let alone $Q \in \mathcal{S}$, so it seems not rigorous to  simply follow the idea from \cite[Corollary 3.8]{campos2020threshold}. Instead, we try to use the classical elliptic comparison theory and iteration argument to overcome it.

Finally, it is worth mentioning that the choice of orthogonal condition $ \int \Delta Q h_1=0$ used in \cite{duyckaerts2010threshold} is not suitable for our setting, then we try to use other better orthogonal conditions and find that $\int \left( |\cdot|^{-(N-\gamma)} *Q^p \right) Q^{p-1} h_1=0$ just meets our demand, which is analogous to the orthogonal condition used in \cite{campos2020threshold} and  \cite{duyckaerts2009dynamic}.

The paper is organized as follows:

In section \ref{Preliminaries}, we introduce the Strichartz pairs widely used in this paper and present some basic preliminaries for the later discussion. 

In section \ref{The linearized equation and properties of linearized operator}, we consider the linearized equation and explore the spectral properties of the linearized operator and the coercivity of the linearized energy.

In section \ref{Modulation}, we discuss the modulation stability near the ground state solution. Here we identify the spatial and phase parameters which control the variations from $e^{it} Q$ when the entire variation is small in $H^1$ norm. 

In section \ref{Convergence to Q above the threshold} and section \ref{Convergence to Q below the threshold}, we study the solutions with initial data from Theorem \ref{theorem: classification of threshold solutions at zero momentum case} part $(a)$ and $(c)$ respectively. Our main goal is to obtain the exponential convergence to $e^{it} Q$ in positive time direction. 

In section \ref{uniqueness: section}, we improve the rate of exponential convergence. After we construct special solutions $Q^\pm$ stated in Theorem \ref{theorem: construction of special functions} by a fixed point argument, we prove the rigidity, showing the solutions discussed in section \ref{Convergence to Q above the threshold} and section \ref{Convergence to Q below the threshold} are equal to $Q^\pm$ respectively. 

In Appendix \ref{Appendix: ground state}, we are devoted to the properties of the ground state and the eigenfunctions $\mathcal{Y}_\pm$ of $\mathcal{L}$ with eigenvalues $\pm e_0$, especially the exponential decay of any oder derivatives of ground state $Q$. Finally, we give some useful estimates on the nonlinear term $R(h)$ defined in (\ref{linearized equation: nonlinear term}) in Appendix \ref{Appendix: Strichartz estimates}.

\noindent \textbf{Acknowledgements.} The author thanks his advisor Baoping Liu for suggesting this topic and for valuable advice and guidance during the discussion. The author would also like to thank Zexing Li and Shengxuan Zhou for their valuable comments and suggestions. Moreover, the author thanks Guixiang Xu and Yongfu Yang for their guidance. The author was partially supported by the NSFC$12071010$ and NSFC$11631002$.

	\section{Preliminaries}
\label{Preliminaries}
\subsection{Strichartz estimates and admissible pairs}
In this section, we will introduce the admissible Strichartz pairs and recall the corresponding Strichartz estimates.
\begin{defi}
	For $s >0$, the pair $(q,r)$ is  $\dot{H}^{s}$ admissible if 
	\begin{equation}
		\frac{2}{q}+ \frac{N}{r} =\frac{N}{2}-s, \quad 2 \le q,r \le \infty, \quad \text{ and } (q,r,N) \not= (2,\infty,2).
		\label{Strichartz pair}
	\end{equation}
If $s=0$, we say that the pair $(q,r)$ is $L^2$ admissible.
\end{defi}

In order to control the constants uniformly in Strichartz estimates, we restrict the range for the pair $(q,r)$,
\begin{equation}
	\begin{cases}
		\left( \frac{2}{1-s} \right)^+ \le q \le \infty, \quad \frac{2N}{N-2s} \le r \le \left( \frac{2N}{N-2} \right)^-, \text{ if } N \ge 3, \\
		\left( \frac{2}{1-s} \right)^+ \le q \le \infty, \quad \frac{2}{1-s} \le r \le \left( \left( \frac{2}{1-s} \right)^+ \right)', \text{ if } N=2,\\
		\frac{4}{1-2s} \le q \le \infty, \quad \frac{2}{1-2s} \le r \le \infty, \text{ if } N=1.
	\end{cases}
		\label{restriction on Strichartz pair}
\end{equation}
Then we define $S(\dot{H}^s)$ norm by
	\begin{align*}
	\| u \|_{S \left( \dot{H}^{s_c} \right)} = \sup\left\lbrace \| u \|_{L_t^q L_x^r}: (q,r) \text{ satisfies  } (\ref{Strichartz pair}) \text{ and } (\ref{restriction on Strichartz pair}) \right\rbrace.
\end{align*}
Moreover, to define the corresponding dual Strichartz norm, we should set the following restrictions:
\begin{equation}
	\begin{cases}
		\left( \frac{2}{1+s} \right)^+ \le q \le \left( \frac{1}{s} \right)^-, \quad \left( \frac{2N}{N-2s} \right)^+ \le r \le \left( \frac{2N}{N-2} \right)^-, \text{ if } N \ge 3, \\
		\left( \frac{2}{1+s} \right)^+ \le q \le \left( \frac{1}{s} \right)^-, \quad \left( \frac{2}{1-s}\right)^+ \le r \le \left( \left( \frac{2}{1+s} \right)^+ \right)', \text{ if } N=2,\\
		\frac{2}{1+2s} \le q \le \left( \frac{1}{s} \right)^-, \quad \left( \frac{2}{1-s} \right)^+ \le r \le \infty, \text{ if } N=1,
	\end{cases}
	\label{restriction on dual Strichartz pair}
\end{equation}
then we define the dual Strichartz norm  as below:
\[
\| u \|_{S'(\dot{H}^{-s})} = \inf \left\lbrace \| u \|_{L_t^{q'} L_x^{r'}}: (q,r) \text{ satisfies } (\ref{Strichartz pair}) \text{ and } (\ref{restriction on dual Strichartz pair}) \right\rbrace.
\]

\begin{defi}
	In the later discussion in this paper, for given $N,p, \gamma$ and hence a fixed $s_c \in (0,1)$, we select specific Strichartz pairs as follows: \\
	$\bullet$ $L^2$-admissible pair: $S(L^2)=L_t^{q_1} L_x^{r_1} \cap L_t^{q_2} L_x^{r_2}$, where 
	\[
	(q_1, r_1)= \left( \frac{2p}{1+s_c(p-1)}, \frac{2Np}{N+\gamma} \right) \text{ and }
	(q_2, r_2) =\left( \frac{2p}{1-s_c} , \frac{2Np}{N+\gamma +2s_c p} \right);
	\]
	$\bullet $ $L^2$ dual admissible pair: $S'(L^2)= L_t^{q_1'} L_x^{r_1'}$, where $(q_1', r_1')= \left( \frac{2p}{2p-1-s_c(p-1)}, \frac{2Np}{2Np-N-\gamma} \right)$; \\
	$\bullet$ $\dot{H}^{s_c}$-admissible pair: $S(\dot{H}^{s_c})= L_t^{q_2}L_x^{r_1}$, where $(q_2, r_1)= \left( \frac{2p}{1-s_c}, \frac{2Np}{N+\gamma} \right)$; \\
	$\bullet$ $\dot{H}^{-s_c}$ dual admissible pair: 
	$S'(\dot{H}^{-s_c})= L_t^{q_3'} L_x^{r_1'}$, where $\left(q_3', r_1'\right)= \left( \frac{2p}{(2p-1)(1-s_c)}, \frac{2Np}{2Np-N-\gamma} \right)$. \\
	By the assumption on $(N,p, \gamma)$, all the Strichartz pairs shown above satisfy (\ref{restriction on Strichartz pair}) and (\ref{restriction on dual Strichartz pair}) respectively.
	\label{rmk: definition of Strichartz pair}
\end{defi}
As for the Strichartz pairs selected above, by Sobolev embedding, we have
\begin{lem}
		$\forall f \in S\left( I, \left\langle \nabla \right\rangle L^2 \right)$, then $ f \in S(I,\dot{H}^{s_c})$ and
	\begin{equation}
		\| f\|_{S(\dot{H}^{s_c})}= \| f\|_{L_t^{q_2} L_x^{r_1}} \lesssim  \big\| |\nabla|^{s_c} f \big\|_{L_t^{q_2} L_x^{r_2}} \lesssim \big\| \left\langle \nabla \right\rangle f \| _{L_t^{q_2} L_x^{r_2}} \lesssim \big\| \left\langle \nabla \right\rangle f \big\|_{S(L^2)}.
		\label{critical Strichartz norm can be bounded by H1 Strichartz norm}
	\end{equation}
\end{lem}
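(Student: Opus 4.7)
The plan is to establish the displayed chain of inequalities by disposing of the easy outer two steps and then performing the one genuine spatial embedding in the middle. The rightmost inequality is immediate from Definition \ref{rmk: definition of Strichartz pair}: the pair $(q_2,r_2)$ is one of the two $L^2$-admissible pairs entering the $S(L^2)$ norm by definition, so $\| \langle \nabla \rangle f \|_{L_t^{q_2} L_x^{r_2}} \le \| \langle \nabla \rangle f \|_{S(L^2)}$ with no work.

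Next I would prove the middle inequality pointwise in time. For $s_c \in (0,1)$, the spatial Fourier multiplier with symbol $m(\xi) = |\xi|^{s_c}(1+|\xi|^2)^{-1/2}$ is bounded on $\mathbb{R}^N$ and satisfies Mikhlin's condition, so the operator $|\nabla|^{s_c} \langle \nabla \rangle^{-1}$ is bounded on $L^{r_2}(\mathbb{R}^N)$ provided $1 < r_2 < \infty$, which is guaranteed by the restrictions (\ref{restriction on Strichartz pair}). This gives $\| |\nabla|^{s_c} f(t,\cdot) \|_{L_x^{r_2}} \lesssim \| \langle \nabla \rangle f(t,\cdot) \|_{L_x^{r_2}}$ at almost every $t$, and then integration in $L_t^{q_2}$ yields the asserted bound.

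The substantive step is the leftmost inequality, a spatial Sobolev embedding applied slice-wise in time. I would verify the required scaling identity $\frac{1}{r_1} = \frac{1}{r_2} - \frac{s_c}{N}$ by direct substitution: with $r_1 = \frac{2Np}{N+\gamma}$ and $r_2 = \frac{2Np}{N+\gamma + 2 s_c p}$, one computes $\frac{1}{r_2} - \frac{1}{r_1} = \frac{2 s_c p}{2Np} = \frac{s_c}{N}$. Since $1 < r_2 < r_1 < \infty$ under the standing Strichartz restrictions, the classical embedding $\dot{W}^{s_c,r_2}(\mathbb{R}^N) \hookrightarrow L^{r_1}(\mathbb{R}^N)$ applies, giving $\| f(t,\cdot) \|_{L_x^{r_1}} \lesssim \| |\nabla|^{s_c} f(t,\cdot) \|_{L_x^{r_2}}$ pointwise in $t$; a final $L_t^{q_2}$ integration closes the chain.

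Overall the proof is a bookkeeping argument built on the definition of $S(L^2)$, a Mikhlin multiplier estimate, and the Hardy--Littlewood--Sobolev inequality, and there is no genuine obstacle. The only items that warrant care are the scaling identity above and checking that the relevant spatial exponents lie strictly inside $(1,\infty)$ so that the Calder\'{o}n--Zygmund and Hardy--Littlewood--Sobolev theorems are legitimately available; both follow from (\ref{restriction on Strichartz pair}) together with $s_c \in (0,1)$.
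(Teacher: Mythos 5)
Your proof is correct and follows the same route the paper has in mind, which it summarizes in one phrase (``by Sobolev embedding'') before the lemma statement: the last step is the definition of $S(L^2)$ as the intersection containing $L_t^{q_2}L_x^{r_2}$, the middle step is boundedness of the multiplier $|\xi|^{s_c}\langle\xi\rangle^{-1}$ on $L^{r_2}$, and the first step is the slice-wise homogeneous Sobolev embedding $\dot W^{s_c,r_2}\hookrightarrow L^{r_1}$ with the scaling relation $\frac{1}{r_1}=\frac{1}{r_2}-\frac{s_c}{N}$, which you verify correctly from the explicit exponents in Definition~\ref{rmk: definition of Strichartz pair}. The exponent checks are sound: $r_2 \ge 2$ is forced by the $L^2$-admissibility restriction (\ref{restriction on Strichartz pair}), $r_1 < \infty$ is manifest, and $s_c\in(0,1)$ makes the symbol $|\xi|^{s_c}\langle\xi\rangle^{-1}$ a Mikhlin multiplier.
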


\begin{lem}[Kato-Strichartz estimates]
	\label{lemma: Strichartz estimates}
$\forall f \in S\left( I, \left\langle \nabla \right\rangle L^2 \right)$,  then
\begin{equation*}
	\Big\| \int_{s>t} e^{i(t-s) \Delta} f(s) ds \Big\|_{S(I, L^2)} \lesssim \| f \|_{S'(I,L^2)},
\end{equation*}
\begin{equation*}
	\Big\| \int_{s>t} e^{i(t-s) \Delta} f(s) ds \Big\|_{S(I, \dot{H}^{s_c})} \lesssim \| f \|_{S'(I, \dot{H}^{-s_c})}.
\end{equation*}
\end{lem}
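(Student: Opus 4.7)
The plan is to derive both estimates from the standard Keel--Tao Strichartz estimates for the free Schr\"odinger propagator together with the Christ--Kiselev lemma, which promotes the full-line inhomogeneous estimate to its retarded version $\int_{s>t} e^{i(t-s)\Delta} f(s)\, ds$ (equivalent, by time reversal, to the forward Duhamel integral).

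For the first inequality, I would start from the homogeneous estimate $\| e^{it\Delta} u_0 \|_{L_t^q L_x^r} \lesssim \| u_0 \|_{L^2}$ valid for every $L^2$-admissible pair $(q,r)$ (i.e.\ satisfying (\ref{Strichartz pair}) with $s=0$), and its dual. A standard $TT^*$ argument yields the inhomogeneous estimate $\bigl\| \int_0^t e^{i(t-s)\Delta} f(s)\, ds \bigr\|_{L_t^q L_x^r} \lesssim \| f \|_{L_t^{\tilde q'} L_x^{\tilde r'}}$ for any two $L^2$-admissible pairs $(q,r)$ and $(\tilde q, \tilde r)$. Because the restrictions in (\ref{restriction on Strichartz pair}) and (\ref{restriction on dual Strichartz pair}) keep all exponents strictly away from the endpoint, one has $q > \tilde q'$, which is precisely the hypothesis needed so that the Christ--Kiselev lemma applies and produces the same bound for the retarded operator $\int_{s>t} e^{i(t-s)\Delta} f(s)\, ds$. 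Taking the sup over admissible $(q,r)$ on the left and the inf over dual admissible $(\tilde q, \tilde r)$ on the right recovers the $S(L^2)$--$S'(L^2)$ estimate.

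For the second inequality, I would reduce to the first via fractional integration. An $\dot H^{s_c}$-admissible pair $(q,r)$ can be written as $(q,r^*)$ where $(q,r_0)$ is $L^2$-admissible and $\frac{1}{r^*}=\frac{1}{r_0}-\frac{s_c}{N}$, and dually an $\dot H^{-s_c}$ dual admissible pair relates to an $L^2$ dual admissible pair by the same Sobolev shift. Since $|\nabla|^{s_c}$ commutes with $e^{it\Delta}$, applying $|\nabla|^{s_c}$ to both sides and using the Sobolev embedding $\dot W^{s_c, r_0}\hookrightarrow L^{r^*}$ (resp.\ its dual form on the source side) converts the desired $\dot H^{s_c}$ estimate into the $L^2$ retarded Strichartz estimate already proved. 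Equivalently, one may invoke Foschi's inhomogeneous Strichartz estimates, which accommodate unequal Sobolev levels $s$ and $-s$ on the two sides directly.

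The main obstacle is not the estimates themselves but the careful bookkeeping: verifying that the specific pairs $(q_1,r_1)$, $(q_2,r_2)$, $(q_2,r_1)$, $(q_1',r_1')$, $(q_3',r_1')$ singled out in Definition \ref{rmk: definition of Strichartz pair} genuinely satisfy (\ref{Strichartz pair}) together with the non-endpoint restrictions (\ref{restriction on Strichartz pair}) and (\ref{restriction on dual Strichartz pair}), so that Christ--Kiselev applies uniformly. This amounts to an explicit algebraic check in the parameters $s_c, N, p, \gamma$, relying on $s_c \in (0,1)$ and $p\ge 2$; once these inclusions are verified, the lemma is a direct consequence of the classical Strichartz machinery.
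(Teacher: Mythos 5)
The paper does not actually give a proof of this lemma; it is stated and used as a standard fact (Kato-type inhomogeneous Strichartz estimates with non-endpoint exponents), so there is no ``paper proof'' for you to match against. Your argument for the first estimate is the standard one: Keel--Tao gives the full-line inhomogeneous estimate between any two $L^2$-admissible pairs via $TT^*$, and the non-endpoint restrictions imposed in (\ref{restriction on Strichartz pair}) and (\ref{restriction on dual Strichartz pair}) ensure $q>\tilde q'$ so that the Christ--Kiselev lemma upgrades this to the retarded operator $\int_{s>t}$. That part is correct.

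Your primary route to the second estimate, however, has a real gap. On the \emph{output} side, writing $\frac{1}{r}=\frac{1}{r_0}-\frac{s_c}{N}$ and using $\dot W^{s_c,r_0}\hookrightarrow L^{r}$ is fine and reduces the left-hand side to a $(q,r_0)$ norm of $|\nabla|^{s_c}$ applied to the Duhamel integral. But on the \emph{source} side the inequality you would need is $\| |\nabla|^{s_c} f \|_{L^{\tilde r_0'}} \lesssim \| f \|_{L^{\tilde r'}}$ with $\frac{1}{\tilde r_0'}=\frac{1}{\tilde r'}+\frac{s_c}{N}$, i.e.\ the claim that $L^{\tilde r'}\hookrightarrow \dot W^{s_c,\tilde r_0'}$. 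There is no such embedding: Hardy--Littlewood--Sobolev gives the reverse inequality $\|f\|_{L^{\tilde r'}}\lesssim \||\nabla|^{s_c}f\|_{L^{\tilde r_0'}}$, not the one you need. This is precisely why the $S(\dot H^{s_c})$--$S'(\dot H^{-s_c})$ estimate cannot be obtained by merely conjugating the $L^2$ estimate with Sobolev embeddings; it is a genuinely non-diagonal inhomogeneous estimate that falls outside the Keel--Tao admissible range. Your fallback citation is the right fix: these pairs satisfy the scaling relation $\frac1q+\frac1{\tilde q}=\frac{N}{2}\bigl(1-\frac1r-\frac1{\tilde r}\bigr)$ and the non-endpoint/acceptability conditions of Foschi (and Vilela, Kato), whose inhomogeneous Strichartz theorem directly yields the retarded estimate between $(q,r)$ and $(\tilde q',\tilde r')$ without any derivative bookkeeping. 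You should present the Foschi route as the proof rather than as an optional alternative, since the Sobolev reduction does not close.
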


\subsection{Gradient seperation}
In this subsection, we want to give a rough classification of solutions to (\ref{Hartree equation}) through the gradient. Precisely, by the same argument as what in \cite[Lemma $2.2$]{duyckaerts2010threshold},
\begin{lem}
	Under the Assumption \ref{assumption: uniqueness of the ground state}, we consider the solutions to (\ref{Hartree equation}) with initial data $u_0$ satisfying (\ref{mass-energy threshold}) and $P[u_0]=0$.\\
	(\textit{a}). If $\| u_0 \|_2^\theta \| \nabla u_0 \|_2 = \| Q\|_2^\theta \| \nabla Q \|_2$, then $u=e^{it} Q$ up to symmetries. \\
	(\textit{b}). If  $\| u_0 \|_2^\theta \| \nabla u_0 \|_2 < \| Q\|_2^\theta \| \nabla Q \|_2$, then $u$ is globally defined and
	\[
	\| u_0 \|_2^\theta \| \nabla u(t) \|_2 < \| Q\|_2^\theta \| \nabla Q \|_2, \quad \forall t \in \mathbb{R}.
	\]
	(\textit{c}). If $\| u_0 \|_2^\theta \| \nabla u_0 \|_2 > \| Q\|_2^\theta \| \nabla Q \|_2$, then
	\[
	\| u_0 \|_2^\theta \| \nabla u(t) \|_2 > \| Q\|_2^\theta \| \nabla Q \|_2, \quad \text{for all } t \text{ belonging to the lifespan of } u.
	\]
	\label{lemma: behavior of gradient of solution}
\end{lem}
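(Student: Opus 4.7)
The plan is to follow the standard dichotomy argument of Kenig--Merle and Duyckaerts--Merle, adapted to the Hartree setting via the sharp Weinstein-type inequality dual to the minimization problem \eqref{Weinstein-type minimization problem}: for every $v\in H^1(\mathbb{R}^N)$,
\[
\int_{\mathbb{R}^N}\bigl(|\cdot|^{-(N-\gamma)}*|v|^p\bigr)|v|^p\,dx \;\leq\; C_{\mathrm{opt}}\,\|v\|_2^{(N+\gamma)-(N-2)p}\,\|\nabla v\|_2^{Np-(N+\gamma)},
\]
with equality attained exactly on the symmetry orbit of $Q$. Plugging this into the energy yields a one-variable lower bound $E[v]\geq g_{M[v]}(\|\nabla v\|_2)$, where
\[
g_m(y)\;=\;\tfrac12 y^2\;-\;\tfrac{C_{\mathrm{opt}}}{2p}\,m^{\frac{(N+\gamma)-(N-2)p}{2}}\,y^{\,Np-(N+\gamma)}.
\]
Since $s_c\in(0,1)$ forces the exponent $Np-(N+\gamma)$ to lie strictly in $(2,2p)$, the function $g_m$ has a unique positive critical point $y^*_m$, which is a strict global maximum on $(0,\infty)$.

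Next I would use the Pohozaev/Nehari identities for $Q$, obtained by testing \eqref{ground state: equation} against $Q$ and against $x\cdot\nabla Q$, to pin down the two quantities $y^*_{M[Q]}$ and $g_{M[Q]}(y^*_{M[Q]})$: they come out to be exactly $\|\nabla Q\|_2$ and $E[Q]$, respectively. A mass-rescaling then converts the threshold hypothesis $\mathcal{M}\mathcal{E}[u_0]=1$ into the statement that $E[u_0]$ equals the maximum value of $g_{M[u_0]}$, and simultaneously the critical point $y^*_{M[u_0]}$ occurs exactly where $\mathcal{G}[u]=1$. Consequently, for every $t$ in the lifespan,
\[
g_{M[u_0]}\bigl(\|\nabla u(t)\|_2\bigr)\;\leq\;E[u(t)]\;=\;E[u_0]\;=\;\max_{y>0} g_{M[u_0]}(y).
\]

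For part (a), $\mathcal{G}[u_0]=1$ forces equality at $t=0$ in the Weinstein inequality, and Assumption \ref{assumption: uniqueness of the ground state} then identifies $u_0$ with a symmetry image of $Q$; uniqueness for \eqref{Hartree equation} propagates this to $u(t)=e^{it}Q$ up to symmetries. For parts (b) and (c), the map $t\mapsto\|\nabla u(t)\|_2$ is continuous on the lifespan, and by strict unimodality of $g_{M[u_0]}$ the only value at which the displayed inequality can be saturated is $y^*_{M[u_0]}$; continuity therefore prevents the trajectory from crossing from the sub-threshold to the super-threshold region or vice versa, giving the preserved inequalities $\mathcal{G}[u(t)]<1$ and $\mathcal{G}[u(t)]>1$, respectively. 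In case (b), trapping below $y^*$ yields a uniform $H^1$ bound, hence globality via the local well-posedness recalled in Section~1.

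The main technical obstacle is purely bookkeeping: one must verify that the exponents coming out of the Weinstein inequality and the Pohozaev identities reassemble into precisely the renormalized quantities $\mathcal{M}\mathcal{E}$ and $\mathcal{G}$ with the exponent $\theta=(1-s_c)/s_c$ chosen in the introduction, so that the sentence \emph{``$g_{M[u_0]}$ peaks at value $\mathcal{M}\mathcal{E}=1$ exactly when $\mathcal{G}=1$''} is literally correct. Beyond this algebraic check, the only conceptual input is Assumption \ref{assumption: uniqueness of the ground state}, invoked in part (a) to identify extremizers of the Weinstein inequality as symmetry images of $Q$; parts (b) and (c) are pure continuity/trapping statements and need no uniqueness of the ground state.
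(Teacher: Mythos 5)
Your overall strategy is the right one and matches what the paper implicitly relies on (it refers the reader to the Duyckaerts--Roudenko gradient-separation argument): combine the sharp Hartree Gagliardo--Nirenberg inequality with the Pohozaev identities to produce the unimodal function $g_m$, normalize mass by scaling so that the renormalized quantities appear cleanly, and treat $(a)$ by identifying a Weinstein-extremizer with a symmetry image of $Q$. Parts of your algebra deserve spelling out (the computation that $y^*_{M[Q]}=\|\nabla Q\|_2$ and $g_{M[Q]}(y^*)=E[Q]$ via (\ref{Pohozhaev equality}), and the scaling invariance of $\mathcal{G}$ and $\mathcal{M}\mathcal{E}$), but these are routine.

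There is, however, a genuine gap in the argument you give for parts $(b)$ and $(c)$. You argue that continuity of $t\mapsto\|\nabla u(t)\|_2$ together with strict unimodality of $g_{M[u_0]}$ prevents crossing through $y^*$. That reasoning works strictly below the threshold, where $E[u_0]<\max g_{M[u_0]}$ and the admissible set $\{y:g_{M[u_0]}(y)\le E[u_0]\}$ is disconnected; continuity then traps the trajectory in one component. But this lemma is \emph{at} the threshold $\mathcal{M}\mathcal{E}[u_0]=1$, so $E[u_0]=\max g_{M[u_0]}$ and the displayed inequality $g_{M[u_0]}(\|\nabla u(t)\|_2)\le E[u_0]$ is satisfied by every $y\ge 0$: there is no forbidden band, and continuity by itself excludes nothing. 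The missing step is a rigidity argument: if $\|\nabla u(t_*)\|_2=y^*$ for some $t_*$ in the lifespan, then $E[u(t_*)]=g_{M[u_0]}(\|\nabla u(t_*)\|_2)$, which forces equality in the Weinstein inequality at time $t_*$; by Assumption \ref{assumption: uniqueness of the ground state}, $u(t_*)$ is a symmetry image of $Q$, and uniqueness of the Cauchy problem then gives $u\equiv e^{it}Q$ up to symmetries, contradicting $\|\nabla u_0\|_2\neq y^*$. Note in particular that this means $(b)$ and $(c)$ \emph{do} require the uniqueness assumption, contrary to the final sentence of your proposal.
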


\subsection{Qualitative rigidity} In this part, we give a criterion of the closeness of solution to (\ref{Hartree equation}) to the soliton family:
\begin{prop}
	Under the Assumption \ref{assumption: uniqueness of the ground state}, there exists a function $\varepsilon(\rho)$ defined for small $\rho>0$ such that $\lim\limits_{\rho \to 0} \varepsilon(\rho) =0 $ and $\forall u \in H^1$ such that
	\begin{align*}
		&\Big| \| u \|_2^2 -\| Q \|_2^2 \Big| + \Big| \| \nabla u\|_2^2 -\| \nabla Q \|_2^2 \Big| \\
		&\qquad \qquad  + 	\Big| \int \left( |\cdot|^{-(N-\gamma)} * |u|^p \right) |u|^{p} dx - \int \left( |\cdot|^{-(N-\gamma)}* Q^p \right) Q^p  dx \Big| 
		\le \rho,
	\end{align*}
	there exists $(\theta_0, x_0) \in \mathbb{R} \backslash 2 \pi \mathbb{Z} \times \mathbb{R}^N$ such that 
	\begin{equation}
		\| u - e^{i\theta_0} Q (\cdot -x_0) \|_{H^1} \le \varepsilon(\rho).
		\label{smallness of distance between u and soliton}
	\end{equation}
	In particular, for solution to (\ref{Hartree equation}) satisfying 
	\begin{equation}
		M[u]=M[Q], \; E[u]= E[Q],
		\label{condition: mass and energy threshold}
	\end{equation} 
	if we let
	\begin{equation}
		\delta(t)= \Big| \| \nabla u(t) \|_2^2 - \| \nabla Q \|_2^2 \Big|,
		\label{delta: definition, the distance}
	\end{equation}
	and assume $\delta(t) \le \rho$, then there exists $(\theta_0,x_0)$ such that (\ref{smallness of distance between u and soliton}) holds.
	\label{propostion: convergence of u to soliton}
\end{prop}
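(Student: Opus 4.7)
The plan is to prove the first assertion by contradiction. Suppose it fails: there exist $\varepsilon_0 > 0$, a sequence $\rho_n \to 0^+$, and $u_n \in H^1(\mathbb{R}^N)$ satisfying the three-quantity closeness with tolerance $\rho_n$ but with
\[
\inf_{(\theta_0, x_0) \in \mathbb{R} \times \mathbb{R}^N} \| u_n - e^{i\theta_0} Q(\cdot - x_0) \|_{H^1} \ge \varepsilon_0.
\]
The hypothesis forces $\|u_n\|_2 \to \|Q\|_2$, $\|\nabla u_n\|_2 \to \|\nabla Q\|_2$, and the nonlocal functional of $u_n$ to converge to that of $Q$, so $(u_n)$ is a minimizing sequence for the Weinstein-type quotient (\ref{Weinstein-type minimization problem}), whose infimum is attained at $Q$.

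The main step is to feed $(u_n)$ into a linear $H^1$ profile decomposition adapted to the Hartree setting, of the form
\[
u_n(x) = \sum_{j=1}^{J} e^{i\theta_n^j} V^j(x - x_n^j) + w_n^J(x),
\]
with asymptotically orthogonal translation parameters, Pythagorean identities for $\|\cdot\|_2^2$ and $\|\nabla\cdot\|_2^2$, and a smallness property of the remainder in an appropriate critical norm. The key technical point is to establish a Brezis--Lieb type decoupling for the nonlocal quantity $\int (|\cdot|^{-(N-\gamma)} * |u_n|^p)|u_n|^p$: by the translation invariance of the Riesz convolution and the vanishing of its kernel at infinity, cross terms between orthogonal profiles disappear in the limit, and the same is true for the profile/remainder interaction. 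This is the crux of the argument and the main obstacle, since, unlike the local power-type nonlinearity, the contribution at one point is controlled by the global behaviour of $u_n$, and one has to combine a refined Hardy--Littlewood--Sobolev bound with the asymptotic orthogonality to rule out dichotomy and vanishing.

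Once the decoupling is in place, the minimizing property of $(u_n)$ combined with the sharp constant in (\ref{Weinstein-type minimization problem}) forces only one profile to be nontrivial and the remainder to go to zero in $H^1$. The surviving profile $V^1$ then achieves the infimum in (\ref{Weinstein-type minimization problem}) and, by Assumption \ref{assumption: uniqueness of the ground state}, must equal $e^{i\theta_\infty} Q(\cdot - x_\infty)$ for some $\theta_\infty \in \mathbb{R}$, $x_\infty \in \mathbb{R}^N$. Together with $w_n^J \to 0$ in $H^1$, this gives
\[
\bigl\| u_n - e^{i\theta_n^1} Q(\cdot - x_n^1) \bigr\|_{H^1} \longrightarrow 0,
\]
contradicting the lower bound $\varepsilon_0$. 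Defining $\varepsilon(\rho)$ as the supremum of $\inf_{(\theta_0,x_0)} \|u - e^{i\theta_0} Q(\cdot - x_0)\|_{H^1}$ over all $u$ satisfying the hypothesis with tolerance $\rho$, this argument shows $\varepsilon(\rho) \to 0$ as $\rho \to 0^+$.

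For the concluding statement, let $u$ be a solution to (\ref{Hartree equation}) with (\ref{condition: mass and energy threshold}) and $\delta(t) \le \rho$. Mass conservation gives $\|u(t)\|_2 = \|Q\|_2$, while $\delta(t) \le \rho$ controls $\bigl|\|\nabla u(t)\|_2^2 - \|\nabla Q\|_2^2\bigr|$. Using $E[u(t)] = E[Q]$ and solving for the nonlocal term from the definition of $E$, one gets
\[
\Bigl| \int \bigl(|\cdot|^{-(N-\gamma)} * |u(t)|^p\bigr) |u(t)|^p\,dx - \int \bigl(|\cdot|^{-(N-\gamma)} * Q^p\bigr) Q^p\,dx \Bigr| \le p\, \delta(t) \le p\rho.
\]
Hence $u(t)$ satisfies the hypothesis of the first assertion with tolerance $O(\rho)$, and (\ref{smallness of distance between u and soliton}) follows from what has already been proved.
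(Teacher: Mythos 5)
Your proposal takes essentially the same route as the paper: argue by contradiction, apply a Hmidi--Keraani type $H^1$ profile decomposition to the would-be counterexample sequence, use orthogonality together with the sharpness of the Weinstein quotient $(\ref{Weinstein-type minimization problem})$ to force a single surviving profile and a vanishing remainder, and then invoke Assumption \ref{assumption: uniqueness of the ground state} to identify that profile with $Q$ up to phase and translation. Your explicit computation for the ``in particular'' part (extracting $|\int (|\cdot|^{-(N-\gamma)}*|u|^p)|u|^p - \int(|\cdot|^{-(N-\gamma)}*Q^p)Q^p| = p\,\delta(t)$ from $E[u]=E[Q]$) is correct and fills in a step the paper leaves implicit.
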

\begin{proof}
	If not, then there exists a sequence $\{u_n\}_{n=1}^\infty \subset H^1(\mathbb{R}^N)$ such that
	\begin{align}
		&  \Big| \| u_n \|_2^2 -\| Q \|_2^2 \Big| + \Big| \| \nabla u_n \|_2^2 -\| \nabla Q \|_2^2 \Big|  \notag \\
		& \qquad + \Big| \int \left( |\cdot|^{-(N-\gamma)} * |u_n|^p \right) |u_n|^p dx - \int \left( |\cdot|^{-(N-\gamma)}* Q^p \right) Q^p  dx \Big| \to 0,
		\label{assumption: closeness u to ground state Q}
	\end{align}
	but
	\[
	\inf_{(\theta_0, x_0) \in \mathbb{R} \times \mathbb{R}^N} \| u_n - e^{i \theta_0} Q(\cdot-x_0) \|_{H^1} \ge \varepsilon_0.
	\]
	By linear decomposition for $\{ u_n \}_{n=1}^\infty$ as used in \cite[Proposition $3.1$]{hmidi2005blowup}, after extracting a subsequence, there exist a family of linear profiles $\{U^j\}_{j=1}^\infty \subset H^1$ and $\{x_n^j\}$ such that
	
	\noindent $(1)$ For every $k \not= j$, $|x_n^k - x_n^j| \to \infty$, $n \to \infty$; 
	
	\noindent $(2)$ For every $l \ge 1$ and every $x \in \mathbb{R}^N$, $u_n(x)= \sum_{j=1}^l U^j(x-x_n^j) + r_n^l(x)$ with
	\begin{equation}
		\lim_{l \to \infty} \limsup_{n \to \infty} \| r_n^l \|_{L^s (\mathbb{R}^N)} =0, \quad \forall s \in \left( 2, 2^* \right), \text{ where } 2^* = 
		\begin{cases}
			\frac{2N}{N-2}, & \text{if } N\ge 3, \\
			+\infty, & \text{if } N=1,2.
		\end{cases}
		\label{smallness of remaining term}
	\end{equation}
	
	\noindent $(3)$. Orthogonality. For any fixed $l \in \mathbb{N}$,
	\begin{equation}
		\| u_n \|_2^2 = \sum_{j=1}^l \| U^j \|_2^2 + \| r_n^l \|_2^2 + o_n(1),
		\label{orthogonality: L2 sense}
	\end{equation}
	\begin{equation}
		\| \nabla u_n \|_2^2 = \sum_{j=1}^l \| \nabla U^j \|_2^2 + \| \nabla r_n^l \|_2^2 + o_n(1).
		\label{orthogonality: dot(H)1 sense}
	\end{equation}
	By (\ref{orthogonality: L2 sense}) and (\ref{orthogonality: dot(H)1 sense}), we have
	\begin{equation}
		\| Q\|_2^2= \lim_{n\to \infty} \| u_n \|_2 ^2 \ge \sum_{j=1}^l \| U^j \|_2^2, \quad  \| \nabla Q \|_2^2= \lim_{n \to \infty} \| \nabla u_n \|_2^2  \ge \sum_{j=1}^l \| \nabla U^j \|_2^2.
		\label{limitation of un can be bounded by bubbles}
	\end{equation}
	
	Moreover, aftering extracting a subsequence if necessary, by (\ref{smallness of remaining term}) and the fact that $|x_n^k - x_n^j| \to \infty$ as $n \to \infty$, we have
	\begin{equation*}
		\lim_{l \to \infty} \limsup_{n \to \infty} \iint \frac{|r_n^l(x)|^p |r_n^l(y)|^p}{|x-y|^{N-\gamma}} dxdy =0,
	\end{equation*}
	and hence
	\[
	\lim_{n \to \infty}\iint \frac{|u_n(x)|^p |u_n(y)|^p}{|x-y|^{N-\gamma}} dxdy = \lim_{l \to \infty}  \sum_{j=1}^l \iint \frac{ | U^j(x)|^p |U^j (y) |^p }{|x-y|^{N-\gamma}} dxdy.
	\]
	
	From Proposition \ref{proposition: existence of ground state}, note that
	\begin{equation*}
		\sum_{j=1}^l \iint \frac{|U^j(x)|^p |U^j(y)|^p}{|x-y|^{N-\gamma}} dxdy 
		\le C_{GN} \sum_{j=1}^l \| U^j \|_2^{N+\gamma-(N-2)p} \| \nabla U^j \|_2^{Np - (N+\gamma)},
	\end{equation*}
	we obtain that
	\begin{align}
		C_{GN}^{-1}
		& \le \liminf_{l \to \infty}  \frac{ \sum_{j=1}^l \| U^j \|_2^{N+\gamma-(N-2)p} \| \nabla U^j \|_2^{Np - (N+\gamma)} }{		\sum_{j=1}^l \iint \frac{|U^j(x)|^p |U^j(y)|^p}{|x-y|^{N-\gamma}} dxdy }  \notag\\
		&\le \liminf_{l \to \infty}  \frac{ \left( \sum_{j=1}^l \| U^j \|_2^2 \right)^\frac{N+\gamma-(N-2)p}{2}  \left( \sum_{j=1}^l \| \nabla U^j \|_2^2 \right)^\frac{Np - (N+\gamma)}{2} }{\sum_{j=1}^l \iint \frac{|U^j(x)|^p |U^j(y)|^p}{|x-y|^{N-\gamma}} dxdy }  \label{key point of proving there is only one profile}\\
		& \le \liminf_{l \to \infty}  \frac{  \| Q \|_2^{N+\gamma -(N-2)p} \| \nabla Q \|_2^{Np-(N+\gamma)}  } {\sum_{j=1}^l \iint \frac{|U^j(x)|^p |U^j(y)|^p}{|x-y|^{N-\gamma}} dxdy } \notag\\
		&= \frac{  \| Q\|_2^{N+\gamma -(N-2)p} \| \nabla Q \|_2^{Np-(N+\gamma)}  }{ \iint \frac{|Q(x)|^p |Q(y)|^p}{|x-y|^{N-\gamma}} dxdy } = C_{GN}^{-1} \notag,
	\end{align}
	thus the inequalities are all equalities. In particular, from (\ref{key point of proving there is only one profile}), there is only one nonzero profile. Without loss of generality, we may assume it is $U^1$, then
	\[
	u_n(x)= U^1(x-x_n^1)+ r_n^1(x)
	\]
	and therefore
	\[
	\frac{ \| U^1 \|_2^{N+\gamma-(N-2)p} \| \nabla U^1 \|_2^{Np-(N+\gamma)}}{ \iint \frac{|U^1(x)|^p |U^1(y)|^p}{|x-y|^{N-\gamma}} dxdy } =C_{GN}^{-1} = \frac{ \| Q \|_2^{N+\gamma-(N-2)p} \| \nabla Q \|_2^{Np-(N+\gamma)}}{ \iint \frac{|Q(x)|^p |Q(y)|^p}{|x-y|^{N-\gamma}} dxdy  } . 
	\]
	If we have the uniqueness of ground state, i.e. Assumption \ref{assumption: uniqueness of the ground state}, then there eixsts $(\mu_2, x_2,\lambda_2)\in  \overline{\mathbb{R}^+} \times \mathbb{R}^N \times \mathbb{C}$ such that
	\[
	U^1(x)= \lambda_2 Q (\mu_2 x + x_2),
	\]
	 where $\mu_2=1$ and $|\lambda_2|=1$ by (\ref{assumption: closeness u to ground state Q}), thus
	\[
	U^1(x)=e^{i \theta_2} Q(x+ x_2) \; \Rightarrow \; u_n(x+x_n^1)= e^{i \theta_2} Q(x+x_2) + r_n^1(x+x_n^1).
	\]
	Together with (\ref{orthogonality: L2 sense}) and (\ref{orthogonality: dot(H)1 sense}), we obtain that $\| r_n^1 \|_{H^1} = o_n(1)$, which means that
	\[
	\inf_{(\theta_0,x_0) \in \mathbb{R} \times \mathbb{R}^N} \| u_n - e^{i \theta_0} Q(\cdot-x_0) \|_{H^1} \le \| r_n^1 \|_{H^1} =o_n(1),
	\]
	a contradiction!
\end{proof}
\section{The linearized equation and properties of linearized operator}
\label{The linearized equation and properties of linearized operator}
\subsection{The linearized equation}
Consider the solution $u$ to (\ref{Hartree equation}) around $e^{it}Q$ and write $u$ as
\[
u(x,t)= e^{it} \left(Q(x)+ h(t,x) \right).
\]
If we write $h=h_1+ ih_2$, then $h=\left( \begin{matrix}
	h_1 \\ h_2
\end{matrix} \right)$ satisfies
\begin{equation}
	\partial_t h + \mathcal{L} h =i R (h), \quad \mathcal{L} = \left( \begin{matrix}
		0 & -L_- \\
		L_+ & 0
	\end{matrix} \right), 
	\label{linearized equation}
\end{equation}
where
\begin{equation}
	L_+ = -\Delta + 1 - p  \left(|\cdot|^{-(N-\gamma)} * \left( Q^{p-1} \cdot \right) \right) Q^{p-1} - (p-1)\left(|\cdot|^{-(N-\gamma)} * Q^p \right)Q^{p-2},
	\label{linearized operator L+}
\end{equation}

\begin{equation}
	L_- = -\Delta + 1  - \left(|\cdot|^{-(N-\gamma)} * Q^p \right) Q^{p-2}
	\label{linearized operator L-},
\end{equation}
and
\begin{align}
	R(h)
	&= \left(|\cdot|^{-(N-\gamma)} * |Q+h|^p \right) |Q+h|^{p-2} (Q+h) \notag \\
	&- \left( |\cdot|^{-(N-\gamma)} * Q^p \right) \left( Q^{p-1} + \frac{p-2}{2} Q^{p-2} \bar{h} + \frac{p}{2} Q^{p-2} h \right)  \notag \notag \\
	& - \left( |\cdot|^{-(N-\gamma)} * \left( \frac{p}{2} Q^{p-1} \bar{h} + \frac{p}{2} Q^{p-1} h \right) \right) Q^{p-1}
	\label{linearized equation: nonlinear term}.
\end{align}

\subsection{The spectral properties of the linearized operator}
In this subsection, we explore the sepctral properties of the linearized operator. Before we present the main result, we give the following auxiliary lemma about the non-negativity of linearized energy $\Phi$ on a subspace of $H^1(\mathbb{R}^N)$ with co-dimension $1$, where $\Phi$ is defined by
\begin{equation}
	\Phi(h)=\frac{1}{2} \int (L_+ h_1) h_1 dx + \frac{1}{2} \int (L_- h_2) h_2 dx.
	\label{linearized energy}
\end{equation}

\begin{lem}[non-negativity of the linearized energy]
	For any  function $h \in H^1$ satisfying 
	\begin{equation}
     \left\langle\left( |\cdot|^{-(N-\gamma)} * |Q|^p \right) Q^{p-1}, h_1 \right\rangle =0,
     	\label{orthogonal condition: orthogonal to Delta Q}
    \end{equation}
    	 we have $ \Phi (h) \ge 0$.
	\label{lemma: non-negativity of linearized operator}
\end{lem}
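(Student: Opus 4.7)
The plan is to bound the two quadratic forms in $\Phi(h) = \tfrac12\langle L_+ h_1, h_1\rangle + \tfrac12 \langle L_- h_2, h_2\rangle$ separately. For the imaginary part, the Euler--Lagrange equation (\ref{ground state: equation}) gives exactly $L_- Q = 0$. Since $Q$ is strictly positive and $L_-$ is a Schr\"odinger-type operator $-\Delta + V$ with a well-behaved potential $V = 1-(|\cdot|^{-(N-\gamma)} * Q^p)Q^{p-2}$, a standard Perron--Frobenius / ground-state argument places $Q$ at the bottom of the spectrum, so $0 = \inf \sigma(L_-)$ and $\langle L_- h_2, h_2\rangle \geq 0$ for every $h_2 \in H^1$. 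No orthogonality condition is needed for this piece.

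The main work is the $L_+$ piece. I plan to exploit the fact that $Q$ realizes the infimum in the Weinstein-type minimization problem (\ref{Weinstein-type minimization problem}). Writing
\[
A(u)=\|u\|_2^2,\qquad B(u)=\|\nabla u\|_2^2,\qquad C(u)=\int\bigl(|\cdot|^{-(N-\gamma)}*|u|^p\bigr)|u|^p\,dx,
\]
and $\alpha = N+\gamma-(N-2)p$, $\beta = Np-(N+\gamma)$ (both positive in the inter-range regime, with $\alpha+\beta=2p$), the functional to minimize is $J(u) = A^{\alpha/2}B^{\beta/2}/C$. I will expand $\log J(Q+\epsilon h_1)$ in $\epsilon$ for real $h_1$. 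Vanishing of the first variation at $\epsilon=0$ yields, by varying $h_1$, the Pohozaev-type identities
\[
\frac{\alpha}{A(Q)} \;=\; \frac{\beta}{B(Q)} \;=\; \frac{2p}{C(Q)}.
\]
A direct computation of $C''(Q)(h_1,h_1)$, combined with the Euler--Lagrange equation to recognize the potential part of $L_+$, gives the key identity
\[
\langle L_+ h_1, h_1\rangle \;=\; \|\nabla h_1\|_2^2 + \|h_1\|_2^2 - \frac{1}{2p}\,C''(Q)(h_1,h_1).
\]

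With these in hand, the second variation of $\log J$ at $Q$ can be reorganized into
\[
0 \;\le\; \frac{d^2}{d\epsilon^2}\bigg|_{\epsilon=0}\log J(Q+\epsilon h_1) \;=\; \frac{2p}{C(Q)}\langle L_+ h_1,h_1\rangle + \frac{(C'(0))^2}{C(Q)^2} - \frac{\alpha (A'(0))^2}{2A(Q)^2} - \frac{\beta(B'(0))^2}{2 B(Q)^2}.
\]
Now the orthogonality hypothesis (\ref{orthogonal condition: orthogonal to Delta Q}) translates, via $(|\cdot|^{-(N-\gamma)}*Q^p)Q^{p-1} = -\Delta Q + Q$, into $C'(0) = 2p\langle -\Delta Q+Q,h_1\rangle = 0$, which is precisely what kills the only term with the wrong sign above. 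Since $\alpha,\beta>0$, rearranging yields $\langle L_+ h_1, h_1\rangle \ge 0$, and combining with the $L_-$ bound gives $\Phi(h)\ge 0$.

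The delicate step I expect to be the main obstacle is justifying the second-variation expansion of $J$ at $Q$ and, more importantly, verifying the identity relating $\langle L_+ h_1,h_1\rangle$ to $C''(Q)(h_1,h_1)$ with the correct coefficient $1/(2p)$; this requires a careful bookkeeping of the two nonlocal terms arising from differentiating the convolution twice (the $Q^{p-2}h_1^2$ piece and the $Q^{p-1}h_1 \otimes Q^{p-1}h_1$ piece) and matching them to the two nonlocal terms in the definition (\ref{linearized operator L+}) of $L_+$. A minor technical point is smoothness of $|u|^p$ at $u=0$ when $p\in[2,3)$; since $Q>0$ uniformly on compacts and $h_1$ is a fixed perturbation, this does not affect the infinitesimal computation.
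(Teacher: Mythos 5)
Your proof is correct, and its engine is the same as the paper's: expand the Weinstein functional to second order at the minimizer $Q$ and use the orthogonality hypothesis to remove the only badly-signed contribution. You do, however, make two organizational choices that streamline the argument. First, you decouple the two blocks of $\Phi$: you dispatch $\langle L_- h_2,h_2\rangle\ge 0$ once and for all from $L_-Q=0$, $Q>0$, and the standard ground-state (Perron--Frobenius) fact, reserving the variational computation for $L_+$ acting on a \emph{real} direction $h_1$; the paper instead expands $I[Q+\lambda h]$ in a complex direction $h=h_1+ih_2$, which produces $\Phi(h)$ in one go but forces bookkeeping of the $A'B'$ cross term. Second, you take $\log J$ rather than the difference functional $I$, which diagonalizes the three factors and makes the mechanism transparent: the orthogonality (\ref{orthogonal condition: orthogonal to Delta Q}) is literally $C'(0)=0$, which kills the lone term of the wrong sign, and since $\alpha,\beta>0$ in the inter-range case the remaining $-\tfrac{\alpha(A')^2}{2A^2}-\tfrac{\beta(B')^2}{2B^2}$ pieces only improve the lower bound. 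The identity you flag as the delicate step, namely $\langle L_+h_1,h_1\rangle=\|\nabla h_1\|_2^2+\|h_1\|_2^2-\tfrac{1}{2p}C''(Q)(h_1,h_1)$, does check out: $C'(0)=2p\int(|\cdot|^{-(N-\gamma)}*Q^p)Q^{p-1}h_1$ fixes the $\tfrac{1}{2p}$ normalization, and the two second-derivative pieces (the ``diagonal'' one giving $(p-1)\int(\cdot)Q^{p-2}h_1^2$ and the ``off-diagonal'' one giving $p\int(|\cdot|^{-(N-\gamma)}*(Q^{p-1}h_1))Q^{p-1}h_1$) match the two nonlocal terms in (\ref{linearized operator L+}) exactly. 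The regularity caveat for $p\in(2,3)$ is also handled correctly: $Q>0$ keeps $Q+\epsilon h_1$ away from the origin of the homogeneity on compacts, and the convolution integrals in the Taylor coefficients are controlled by Hardy--Littlewood--Sobolev, so the pointwise $\epsilon$-expansion and the integration are both legitimate.
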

\begin{proof}
	Similar as the process in \cite[A.1]{duyckaerts2010threshold}, since $Z[u]$ attains its infimum at $Q$,
	\[
	I[u] \triangleq \frac{\| u \|_2^{N+\gamma-(N-2)p} \| \nabla u \|_2^{Np-(N+\gamma)}}{\| Q \|_2^{N+\gamma-(N-2)p} \| \nabla Q \|_2^{Np-(N+\gamma)}} - \frac{\int \left( |\cdot|^{-(N-\gamma)} * |u|^p \right) |u|^p dx}{\int \left( |\cdot|^{-(N-\gamma)} * Q^p \right) Q^p dx} \ge 0, \quad \forall u \in H^1(\mathbb{R}^N). 
	\]
	For any function $h \in H^1(\mathbb{R}^N)$, the function $\lambda \mapsto I[Q+\lambda h]$ with domain $\mathbb{R}$ attains its minimum at $\lambda =0$, which implies that
	\begin{equation}
		\frac{d^2}{d\lambda^2}\Big|_{\lambda=0} I[Q+\lambda h] \ge 0.
		\label{positivity of second derivative of I}
	\end{equation}
	Next, we expand $ \| Q + \lambda h \|_2^{N+\gamma-(N-2)p}$, 	$\| \nabla Q + \lambda \nabla h \|_2^{Np-(N+\gamma)} $ and $ \int \left( |\cdot|^{-(N-\gamma)} * |Q + \lambda h|^p  \right) |Q+\lambda h|^p dx $ with respect to $\lambda$ up to order $2$ respectively and then compute the expansion of $ I[Q+\lambda h]$ in $\lambda$ of order $2$. Then by $\int \nabla Q \cdot \nabla h_1 = - \int Q h_1$, which simply follows from (\ref{ground state: equation}) and (\ref{orthogonal condition: orthogonal to Delta Q}),
	\begin{align*}
		&\frac{Np-(N+\gamma)}{ \| \nabla Q \|_2^2} \Phi(h) - \frac{\left( Np-(N+\gamma) \right)(p-2)  }{2\| \nabla Q \|_2^2 } \int \left( |\cdot|^{-(N-\gamma}  * Q^p\right) Q^{p-2} |h_2|^2 dx \\
		 &-\frac{2p(Np-(N+\gamma))}{N+\gamma-(N-2) p} \left( \frac{\int \nabla Q \cdot \nabla h_1}{\int |\nabla Q|^2} \right)^2 \ge 0,
	\end{align*}
   thus it implies $\Phi(h) \ge 0$ by (\ref{parameter: range 1}).
\end{proof}
\begin{rmk}
	If we choose the orthogonal condition $\int \Delta Q h_1 dx =0$ as in \cite{duyckaerts2010threshold}, in order to get the non-negativity of linearized operator, some additional restriction on parameters $(N,p,\gamma)$ should be imposed. By contrast, if we choose $\int \left( |\cdot|^{-(N-\gamma)} * Q^p \right) Q^{p-1} h_1 dx =0$, we can get the optimal range, i.e. (\ref{parameter: range 1}).
\end{rmk}

Next, we obtain the spectral properties of linearized operator $\mathcal{L}$ as follows.
\begin{prop}
	Under the Assumption \ref{assumption: nondegeneracy of the ground state}, let $\sigma (\mathcal{L})$ be the spectrum of the linearized operator $\mathcal{L}$ defined on $L^2(\mathbb{R}^N) \times L^2(\mathbb{R}^N)$ and let $\sigma_{\text{ess}} \left( \mathcal{L} \right)$ be its essential spectrum. Then
	\[
	\sigma_{\text{ess}} (\mathcal{L}) =\{i \xi: \xi \in \mathbb{R}, |\xi| \ge 1\}, \quad \sigma(\mathcal{L}) \cap \mathbb{R} = \{ -e_0, 0, e_0\} \text{ for some } e_0>0.
	\]
	Furthermore, $e_0, -e_0$ are the only pair of eigenvalues of $\mathcal{L}$ on $\mathbb{R} \setminus \{ 0 \}$ with eigenfunctions $\mathcal{Y}_+ \in 
	\mathcal{S}$ and $\mathcal{Y}_- \in \mathcal{S}$ satisfying $\mathcal{Y}_+ = \overline{\mathcal{Y}}_-$. And if we let $\mathcal{Y}_1 =\Re \mathcal{Y}_+= \Re \mathcal{Y}_-$ and $\mathcal{Y}_2 = \Im \mathcal{Y}_+ = -\Im \mathcal{Y}_-$, then
	\[
	L_+ \mathcal{Y}_1 =e_0 \mathcal{Y}_2 \quad \text{ and } \quad L_- \mathcal{Y}_2 =-e_0 \mathcal{Y}_1.
	\]
	Moreover, as for the null space of $\mathcal{L}$, we have
	\[
	\ker \mathcal{L} = \text{span} \{ iQ, \partial_{x_1} Q,..., \partial_{x_N} Q \}.
	\]
	\label{proposition: spectral information of linearized operator L}
\end{prop}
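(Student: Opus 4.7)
My plan is to exploit the block structure of $\mathcal{L}$ via $\mathcal{L}^2 = \operatorname{diag}(-L_-L_+,\,-L_+L_-)$, so that real eigenvalues $\pm e$ of $\mathcal{L}$ correspond to positive eigenvalues $e^2$ of $-L_-L_+$ and purely imaginary $i\xi \in \sigma(\mathcal{L})$ to positive values of $L_\pm L_\mp$. I would first handle the essential spectrum: since $L_\pm = -\Delta + 1 + V_\pm$ with $V_\pm$ a relatively compact perturbation of $-\Delta+1$ (here using the decay of $Q$ proven in the Appendix together with Hardy--Littlewood--Sobolev for the convolution potential), Weyl's theorem gives $\sigma_{\text{ess}}(L_\pm) = [1,\infty)$. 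Transferring this to the matrix operator through $\mathcal{L}^2$, one gets $\sigma_{\text{ess}}(\mathcal{L}) \subset i(-\infty,-1]\cup i[1,\infty)$, with the reverse inclusion by explicit Weyl sequences built from translations of smooth cutoffs of oscillatory functions $e^{ik\cdot x}\varphi$, $|k|\ge 0$.

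Next I would compute the kernel. Differentiating the Euler--Lagrange equation \eqref{ground state: equation} in $x_j$ gives $L_+\partial_{x_j}Q = 0$, and the equation itself gives $L_-Q = 0$. By Assumption \ref{assumption: nondegeneracy of the ground state} we have $\ker L_+ = \text{span}\{\partial_{x_j}Q\}$, and $\ker L_- = \text{span}\{Q\}$ follows from a Perron--Frobenius argument: $L_-$ is a Schr\"odinger operator whose ground state is the positive function $Q$ (its potential coincides with the one making $Q$ an eigenfunction at the bottom of the spectrum), so $0$ is a simple eigenvalue. Solving $\mathcal{L}(h_1,h_2) = 0$ componentwise and returning to complex notation yields $\ker \mathcal{L} = \text{span}\{iQ,\partial_{x_1}Q,\ldots,\partial_{x_N}Q\}$.

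The core of the argument is showing that $\sigma(\mathcal{L})\cap\mathbb{R}\setminus\{0\}$ consists of exactly one pair $\{\pm e_0\}$, and this is where I expect the main obstacle. I would argue as follows. Because $Q$ is a minimizer of the Weinstein-type functional \eqref{Weinstein-type minimization problem}, the operator $L_+$ has Morse index exactly one (the unique negative direction arises from scaling $Q$), while $L_-\ge 0$ with kernel $\mathbb{R}Q$. Combined with Lemma \ref{lemma: non-negativity of linearized operator}, the linearized energy $\Phi$ is non-negative on a codimension-one subspace of $H^1\times H^1$. A real eigenpair $\lambda\in\mathbb{R}\setminus\{0\}$ with eigenfunction $\mathcal{Y} = (\mathcal{Y}_1,\mathcal{Y}_2)$ forces $L_+L_-\mathcal{Y}_2 = -\lambda^2\mathcal{Y}_2$ and $L_-L_+\mathcal{Y}_1 = -\lambda^2\mathcal{Y}_1$, and a Pego--Weinstein-type computation (pair the eigenvalue equation with $\mathcal{Y}_1,\mathcal{Y}_2$) shows $\lambda^2$ is uniquely determined by the variational quantity
\[
-\lambda^2 = \inf_{f\perp \ker L_-}\frac{\langle L_+ L_-^{-1}L_+ f, f\rangle}{\|f\|_2^2}\quad\text{restricted to the negative cone of }L_+,
\]
which yields a unique positive $e_0^2$. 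The absence of further real eigenvalues then follows from the codimension-one coercivity of $\Phi$ plus the index-one property of $L_+$. I would extract $\mathcal{Y}_\pm$ as normalized eigenvectors of $\mathcal{L}$ for $\pm e_0$; the identity $\mathcal{Y}_+ = \overline{\mathcal{Y}_-}$ reflects the symmetry $\overline{\mathcal{L}\overline{h}} = -\mathcal{L}h$ under complex conjugation of the two components, and the intertwining $L_+\mathcal{Y}_1 = e_0\mathcal{Y}_2$, $L_-\mathcal{Y}_2 = -e_0\mathcal{Y}_1$ is the real/imaginary part of $\mathcal{L}\mathcal{Y}_+ = e_0\mathcal{Y}_+$.

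Finally, to show $\mathcal{Y}_\pm \in \mathcal{S}$, I would run an elliptic bootstrap on the system $L_+\mathcal{Y}_1 = e_0\mathcal{Y}_2$, $L_-\mathcal{Y}_2 = -e_0\mathcal{Y}_1$. Smoothness follows from iterated application of the resolvent of $-\Delta+1$ and the fact that $Q$ and its derivatives are bounded (Appendix \ref{Appendix: ground state}). For the polynomial decay of $\mathcal{Y}_\pm$ and of all their derivatives, I would use Agmon-type exponential-decay estimates: since $e_0 \in (0,1)$ lies strictly below the essential spectrum threshold of $\mathcal{L}^2$, $\mathcal{Y}_\pm$ decay exponentially together with every derivative, and thus belong to $\mathcal{S}$. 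The main difficulty I anticipate is the uniqueness of the real eigenvalue pair, because the matrix operator $\mathcal{L}$ is non-selfadjoint and standard min-max fails directly; the remedy is the reduction to the selfadjoint problem for $L_+L_-$ under the constrained positivity from Lemma \ref{lemma: non-negativity of linearized operator}.
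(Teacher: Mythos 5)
The overall strategy you outline — reduce the non‑self‑adjoint spectral problem to a self‑adjoint one, use Lemma \ref{lemma: non-negativity of linearized operator} together with the Morse index one property of $L_+$ to get existence and uniqueness of the negative direction, and then bootstrap for decay — is indeed what the paper does, and your treatment of the kernel, the essential spectrum, the complex‑conjugation symmetry giving $\mathcal{Y}_+ = \overline{\mathcal{Y}_-}$, and the intertwining relations is fine. The essential‑spectrum route through $\mathcal{L}^2 = \operatorname{diag}(-L_-L_+,-L_+L_-)$ is a legitimate alternative to the paper's route through $\mathcal{P}$, and your Agmon‑decay argument for $\mathcal{Y}_\pm\in\mathcal{S}$ is a perfectly reasonable substitute for the paper's explicit computation of the kernels of $(-\Delta+1\pm ie_0)^{-1}$.

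The genuine gap is in the step you yourself flag as the hard one, and your proposed remedy does not close it. You write that the fix for the failure of min--max is ``the reduction to the selfadjoint problem for $L_+L_-$'', but $L_+L_-$ (and likewise $L_-L_+$) is \emph{not} self‑adjoint on $L^2$ — $\big(L_+L_-\big)^* = L_-L_+ \neq L_+L_-$ since $L_+$ and $L_-$ do not commute — so you cannot invoke the min–max principle for it as stated. The correct symmetrization, which the paper uses, is $\mathcal{P} = L_-^{1/2}L_+L_-^{1/2}$, which \emph{is} self‑adjoint on $L^2$ (with domain $H^4$) and is unitarily conjugate to the restriction of $L_-L_+$ to $\ker(L_-)^\perp$; a negative eigenvalue $-e_0^2$ of $\mathcal{P}$ then produces the pair $\pm e_0\in\sigma(\mathcal{L})$ via $\mathcal{Y}_1 = L_-^{1/2}g$, $\mathcal{Y}_2 = e_0^{-1}L_+\mathcal{Y}_1$, and the codimension‑one positivity of $L_+$ on $\{(|\cdot|^{-(N-\gamma)}*Q^p)Q^{p-1}\}^\perp$ forces simplicity and uniqueness of the negative eigenvalue of $\mathcal{P}$. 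Your displayed variational formula also does not parse: $L_+L_-^{-1}L_+ f$ requires $L_+ f\in\operatorname{ran}(L_-)$, which is not guaranteed by $f\perp\ker L_-$, and the phrase ``restricted to the negative cone of $L_+$'' is not a Rayleigh quotient and does not yield a well‑defined infimum or an eigenvalue equation. Replace this piece with the paper's $\mathcal{P}$ argument (or carefully re‑derive the conjugate of $L_-L_+$ on the orthogonal complement of $Q$) and the rest of your outline goes through.
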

\begin{proof}
	As for the spectral properties of $\mathcal{L}$, it suffices to consider the self-adjoint operator $\mathcal{P} = L_-^\frac{1}{2} L_+ L_-^\frac{1}{2}$ with domain $H^4(\mathbb{R}^N) \subset L^2(\mathbb{R}^N)$. Since it can be checked that $\mathcal{P}$ is a relatively compact perturbation of $(-\Delta +1)^2$, by Weyl's theorem (see \cite[Theorem $14.6$]{hislop2012introduction}), 
	\[
	\sigma_{\text{ess}} (\mathcal{P}) = [0, + \infty).
	\]
	
	Moreover, by min-max principle, following the similar argument as in \cite[Lemma $C.1$]{miao2015dynamics} or in \cite[Lemma $3.2$]{campos2020threshold}, $\mathcal{P}=L_-^\frac{1}{2} L_+ L_-^\frac{1}{2}$ has a negative eigenvalue $-e_0^2$ with associated eigenfunction $g$. Defining $\mathcal{Y}_1= L_{-}^\frac{1}{2} g, \mathcal{Y}_2 = \frac{1}{e_0} L_+ \mathcal{Y}_1$ and $\mathcal{Y}_\pm = \mathcal{Y}_1 \pm i \mathcal{Y}_2$, we obtain that $\mathcal{L} \mathcal{Y}_\pm = \pm e_0 \mathcal{Y}_\pm$.  
	
	The simplicity of eigenvalue $-e_0^2$ of $\mathcal{P}$ and the uniqueness of negative eigenvalue of $\mathcal{P}$ follow from the non-negativity of $L_+$ acting on $\Big\{ \left( |\cdot|^{-(N-\gamma)} *Q^p \right)Q^{p-1} \Big\}^\perp$, which simply follows from Lemma \ref{lemma: non-negativity of linearized operator}. As for the null space of $\mathcal{L}$, it is Assumption \ref{assumption: nondegeneracy of the ground state} and the fact that $\ker L_-= \text{span} \{ Q\}$ (see \cite[Theorem $\text{XIII. } 48$]{simon1978methods}).
	
	To prove $\mathcal{Y}_ \pm \in \mathcal{S}$, it suffices to check $\mathcal{Y}_1, \mathcal{Y}_2 \in \mathcal{S}$. First, by the bootstrap argument, we know that $\mathcal{Y}_1, \mathcal{Y}_2 \in H^\infty$, which further implies that $\mathcal{Y}_1, \mathcal{Y}_2 \in  \left( \cap_{k \ge 1} W^{k,\infty}  \right) \cap C^\infty$ by Sobolev embedding. Note that $\mathcal{Y}_1$ satisfies that
	\begin{equation}
		 \left[ (1-\Delta)^2 + e_0^2 \right] \mathcal{Y}_1 = F(x),
		\label{equation for Y_1}
	\end{equation}
    where
    \begin{align*}
   F(x)
   & = \left( |\cdot|^{-(N-\gamma)} * Q^p \right)Q^{p-2}  (-\Delta +1 ) \mathcal{Y}_1 \notag\\
   &\quad + \left( \Delta -1 \right) \left[ p \left( |\cdot|^{-(N-\gamma)} * \left( Q^{p-1} \mathcal{Y}_1 \right) \right) Q^{p-1} + \left( p-1 \right) \left( |\cdot|^{-(N-\gamma)} * Q^p \right) Q^{p-2} \mathcal{Y}_1 \right]\notag \\
   & \quad + \left( |\cdot|^{-(N-\gamma)} * Q^p \right) Q^{p-2} \cdot  \left[ p \left( |\cdot|^{-(N-\gamma)} * \left( Q^{p-1} \mathcal{Y}_1 \right) \right)  Q^{p-1} + \left( p-1 \right) \left( |\cdot|^{-(N-\gamma)} * Q^p \right) Q^{p-2} \mathcal{Y}_1 \right]
    \end{align*}
	satisfies $|F(x)| \lesssim \left\langle x \right\rangle^{-\alpha}$ for some $\alpha>0$ by the exponential decay of $\partial^\beta Q$ and the boundness of $\partial^\beta \mathcal{Y}_1$ for any $\beta \in \mathbb{Z}_{\ge 0} ^N$.
	
	Next, we calculate the integral kernel $G_-(x,y)=(-\Delta +1 - ie_0)^{-1}(x,y)$. Note that
	\begin{align*}
		\Bigg| \int_{\mathbb{R}^N} e^{2 \pi i x  \cdot \xi} \left( 4 \pi^2 |\xi|^2 +1 - ie_0 \right)^{-1} d\xi \Bigg| 
		& = \Bigg|  \int_{\mathbb{R}^N} e^{2 \pi i x  \cdot \xi}  \left( \int_0^\infty e^{-\left( 4 \pi^2 |\xi|^2 +1 - ie_0 \right) \delta} d\delta \right) d\xi \Bigg|  \\
		& \le (4 \pi)^{-\frac{N}{2}}  \int_0^\infty e^{-\frac{|x|^2}{4 \delta}} e^{-\delta}  \delta^{-\frac{N}{2}} d\delta,
	\end{align*}
	together with the estimate that
	\[
	e^{-\frac{|x|^2}{4 \delta}} e^{-\delta} \le \min \left\lbrace e^{-\delta - \frac{1}{4 \delta}}, e^{-|x|} \right\rbrace \le e^{-\frac{1}{2}\delta - \frac{1}{8 \delta}} e^{-\frac{|x|}{2}}, \text{ for } |x| \ge 1,
	\]
	we have
	\[
	|G_-(x)| \lesssim e^{-\frac{|x|}{2}}  \int_0^\infty e^{-\frac{1}{2}\delta - \frac{1}{8 \delta}}  \delta^{-\frac{N}{2}} d \delta \lesssim e^{-\frac{|x|}{2}}, \quad |x| \ge 1.
	\]
	When  $|x| \to 0$, from \cite{aronszajn1961theory} and \cite{stein2016singular},
	\[
	G_-(x) \le	(4 \pi)^{-\frac{N}{2}}  \int_0^\infty e^{-\frac{|x|^2}{4 \delta}} \delta^{-\frac{N}{2}}e^{-\delta} d\delta=
	\begin{cases}
	C |x|^{-N+2} + o(|x|^{-N+2}), & \text{ if } N \ge 3, \\
	C \log \frac{1}{|x|} + o \left( \log \frac{1}{|x|} \right), & \text{ if } N=2, \\
	C + o(1), & \text{ if } N=1.
\end{cases}
	\]
	Similarly, we can get the same estimates of the integral kernel $G_+(x,y)= (-\Delta+1+ ie_0)^{-1}(x,y)$. It then suffices to check the decay of
	\[
	\mathcal{Y}_1 = \int_{\mathbb{R}^N}  G_+ (x-z) \left( \int_{\mathbb{R}^N} G_-(z-y) F(y) dy \right) dz.
	\]
	In fact, as for $\int_{\mathbb{R}^N} G_-(z-y) F(y) dy$, we divide the integral domain into the  following three parts:  $|y-z| \le \frac{1}{2}|z|$, $|y| \ge 2 |z|$ and $|y-z| > \frac{1}{2} |z|$. After some simple estimates respectively, it can be checked that
	\[
	\int_{\mathbb{R}^N} G_-(z-y) F(y) dy \lesssim \left\langle z \right\rangle^{-\alpha}.
	\]
	Repeating the argument again, we obtain $|\mathcal{Y}_1(x)| \lesssim \left\langle x \right\rangle^{-\alpha}$. Similarly, taking $\partial^\beta$ onto both sides of (\ref{equation for Y_1}), we have the same decay of $\partial^\beta \mathcal{Y}_1, \; \forall \beta \in \mathbb{Z}_{\ge 0}^N$. By bootstrapping argument again and again, we can check $ \big\| \left\langle x \right\rangle^{\gamma} \partial^\beta \mathcal{{Y}}_1 \big\|_{L^\infty} < \infty, \; \forall \beta \in \mathbb{Z}_{\ge 0}^N, \; \forall \gamma >0$, i.e.  $\mathcal{Y}_1 \in \mathcal{S}(\mathbb{R}^N)$. The proof of $\mathcal{Y}_2 \in \mathcal{S}(\mathbb{R}^N)$ is almost the same.
\end{proof}

\begin{rmk}
	By the analogous process of proving $\mathcal{Y}_\pm \in \mathcal{S}(\mathbb{R}^N)$,
	\begin{equation}
		\left( \mathcal{L}- (k+1) e_0 \right)^{-1} \mathcal{S}(\mathbb{R}^N) \subset \mathcal{S}(\mathbb{R}^N), \quad \forall k \ge 1.
		\label{resolvent operator maps Schwartz class into Schwartz class}
	\end{equation}
\end{rmk}

\subsection{Coercivity of linearized energy}
As for the linearized energy $\Phi$ defined in (\ref{linearized energy}), we denote the bilinear symmetric form associated to $\Phi$ by $B(f,g)$, i.e.
\begin{equation}
	B(g,h)=\frac{1}{2} \int (L_+ g_1) h_1 dx + \frac{1}{2} \int (L_- g_2) h_2 dx.
	\label{bilinear summetric form}
\end{equation}
Next, we want to obtain the coercivity of $\Phi$. Before we present the result, except for (\ref{orthogonal condition: orthogonal to Delta Q}), we give some other orthogonality relations:
\begin{equation}
	\int \left( \partial_{x_1} Q \right) h_1 = ... = \int \left( \partial_{x_N} Q \right) h_1 = \int Q h_2,
	\label{orthogonal condition: orthogonal to nabla Q and iQ}
\end{equation}

\begin{equation}
	B(\mathcal{Y}_-,h)= B (\mathcal{Y}_+, h)=0 \quad \Leftrightarrow \quad \int \mathcal{Y}_1 h_2 = \int \mathcal{Y}_2 h_1=0,
	\label{orthogonal condition: defined by bilinear form B}
\end{equation}
where $\mathcal{Y}_\pm$ are defined in Proposition \ref{proposition: spectral information of linearized operator L}. We define $G_\perp $ to be the set of all $h \in H^1(\mathbb{R}^N)$ satisfying (\ref{orthogonal condition: orthogonal to Delta Q}) and (\ref{orthogonal condition: orthogonal to nabla Q and iQ}), and $G_\perp'$ to be the set of all $h \in H^1(\mathbb{R}^N)$ satisfying (\ref{orthogonal condition: orthogonal to nabla Q and iQ}) and (\ref{orthogonal condition: defined by bilinear form B}), then
\begin{prop}[coercivity of $\Phi$]
	Under the Assumption \ref{assumption: nondegeneracy of the ground state}, there exists constant $c > 0$ such that
	\begin{equation}
		\Phi(h) \ge c\|h \|_{H^1}^2, \quad \forall h \in G_\perp \cup G_\perp'.
		\label{corecivity of Phi acting on space with codimension 6}
	\end{equation}
	\label{proposition: corecivity of Phi acting on apce with codimension 6}
	\begin{proof}
		\textit{Step 1.} Coercivity of $\Phi$ on $G_\perp$. Note that $\Phi(h)=\Phi_1(h_1) + \Phi_2(h_2)$, where
		\[
		\Phi_1(h_1)= \frac{1}{2} \int \left( L_+ h_1 \right)h_1 , \qquad \Phi_2(h_2)= \frac{1}{2} \int \left( L_- h_2 \right)h_2.
		\]
		W want to prove the coercivity of $\Phi_1$ and $\Phi_2$ respectively. As for the coercivity of $\Phi_1$, we first claim that
		\begin{equation}
			\Phi_1(h_1) \ge c \| h_1 \|_2^2, \quad \forall \; h_1 \text{ satisfying } \int \left( \partial_{x_j} Q \right) h_1 = \int (|\cdot|^{-(N-\gamma)} * Q^p) Q^{p-1}h_1  =0, \forall 1 \le j \le N.
			\label{corecivity of Phi on L2 space}
		\end{equation}
		If not, we then assume that there exists a sequence of real-valued functions $\{f_n\}_n \subset H^1(\mathbb{R}^N)$ such that
		\begin{equation*}
			\lim_{n \to \infty} \Phi_1(f_n)=0, \quad \| f_n \|_2 =1 \text{ and }  \int \left( \partial_{x_j} Q \right) f_n=\int (|\cdot|^{-(N-\gamma)} *Q^p) Q^{p-1} f_n=0, \quad \forall 1 \le j \le N.
		\end{equation*}
		Note that
		\begin{align}
			\Phi_1(f_n) 
			&= \frac{1}{2} \int |\nabla f_n|^2 + \frac{1}{2} \int |f_n|^2  - \frac{p}{2}\int \left( |\cdot|^{-(N-\gamma)}*(Q^{p-1}f_n) \right)(Q^{p-1}f_n) \notag\\
			&\qquad \qquad - \frac{p-1}{2} \int \left( |\cdot|^{-(N-\gamma)} *Q^{p}  \right) Q^{p-2} |f_n|^2 = o_n(1),
			\label{smallness of Phi_1}
		\end{align}
		by Hardy-Littlewood-Sobolev inequality and exponential decay  of $\partial^\alpha Q, \forall \alpha \in \mathbb{Z}_{\ge 0}^N$ (see Lemma \ref{lemma: exponential decay of ground state and its derivatives}) we have $\int |\nabla f_n|^2 \le C \|f_n \|_2^2 < \infty$, which means that $\{ f_n \}$ is bounded in $H^1$, hence  there exists a subsequence such that
		\[
		f_n \rightharpoonup f_* \text{ in } H^1(\mathbb{R}^N) \text{ for some } f_* \in H^1(\mathbb{R}^N).
		\]
		Moreover, by compactness argument,
		\begin{equation}
			\int \left( |\cdot|^{-(N-\gamma)}* Q^p \right) Q^{p-2} |f_n|^2 \to \int \left( |\cdot|^{-(N-\gamma)}* Q^{p} \right)Q^{p-2} |f_*|^2, \; n \to \infty
			\label{weak convergence implies strong convergence of nonlinear term 1}
		\end{equation}
		and 
		\begin{equation}
			\int \left( |\cdot|^{-(N-\gamma)} * (Q^{p-1}f_n) \right) (Q^{p-1} f_n)  \to \int \left( |\cdot|^{-(N-\gamma)} * (Q^{p-1}f_*) \right) (Q^{p-1} f_*),\; n \to \infty. 
			\label{weak convergence implies strong convergence of nonlinear term 2}
		\end{equation}
		Then by semi-lower continuity of weak convergence  \cite[Proposition $3.5\; (iii)$]{brezis2010functional},
		\begin{equation*}
			0 = \liminf_{n \to \infty} \Phi_1(f_n) 	 \ge \Phi(f_*) \ge 0,
		\end{equation*}
		which implies that $\Phi_1(f_*)=0$ with
		\[
		\int \left( \partial_{x_j} Q \right) f_* = \int (|\cdot|^{-(N-\gamma)} * Q^p) Q^{p-1} f_*=0, \quad \forall 1 \le j \le N.
		\]
	    Consequently, by (\ref{smallness of Phi_1}), (\ref{weak convergence implies strong convergence of nonlinear term 1}), (\ref{weak convergence implies strong convergence of nonlinear term 2}) and the assumption that $\|f_n\|_2 =1$,
		\[
		\frac{p}{2}\int \left( |\cdot|^{-(N-\gamma)}* (Q^{p-1} f_*) \right)(Q^{p-1}f_*) + \frac{p-1}{2}\int \left( |\cdot|^{-(N-\gamma)} * Q^p \right) Q^{p-2} |f_*|^2 \ge \frac{1}{2},
		\]
		then  $0 \not= f_* \in H^1(\mathbb{R}^N)$ is a minimizer of the following variational problem
		\[
		\inf_{ f \in \mathcal{K} } \Phi_1(f)=\inf_{ f \in \mathcal{K} } \frac{1}{2} \int \left( L_+ f \right)f dx,
		\]
		where 
		\[\mathcal{K} \triangleq \left\lbrace f \in H^1(\mathbb{R}^N):  0 \not= f \text{ is real}, \; \| f \|_2=1, \;  f \perp \left( |\cdot|^{-(N-\gamma)} *Q^{p} \right) Q^{p-1}, \; f\perp \partial_{x_j} Q , \; 1 \le j \le N \right\rbrace,
		\]
		so there exists $\{\lambda_j\}_{0 \le j \le N+1}$ such that
		\[
		L_+ f_*= \lambda_0 \left(  |\cdot|^{-(N-\gamma)} *Q^{p} \right) Q^{p-1} + \sum_{j=1}^{N} \lambda_j \partial_{x_j} Q + \lambda_{N+1} f_*.
		\]
		Taking inner product with $f_*$ and $\partial_{x_j} Q$ in $L^2(\mathbb{R}^N)$ on both sides respectively, we can easily get that $\lambda_j= 0, \; \forall 1 \le j \le N+1$ and then
		\[
		L_+ f_*= \lambda_0 \left( |\cdot|^{-(N-\gamma)} * Q^p \right) Q^{p-1}.
		\]
		Note that
		\[
		L_+ Q = -(2p-2)  \left( |\cdot|^{-(N-\gamma)} *Q^{p} \right) Q^{p-1},
		\]
	    by Assumption \ref{assumption: nondegeneracy of the ground state} again,
		\[
		f_*=- \frac{\lambda_0}{2(p-1)} Q + \sum_{j=1}^N \mu_j \left( \partial_{x_j} Q \right)
		\]
		for some $\{ \mu_j \}_{1 \le j \le N}$. By orthogonal conditions and the fact that $\partial_{x_j} Q \perp Q$ for any $1\le j \le N$, $\mu_j=0, \; \forall 1 \le j \le 
		N$, then $f_*=- \frac{\lambda_0}{2(p-1)} Q$ and 
		\begin{align*}
			\Phi_1(f_*)
			 = \frac{1}{2} \int (L_+ f_*) f_* dx 
			 = -\frac{\lambda_0^2}{4(p-1)} \int \left( |\cdot|^{-(N-\gamma)}* Q^{p} \right) Q^p dx=0,
		\end{align*}
		which implies that $\lambda_0=0$ and thus $f_*=0$, a contradiction! Then the coercivity of $\Phi_1$ in the sense of $H^1(\mathbb{R}^N)$ follows from a simple interpolation of (\ref{corecivity of Phi on L2 space}) and
		\[
		\Phi_1 (h_1) \ge \int \frac{1}{2}|\nabla h_1| dx - C \| h_1 \|_{L^2(\mathbb{R}^N)}^2. 
		\]
		
		By the same argument as above, it is easy to get the coercivity of $\Phi_2$, then we have proved the coercivity of $\Phi$ on $G_\perp$. \\
		\textit{Step 2.} Coercivity of $\Phi$ on $G_\perp'$. First, we prove that
		\begin{equation}
			\Phi(h) >0, \quad \forall h \in G_\perp' \setminus \{ 0 \}.
			\label{positivity of Phi on G'(perp)}
		\end{equation}
		If not, then there exists $ f \in G_\perp' \setminus \{ 0 \}$ such that
		\[
		0 \ge \Phi(f)=B(f,f)=\frac{1}{2} \int \left( L_+ f_1 \right) f_1 + \frac{1}{2} \int \left( L_- f_2 \right) f_2.
		\]
		We define a space
		\begin{equation*}
			E \triangleq \text{span}\left\lbrace \partial_{x_1} Q,...,\partial_{x_N} Q, iQ ,\mathcal{Y}_+, f\right\rbrace.
		\end{equation*}
		Then $\forall h \in E$, which we may assume $h= \sum_{j_=1}^N \lambda_j \partial_{x_j} Q + \lambda_{N+1} iQ + \lambda_{N+2} \mathcal{Y}_+ + \lambda_{N+3} f, \; \lambda_j \in \mathbb{R}, \; \forall 1 \le j \le N+3$,
		\begin{align*}
			\Phi(h) =B(h,h) = \lambda_{N+3}^2 B(f,f) = \lambda_{N+3}^2 \Phi(f) \le 0.
		\end{align*}
		Furthermore, we claim that $\dim_{\mathbb{R}} E =N+3$. If it is true, then it contradicts to the fact that $\Phi$ is positive on subspace $G_\perp$ with codimension $N+2$, hence it remains us to check the validity of the claim. Assume that there exist $\{ \lambda_j \}_{j=1}^{N+3} \subset \mathbb{R}^{N+3}$ such that
		\[
		\sum_{j_=1}^N \lambda_j \partial_{x_j} Q + \lambda_{N+1} iQ + \lambda_{N+2} \mathcal{Y}_+ + \lambda_{N+3} f=0,
		\]
		then $\lambda_{N+2} = 0$ simply follows from the facts that
		\begin{equation*}
			0 = B\left(\sum_{j_=1}^N \lambda_j \partial_{x_j} Q + \lambda_{N+1} iQ + \lambda_{N+2} \mathcal{Y}_+ + \lambda_{N+3} f, \mathcal{Y} _-\right) = \lambda _{N+2} B(\mathcal{Y}_+, \mathcal{Y}_-)=0.
		\end{equation*}
		Since $\partial_{x_j} Q, iQ, f$ are orthogonal to each other in $L^2$, it is easy to check that that $\lambda_j=0, \forall i \not =N+2$. Then $\dim_{\mathbb{R}} E=N+3$ and we have proved (\ref{positivity of Phi on G'(perp)}). \\
		\indent As for the coercivity of $\Phi$ on $G_{\perp}'$, we assume that there exists a sequence $\{h_n \} \subset G_\perp'$ such that
		\[
		\Phi(h_n) \to 0, \text{ and } \| h_n \|_2^2=1,
		\]
		then we can also extract a subsequence $\{ h_n \} $ such that $h_n \rightharpoonup h^*$ in $H^1$ sense. By the same argument as in Step $1$, $h^*$ also satisfies $h^* \in G_\perp'$, $h^* \not = 0$ and $\Phi(h^*)=0$, which contradicts to (\ref{positivity of Phi on G'(perp)}). Consequently,
		\begin{equation}
		\Phi(h) \ge c \| h \|_2^2, \; \forall h \in G_\perp'.
		\label{coercivity of Phi on G prime}
		\end{equation}
		Then the coercivity of $\Phi$ on $G_\perp'$ immediately follows from the interpolation between (\ref{coercivity of Phi on G prime}) and 
		\[
		\Phi(h) \ge \frac{1}{2} \| h \|_{H^1}^2 - C \| h \|_{L^2}^2, \; \forall h \in H^1.
		\]
	\end{proof}
\end{prop}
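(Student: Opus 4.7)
The plan is to decompose $\Phi(h) = \Phi_1(h_1) + \Phi_2(h_2)$ with $\Phi_1(h_1) = \tfrac{1}{2}\langle L_+ h_1, h_1\rangle$ and $\Phi_2(h_2) = \tfrac{1}{2}\langle L_- h_2, h_2\rangle$, handle the two orthogonality classes $G_\perp$ and $G_\perp'$ separately, and in each case reduce coercivity to strict positivity by a weak compactness argument, upgrading $L^2$-coercivity to $H^1$-coercivity by interpolating with the gradient term already present in $\Phi$. On $G_\perp$, the operator $L_-$ is non-negative with $\ker L_- = \mathrm{span}\{Q\}$ (since $Q$ is a positive ground state), so the orthogonality $\int Q\, h_2 = 0$ produces a spectral gap and yields coercivity of $\Phi_2$ immediately. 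For $\Phi_1$ I would argue by contradiction: assume a sequence $\{f_n\}$ with $\|f_n\|_2 = 1$, $\int \partial_{x_j}Q \cdot f_n = 0$, $\int (|\cdot|^{-(N-\gamma)} * Q^p) Q^{p-1} f_n = 0$, and $\Phi_1(f_n) \to 0$. Hardy--Littlewood--Sobolev together with the definition of $L_+$ bounds $\|\nabla f_n\|_2$, and the exponential decay of $Q$ and its derivatives from the Appendix makes the two Hartree-type quadratic forms compact under weak $H^1$-convergence, so the weak limit $f_*$ satisfies $\Phi_1(f_*) \le 0$. Lemma \ref{lemma: non-negativity of linearized operator} then forces $\Phi_1(f_*) = 0$, and Lagrange multipliers combined with $L_+ Q = -(2p-2)(|\cdot|^{-(N-\gamma)}*Q^p)Q^{p-1}$ and Assumption \ref{assumption: nondegeneracy of the ground state} (which pins $\ker L_+ = \mathrm{span}\{\partial_{x_j}Q\}$) reduce $f_*$ to a multiple of $Q$, which a final pairing against $Q$ shows to vanish, contradicting the strong $L^2$-normalization extracted from the compact nonlocal terms.

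For $G_\perp'$, the orthogonality to $(|\cdot|^{-(N-\gamma)}*Q^p)Q^{p-1}$ is replaced by the symplectic-type conditions $B(\mathcal{Y}_\pm, h) = 0$, so the above approach does not apply directly. My plan is to first prove strict positivity $\Phi(h) > 0$ on $G_\perp' \setminus \{0\}$ by a dimension-counting argument. If $0 \ne f \in G_\perp'$ had $\Phi(f) \le 0$, form
\[
E = \mathrm{span}\{\partial_{x_1}Q, \ldots, \partial_{x_N}Q, iQ, \mathcal{Y}_+, f\}.
\]
The first $N+1$ generators lie in $\ker\mathcal{L}$ so that $B$ annihilates them; using $L_+\mathcal{Y}_1 = e_0\mathcal{Y}_2$ and $L_-\mathcal{Y}_2 = -e_0\mathcal{Y}_1$ from Proposition \ref{proposition: spectral information of linearized operator L} one computes $B(\mathcal{Y}_+, \mathcal{Y}_+) = 0$; and $B(\mathcal{Y}_+, f) = 0$ is exactly the $G_\perp'$ hypothesis. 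Hence for any $h = \sum_j \lambda_j \partial_{x_j}Q + \lambda_{N+1}iQ + \lambda_{N+2}\mathcal{Y}_+ + \lambda_{N+3} f \in E$ we have $\Phi(h) = B(h,h) = \lambda_{N+3}^2\Phi(f) \le 0$; provided $\dim_\mathbb{R} E = N+3$ this contradicts Lemma \ref{lemma: non-negativity of linearized operator}, which is positive on the codimension-one subspace defined by (\ref{orthogonal condition: orthogonal to Delta Q}). Once strict positivity is known, the same weak-compactness argument as above upgrades it to $L^2$-coercivity (the orthogonality conditions in $G_\perp'$ pass to weak $H^1$-limits because $\mathcal{Y}_\pm, Q, \partial_{x_j}Q \in \mathcal{S}$), and interpolation yields $H^1$-coercivity on $G_\perp'$.

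The most delicate step is justifying $\dim_\mathbb{R} E = N+3$. I would pair any linear relation $\sum_j \lambda_j \partial_{x_j} Q + \lambda_{N+1} iQ + \lambda_{N+2} \mathcal{Y}_+ + \lambda_{N+3} f = 0$ with $\mathcal{Y}_-$ through $B$: the $\ker\mathcal{L}$-generators and $f$ (by the $G_\perp'$-condition) contribute zero, while the eigenvalue equations give $B(\mathcal{Y}_+, \mathcal{Y}_-) = e_0 \int \mathcal{Y}_1 \mathcal{Y}_2 \ne 0$ (after a suitable normalization of $\mathcal{Y}_\pm$; vanishing of this pairing would degenerate the eigenstructure $\mathcal{L}\mathcal{Y}_\pm = \pm e_0 \mathcal{Y}_\pm$), forcing $\lambda_{N+2} = 0$. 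The remaining relation would place $f$ in $\mathrm{span}\{\partial_{x_j} Q, iQ\}$, but the $G_\perp'$-conditions make $f$ simultaneously $L^2$-orthogonal to that span, so $f = 0$, closing $\lambda_{N+3} = 0$; the final $\lambda_j$'s then vanish by pairwise $L^2$-orthogonality of $\{\partial_{x_j}Q, iQ\}$ (which follows from the radiality of $Q$ and the disjoint real/imaginary structure of $iQ$).
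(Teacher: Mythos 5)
Your overall decomposition $\Phi=\Phi_1+\Phi_2$, the weak-compactness/Lagrange-multiplier argument for $\Phi_1$ on $G_\perp$, the dimension-counting argument for strict positivity on $G_\perp'$, and the interpolation step are all structurally identical to the paper's proof. Your treatment of $\Phi_2$ via the spectral gap of $L_-$ (using $L_-Q=0$, $Q>0$, $\sigma_{\mathrm{ess}}(L_-)=[1,\infty)$) is a legitimate shortcut where the paper instead says ``by the same argument as above''; either route works.

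There is, however, a genuine gap in your contradiction for the dimension count. You write that $\Phi\le 0$ on the $(N+3)$-dimensional space $E$ ``contradicts Lemma \ref{lemma: non-negativity of linearized operator}, which is positive on the codimension-one subspace defined by (\ref{orthogonal condition: orthogonal to Delta Q}).'' But Lemma \ref{lemma: non-negativity of linearized operator} gives only \emph{non-negativity} $\Phi\ge 0$, and this cannot be upgraded to strict positivity there: each $\partial_{x_j}Q$ satisfies (\ref{orthogonal condition: orthogonal to Delta Q}) and lies in $\ker L_+$, so $\Phi(\partial_{x_j}Q)=0$. Consequently the intersection of $E$ (where $\Phi\le 0$) with that codimension-one subspace (where $\Phi\ge 0$) simply has $\Phi\equiv 0$, and no contradiction arises. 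The argument closes only if you invoke the \emph{strict} positivity (indeed coercivity) of $\Phi$ on $G_\perp$ established in your Step 1: since $G_\perp$ has codimension $N+2$ while $\dim E = N+3$, there is a nonzero $h\in E\cap G_\perp$ with $\Phi(h)\le 0$ and $\Phi(h)\ge c\|h\|_{H^1}^2>0$ simultaneously. This is precisely the reference the paper uses, and it is the one your proof must use. The rest of your dimension-count (pairing with $\mathcal{Y}_-$ via $B$ to kill $\lambda_{N+2}$, then $L^2$-orthogonality of $\partial_{x_j}Q$, $iQ$, $f$) is sound, although, as in the paper, the nonvanishing of $B(\mathcal{Y}_+,\mathcal{Y}_-)=e_0\int\mathcal{Y}_1\mathcal{Y}_2$ is asserted rather than proved.
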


\section{Modulation}
\label{Modulation}
For $u$ solution to (\ref{Hartree equation}) with
\[
M[u]= M[Q], \quad E[u] = E[Q],
\]
by Proposition \ref{propostion: convergence of u to soliton}, if $\delta(t)$ is sufficiently small, then there exists $(\tilde{\sigma}, \tilde{X})$ such that $  \big\| e^{-i \tilde{\sigma}} u(\cdot + \tilde{X}) -Q \big\| _{H^1} \le \varepsilon (\delta(t))$.  And we further have
\begin{lem}
	Under the Assumption \ref{assumption: uniqueness of the ground state}, for any solution $u$ satisfying $M[u]=M[Q], \; E[u]=E[Q]$, there exist $\delta_0>0$ such that there exists $(\sigma, X) \in 
	\mathbb{R} \backslash 2\pi \mathbb{Z} \times \mathbb{R}^N$ such that $v= e^{-i \sigma} u(\cdot+ X)$ satisfies $\| v - Q \|_{H^1} \le \varepsilon$ and 
	\begin{equation}
		\Im \int Q v =0, \quad \Re \int (\partial_{x_k} Q) v =0, \; k =1,...,N,
		\label{orthogonal condition of v}
	\end{equation}
	where $\varepsilon$ is defined in Proposition \ref{propostion: convergence of u to soliton}. Moreover, the parameters defined above are unique and the mapping $u \mapsto (\sigma, X)$ is $C^1$. 
	\label{modulation: implicit function theorem}
\end{lem}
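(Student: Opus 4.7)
The plan is a standard application of the implicit function theorem, viewing the $N+1$ real orthogonality conditions in (\ref{orthogonal condition of v}) as a system of equations in the $N+1$ real parameters $(\sigma, X) \in \mathbb{R} \times \mathbb{R}^N$. Proposition \ref{propostion: convergence of u to soliton} already supplies approximate parameters $(\tilde\sigma, \tilde X)$ making $e^{-i\tilde\sigma} u(\cdot + \tilde X)$ arbitrarily close to $Q$ in $H^1$ once $\delta_0$ is small enough, so the task reduces to correcting these approximate parameters so that the orthogonality conditions hold exactly.

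First I would introduce the auxiliary map $\Phi : \mathbb{R} \times \mathbb{R}^N \times H^1(\mathbb{R}^N) \to \mathbb{R}^{N+1}$ defined by
\begin{equation*}
\Phi(\sigma, X, w) = \Bigl( \Im \int Q(x)\, e^{-i\sigma} w(x+X)\,dx,\; \Re \int (\partial_{x_k} Q)(x)\, e^{-i\sigma} w(x+X)\,dx \Bigr)_{k=1,\ldots,N}.
\end{equation*}
A direct check gives $\Phi(0,0,Q) = 0$: the first component vanishes because $\int Q^2$ is real, and the remaining components vanish by integration by parts, $\int (\partial_{x_k} Q) Q\,dx = \tfrac{1}{2}\int \partial_{x_k}(Q^2)\,dx = 0$ (legitimate by the exponential decay of $Q$ from Lemma \ref{lemma: exponential decay of ground state and its derivatives}).

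Next I would compute the Jacobian $D_{(\sigma, X)} \Phi$ at $(0,0,Q)$. Differentiating under the integral, the two off-diagonal blocks vanish: $\partial_{X_j} \Phi_0 = \Im\int Q\,\partial_{x_j} Q\,dx = 0$ because the integrand is real, and $\partial_\sigma \Phi_k = \Re\bigl(-i\int (\partial_{x_k} Q) Q\,dx\bigr) = 0$. The diagonal entries are $\partial_\sigma \Phi_0 = -\|Q\|_2^2$ and $\partial_{X_j}\Phi_k = \delta_{jk}\,\|\nabla Q\|_2^2/N$, the latter following from the radiality of $Q$. Thus $D_{(\sigma,X)}\Phi|_{(0,0,Q)}$ is a nonsingular diagonal matrix, and the implicit function theorem produces a $C^1$ map $w \mapsto (\sigma(w), X(w))$ defined on an $H^1$-neighborhood of $Q$, locally unique near $(\sigma, X) = (0,0)$, solving $\Phi(\sigma(w), X(w), w) = 0$.

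To finish, I would apply this $C^1$ map to $w = e^{-i\tilde\sigma}u(\cdot + \tilde X)$ and set $(\sigma, X) = (\tilde\sigma + \sigma(w), \tilde X + X(w))$; the smallness of the correction together with Proposition \ref{propostion: convergence of u to soliton} yields $\|v - Q\|_{H^1} \le \varepsilon$. I do not expect any serious obstacle: the only delicate point is to choose $\delta_0$ small enough that $w$ lies in the IFT neighborhood of $Q$ and that the corrected parameters remain in the regime where local uniqueness applies, both of which are controlled directly by Proposition \ref{propostion: convergence of u to soliton}. Uniqueness of $(\sigma, X)$ and the $C^1$ dependence of $u \mapsto (\sigma, X)$ then follow automatically from the implicit function theorem.
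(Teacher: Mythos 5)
Your argument is essentially the same as the paper's: both define the map $\Phi = J$ sending $(\sigma, X, w)$ to the $N+1$ real orthogonality functionals, note $J(0,0,Q)=0$, check the Jacobian at $(0,0,Q)$ is invertible, and invoke the implicit function theorem after first using Proposition \ref{propostion: convergence of u to soliton} to place $e^{-i\tilde\sigma}u(\cdot+\tilde X)$ in the relevant $H^1$-neighborhood of $Q$. Your computation of the Jacobian (diagonal with entries $-\|Q\|_2^2$ and $\delta_{jk}\|\nabla Q\|_2^2/N$) simply spells out the invertibility the paper asserts without proof.
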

\begin{proof}
	Note by Proposition \ref{propostion: convergence of u to soliton}, there exists $\tilde{\sigma}, \tilde{X}$ such that
	\[
	\Big\| e^{- i \widetilde{\sigma}} u(\cdot+ \widetilde{X}) - Q \Big\|_{H^1(\mathbb{R}^N)} \le {\varepsilon}(\delta), \quad \text{ if } \delta(t) < \delta_0 \ll 1.
	\]
	Without loss of generality, we assume $u$ is close to $Q$ in $H^1$, and if not, we can replace $u$ by $\tilde{u}= e^{- i \widetilde{\sigma}} u(\cdot +\widetilde{X})$. Next, we define a mapping $J: \mathbb{R} \setminus 2 \pi \mathbb{Z} \times \mathbb{R}^N \times H^1 \to \mathbb{R}^{N+1
	}$ as
	\[
	J(\sigma, X,u) = \left(  \begin{matrix}
		J_1(\sigma, X ,u)   \\
		J_2(\sigma, X, u)
	\end{matrix} \right)= \left(  \begin{matrix}
		\Im  \int e^{-i \sigma} u(x+X) Q(x) dx   \\
		\Re \int e^{-i \sigma} u(x+X) \nabla Q(x) dx
	\end{matrix} \right),
	\]
	where $J(0,0,Q)=0$ and the corresponding Jacobian matrix at $(0,0, Q)$ is invertible. By the implicit function theorem, there exists $\varepsilon_0, \eta_0 >0$ such that
	\[
	\| u -Q \|_{H^1} < \varepsilon_0 \quad \Rightarrow \quad \exists ! \; (\sigma, X) , \; |\sigma| + |X| \le \eta_0 \text{ and } J(\sigma, X ,u)=0.
	\] 
	The uniqueness of $(\sigma,X)$ and the regularity of the mapping $u \mapsto (\sigma,X)$ follow from the implicit function theorem and the regularity of solution to (\ref{Hartree equation}) in $H^1$ sense.
\end{proof}

Let $u$ be the solution to (\ref{Hartree equation}) satisfying $M[u]=M[Q]$ and $E[u]=E[Q]$, and let $D_{\delta_0}$ be the open set of all times in the lifespan of $u$ such that $\delta(t) < \delta_0$. On $D_{\delta_0}$, there exist $C^1$ functions $\sigma(t), X(t)$ such that  $e^{-i \sigma(t)} u(\cdot+X(t))$ is close to $Q$ and the orthogonal property (\ref{orthogonal condition of v}) holds. Next, we want to get more information about the behavior of such modulation parameters when it is in $D_{\delta_0}$. Here we tend to work with the parameters $X(t)$ and $\theta(t) =\sigma(t)-t$. Precisely, we rewrite
\begin{equation}
	e^{-it-i\theta(t)} u(x+ X(t),t) = (1+ \alpha(t)) Q (x)+h(t,x),\; \forall  t\in D_{\delta_0},
	\label{modulation: decomposition of u}
\end{equation}
where $\alpha(t) \in \mathbb{R}$ is a continuous function such that $\Re \left\langle h,  \left( |\cdot|^{-(N-\gamma) }* Q^p \right) Q^{p-1} \right\rangle =0$. Then by Proposition \ref{proposition: corecivity of Phi acting on apce with codimension 6}, $h \in G_{\perp}$ and thus
\begin{equation}
	\Phi(h) \simeq \| h \|_{H^1(\mathbb{R}^N)}^2.
	\label{corecivity of Phi acting on h}
\end{equation}
Furthermore, the behavior of these modulation parameters is as follows:
\begin{lem} Under the Assumption \ref{assumption: uniqueness of the ground state} and Assumption \ref{assumption: nondegeneracy of the ground state}, let $u$ be the solution to (\ref{Hartree equation}) satisfying $M[u]=M[Q], \; E[u]= E[Q]$, then taking a smaller $\delta_0$ if necessary, the following estimates hold for $t\in D_{\delta_0}$:
	\begin{equation}
		|\alpha(t)| \simeq \Big| \int Q h_1(t) dx\Big| \simeq \| h(t) \|_{H^1} \simeq \delta(t).
		\label{modulation parameters: estimate}
	\end{equation}
	\label{lemma: the estimates of modulation parameters}
\end{lem}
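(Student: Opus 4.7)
The plan is to exploit the two conservation laws $M[u]=M[Q]$ and $E[u]=E[Q]$, together with the coercivity estimate (\ref{corecivity of Phi acting on h}), to successively relate $\alpha(t)$, $\int Q h_1$, $\|h\|_{H^1}$ and $\delta(t)$. Throughout one works with $v(t)=e^{-it-i\theta(t)}u(\cdot+X(t))=(1+\alpha)Q+h$, and one records the consequences of the orthogonality conditions coming from Lemma \ref{modulation: implicit function theorem} together with the extra relation $\Re\langle h,(|\cdot|^{-(N-\gamma)}*Q^p)Q^{p-1}\rangle=0$, namely
\[
\int Q h_2=0,\quad \int (\partial_{x_k}Q)h_1=0,\quad \int (|\cdot|^{-(N-\gamma)}*Q^p)Q^{p-1}h_1=0.
\]

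\textbf{Step 1 (mass identity).} Expanding $\|v\|_2^2=\|Q\|_2^2$ and using $\int Q h_2=0$ yields
\[
2\alpha\|Q\|_2^2+2\int Q h_1=-\alpha^2\|Q\|_2^2-2\alpha\!\int Q h_1-\|h\|_2^2,
\]
so $\alpha\|Q\|_2^2+\int Q h_1=O(|\alpha|^2+\|h\|_{H^1}^2)$.

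\textbf{Step 2 (action expansion).} Let $S[u]=E[u]+\tfrac12 M[u]$; then $S[u]=S[Q]$ and $S'[Q]=0$ by (\ref{ground state: equation}), so
\[
0=S[v]-S[Q]=\Phi(\alpha Q+h)+\mathcal{R},
\]
where $\mathcal R$ gathers the super-quadratic remainders from the Taylor expansion of the Hartree nonlinearity and is $o(\|\alpha Q+h\|_{H^1}^2)$ for $\|h\|_{H^1}+|\alpha|$ small (a Hölder-type bound on $|w|^p$ near $w=Q>0$). Now expand
\[
\Phi(\alpha Q+h)=\tfrac{\alpha^2}{2}\langle L_+Q,Q\rangle+\alpha\langle L_+Q,h_1\rangle+\Phi(h).
\]
The identity $L_+Q=-(2p-2)(|\cdot|^{-(N-\gamma)}*Q^p)Q^{p-1}$, together with the chosen orthogonality $\int(|\cdot|^{-(N-\gamma)}*Q^p)Q^{p-1}h_1=0$, kills the cross term. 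Since $\langle L_+Q,Q\rangle=-(2p-2)\int(|\cdot|^{-(N-\gamma)}*Q^p)Q^p<0$, we obtain
\[
\Phi(h)=\tfrac12 \alpha^2\bigl|\langle L_+Q,Q\rangle\bigr|+o(\alpha^2+\|h\|_{H^1}^2).
\]
Combined with the coercivity estimate (\ref{corecivity of Phi acting on h}), which gives $\Phi(h)\simeq\|h\|_{H^1}^2$, absorbing the $o(\|h\|_{H^1}^2)$ term into the left side yields $\|h\|_{H^1}^2\simeq\alpha^2$, hence $\|h\|_{H^1}\simeq|\alpha|$ for $\delta(t)\le\delta_0$ small.

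\textbf{Step 3 (comparing $\int Qh_1$, $\alpha$ and $\delta$).} Feeding $\|h\|_{H^1}^2=O(\alpha^2)$ into Step 1 gives $\int Qh_1=-\alpha\|Q\|_2^2+O(\alpha^2)$, hence $|\int Qh_1|\simeq|\alpha|$. Finally, expanding $\|\nabla v\|_2^2$ and integrating by parts,
\[
\delta(t)=\Bigl|2\alpha\|\nabla Q\|_2^2-2(1+\alpha)\!\int(\Delta Q)h_1+\alpha^2\|\nabla Q\|_2^2+\|\nabla h\|_2^2\Bigr|.
\]
Using the Euler--Lagrange equation (\ref{ground state: equation}) and the orthogonality to $(|\cdot|^{-(N-\gamma)}*Q^p)Q^{p-1}$ rewrites $-\int(\Delta Q)h_1=-\int Qh_1=\alpha\|Q\|_2^2+O(\alpha^2)$, so
\[
\delta(t)=2|\alpha|\bigl(\|\nabla Q\|_2^2+\|Q\|_2^2\bigr)+O(\alpha^2),
\]
which, after shrinking $\delta_0$, gives $\delta(t)\simeq|\alpha|$, completing the chain of equivalences.

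\textbf{Main obstacle.} The delicate point is controlling the remainder $\mathcal R$ in Step 2 uniformly in the full admissible range $p\ge 2$. For $p\in[2,3)$ one cannot appeal to a smooth Taylor expansion of $|w|^p$ through third order; one must instead exploit the Hölder regularity of $t\mapsto t^{p-1}$ and the Hardy--Littlewood--Sobolev inequality to show $|\mathcal R|\lesssim \|\alpha Q+h\|_{H^1}^{2+\varepsilon}$ for some $\varepsilon=\varepsilon(p)>0$, which is still $o(\alpha^2+\|h\|_{H^1}^2)$ and therefore absorbable. Everything else is purely algebraic once the delicate orthogonality $\int(|\cdot|^{-(N-\gamma)}*Q^p)Q^{p-1}h_1=0$ is in place, precisely the reason for its choice over $\int(\Delta Q)h_1=0$ used in \cite{duyckaerts2010threshold}.
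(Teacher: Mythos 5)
Your proof is correct and follows the same essential strategy as the paper: expand the mass, the kinetic energy and the (conserved) energy around $Q$, use the orthogonality $\int(|\cdot|^{-(N-\gamma)}*Q^p)Q^{p-1}h_1=0$ together with $L_+Q=-(2p-2)(|\cdot|^{-(N-\gamma)}*Q^p)Q^{p-1}$ to kill the cross term $B(Q,h)$, invoke the coercivity (\ref{corecivity of Phi acting on h}) on $G_\perp$, and control the super-quadratic remainder via the H\"older bounds on $J,K$ from Lemma~\ref{lemma: estimate on J(z) and K(z)} in the low-regularity range $p\in(2,3)$. The only cosmetic differences are your use of the action $S=E+\tfrac12 M$ with $S'[Q]=0$ (the paper achieves the same linear cancellation by combining the $M$ and $E$ expansions directly), and your sequential absorption of errors in three steps rather than the paper's bootstrap on the bundled quantity $\widetilde{\delta}=|\alpha|+\delta+\|h\|_{H^1}$; both close identically once one notes that $\delta(t)<\delta_0$ makes $|\alpha|$ and $\|h\|_{H^1}$ a priori small via Proposition~\ref{propostion: convergence of u to soliton}.
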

\begin{proof}
	Let $\widetilde{\delta}(t) \triangleq |\alpha(t)| + \delta(t) + \| h(t) \|_{H^1}$. Considering the variation of $u \mapsto M[u]$ around the ground state $Q$,
	\begin{equation*}
		M[u]
		= M[Q+\left( \alpha Q + h \right)]
		= M[Q] + 2 \Re \left\langle Q , \alpha Q+ h \right\rangle + O\left( \alpha^2 + \| h \|_{H^1}^2 \right),
	\end{equation*}
then
	\begin{equation}
		|\alpha(t)|= \frac{1}{\| Q \|_2^2} \Big| \int Q h_1 dx \Big| + O(\widetilde{\delta}^2).
		\label{modulation parameter: result of variation of mass}
	\end{equation}
	Moreover, if we consider the variation of $u \mapsto \| \nabla u \|_2^2$ near the ground state $Q$, then
	\begin{equation*}
		\| \nabla u \|_2^2
		= \Big\| \nabla \left( Q + \left( \alpha Q +h \right) \right) \Big\|_2^2 
		= \| \nabla Q \|_2^2 + 2 \alpha \| \nabla Q \|_2^2 + 2 \int \nabla Q \cdot \nabla h_1 dx +  O\left(  \alpha^2 + \| h\|_{H^1}^2 \right).
	\end{equation*}
  Together with orthogonal condition $ \int \left( |\cdot|^{-(N-\gamma)} * Q^{p} \right) Q^{p-1} h_1 dx = 0$,  we obtain that
  \begin{align}
  	\delta(u(t)) & = \Big|  \| \nabla u(t) \|_2^2 - \| \nabla Q \|_2^2 \Big| \notag\\
  	&= \Big| 2 \alpha \| \nabla Q \|_2^2 +2 \int Q h_1 dx \Big|  + O(\widetilde{\delta}^2) \notag\\
  	& =2  \left(  \| \nabla Q \|_2^2 + \| Q\|_2^2  \right) |\alpha(t)|  + O(\widetilde{\delta}^2).
  				\label{modulation parameter: result od variation of potential}
  \end{align}
  
	As for the variation of energy $E[u]$ around the ground state $Q$, if we let $v= \alpha Q +h$, 
	\begin{align*}
		E[u] &=E[Q+v]\\
		&= \frac{1}{2} \int |\nabla Q + \nabla v|^2 dx - \frac{1}{2p} \int \left( |\cdot |^{-(N-\gamma)} * |Q+v|^p \right) |Q+v|^p dx \\
		& = E[Q]  - \left\langle Q,v_1 \right\rangle  \\
		& \quad + \frac{1}{2} \left\langle -\Delta v_1 -p \left( |\cdot|^{-(N-\gamma)}*Q^{p-1} v_1 \right) Q^{p-1} -(p-1) \left( |\cdot|^{-(N-\gamma)} * Q^p \right)Q^{p-2} v_1 , v_1 \right\rangle \\
		& \quad + \frac{1}{2} \left\langle -\Delta v_2 - \left( |\cdot|^{-(N-\gamma)} * Q^p \right) Q^{p-2}v_2, v_2 \right\rangle + h.o.t. \\
		& = E[Q]+ \Phi(v)+ h.o.t.,
		\end{align*}
	where the last equality follows from the assumption $M[Q+v]= M[Q]$. Moreover, by Lemma \ref{lemma: estimate on J(z) and K(z)}, the $h.o.t.$ satisfies
	\begin{equation*}
		h.o.t.= 
		\begin{cases}
			O\left(\| v \|_{H^1}^3 \right), & \text{ if } p=2 \text{ or }  p \ge 3, \\
			O\left(\| v \|_{H^1}^{p} \right), & \text{ if } p \in (2,3),
		\end{cases}
	\end{equation*}
     and without loss of generality, we denote $h.o.t. = O\left( \| v \|_{H^1}^s \right)= O \left( \widetilde{\delta}^s \right)$ for some $s >2$.  Then by the orthogonal condition $\int \left( |\cdot|^{-(N-\gamma)} * Q^{p} \right) Q^{p-1} h_1 dx = 0$, we obtain that $B(Q,h)=0$ and
	\begin{align*}
		O(\widetilde{\delta}^s)
		=  \Phi(\alpha Q + h) 
		= \alpha^2  \Phi(Q) + \Phi(h) + 2 \alpha B(Q,h) 
		= \alpha^2  \Phi(Q) + \Phi(h),
	\end{align*}
	so together with (\ref{corecivity of Phi acting on h}) and (\ref{Pohozhaev equality}),
	\begin{equation}
		\| h \|_{H^1} \simeq \Phi(h)^{\frac{1}{2}} \simeq |\alpha|+ O(\widetilde{\delta}^\frac{s}{2}).
		\label{modulation parameter: variation of energy}
	\end{equation}
	Combining (\ref{modulation parameter: result of variation of mass}), (\ref{modulation parameter: result od variation of potential}) and (\ref{modulation parameter: variation of energy}), $\widetilde{\delta}(t) \lesssim \delta(t) + O(\widetilde{\delta^\kappa}(t))$ for some $\kappa > 1$, then by continuity argument, $\widetilde{\delta}(t) \lesssim \delta(t), \; \forall t \in D_{\delta_0},  \delta_0 \ll 1$. Then using  (\ref{modulation parameter: result of variation of mass}), (\ref{modulation parameter: result od variation of potential}) and (\ref{modulation parameter: variation of energy}) again, we immediately get (\ref{modulation parameters: estimate}).
\end{proof}

When it comes to the evolution of derivatives of modulation parameters,
\begin{lem}[Bounds on the time-derivatives]
	Under the conditions in Lemma \ref{lemma: the estimates of modulation parameters}, 
	\begin{equation}
		|\dot{\alpha}|+ |\dot{X}| + |\dot{\theta}| =O(\delta), \; \forall t \in D_{\delta_0}.
		\label{modulation parameters: estimate of their derivatives}
	\end{equation}
	\label{modulation parameters: estimate on their time-derivatives}
\end{lem}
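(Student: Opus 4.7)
The plan is to extract the three derivatives $(\dot\alpha, \dot X, \dot\theta)$ from the time derivatives of the three families of orthogonality conditions satisfied by $h$: $\int Q\, h_2 = 0$, $\int (\partial_{x_k} Q)\, h_1 = 0$ for $k=1,\dots,N$, and $\Re\langle h, (|\cdot|^{-(N-\gamma)}*Q^p) Q^{p-1}\rangle = 0$. These give exactly $N+2$ scalar identities, matching the $N+2$ unknown derivatives.

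First I would derive the evolution equation for $h$. Starting from (\ref{Hartree equation}) for $u$, using $v(t,x) = e^{-it-i\theta(t)} u(t, x+X(t)) = (1+\alpha(t))Q(x) + h(t,x)$, and invoking the ground-state equation for $Q$, a direct computation yields (after separating real and imaginary parts)
\begin{align*}
	\partial_t h_1 &= -\dot\alpha\, Q + L_- h_2 - \Im R(\alpha Q + h) + \dot\theta\, v_2 + \dot X \cdot \nabla v_1,\\
	\partial_t h_2 &= -L_+(\alpha Q + h_1) + \Re R(\alpha Q + h) - \dot\theta\, v_1 + \dot X \cdot \nabla v_2,
\end{align*}
where $v_1 = (1+\alpha)Q + h_1$ and $v_2 = h_2$. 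Next, pairing $\partial_t h_2$ with $Q$, pairing $\partial_t h_1$ with each $\partial_{x_k} Q$, and pairing $\partial_t h_1$ with $(|\cdot|^{-(N-\gamma)}*Q^p) Q^{p-1}$, and using $L_+ Q = -(2p-2)(|\cdot|^{-(N-\gamma)}*Q^p) Q^{p-1}$, $L_+ \partial_{x_k} Q = 0$, the radial symmetry of $Q$, and self-adjointness of $L_\pm$, the three differentiated orthogonality conditions collapse to a linear system
\[
M(t)\,(\dot\alpha, \dot X_1, \dots, \dot X_N, \dot\theta)^{\top} = F(t),
\]
where $M(t)$ is an $O(\delta)$-perturbation of a diagonal matrix whose diagonal entries are the nonzero constants $\int (|\cdot|^{-(N-\gamma)}*Q^p) Q^p$, $(1+\alpha)\|\nabla Q\|_2^2/N$ (one copy per $\dot X_k$), and $(1+\alpha)\|Q\|_2^2$. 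For $\delta_0$ small, $M(t)$ is therefore invertible.

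The right-hand side $F(t)$ splits into three kinds of terms: linear-in-$h$ contributions such as $\int (L_-\partial_{x_k}Q)\, h_2$ (which does not vanish, because $L_-\partial_{x_k}Q \ne 0$ in general), all bounded by $C\|h\|_{H^1}$ and hence $O(\delta)$ by Lemma \ref{lemma: the estimates of modulation parameters}; the coefficient $\alpha\int (|\cdot|^{-(N-\gamma)}*Q^p) Q^p$ on the $\dot\theta$-equation, which is also $O(\delta)$; and nonlinear remainders $\int Q\, \Re R(\alpha Q + h)$, etc., which by the estimates on $R$ from Appendix \ref{Appendix: Strichartz estimates} give $O(\|\alpha Q + h\|_{H^1}^{\min(2,p)}) = O(\delta^2)$ once $\delta$ is small. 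Inverting $M(t)$ then yields $|\dot\alpha| + |\dot X| + |\dot\theta| = O(\delta)$, which is (\ref{modulation parameters: estimate of their derivatives}). The main technical obstacle I anticipate is the control of $R(\alpha Q + h)$ in the range $p \in (2,3)$: the usual Taylor expansion fails because $p-2 < 1$, so one has to rely on the H\"older continuity of $z \mapsto |z|^{p-2}$ together with the exponential decay of $Q$ (to tame the nonlocal convolution factor), exactly as flagged in the introduction.
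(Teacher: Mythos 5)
Your proposal is correct and follows essentially the same route as the paper: both derive the evolution equation for $h$ and project it against $Q$, $\partial_{x_k}Q$, and $\left(|\cdot|^{-(N-\gamma)}*Q^p\right)Q^{p-1}$, using $L_+Q = -(2p-2)\left(|\cdot|^{-(N-\gamma)}*Q^p\right)Q^{p-1}$, $L_+\partial_{x_k}Q=0$, radial symmetry, and the orthogonality conditions. The only stylistic difference is that you phrase the conclusion as inverting a near-diagonal linear system $M(t)$, whereas the paper sets $\delta^* = \delta + |\dot\alpha| + |\dot X| + |\dot\theta|$, shows $\delta^* = O(\delta + \delta\delta^*)$, and absorbs for $\delta_0$ small; the two are equivalent packagings of the same estimate.
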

\begin{proof}
	Let $\delta^*(t)= \delta(t)+ |\dot{\alpha}(t)| + |\dot{X}(t)| + |\dot{\theta}(t)|$. By (\ref{Hartree equation}) and (\ref{modulation: decomposition of u}),
	\begin{equation}
		i \partial_t h + \Delta h + i \dot{\alpha} Q - i \dot{X} \cdot \nabla Q - \dot{\theta} Q = O(\delta+ \delta \delta^*) \text{ in } L^2.
		\label{the equation h has to satisfies}
	\end{equation}
	Since $h$ satisfies (\ref{orthogonal condition: orthogonal to Delta Q}) and (\ref{orthogonal condition: orthogonal to nabla Q and iQ}), multiplying (\ref{the equation h has to satisfies}) by $Q$, integrating on $\mathbb{R}^N$ and then taking the real part, we get
	\[
	- \dot{\theta} \| Q \|_2^2  =  O(\delta + \delta \delta^*) \; \Rightarrow \; |\dot{\theta}| = O(\delta + \delta \delta^*).
	\]
	If we multiply (\ref{the equation h has to satisfies}) by $\partial_{x_j} Q$ and $\left( |\cdot|^{-(N-\gamma)} * Q^p \right) Q^{p-1}$ respectively, integrate on $\mathbb{R}^N$ and take the imaginary part, then
	\[
	|\dot{X}_j| = O(\delta + \delta \delta^*) \text{ and }  |\dot{\alpha}|= O(\delta + \delta \delta ^*).
	\]
	Consequently, 
	\[
	\delta^* = O(\delta + \delta \delta^*), \quad \forall t \in 
	D_{\delta_0},
	\]
	then it concludes the desired result for $\delta_0 \ll 1$.
\end{proof}
We end this section with the following lemma, which will be used in the next two sections.
\begin{lem}
	Under the Assumption \ref{assumption: uniqueness of the ground state} and Assumption \ref{assumption: nondegeneracy of the ground state}, let $u$ be the solution to (\ref{Hartree equation}) satisfying $M[u]=M[Q], E[u]=E[Q]$, and assume $u$ is defined on $[0,+\infty)$ and that there exists $c,C >0$ such that
	\begin{equation}
		\int_t^\infty \delta(s) ds \le C e^{-ct}, \forall t \ge 0,
		\label{assumption on delta: decay in exponential sense}
	\end{equation}
	then there exists $(\theta_0, x_0 ) \in \mathbb{R}\backslash 2\pi \mathbb{Z} \times \mathbb{R}^N$ such that
	\[
	\| u - e^{i\theta_0} e^{it} Q(\cdot- x_0) \|_{H^1(\mathbb{R}^N)} \le C e^{-ct}.
	\]
	\label{auxilliary lemma}
\end{lem}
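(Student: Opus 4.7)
The strategy is to run the modulation analysis of Section~\ref{Modulation} on a tail $[T_0,\infty)$, turn the $L^1$-in-time bound on $\delta$ coming from~\eqref{assumption on delta: decay in exponential sense} into exponential convergence of the modulation parameters $(\alpha(t),X(t),\theta(t))$ to definite limits, and then exploit the two-way equivalence $|\alpha|\simeq\delta\simeq\|h\|_{H^1}$ of Lemma~\ref{lemma: the estimates of modulation parameters} to upgrade the $L^1$-tail bound into pointwise exponential decay of each ingredient of the decomposition~\eqref{modulation: decomposition of u}.

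First I would secure that $t\in D_{\delta_0}$ for all $t$ large enough, so that the modulation machinery applies. The hypothesis gives $\int_0^\infty\delta(s)\,ds<\infty$, which together with continuity of $t\mapsto\|\nabla u(t)\|_2^2$ forces a sequence $t_n\to\infty$ with $\delta(t_n)\to 0$. At any such $t_n$ the decomposition~\eqref{modulation: decomposition of u} is valid, and on $D_{\delta_0}$ Lemma~\ref{modulation parameters: estimate on their time-derivatives} yields $|\dot\alpha(s)|\lesssim\delta(s)\simeq|\alpha(s)|$. Combining this with $|\alpha(t)-\alpha(t_n)|\lesssim\int_{t_n}^t\delta\,ds\le Ce^{-ct_n}$, a standard continuation/bootstrap argument (taking $T_0:=t_n$ large enough) shows $\delta(t)<\delta_0$ for all $t\ge T_0$, so the decomposition persists on the whole tail.

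Once this is in place, Lemma~\ref{modulation parameters: estimate on their time-derivatives} gives $|\dot\alpha|+|\dot X|+|\dot\theta|\lesssim\delta\in L^1([T_0,\infty))$, so the three parameters converge to limits $\alpha_\infty,X_\infty,\theta_\infty$ as $t\to\infty$. For $X$ and $\theta$ the hypothesis directly yields
\[
|X(t)-X_\infty|+|\theta(t)-\theta_\infty|\le\int_t^\infty\big(|\dot X(s)|+|\dot\theta(s)|\big)\,ds\lesssim\int_t^\infty\delta(s)\,ds\le Ce^{-ct}.
\]
For $\alpha$, the equivalence $|\alpha|\simeq\delta\in L^1$ together with the existence of a limit forces $\alpha_\infty=0$ (otherwise $|\alpha(t)|$ would stay bounded below for large $t$, contradicting integrability), so $|\alpha(t)|\le\int_t^\infty|\dot\alpha(s)|\,ds\le Ce^{-ct}$, and Lemma~\ref{lemma: the estimates of modulation parameters} propagates this back to the pointwise bounds $\delta(t)\le Ce^{-ct}$ and $\|h(t)\|_{H^1}\le Ce^{-ct}$.

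To finish, set $\theta_0:=\theta_\infty$, $x_0:=X_\infty$ and expand~\eqref{modulation: decomposition of u}:
\begin{align*}
u(x,t)-e^{i\theta_\infty}e^{it}Q(x-X_\infty)
&=e^{it}\Big[(e^{i\theta(t)}-e^{i\theta_\infty})Q(x-X(t))+e^{i\theta_\infty}\big(Q(x-X(t))-Q(x-X_\infty)\big)\\
&\qquad+e^{i\theta(t)}\alpha(t)Q(x-X(t))+e^{i\theta(t)}h(t,x-X(t))\Big].
\end{align*}
Each of the four bracketed terms is $O(e^{-ct})$ in $H^1$ by the previous step, using in addition the translation continuity $\|Q(\cdot-Y_1)-Q(\cdot-Y_2)\|_{H^1}\lesssim|Y_1-Y_2|$ that follows from the exponential decay of $\partial^\beta Q$ proved in Appendix~\ref{Appendix: ground state}. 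Enlarging $C$ to absorb the compact interval $[0,T_0]$ yields the stated bound on all of $[0,\infty)$. The main obstacle is the first step: the hypothesis controls $\delta$ only in $L^1$, not pointwise, and one must combine the subsequential vanishing $\delta(t_n)\to 0$ with the smallness of $\int_{t_n}^\infty\delta$ to keep the modulation decomposition valid on an unbounded interval; once that is achieved, the two-way equivalence of Lemma~\ref{lemma: the estimates of modulation parameters} carries the rest of the argument mechanically.
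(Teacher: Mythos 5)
Your proposal is correct and follows essentially the same route as the paper: establish that the modulation decomposition is valid on a tail (the paper does this by a contradiction/first-exit-time argument showing $\lim_{t\to\infty}\delta(t)=0$, you do it by a bootstrap on $\alpha$, but both rest on the same bound $|\dot\alpha|\lesssim\delta$ combined with the hypothesis $\int_t^\infty\delta\le Ce^{-ct}$), then upgrade $|\alpha(t)|=\big|\int_t^\infty\dot\alpha\big|\lesssim e^{-ct}$ to pointwise exponential decay of $\delta$, $\|h\|_{H^1}$, and the parameter increments via Lemmas~\ref{lemma: the estimates of modulation parameters}--\ref{modulation parameters: estimate on their time-derivatives}, and finally expand the decomposition~\eqref{modulation: decomposition of u}. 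The only cosmetic difference is that translation continuity of $Q$ in $H^1$ needs only $Q\in H^2$, not the full Schwartz-type decay of Appendix~\ref{Appendix: ground state}.
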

\begin{proof}
	By assumption (\ref{assumption on delta: decay in exponential sense}), it can be easily to see that there must exist $\{t_n\}_{n \in \mathbb{N}}$, such that $\delta(t_n) \to 0$. With the help of convergence to $0$ of $\delta(t)$ up to a subsequence, we first need to check $\lim_{t \to \infty} \delta(t)=0$. If not, there exists $\{ t_n' \}_{n \in \mathbb{N}}$, such that
	\[
	t_n' \to\infty, \text{ and } \delta(t_n') \ge \varepsilon_1 >0
	\]
	for some $\varepsilon_1>0$. Then we can adjust the value of $\{ t_n' \}$ and extract subsequences from $\{t_n\}$ and $\{t_n'\}$ with the properties below:
	\[
	t_n < t_n',  \; \forall n \in \mathbb{N}, \quad \delta(t_n)= \varepsilon_1, \quad \delta(t) < \varepsilon_1, \; \forall t \in [t_n, t_n').
	\]
	Hence
	\[
	|\alpha(t_n)- \alpha(t_n')| \le \int_{t_n}^{t_n'} |\dot{\alpha}(t)| dt \le Ce^{-ct_n} \to 0, \quad \Rightarrow \quad \alpha(t_n') \to 0, n \to \infty,
	\]
	which leads a contradiction.
	
	With the help of (\ref{modulation parameters: estimate}) and (\ref{modulation parameters: estimate of their derivatives}), we claim that there exists $X_\infty$ and $\theta_\infty$ such that
	\begin{equation}
		\delta(t) + |\alpha(t)| + \|h(t) \|_{H^1} + |X(t)-X_\infty| + |\theta(t)- \theta_\infty| \le  C e^{-ct}, \forall \; t \ge 0.
		\label{existence of parameters X(infty) and theta(infty)}
	\end{equation}

 In fact, by Lemma \ref{modulation parameters: estimate on their time-derivatives} and Lemma \ref{auxilliary lemma} and together with (\ref{assumption on delta: decay in exponential sense}),
	\[
	\delta(t) \simeq \| h(t)\|_{H^1} \simeq | \alpha(t) |= \Big| \int_t^\infty  \dot{\alpha}(s) ds \Big| \lesssim \int_t^\infty \delta(s) ds \le C e^{-ct}, \; \forall t \ge 0,
	\]
then as a by-product,
	\[
	|\dot{X}(t)| + |\dot{\theta}(t)| \le Ce^{-ct},
	\]
	which yields that there exist $X_\infty$ and $\theta_\infty$ such that
	\begin{equation*}
		|X(t)- X_\infty| + |\theta(t)- \theta_\infty|  \lesssim  \int_t^\infty |\dot X(s)| + |\dot{\theta}(s)| ds \lesssim \int_t^\infty e^{-cs} ds \lesssim Ce^{-ct},
	\end{equation*}
	and it solves (\ref{existence of parameters X(infty) and theta(infty)}). Lemma \ref{auxilliary lemma} then immediately follows from (\ref{existence of parameters X(infty) and theta(infty)}) and the decomposition (\ref{modulation: decomposition of u}).
\end{proof}

\section{Convergence to $Q$ for $\| \nabla u_0 \|_2^\frac{1-s_c}{s_c} \| u_0 \|_2 > \| \nabla Q \|_2^\frac{1-s_c}{s_c} \| Q \|_2$}
\label{Convergence to Q above the threshold}
\begin{thm}
	Under the same conditions as what in Theorem \ref{theorem: classification of threshold solutions at zero momentum case}, let $u$ be the solution to (\ref{Hartree equation}) satisfying
	\begin{equation}
		M[u]= M[Q], \quad E[u]= E[Q], \quad  \| \nabla u_0 \|_2 > \| \nabla Q \|_2,
		\label{condition: above the threshold}
	\end{equation}
	and $u$ globally exist in positive time, and assume 
	\[
	\begin{cases}
		u_0 \text{ is of finite variance, i.e. } |x| u_0 \in L^2, \\
		\text{or } u_0 \in L_{rad}^2(\mathbb{R}^N) \text{ and } (N,p,\gamma) \text{ satisfies } (\ref{parameter: range for radial case}) \text{ in addition},
	\end{cases}\]
then there exists $\theta_0 \in \mathbb{R}\backslash 2 \pi \mathbb{Z}, x_0 \in \mathbb{R}^N, c,C>0$ such that 
	\[
	\| u - e^{it + i \theta_0} Q(\cdot -x_0) \|_{H^1(\mathbb{R}^N)} \le C e^{-ct}.
	\]
	Moreover, the lifespan of $u$ in negative time direction is finite.
	\label{theorem: convergence of u to soliton Q above the threshold}
\end{thm}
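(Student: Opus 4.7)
The plan is to establish exponential forward-time convergence to a modulated $e^{it}Q$ in three stages: (i) a virial/Pohozaev argument to show $\delta\in L^1([0,\infty))$; (ii) modulation theory together with the spectral structure of $\mathcal{L}$ to upgrade this integrability to pointwise exponential decay of $\delta$; and (iii) an application of Lemma \ref{auxilliary lemma} to extract the limiting profile, followed by a short convexity argument for the finite negative lifespan.

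\textbf{Step 1: integrated decay of $\delta$.} First I would exploit the gradient separation from Lemma \ref{lemma: behavior of gradient of solution}(c): since $\mathcal{G}[u(t)]>1$ throughout the lifespan and $\mathcal{M}\mathcal{E}[u]=1$, the virial/Pohozaev functional
\[
K[u]\;:=\;\|\nabla u\|_2^2-\frac{Np-N-\gamma}{2p}\int\!\big(|\cdot|^{-(N-\gamma)}*|u|^p\big)|u|^p\,dx
\]
satisfies $K[u(t)]\le -c\,\delta(t)$ for all $t$. In the finite-variance case, $\tfrac{d^2}{dt^2}\!\int|x|^2|u|^2\,dx=8\,K[u(t)]\le -c\,\delta(t)$; combined with global forward existence and the nonnegativity of the variance, integrating twice forces $\int_0^\infty\delta(s)\,ds<\infty$. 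In the $L^2_{rad}$ case, I would instead use an Ogawa--Tsutsumi-type truncated variance $V_R(t)=\int\phi_R|u|^2$, and the restriction (\ref{parameter: range for radial case}) is precisely what is needed to control the truncation error from the nonlocal convolution and still obtain $V_R''(t)\le -c\,\delta(t)+o_R(1)$. Either way, $\delta(t)<\delta_0$ for all large $t$, so the modulation framework of Section \ref{Modulation} applies on a half-line.

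\textbf{Step 2: exponential decay via spectral analysis.} Inside $D_{\delta_0}$, the decomposition (\ref{modulation: decomposition of u}) gives $\|h(t)\|_{H^1}\simeq\delta(t)$. I would introduce the spectral coordinates
\[
a_\pm(t)\;:=\;B\!\left(\mathcal{Y}_\mp,\,h(t)\right),
\]
with $\mathcal{Y}_\pm$ the Schwartz eigenfunctions of $\mathcal{L}$ from Proposition \ref{proposition: spectral information of linearized operator L}. Substituting $h$ into (\ref{linearized equation}) and using $\mathcal{L}\mathcal{Y}_\pm=\pm e_0\mathcal{Y}_\pm$ together with the decay of $\mathcal{Y}_\pm$, one derives the ODE identities
\[
\dot a_+-e_0 a_+=O(\|h\|_{H^1}^{1+\eta}),\qquad \dot a_-+e_0 a_-=O(\|h\|_{H^1}^{1+\eta}),
\]
for some $\eta>0$ (with $\eta=1$ if $p\ge 3$ and $\eta=p-1$ if $p\in(2,3)$, coming from the expansion of $R(h)$). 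The coercivity of $\Phi$ on $G_\perp'$ (Proposition \ref{proposition: corecivity of Phi acting on apce with codimension 6}), combined with the conservation laws $M[u]=M[Q]$, $E[u]=E[Q]$ and the orthogonalities built into the modulation, gives $\|h\|_{H^1}^2\lesssim a_+^2+a_-^2+O(\|h\|_{H^1}^{2+\eta})$, hence $\delta(t)\lesssim|a_+(t)|+|a_-(t)|$. Since $\delta\in L^1([0,\infty))$, the unstable ODE for $a_+$ cannot sustain any $e^{e_0 t}$ component, so a Gronwall/bootstrap argument forces $|a_\pm(t)|\lesssim e^{-e_0 t}$ and therefore $\int_t^\infty\delta(s)\,ds\le C e^{-e_0 t}$ for all $t\ge 0$.

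\textbf{Step 3: conclusion, negative lifespan, and main obstacle.} Lemma \ref{auxilliary lemma} now applies directly and produces $(\theta_0,x_0)$ with $\|u-e^{it+i\theta_0}Q(\cdot-x_0)\|_{H^1}\le Ce^{-ct}$. For the finite negative lifespan, Lemma \ref{lemma: behavior of gradient of solution}(c) yields $\delta(t)\ge c_0>0$ on the backward lifespan; the bound $V''(t)\le -c\delta(t)\le -cc_0$ (or its radial truncated version under (\ref{parameter: range for radial case})) is incompatible with $V\ge 0$ on a semi-infinite interval, forcing $T_->-\infty$. The main technical obstacle I anticipate is Step 2 when $p\in(2,3)$: the remainder $R(h)$ is only of order $\|h\|_{H^1}^{p}$ rather than quadratic, so the forcing terms in the ODE for $a_\pm$ are weaker than in the classical NLS/Hartree setting, and the bootstrap from $L^1$-integrability of $\delta$ to its exponential decay must be run through an iteration exploiting the H\"older-continuity refinement for $|z|^{p-2}$ highlighted in the introduction; a secondary difficulty is the radial virial, where the convolution couples pointwise behavior to the global behavior of $u$, and (\ref{parameter: range for radial case}) is exactly the parameter window that makes the truncation error admissible.
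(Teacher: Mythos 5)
Your overall architecture (virial $\to$ integrated decay of $\delta$ $\to$ modulation $\to$ Lemma \ref{auxilliary lemma}) is the right skeleton, and the negative-lifespan argument via time reversal matches the paper. However there are two genuine gaps, and they are related.

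\textbf{Gap 1: Step 1 only gets $\delta\in L^1$, which is not strong enough to invoke Lemma \ref{auxilliary lemma}.} Lemma \ref{auxilliary lemma} requires the exponential integrated bound $\int_t^\infty\delta(s)\,ds\le Ce^{-ct}$, not mere integrability. Integrating the concavity estimate $\ddot y\le -8s_c(p-1)\delta$ twice (as you propose) gives $\delta\in L^1$ and nothing more. The paper closes this gap with a specific variational trick you have omitted: for any $C^1$ function $\varphi$ and any $f\in H^1$ with $\|f\|_2=\|Q\|_2$ and $E[f]=E[Q]$, using $e^{i\lambda\varphi}f$ as a competitor in the Gagliardo--Nirenberg minimization and optimizing in $\lambda$ yields the Cauchy--Schwarz type bound
\[
\Big|\Im\int(\nabla\varphi\cdot\nabla f)\bar f\Big|^2\;\le\;C\,\delta(f)^2\int|\nabla\varphi|^2|f|^2.
\]
With $\varphi=|x|^2$ this reads $(\dot y)^2\le C(\ddot y)^2\,y$. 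Combined with $\dot y>0$, $\ddot y<0$, one gets $\dot y/\sqrt y\le -C\ddot y$; integrating once shows $y$ is uniformly bounded, so then $\dot y\le -c\ddot y$ and hence $\dot y(t)\lesssim e^{-ct}$, which is exactly $\int_t^\infty\delta\lesssim e^{-ct}$. This inequality is the crux of the finite-variance argument and is absent from your proposal.

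\textbf{Gap 2: the spectral bootstrap in Step 2 does not compensate for Gap 1 as stated.} Your claimed upgrade from ``$\delta\in L^1$'' to ``$\delta$ decays exponentially'' via the ODEs for $a_\pm$ is not established. First, $\delta\in L^1$ together with continuity does not give $\delta(t)\to 0$ pointwise, which is needed to stay in the modulation regime $D_{\delta_0}$ for all large $t$; one has to argue for this separately (the paper's proof of Lemma \ref{auxilliary lemma} does so, but using the stronger exponential integrated decay). Second, even granting $\delta\to 0$, the inequality $\delta\lesssim|a_+|+|a_-|$ that you want from coercivity mixes two incompatible decompositions: the modulation of Section \ref{Modulation} places $h\in G_\perp$, while the $a_\pm$ and $G_\perp'$ coordinates belong to the Section \ref{uniqueness: section} machinery; there is also the parameter $\alpha$ from (\ref{modulation: decomposition of u}), which is comparable to $\|h\|_{H^1}$ and not captured by $a_\pm$ alone. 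Third, the claim that ``a Gronwall/bootstrap argument forces $|a_\pm|\lesssim e^{-e_0 t}$'' is exactly where the real work lies; the forcing $O(\delta^{1+\eta})$ only gives a nonlinear fixed-point-type improvement, and one needs a genuine iteration (not a one-shot Gronwall) to extract an exponential rate. The paper avoids this entirely in Section \ref{Convergence to Q above the threshold}, reserving the spectral machinery for the uniqueness step.

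\textbf{A further issue in the radial case.} The paper does not press a truncated virial to its conclusion with a persistent $o_R(1)$ error; instead it proves $\ddot y_R\le -4s_c(p-1)\delta(t)$ for $R\ge R_0$ (absorbing the remainder $A_R$ into the $\delta$-term using the radial Strauss/interpolation estimates that motivate (\ref{parameter: range for radial case})), then uses monotonicity $y_R(0)\le y_R(t_n)$ along a sequence $t_n$ with $\delta(t_n)\to 0$, and finally Fatou, to conclude that $u_0$ has finite variance. The radial case is thereby \emph{reduced} to the finite-variance case. Your sketch, which leaves an $o_R(1)$ in $V_R''$, would not close as written, since after integrating in $t$ the error grows linearly.

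In short: supply the Cauchy--Schwarz virial inequality above and the integrated exponential decay follows directly, making the spectral bootstrap unnecessary here; and for the radial case, aim to prove finite variance of $u_0$ rather than to keep the truncated virial alive.
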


\subsection{Finite variance solutions}
\label{Finite variance solutions subsections}
In this section, we are devoted to the proof of the Theorem \ref{theorem: convergence of u to soliton Q above the threshold} when $u_0$ is of finite variance. Here we introduce virial quantity 
\begin{equation}
	y(t) \triangleq \int |x|^2 |u(x,t)|^2 dx,
	\label{virial quantity}
\end{equation}
then 
\begin{equation}
	\dot{y}
	=\Re \int_{\mathbb{R}^N} |x|^2 \dot{u} \bar{u} dx \notag  \\
	= -2 \Im \int_{\mathbb{R}^N} |x|^2 (\Delta u) \bar{u} dx \notag\\
	=4 \Im \int_{\mathbb{R}^N} x \cdot \nabla u \bar{u} dx,
	\label{first derivative of varial}
\end{equation}
and
\begin{equation}
	\ddot{y} (t) = 16(s_c(p-1)+1)E[u]- 8s_c(p-1) \| \nabla u \|_2^2.
	\label{second derivative of virial: finite variance 1}
\end{equation}
By the assumption that
\[
E[u]=E[u_0]=E[Q]
\]
and Pohozhaev identity (\ref{Pohozhaev equality}), $\ddot{y}(t)$ can be written as
\begin{align}
	\ddot{y}(t)
	&=16(s_c(p-1)+1)E[Q]- 8s_c(p-1) \| \nabla u \|_2^2 \notag\\
	&= 8s_c(p-1) \left( \| \nabla  Q \|_2^2 - \| \nabla u \|_2^2 \right) = -8 s_c(p-1) \delta(t) <0,
	\label{second derivative of virial: finite variance 2}
\end{align}
where $\delta(t)= \Big| \| \nabla u \|_2^2 - \| \nabla Q \|_2^2 \Big|$. 

Then we devide the argument into the following three steps with the previous preliminary.\\
\textit{Step 1.} Claim: 
\begin{equation}
	\dot{y}(t) >0, \; \forall t\ge 0.
	\label{derivative of virial quantity: above the threshold}
\end{equation}
Here we use the convexity argument. If not, then $\exists t_0 \ge 0$ such that $\dot{y}(t_0) \le 0$. Then by (\ref{second derivative of virial: finite variance 2}), there exists $t_1 > t_0$, such that $\forall t \ge t_1$, $\dot{y}(t) \le \dot{y}(t_1) < \dot{y}(t_0) \le 0$, hence $y(t) \to -\infty$ as $t \to \infty$, a contradiction! \\
\textit{Step 2.} Claim: If $\varphi \in C^1(\mathbb{R}^N), \; f \in H^1(\mathbb{R}^N)$ satisfying $\|f \|_2= \| Q \|_2 \text{ and } E[f]=E[Q]$, moreover, assume that $\int |\nabla \varphi |^2 |f|^2 dx < \infty$, then
\begin{equation}
	\Big| \Im \int \left( \nabla \varphi \cdot \nabla f \right) \bar{ f}\Big|^2 \le  C \delta^2(f) \int |\nabla \varphi|^2 |f|^2.
	\label{Cauchy-Schwarz type inquality}
\end{equation}
Since $Q$ is the minimizer of minimizing problem
\[
\inf_{0 \not= f \in H^1(\mathbb{R}^N)} \frac{ \| \nabla f \|_2^{Np-(N+\gamma)} \| f \|_2^{N+\gamma-(N-2) p }} { \int \left( |\cdot|^{-(N-\gamma)} * |f|^p \right) |f|^p dx},
\]
we have
\begin{align*}
	\left( \int \left( |\cdot|^{-(N-\gamma)} * |f|^p \right) |f|^p dx \right) \| \nabla Q \|_2^{Np-(N+\gamma)} \le \left( \int \left( |\cdot|^{-(N-\gamma)} * Q^p \right) Q^p dx \right) \| \nabla f \|_2^{Np-(N+\gamma)}
\end{align*}
for any $f \in  H^1(\mathbb{R}^N)$. The inequality above is also vaild for $e^{i\lambda\varphi} f, \; \forall \lambda \in \mathbb{R}$, hence
\begin{align*}
	& \quad \left( \int |\nabla \varphi|^2 |f|^2 dx \right) \lambda^2 - 2 \left( \Im \int \left( \nabla \phi \cdot \nabla f \right) \bar{f}\right) \lambda   \\
	& \qquad \qquad + \| \nabla f \|_2^2 - \frac{\| \nabla Q \|_2^2 \left( \int \left( |\cdot|^{-(N-\gamma)} *|f|^p \right) |f|^p dx \right)^\frac{2}{Np-(N+\gamma)} }{\left( \int \left( |\cdot|^{-(N-\gamma)} * Q^pdx \right)Q^p \right)^\frac{2}{Np-(N+\gamma)}}  \ge 0, \quad \forall \lambda \in \mathbb{R},
\end{align*}
which means that
\begin{equation}
	 \left( \Im \int \left( \nabla \phi \cdot \nabla f \right) \bar{f}\right)^2 
	\le \left( \int |\nabla \varphi|^2 |f|^2 dx \right) \left( \| \nabla f \|_2^2 - \frac{\| \nabla Q \|_2^2 \left( \int \left( |\cdot|^{-(N-\gamma)} *|f|^p \right) |f|^p dx \right)^\frac{2}{Np-(N+\gamma)} }{\left( \int \left( |\cdot|^{-(N-\gamma)} * Q^pdx \right)Q^p \right)^\frac{2}{Np-(N+\gamma)}} \right).
	\label{Delta < 0}
\end{equation}
By the assumption that $E[f]=E[Q]$ and Pohozhaev identity (\ref{Pohozhaev equality}),
\begin{align}
	&\quad\| \nabla f \|_2^2 - \frac{\| \nabla Q \|_2^2 \left( \int \left( |\cdot|^{-(N-\gamma)} *|f|^p \right) |f|^p dx \right)^\frac{2}{Np-(N+\gamma)} }{\left( \int \left( |\cdot|^{-(N-\gamma)} * Q^pdx \right)Q^p \right)^\frac{2}{Np-(N+\gamma)}} \notag \\
	&= \| \nabla f \|_2^2 - \| \nabla Q \|_2^2 \left(1+ \frac{p \delta(t)}{\int (|\cdot|^{-(N+\gamma)}* Q^p)Q^p dx}  \right)^\frac{2}{Np-(N+\gamma)} \notag \\
	& \le \| \nabla f \|_2^2- \|\nabla Q \|_2^2 -\frac{2p}{Np-(N+\gamma)} \frac{\| \nabla Q \|_2^2}{ \int (|\cdot|^{-(N-\gamma)} *Q^p) Q^p dx} \delta(t) +C\delta(t)^2 \notag\\
	& = \left( 1- \frac{2p}{Np-(N+\gamma)}  \frac{\| \nabla Q \|_2^2}{ \int (|\cdot|^{-3} *Q^2) Q^2 dx} \right) \delta(t) + C\delta(t)^2 =C\delta(t)^2,
	\label{Cauchy-Schwartz type inquality: auxillary result}
\end{align}
the claim then immediately follows from (\ref{Delta < 0}) and (\ref{Cauchy-Schwartz type inquality: auxillary result}). \\
\textit{Step 3.} Complete the proof. Let $\varphi(x)= |x|^2$, then by (\ref{first derivative of varial}), (\ref{second derivative of virial: finite variance 2}) and (\ref{Cauchy-Schwarz type inquality}),
\[
(\dot{y})^2 \le  C (\ddot{y})^2 y \quad \Rightarrow \quad \frac{\dot{y}}{\sqrt{y}} \le -C \ddot{y}.
\]
By (\ref{derivative of virial quantity: above the threshold}),
\[
\sqrt{y(t)}- \sqrt{y(0)} = \int_0^t \frac{\dot{y}(s)}{\sqrt{y(s)}} ds \le -C \left( \dot{y}(t) - \dot{y}(0) \right) \le C \dot{y}(0)
\]
and $y(t)$ is uniformly bounded for $t \ge 0$. Therefore,
\[
\dot{y} \le -c \ddot{y}, \quad \Rightarrow \quad \frac{d}{dt} \left( e^{ct} \dot{y} \right)  \le 0, \quad \Rightarrow \quad \dot{y} (t) \le C e^{-ct}, \; \forall t \ge 0,
\]
for some $C,c >0$. Then by (\ref{second derivative of virial: finite variance 2}),
\[
\int_t^\infty \delta(s) ds = -\frac{1}{8 s_c (p-1)} \int_t^\infty \ddot{y}(s) ds \lesssim  e^{-ct}, \; \forall  t\ge 0.
\]
Then we immediately complete this part by Lemma \ref{auxilliary lemma}.

\subsection{Radial solutions}
\label{subsection: radial solutions}
In this subsection, we consider the case for $u_0 \in L_{\text{rad}}^2(\mathbb{R}^N)$, and our goal is to prove $u_0$ is of finite variance. Here we consider the localized virial quantity
\begin{equation}
	y_R(t)= \int_{\mathbb{R}^N} \varphi_R(x) |u(x,t)|^2 dx = \int_{\mathbb{R}^N} R^2\varphi\left( \frac{x}{R} \right) |u(x,t)|^2 dx,
	\label{localized virial: radial case}
\end{equation}
where  $\varphi$ is a $C^\infty$ positive radial function on $\mathbb{R}^N$ such that
\[
\varphi(x) =
\begin{cases}
	|x|^2, & \text{if } |x| \le 1, \\
	0, & \text{if } |x| \ge 2,
\end{cases} \quad \varphi (r) \ge 0 , \quad \varphi''(r) \le 2,
\]
then after some careful calculation, we obtain that
\begin{equation}
	\dot{y}_R(t)= 2 \Im \int_{\mathbb{R}^N} \nabla \varphi_R(x) \cdot  \nabla u \bar{u} dx = 2R \Im \int_{\mathbb{R}^N} \nabla \varphi \left( \frac{x}{R} \right) \cdot  \nabla u \bar{u} dx,
	\label{first derivative of virial: radial case}
\end{equation}
and
\begin{equation}
	\ddot{y}_R(t)=- 8 s_c(p-1) \delta(t) +A_R(u(t)),
	\label{second derivative of virial: radial}
\end{equation}
where
\begin{align}
	A_R(u(t))& = 4 \int_{\mathbb{R}^N} \left( \varphi''\left( \frac{x}{R} \right) -2  \right) |\nabla u|^2 dx - \int_{\mathbb{R}^N}  \left( \Delta^2 \varphi_R \right)  |u|^2 dx \notag\\
	& \quad+ \left( \frac{2}{p} -2 \right)\int_{\mathbb{R}^N} \left(\Delta \varphi_R  -2 N \right) \left( |\cdot|^{-(N-\gamma)} * |u|^p \right) |u|^p dx \notag \\
	& \quad - \frac{2}{p}\iint_{\mathbb{R}^N \times \mathbb{R}^N} |u(x)|^p |u(y)|^p \nabla_y \cdot  \left( \frac{(\nabla \varphi_R (x) -2x)-(\nabla \varphi_R(y)- 2y)}{|x-y|^{N-\gamma}} \right) dxdy.
	\label{remaining term: radial case}
\end{align}
Next, we we claim that there exists $R_0 >0$, such that
\begin{equation}
\ddot{y}_R(t) \le -4s_c(p-1) \delta(t), \quad \forall R \ge R_0,
\label{estimate of second derivative of virial: radial, above the threshold}
\end{equation}
and it suffices to check
\[
|A_R\left( u(t) \right)| \le 4s_c(p-1) \delta(t), \quad \forall R \ge R_0
\]
for some $R_0 >0$. Here we divide the proof into following two cases: \\
\textit{Case 1.} $t \in D_{\delta_1}$ for some $0 < \delta_1 \le \delta_0 \ll 1$. Here we try to make full use of modulation. By (\ref{modulation: decomposition of u}) and the symmetry of $u$,
\[
e^{-i \theta} u = (1+ \alpha)e^{it} Q + e^{it} h = e^{it} Q + e^{it} \left( \alpha Q +h \right).
\]
We denote $v=\alpha Q +v$, then
\begin{equation*}
	A_R(u)
	= A_R\left( e^{- i \theta } u \right) 
	= A_R(e^{it} Q + e^{it} v) 
	= A_R(Q+v) -A_R(Q),
\end{equation*}
where the last equality follows from the fact that
\[
\dot{y}_R(t) = 2 R \Im \int e^{-it} Q \nabla \varphi \left( \frac{x}{R} \right) \cdot e^{it} \nabla Q dx=0 \text{ and } \delta(u(t))=0 \text{ if } u(t)=e^{it}Q.
\]
Furthermore, note that when $|x| \le R$, $\nabla \varphi_R(x)= R \nabla \varphi\left( \frac{x}{R} \right) = 2x$, the function $\left( \nabla \varphi_R(x) - 2x \right)- \left( \nabla \varphi_R(y) -2y \right)$ is supported on
\begin{equation}
\left\lbrace (x,y) \in \mathbb{R}^N \times \mathbb{R}^N: |x| \ge R \text{ or } |y| \ge R\right\rbrace.
\label{support of nonlinear term: radial case}
\end{equation}
In addition, by mean-value theorem,
\begin{align}
	\Bigg| \nabla_y \cdot  \left( \frac{(2x-\nabla \varphi_R (x))-(2y- \nabla \varphi_R(y))}{|x-y|^{N-\gamma}}  \right) \Bigg| 
	& \lesssim \Bigg| \frac{-2N + \Delta \varphi_R (y)}{|x-y|^{N-\gamma}} \Bigg| 
	 + \frac{ |x-y| + |\nabla \varphi_R(x)-\nabla  \varphi_R (y)|}{|x-y|^{N-\gamma +1}} \notag\\
	& \lesssim \frac{1}{|x-y|^{N-\gamma}}.
	\label{estimate of gradident onto convolution}
\end{align}
Then by Lemma \ref{lemma: the estimates of modulation parameters}, Lemma \ref{lemma: exponential decay of ground state and its derivatives} and Lemma \ref{lemma: estimate on J(z) and K(z)},
\begin{align}
	A_R(u(t))
	&= |A_R(Q+v) - A_R(Q)| \notag \\
	&\lesssim  \Bigg| \int_{|x| \ge R} |\nabla Q + \nabla v|^2 -|\nabla Q|^2 dx \Bigg| + \Bigg| \int_{|x| \ge R} |Q+v|^2 -Q^2 dx \Bigg| \notag\\
	& \quad + \Bigg| \iint_{\{ |x| \ge R \} \times \mathbb{R}^N}  \left( \frac{|Q(x)+v(x)|^p |Q(y)+v(y)|^p}{|x-y|^{N-\gamma}} - \frac{|Q(x)|^p |Q(y)|^p}{|x-y|^{N-\gamma}} \right) dxdy\Bigg| \notag\\
	& \lesssim e^{-cR} \delta(t) + \delta^2(t)+ \delta^{2p}(t),
	\label{radial case: estimate on AR(u(t)): near the ground state}
\end{align}
for some $c >0$, which shows that there exists $R_1 \gg 1$ and $0 <\delta_1 \ll 1$ such that $|A_R(u(t))| \le 4s_c(p-1) \delta(t), \; \forall R \ge R_1, \text{ and } \forall t \in D_{\delta_1}$.

\noindent \textit{Case 2.} $t\notin D_{\delta_1}$. Since we have required that $\varphi''(r) \le 2$,
\begin{equation}
\int \left( \varphi''\left(\frac{x}{R}\right) -2 \right) |\nabla u|^2 dx \le 0.
\label{radial case: estimate of the first term}
\end{equation}
Moreover, 
\begin{align}
	\Bigg| \int_{\mathbb{R}^5} \left( \Delta^2 \varphi_R \right) |u|^2 dx\Bigg|
	&=\Bigg| \frac{1}{R^2} \int \left( \Delta^2 \varphi \right) \left( \frac{x}{R} \right) u(t,x)^2 dx \Bigg| \notag\\
	& \le \frac{C}{R^2} \| u \|_2^2 = \frac{C}{R^2} \| Q \|_2^2 \le 2s_c(p-1)\delta_1 \le 2s_c(p-1)\delta(t)
	\label{radial case: estimate of the second term}
\end{align}
for $R \ge R_2 = \sqrt{\frac{C\| Q \|_2^2}{2s_c (p-1)\delta_1}}$. 

It remains us to estimate the last two terms of $A_R(u(t))$ defined in (\ref{remaining term: radial case}). By (\ref{support of nonlinear term: radial case}) and (\ref{estimate of gradident onto convolution}), they can both be bounded by $\big\| \left( |\cdot|^{-(N-\gamma)} * |u|^p \right) |u|^p \big\|_{L_x^1(|x| \ge R)}$. Our aim is to prove
\begin{equation}
\Big\| \left( |\cdot|^{-(N-\gamma)} * |u|^p \right) |u|^p \Big\|_{L_x^1(|x| \ge R)} \lesssim R^{-\beta} \| u \|_2^{2p-\alpha} \| \nabla u \|_2^\alpha
\label{the estimate of the remaining term: radial case}
\end{equation}
for some $\beta >0$ and $\alpha \in [0,2]$. If we check the validity of  (\ref{the estimate of the remaining term: radial case}), then by mass conservation of $u$ and Young's inequality,
\begin{equation}
\Big\| \left( |\cdot|^{-(N-\gamma)} * |u|^p \right) |u|^p \Big\|_{L_x^1(|x| \ge R)} \lesssim R^{-\beta} \delta(t).
\label{the estimate of the remaining term: radial case 1}
\end{equation}
\begin{rmk}
	If we have (\ref{the estimate of the remaining term: radial case 1}), not only can we get a decay of coefficient with respect to $R$, but also we get that the behavior of nonlinear term can be control by $\delta(t)$ when $t \notin D_{\delta_0}$. However, if we wish $\delta(t)$ to dominate the behavior of nonlinear term, we are motivated to restrict $\alpha \in [0,2]$ in (\ref{the estimate of the remaining term: radial case}),  which, by further calculation, is why we should add the additional restrictions onto the parameters $(\gamma, N, p)$ as shown in (\ref{parameter: range for radial case}).
\end{rmk}

We then continue the argument. To prove (\ref{the estimate of the remaining term: radial case}), we divide the integral above into the following two parts:
\begin{align*}
	\Big\| \left( |\cdot|^{-(N-\gamma)} * |u|^p \right) |u|^p \Big\|_{L_x^1(|x| \ge R)}
	& \le 	\Big\| \left( |\cdot|^{-(N-\gamma)} *\left(  \chi_{\left\lbrace |x| \ge \frac{1}{2}R \right\rbrace} |u|^p \right) \right) |u|^p \Big\|_{L_x^1(|x| \ge R)} \\
	&+ \Big\| \left( |\cdot|^{-(N-\gamma)} *\left(  \chi_{\left\lbrace |x| \le \frac{1}{2}R \right\rbrace} |u|^p \right) \right) |u|^p \Big\|_{L_x^1(|x| \ge R)} \\
	& \triangleq I+ II.
\end{align*}
As for $I$, by H\"{o}lder inequality and Hardy-Littlewood-Sobolev inequality,
\begin{align*}
	I & \lesssim 	\Big\|  |\cdot|^{-(N-\gamma)} *\left(  \chi_{\left\lbrace |x| \ge \frac{1}{2}R \right\rbrace} |u|^p \right)\Big\|_{L_x^\frac{2N}{N-\gamma}(|x| \ge R)}  \| u \|_{L_x^\frac{2Np}{N+\gamma} \left( |x| \ge R \right)} \notag \\
	& \lesssim \| u \|_{L_x^{\frac{2Np}{N+\gamma}} \left( |x| \ge \frac{1}{2}R \right)}^{2p} \notag \\
	& \lesssim \frac{1}{R^{\frac{(N-1)\left( N(p-1) -\gamma \right)}{N}}}  \| u \|_2^\frac{Np+N+\gamma}{N} \|\nabla u \|_2^\frac{Np-N-\gamma}{N}.
\end{align*}
Note we cannot get any decay of $R$ when $N=1$, we are motivated to restrict the parameters to
\begin{equation}
	N \ge 2, \; \frac{Np-N-\gamma}{N} \in [0,2].
		\label{estimate for the first term I: radial case}
\end{equation}
As for $II$, noting that $|x-y| \sim |x|$ when $(x,y) \in \{ (x,y): |x| \ge R, |y| \le \frac{R}{2} \}$,
\begin{equation*}
	II \simeq \iint_{\left\lbrace |x| \ge R \right\rbrace \cap \left\lbrace |y| \le \frac{1}{2}R \right\rbrace} \frac{|u(x)|^p |u(y)|^p}{|x|^{N-\gamma}} dxdy  
	 = \left( \int_{|y| \le \frac{1}{2} R} |u(y)|^p dy \right) \left(  \int_{|x| \ge R} \frac{|u(x)|^p}{|x|^{N-\gamma}} dx \right).
\end{equation*}
If $s \in \Big(\frac{N}{N-\gamma}, +\infty \Big]$, then
\begin{equation*}
	II \lesssim  \big\| |x|^{-(N-\gamma)} \big\|_{L^{s}(|x| \ge R)} \big\| |u|^p \big\|_{L^{s'}(|x| \ge R)} \| u \|_{L^p\left(|x| \le \frac{R}{2}\right)}^p
	\lesssim  R^{-(N-\gamma) +\frac{N}{s}} \| u \|_{L^{s'p} (|x| \ge R)}^p \| u \|_{L^p\left(|x| \le \frac{R}{2}\right)}^p,
\end{equation*}
where
\begin{equation}
\| u \|_{L^p\left(|x| \le \frac{R}{2}\right)}^p \le || u \|_2^\frac{2N-Np+2p}{2} \| \nabla u \|_2^\frac{Np-2N}{2}
\label{estimates of the remaining term: radial case, near the origin}
\end{equation}
by H\"{o}lder inequality and Sobolev embedding. Moreover, as for $ \| u \|_{L^{s'p} (|x| \ge R)}^p$, since the integral regime is away from the origin, except for following the argument in (\ref{estimates of the remaining term: radial case, near the origin}), there is another method to handle it, which is by using interpolation between $L^2$ norm and $L^\infty$ norm together with Strauss lemma. Compared with the former one, not only can we get a decay of $R$, but also can get less power of $\| \nabla u \|_2$  followed by the latter one. Thus we follow the latter way, then
\begin{align*}
	\big\| u \big\|_{L^{s' p}(|x| \ge R)}^p
	&= \left( \int_{|x| \ge R} |u(x)|^{s' p} dx \right)^{\frac{1}{s'}} \\
	& \le  \left( \int_{|x| \ge R} |u(x)|^{2} dx \right)^{\frac{1}{s'}}  \| u \|_{L^\infty (|x| \ge R)}^{\frac{1}{s'} \left( s'p -2 \right)} \qquad \text{(by interpolation)} \\
	& \lesssim R^{-\frac{N-1}{2} \left( p - \frac{2}{s'} \right)} \| u ||_{L^2}^{\frac{p}{2} + \frac{1}{s'}} \| \nabla u \|_{L^2}^{\frac{p}{2}-\frac{1}{s'}}. \qquad \text{(by Strauss lemma)}
\end{align*}
Consequently, $II$ is bounded by
\[
II \lesssim R^{-\frac{N-1}{2} \left( p - \frac{2}{s'} \right) -(N-\gamma) +\frac{N}{s}} \| u ||_{L^2}^{\frac{3p}{2} + \frac{1}{s'}- \frac{Np}{2}+N} \| \nabla u \|_{L^2}^{\frac{p}{2}-\frac{1}{s'} +\frac{Np}{2}-N},
\]
where the power of $\| \nabla u \|_2$ satisfies
\begin{equation}
\frac{p}{2}- \frac{1}{s'} +\frac{Np}{2}- N = \frac{1}{s} + \left( \frac{p}{2} -1 \right) \left( N+1 \right)  \in \Bigg[ \left( \frac{p}{2} -1 \right) \left( N+1 \right), \frac{N-\gamma}{N} + \left( \frac{p}{2} -1 \right) \left( N+1 \right) \Bigg),
\label{the estimate of II: radial case}
\end{equation}
and the minimum is obtained at $s=\infty$. In order to get (\ref{the estimate of the remaining term: radial case}), by (\ref{estimate for the first term I: radial case}) and (\ref{the estimate of II: radial case}), we then impose
\begin{equation}
	N \ge 2, \; \frac{Np-N-\gamma}{N} \in [0,2] \text{ and } \frac{p}{2}- \frac{1}{s'} +\frac{Np}{2}- N \in [0,2].
	\label{radial case: restriction on to the parameter original version}
\end{equation}
	Since we hope to get range of $(\gamma,N,p)$ as wide as possible, we choose $s=\infty$, then at this time (\ref{radial case: restriction on to the parameter original version}) is definitely (\ref{parameter: range for radial case}).

Under the restriction (\ref{parameter: range for radial case}), combining with (\ref{radial case: estimate of the first term}), (\ref{radial case: estimate of the second term}) and (\ref{the estimate of the remaining term: radial case 1}), we can choose $R \ge R_0 \gg 1$ such that
\[
|A_R(u(t)) | \le  4 s_c (p-1) \delta(t), \quad \forall t \notin D_{\delta_1},
\]
which implies (\ref{estimate of second derivative of virial: radial, above the threshold}). As a simple by-product, $\dot{y}_R (t) >0$ and $\dot{y}_R(t)$ is monotonically decreasing, which respectively imply that
\begin{equation}
y_R(0)= \int R^2 \varphi \left( \frac{x}{R} \right) |u_0|^2 dx \le y_R(t), \forall t \ge 0, \; \forall R \ge R_0,
\label{monotonicity of localized virial quantity}
\end{equation}
and there exists $A \ge 0$ such that $\dot{y}_R(t) \downarrow A \text{ as } t \to \infty$. The latter fact ensures
\begin{equation*}
\int_0^\infty \delta(t) dt \lesssim \int_0^\infty |\ddot{y}_R (t)| dt = \Big| \int_0^\infty \ddot{y}_R(t)dt \Big|= \Big| \dot{y}_R(\infty) - \dot{y}_R(0) \Big| < \infty,
\end{equation*}
so there exists a subsequence $\{t_n\}$ such that $t_n \to \infty$ and  $\delta(t_n) \to 0$. Since $M[u]=M[Q], E[u]=E[Q]$, by Proposition \ref{propostion: convergence of u to soliton},  there exists $\theta_0$ such that
\[
\big\| u(t_n) - e^{i \theta_0} Q \big\|_{H^1(\mathbb{R}^N)} \to 0, \quad n \to \infty.
\] 
Taking $t=t_n$ in (\ref{monotonicity of localized virial quantity}) and then letting $n \to \infty$, we have 
\[
\int R^2 \varphi \left( \frac{x}{R} \right) |u_0|^2 dx \le \int R^2 \varphi \left( \frac{x}{R} \right) Q^2 dx, \quad \forall R \ge R_0,
\] 
thus by Fatou's Lemma,
\begin{align*}
	\int |x|^2 |u_0(x)|^2 dx 
	& \le \liminf_{R \to \infty} \int R^2 \varphi \left( \frac{x}{R} \right) |u_0(x)|^2 dx \\
	& \le \liminf_{R \to \infty}  \int R^2 \varphi \left(\frac{x}{R} \right) Q^2 dx < \infty,
\end{align*}
which implies the finite variance of $u_0$, then we finish the proof.

\subsection{Complete the proof of Theorem \ref{theorem: convergence of u to soliton Q above the threshold}} 
It remains for us to check that the lifespan of $u$ in negative time direction is finite. First, from the proof of (\ref{derivative of virial quantity: above the threshold}),
\begin{lem}
	Let $u$ be a solution to (\ref{Hartree equation}) satisfying (\ref{condition: above the threshold}) and $T_+(u) = + \infty$ with initial data $u_0$ of finite variance, then
	\[
	\Im \int x \cdot \nabla u(x,t) \bar{u}(x,t) dx = \frac{1}{4} \dot{y}(t) >0
	\]
	for all $t$ in the interval of existence of $u$.
	\label{lemma: positivity of the derivative of virial quantity}
\end{lem}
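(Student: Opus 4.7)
The plan is to reuse verbatim the convexity argument already carried out in Section \ref{Finite variance solutions subsections} for the forward time direction (see the proof of (\ref{derivative of virial quantity: above the threshold})), observing that no feature of the argument actually required $t \ge 0$; it only required $T_+(u) = +\infty$ together with $\ddot{y} < 0$ throughout the lifespan.

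More concretely, I would first note that since $u_0$ has finite variance, $u(t)$ has finite variance for all $t$ in its lifespan $(T_-, +\infty)$, so $y(t)$ is well defined, nonnegative, and $C^2$ there. By Lemma \ref{lemma: behavior of gradient of solution}, the assumption $\|\nabla u_0\|_2 > \|\nabla Q\|_2$ propagates to give $\|\nabla u(t)\|_2 > \|\nabla Q\|_2$ for every $t \in (T_-, +\infty)$, hence $\delta(t) > 0$ throughout. Combining this with (\ref{second derivative of virial: finite variance 2}) yields
\begin{equation*}
\ddot{y}(t) = -8 s_c(p-1)\delta(t) < 0, \qquad \forall t \in (T_-, +\infty),
\end{equation*}
so $\dot{y}$ is strictly decreasing on the entire interval of existence.

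Now argue by contradiction: suppose there exists $t_0 \in (T_-, +\infty)$ with $\dot{y}(t_0) \le 0$. By strict monotonicity of $\dot{y}$ (and the fact that $\ddot y(t_0)<0$), one can pick $t_1 > t_0$ with $\dot{y}(t_1) < 0$; then $\dot{y}(t) \le \dot{y}(t_1) < 0$ for all $t \ge t_1$, which integrates to
\begin{equation*}
y(t) \le y(t_1) + \dot{y}(t_1)(t - t_1) \xrightarrow[t \to +\infty]{} -\infty.
\end{equation*}
Since $T_+(u) = +\infty$, $y$ is defined for all such $t$, but $y(t) \ge 0$, a contradiction. Therefore $\dot{y}(t) > 0$ for every $t$ in the interval of existence, and the identity $\Im \int x \cdot \nabla u \, \bar{u}\, dx = \tfrac14 \dot{y}(t)$ from (\ref{first derivative of varial}) finishes the proof.

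The main (and essentially only) obstacle would be making sure the sign information $\delta(t) > 0$ extends to negative times; this is handled cleanly by Lemma \ref{lemma: behavior of gradient of solution} (gradient separation), so the rest is a purely real-variable convexity argument that works on both sides of $t=0$ without modification.
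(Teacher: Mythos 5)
Your proof is correct and follows the same convexity argument the paper invokes: the paper's ``proof'' of this lemma is simply the remark that the Step~1 argument from Section~\ref{Finite variance solutions subsections} applies on the whole interval of existence, and you have spelled out exactly what that means — gradient separation (Lemma~\ref{lemma: behavior of gradient of solution}) gives $\delta(t)>0$ on all of $(T_-,+\infty)$, hence $\ddot y<0$ there via (\ref{second derivative of virial: finite variance 2}), and the convexity contradiction uses only $T_+=+\infty$ and $y\ge0$, with no dependence on $t\ge0$.
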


As for $u$ given in Theorem \ref{theorem: convergence of u to soliton Q above the threshold}, we assume $T_- (u) = -\infty$. Since the equation (\ref{Hartree equation}) satisfies the time reversal symmetry, we can see that $v(x,t)= \bar{u}(x,-t)$ is also the solution to (\ref{Hartree equation}) on $\mathbb{R}_+$ and $v$ satisfies the condition in Lemma \ref{lemma: positivity of the derivative of virial quantity}, which implies that
\[
0 < \Im \int x \cdot \nabla v(x,t) \bar{v} (x,t) dx = - \Im \int x\cdot \nabla u(x,-t) \bar{u}(x,-t) dx, \; \forall t >0,
\]
but it contradicts to the fact that
\[
\Im \int x \cdot u(x,t) \bar{u}(x,t) dx >0, \; \forall t \in \left( T_-(u), T_+(u)  \right).
\]
Hence the negative time of existence of $u$ is finite.
\section{Convergence to $Q$ for $\| \nabla u_0 \|_2^\frac{1-s_c}{s_c} \| u_0 \|_2 < \| \nabla Q \|_2^\frac{1-s_c}{s_c} \| Q \|_2$}
\label{Convergence to Q below the threshold}
\begin{thm} 
	Under the same conditions as what in Theorem \ref{theorem: classification of threshold solutions at zero momentum case}, let $u$ be the solution to (\ref{Hartree equation}) satisfying
	\begin{equation}
		M[u]=M[Q], \quad E[u]= E[Q], \quad \| \nabla u \|_2 < \| \nabla Q \|_2,
		\label{condition: below threshold}
	\end{equation}
	and assume it does not scatter in positive time. Then there exists $(\theta_0, x_0) \in \mathbb{R}\backslash 2 \pi \mathbb{Z} \times \mathbb{R}^N$, $c ,C >0$ such that
	\[
	\| u - e^{it + i \theta_0} Q(\cdot -x_0) \|_{H^1(\mathbb{R}^N)} \le Ce^{-ct}.
	\]
	\label{convergence to Q below the threshold}
\end{thm}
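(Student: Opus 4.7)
The plan is to mirror the strategy used for the above-threshold case in Section \ref{Convergence to Q above the threshold}, replacing the finite-time virial blow-up mechanism by a concentration--compactness reduction suited to non-scattering global solutions. By Theorem \ref{classification of solution below threshold}(1), the hypothesis (\ref{condition: below threshold}) forces $u$ to be global in both time directions with $\|\nabla u(t)\|_2<\|\nabla Q\|_2$ for every $t$. Since $u$ is assumed not to scatter as $t\to+\infty$, a Kenig--Merle-type profile decomposition combined with the long-time perturbation theory in the Strichartz framework of Definition \ref{rmk: definition of Strichartz pair} should extract a critical element: a continuous path $x(t)\in\mathbb{R}^N$ such that $K=\{u(t,\cdot-x(t)):t\ge 0\}$ is precompact in $H^1(\mathbb{R}^N)$.

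Next I would show that $\delta(t)\to 0$ as $t\to+\infty$. Given any sequence $t_n\to+\infty$, precompactness of $K$ yields (along a subsequence) a limit $v_0\in H^1$ with $M[v_0]=M[Q]$, $E[v_0]=E[Q]$ and $\|\nabla v_0\|_2\le\|\nabla Q\|_2$. If the last inequality were strict, the solution $v$ emanating from $v_0$ would scatter by Theorem \ref{classification of solution below threshold}(1); running the long-time perturbation lemma at $t_n$ would then propagate scattering to $u$ itself, contradicting the hypothesis. Hence $\|\nabla v_0\|_2=\|\nabla Q\|_2$, so Proposition \ref{propostion: convergence of u to soliton} forces $v_0$ to be a symmetry copy of $Q$; in particular $\delta(t_n)\to 0$. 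A continuity argument using the compactness of $K$ and the modulation estimates of Lemma \ref{lemma: the estimates of modulation parameters} upgrades this subsequential statement to $\lim_{t\to+\infty}\delta(t)=0$, so from some $T_0\ge 0$ onward we are in the modulation regime and may decompose
\[
u(x,t)=e^{it+i\theta(t)}\Bigl((1+\alpha(t))Q(x-X(t))+h(t,x-X(t))\Bigr),
\]
with $|\alpha(t)|\simeq\|h(t)\|_{H^1}\simeq\delta(t)$.

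The remaining task is to promote the qualitative decay $\delta(t)\to 0$ into the quantitative bound $\int_t^\infty\delta(s)\,ds\le Ce^{-ct}$ required by Lemma \ref{auxilliary lemma}. I would first obtain only $\int_0^\infty\delta(s)\,ds<\infty$ from a localized virial functional
\[
F_R(t)=2\,\mathrm{Im}\!\int\nabla\varphi_R(x-X(t))\cdot\nabla u(t,x)\,\overline{u(t,x)}\,dx
\]
recentered along the modulated path, with $\varphi_R$ the cutoff introduced in Section \ref{subsection: radial solutions}: thanks to the Pohozhaev identity, the strict saturation of the mass--energy threshold, and precompactness of $K$ which disposes of spatial tails uniformly in $t$, one obtains $F_R'(t)\le -c\,\delta(t)$ modulo errors that are absorbed by choosing $R$ large, while $|F_R(t)|$ remains uniformly bounded in $t$. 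To pass from $L^1$-integrability to exponential decay I would project the linearized equation (\ref{the equation h has to satisfies}) onto the eigenfunctions $\mathcal{Y}_\pm$ from Proposition \ref{proposition: spectral information of linearized operator L} and analyze the resulting coupled ODE on the spectral coordinates: the $\mathcal{Y}_-$-mode is linearly stable and decays like $e^{-e_0 t}$, while the $\mathcal{Y}_+$-mode is controlled through the orthogonality (\ref{orthogonal condition of v}) together with the integrability just obtained, exactly in the spirit of Duyckaerts--Roudenko.

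The main obstacle will be the concentration--compactness step itself: the nonlocal, fractional-power nonlinearity $\bigl(|\cdot|^{-(N-\gamma)}\ast|u|^p\bigr)|u|^{p-2}u$ requires a profile decomposition and a long-time perturbation lemma tailored to the mixed $\langle\nabla\rangle S(L^2)\cap S(\dot H^{s_c})$ framework of Section \ref{Preliminaries}, and when $p\in(2,3)$ only H\"older-type difference estimates are available on $R(h)$, exactly the technical issue flagged in the introduction. Rigorously controlling these nonlinear errors --- and, at the spectral step, justifying the pairing of $h(t)$ with the Schwartz-class eigenfunctions $\mathcal{Y}_\pm$ via (\ref{resolvent operator maps Schwartz class into Schwartz class}) --- is the heart of the argument.
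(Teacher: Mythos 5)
Your outline follows the same skeleton as the paper's Section~\ref{Convergence to Q below the threshold} for the concentration--compactness step (Lemma \ref{concentration compactness}) and the recentered localized virial identity, but it diverges at the crucial intermediate step and the diverge contains a genuine gap.

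The gap is in your claim that $\delta(t)\to 0$ pointwise as $t\to+\infty$. You argue that any $H^1$ limit $v_0$ of $u(t_n,\cdot-x(t_n))$ with $\|\nabla v_0\|_2<\|\nabla Q\|_2$ would generate a scattering solution ``by Theorem \ref{classification of solution below threshold}(1).'' This does not apply: since $M[v_0]=M[Q]$ and $E[v_0]=E[Q]$, the limit sits exactly at threshold, $\mathcal{M}\mathcal{E}[v_0]=1$, whereas Theorem \ref{classification of solution below threshold} requires $\mathcal{M}\mathcal{E}<1$ strictly. For a threshold profile with $\mathcal{G}<1$, the trichotomy ``scatters or is $Q^-$ up to symmetries'' is precisely the content of the theorem you are trying to prove (Theorem \ref{theorem: classification of threshold solutions at zero momentum case} part a), so invoking it here is circular. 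The paper deliberately avoids establishing a full pointwise limit $\delta(t)\to0$ at this stage; it only derives the much weaker convergence in mean $\frac1T\int_0^T\delta(t)\,dt\to0$ (Lemma~\ref{convergence in mean: lemma}), hence a \emph{subsequence} $\delta(t_n)\to0$ (Corollary~\ref{delta tends to 0 with a time sequence}), and that is all that is needed.

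The second, related deviation is in how you propose to bootstrap to exponential decay. Projecting $h$ onto the eigenfunctions $\mathcal{Y}_\pm$ and analyzing the spectral ODE is not used in Section~\ref{Convergence to Q below the threshold} at all; it belongs to Section~\ref{uniqueness: section}, where it sharpens an already-obtained $e^{-ct}$ rate to $e^{-e_0^-t}$. The route the paper actually takes for Theorem~\ref{convergence to Q below the threshold} is: (i) the virial-type estimate $\int_\sigma^\tau\delta\le C\bigl(1+\sup_{[\sigma,\tau]}|x|\bigr)\bigl(\delta(\sigma)+\delta(\tau)\bigr)$ (Lemma~\ref{lem: virial type estimates on delta}), valid whether or not one is in the modulation regime; (ii) the reverse bound $|x(\tau)-x(\sigma)|\le C\int_\sigma^\tau\delta$ (Lemma~\ref{control of the variations of mass center}), proved with a dichotomy on intervals of length at most~2; (iii) feeding the subsequence $t_n$ with $\delta(t_n)\to0$ into (i)--(ii) shows $x(t)$ is uniformly bounded, which collapses (i) to $\int_\sigma^\infty\delta\lesssim\delta(\sigma)$, and Gronwall then gives $\int_\sigma^\infty\delta\le Ce^{-c\sigma}$; finally Lemma~\ref{auxilliary lemma} converts this into the claimed $H^1$-convergence. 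Your plan is missing the crucial control-of-$x(t)$ lemma, which is what makes the Gronwall argument close without ever needing the (circular) pointwise limit.
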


First of all, we present a result about concentration compactness, which is essential to the later discussion in this section.
\begin{lem}[Concentration compactness] 	\label{concentration compactness}
	Let $u$ be a solution to (\ref{Hartree equation}) satisfying the conditions in Theorem \ref{convergence to Q below the threshold}. There exists a continuous function $x(t)$ such that 
	\begin{equation}
		K \triangleq \left\lbrace u(x+x(t),t): t\in [0,+\infty) \right\rbrace  
		\label{trajetory of u}
	\end{equation}
	is pre-compact in $H^1(\mathbb{R}^N)$.
\end{lem}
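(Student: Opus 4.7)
The plan is to argue by contradiction using a linear profile decomposition for sequences $u(\cdot, t_n)$ and the sub-threshold scattering result of Theorem \ref{classification of solution below threshold}(1), reducing the orbit to a single nonlinear profile that must coincide with $u$ up to translation.

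First I would record that by Lemma \ref{lemma: behavior of gradient of solution}(b), the assumption $\mathcal{G}[u_0] < 1$ implies $\mathcal{G}[u(t)] < 1$ for all $t \in \mathbb{R}$, so $\{u(t) : t \ge 0\}$ is uniformly bounded in $H^1$. The key step is then to prove the following compactness modulo translations: for every sequence $t_n \to \infty$ there exist $x_n \in \mathbb{R}^N$ and $V \in H^1$ with $u(\cdot + x_n, t_n) \to V$ strongly in $H^1$ along a subsequence.

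To prove this, apply the $H^1$ linear profile decomposition (the same one used in Proposition \ref{propostion: convergence of u to soliton}) to $\phi_n := u(\cdot, t_n)$:
\[
\phi_n(x) = \sum_{j=1}^{J} e^{it_n^j \Delta} V^j(x - x_n^j) + w_n^J(x),
\]
with the usual orthogonality of parameters $(t_n^j, x_n^j)$, and with the linear evolution of $w_n^J$ small in the critical Strichartz norm as $J \to \infty$. The orthogonality gives the Pythagorean expansions
\[
M[\phi_n] = \sum_{j=1}^{J} M[V^j] + M[w_n^J] + o_n(1), \qquad \|\nabla \phi_n\|_2^2 = \sum_{j=1}^{J} \|\nabla V^j\|_2^2 + \|\nabla w_n^J\|_2^2 + o_n(1),
\]
and, using the non-local analogue of the Brezis--Lieb lemma for the convolution functional already invoked in the proof of Proposition \ref{propostne: convergence of u to soliton}, the same decomposition for the potential energy and hence for $E[\phi_n]$. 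Associate to each $V^j$ its nonlinear profile $NL^j$ in the standard way (depending on whether $t_n^j$ stays bounded or tends to $\pm\infty$, in the latter case using the scattering theory for data with small critical norm at infinity).

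From $M[u] = M[Q]$, $E[u] = E[Q]$, and $\mathcal{G}[u(t_n)] < 1$, the Pythagorean decompositions force each profile and the remainder to satisfy $M[\cdot]^{\theta} E[\cdot] \le M[Q]^\theta E[Q]$ and the sub-threshold gradient condition, so Theorem \ref{classification of solution below threshold}(1) applies to each nonlinear profile $NL^j$ and to the data produced by $w_n^J$. A standard long-time perturbation argument for (\ref{Hartree equation}) (of the type used in \cite{arora2019global} to prove Theorem \ref{classification of solution below threshold}) then shows that if $J \ge 2$, or if $J = 1$ and the single profile is strictly sub-threshold, or if the remainder does not vanish in $H^1$, then $u$ itself scatters in positive time, contradicting our assumption. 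Hence $J = 1$, $w_n^1 \to 0$ in $H^1$, and $\phi_n(\cdot + x_n^1) \to V^1$ strongly in $H^1$ along a subsequence, as required.

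Finally, for the continuous choice of $x(t)$, one uses the pre-compactness of the orbit modulo translations together with continuity of $t \mapsto u(t)$ in $H^1$: for each $t$, select $x(t)$ so that a fixed bump of mass of $|u(\cdot + x(t), t)|^2$ is concentrated near the origin in a prescribed way (as in Duyckaerts--Merle \cite{duyckaerts2009dynamic} or \cite{duyckaerts2010threshold}); standard arguments make this selection continuous. The main obstacle will be the nonlocal nonlinearity: one must justify the Brezis--Lieb-type decoupling of $\iint |\cdot|^{-(N-\gamma)} * |u|^p \cdot |u|^p$ along orthogonal profiles (so that the Hartree energy splits), and one must run the long-time perturbation theory in the mixed Strichartz spaces $S(L^2) \cap S(\dot H^{s_c})$ of Definition \ref{rmk: definition of Strichartz pair}, since the low power $p \in [2,3)$ prevents a direct Taylor expansion of the nonlinearity and forces Hölder-type estimates of the kind collected in Appendix \ref{Appendix: Strichartz estimates}.
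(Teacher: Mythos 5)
Your plan follows essentially the same route as the paper, which defers to Duyckaerts--Roudenko \cite{duyckaerts2010threshold}, Lemma 6.1: a profile decomposition combined with the sub-threshold scattering result and long-time perturbation theory forces a single threshold profile with vanishing remainder, yielding pre-compactness modulo translations. One caveat: the decomposition you need, with nontrivial temporal parameters $t_n^j$ and a remainder small in the critical Strichartz norm, is of Bahouri--G\'erard/Keraani type as developed for \cite{arora2019global}, not the fixed-time Hmidi--Keraani decomposition cited in Proposition \ref{propostion: convergence of u to soliton}; and since a nonlinear profile may carry nonzero momentum, you should invoke the sub-threshold scattering criterion after a Galilean boost (cf.\ Theorem \ref{classification of threshold solutions with nonzero momentum case}), since Theorem \ref{classification of solution below threshold} is stated for $P[u_0]=0$.
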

\begin{proof}
	The proof is essentially the same as \cite[Lemma $6.1$]{duyckaerts2010threshold}.
\end{proof}

    Repeating the analogous argument to what in \cite[Section $6.1$]{duyckaerts2010threshold}, the continuous function $x(t)$ defined in Lemma \ref{concentration compactness} can be further modified such that not only does it retain the property in Lemma \ref{concentration compactness}, but also it satisfies
	\begin{equation}
	x(t)=X(t), \quad \forall t \in D_{\delta_0},
	\label{mass center: modified version}
	\end{equation}
   where $X(t)$ is defined in (\ref{modulation: decomposition of u}).

 Moreover, we can obtain more informations about the behavior of mass center $x(t)$ as follows:
\begin{lem}
	As $u$ given in Theorem \ref{convergence to Q below the threshold}. Let $x(t)$ defined above, then
	
	\noindent $(i)$
	\begin{equation}
		P[u]= \Im \int \bar{u} \nabla u dx=0,
		\label{vanishing of momentum}
	\end{equation}
	and thus
	\begin{equation}
		\lim\limits_{t \to +\infty} \frac{x(t)}{t}=0.
		\label{mass center: behavior at infinity}
	\end{equation}
	
	\noindent $(ii)$ there exists a constant $C >0$ such that
	\begin{equation}
		|x(t)-x(s)| \le C, \quad \forall s,t \ge 0 \text{ with } |s-t| \le 2.
		\label{mass center: formally uniform continuity}
	\end{equation}
\end{lem}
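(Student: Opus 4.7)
The natural attack is by contradiction, using Galilean invariance to reduce to the below-threshold setting. Suppose $P[u_0]\neq 0$ and set $\xi_0=-P[u_0]/M[u_0]$. Define the Galilean transform $v(x,t)=e^{ix\cdot\xi_0}e^{-it|\xi_0|^2}u(x-2\xi_0 t,t)$. A direct computation (already recorded in the discussion just before Theorem \ref{classification of threshold solutions with nonzero momentum case}) gives $P[v]=0$, $M[v]=M[Q]$, $E[v]=E[Q]-\frac{|P[u_0]|^2}{2M[Q]}<E[Q]$, and $\|\nabla v_0\|_2^2=\|\nabla u_0\|_2^2-\frac{|P[u_0]|^2}{M[u_0]}<\|\nabla Q\|_2^2$. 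Thus $\mathcal{M}\mathcal{E}[v_0]<1$ and $\mathcal{G}[v_0]<1$, so Theorem \ref{classification of solution below threshold} part $(1)$ forces $v$ to scatter in $H^1$. Since scattering is preserved under the Galilean symmetry, $u$ must scatter as well, contradicting the standing hypothesis in Theorem \ref{convergence to Q below the threshold} that $u$ does not scatter. Hence $P[u]=0$.

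\textbf{Plan for $x(t)/t\to 0$.} With $P[u]=0$ in hand I plan a truncated center-of-mass argument of the Duyckaerts--Merle / Duyckaerts--Roudenko type. Pick a smooth cutoff $\phi:\mathbb{R}^N\to\mathbb{R}^N$ with $\phi(y)=y$ on $|y|\le 1$ and $|\phi|\lesssim 1$ outside. For $R\ge 1$ put $z_R(t)=\int R\,\phi(x/R)|u(x,t)|^2\,dx$, so that $\dot z_R(t)=2\Im\int D\phi(x/R)\nabla u\,\bar u\,dx$. The precompactness of $K$ in Lemma \ref{concentration compactness} gives, for every $\varepsilon>0$, a radius $R_0(\varepsilon)$ such that $\int_{|x-x(t)|\ge R_0}(|u|^2+|\nabla u|^2)\,dx\le\varepsilon$ uniformly in $t\ge 0$. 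Choose $R\gg R_0+|x(t)|$; on the set where $\phi(x/R)=x/R$ one gets $D\phi(x/R)=I$, and the remainder lives in a region of small mass. Combined with $P[u]=0=\Im\int\bar u\nabla u\,dx$, this yields $|\dot z_R(t)|\le C\varepsilon$, hence $|z_R(t)-z_R(0)|\le C\varepsilon t$. On the other hand, again by compactness, $z_R(t)=x(t)M[Q]+O(R_0)+o_R(1)$ when $R\gg R_0+|x(t)|$. Combining these two estimates for a sequence $t_n\to\infty$ along which $R$ is chosen $\gg|x(t_n)|+R_0$ gives $|x(t_n)|M[Q]\le O(R_0)+C\varepsilon t_n$, so $\limsup_{t\to\infty}|x(t)|/t\le C\varepsilon/M[Q]$; sending $\varepsilon\to 0$ finishes the proof.

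\textbf{Plan for Part (ii), the local boundedness of $x(\cdot)$.} I will argue by contradiction using local well-posedness plus compactness. Suppose there exist sequences $s_n,t_n\ge 0$ with $|t_n-s_n|\le 2$ and $|x(t_n)-x(s_n)|\to\infty$. Extract a subsequence along which $t_n-s_n\to\tau\in[-2,2]$. By precompactness of $K$ in $H^1$, up to a further subsequence $u(\cdot+x(s_n),s_n)\to v_0$ strongly in $H^1$ with $\|v_0\|_2=\|Q\|_2\neq 0$. By the continuous dependence of $H^1$ solutions to (\ref{Hartree equation}) on initial data on the compact time interval $[-2,2]$, the solutions with those initial data evaluated at time $t_n-s_n$ also converge, i.e.\ $u(\cdot+x(s_n),t_n)\to\tilde v(\tau)$ in $H^1$, where $\tilde v$ is the solution with $\tilde v(0)=v_0$. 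Hence $\|u(\cdot+x(s_n),t_n)\|_{L^2(B_R)}\to\|\tilde v(\tau)\|_{L^2(B_R)}$ for every $R>0$. However, writing $y_n:=x(t_n)-x(s_n)$ we have $u(\cdot+x(s_n),t_n)=u(\cdot-y_n+x(t_n),t_n)$, and precompactness of $\{u(\cdot+x(t_n),t_n)\}$ forces its translates by $-y_n$ (with $|y_n|\to\infty$) to converge to $0$ in $L^2(B_R)$ for each fixed $R$. Therefore $\tilde v(\tau)\equiv 0$, which by mass conservation would force $v_0\equiv 0$, contradicting $\|v_0\|_2=\|Q\|_2$. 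This establishes (\ref{mass center: formally uniform continuity}).

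\textbf{Expected difficulties.} The routine Galilean argument for (i) is straightforward modulo correctly identifying how the symmetry acts on $\mathcal{M}\mathcal{E}$ and $\mathcal{G}$, which the paper has already isolated. The truly delicate point is the truncated virial computation for $x(t)/t\to 0$: the bookkeeping of the boundary terms from $D\phi(x/R)-I$ and of the non-local piece (which we do not see here but may enter through similar arguments elsewhere) has to be uniform in $t$, and one must be careful to choose $R$ simultaneously large compared to both $R_0(\varepsilon)$ and $|x(t)|$. This coupling between the compactness scale and the unknown quantity $x(t)$ is the main technical obstacle; it is handled by taking $R=R(t)$ with $R(t)\to\infty$ slowly enough that $R(t)\gg|x(t)|+R_0(\varepsilon)$ along the relevant sequence, using the compactness to justify the approximation $z_{R(t)}(t)\approx x(t) M[Q]$.
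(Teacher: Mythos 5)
Your overall plan follows the same route the paper gestures at — it merely cites \cite{duyckaerts2007scattering} for part (i) and "concentration compactness plus continuous dependence" for part (ii) — so the Galilean-transform proof that $P[u]=0$ and the compactness-plus-continuous-dependence proof of (ii) are both correct and exactly what the references do. The Galilean computation lowering both $\mathcal{ME}$ and $\mathcal{G}$ below $1$ is the standard trick, and the argument for (ii) is sound: if $|x(t_n)-x(s_n)|\to\infty$ with $|t_n-s_n|\le 2$, strong $H^1$ convergence of $u(\cdot+x(s_n),s_n)$ and continuous dependence give $u(\cdot+x(s_n),t_n)\to\tilde v(\tau)$, while translating $u(\cdot+x(t_n),t_n)\to w_0$ by $|y_n|\to\infty$ forces $\tilde v(\tau)\equiv 0$, contradicting mass conservation.

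There is, however, a genuine gap in your truncated center-of-mass argument for $x(t)/t\to 0$. To pass from $|\dot z_R|\le C\varepsilon$ to $|z_R(t_n)-z_R(0)|\le C\varepsilon t_n$ you need the pointwise bound on $\dot z_R(s)$ to hold for \emph{every} $s\in[0,t_n]$, which in turn requires $R\ge R_0(\varepsilon)+|x(s)|$ for all such $s$, i.e.\ $R\ge R_0(\varepsilon)+\sup_{0\le s\le t_n}|x(s)|$. Choosing $R\gg R_0+|x(t_n)|$, as you do, controls only the endpoint. Your remark about taking $R(t)$ "slowly enough" acknowledges the coupling but does not resolve it: a priori $\sup_{[0,t_n]}|x|$ could far exceed $|x(t_n)|$, and $R$ cannot be chosen in terms of an unknown supremum without circularity. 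The standard fix (as in Duyckaerts--Merle) is to argue by contradiction and introduce a first-exit time: assume $|x(t_n)|\ge\varepsilon_0 t_n$ for some $\varepsilon_0>0$ along $t_n\to\infty$, set $\tau_n=\inf\{t\le t_n:|x(t)|\ge|x(t_n)|\}$, so that $\sup_{[0,\tau_n]}|x|=|x(\tau_n)|=|x(t_n)|$ by continuity, take $R_n=R_0(\varepsilon)+|x(t_n)|$, integrate $\dot z_{R_n}$ on $[0,\tau_n]$ only, and absorb the $O(\varepsilon|x(t_n)|)$ error terms into the leading $x(\tau_n)M[Q]$ term. This yields $\varepsilon_0\le C\varepsilon/(M[Q]-C\varepsilon)$, and sending $\varepsilon\to 0$ finishes. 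Without the first-exit-time device (or some equivalent monotone selection of times) your sketch is incomplete at this point.
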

\begin{proof}
	(i) The proof is almost the same as \cite[Proposition $4.1$, Lemma $5.1$]{duyckaerts2007scattering}. \\
	(ii) The result simply follows from Lemma \ref{concentration compactness}, the continuous dependence of the flow onto the initial data and the continuity of the flow.
\end{proof}

Next, we want to study the behavior of $\delta(t)$ at infinity. First we have the vanishing of $\delta(t)$ at infinity in the mean sense.
\begin{lem}[Convergence in mean]
	Let $u$ be a solution of (\ref{Hartree equation}) satisfying (\ref{condition: below threshold}), then
	\begin{equation}
		\lim\limits_{T \to \infty} \frac{1}{T}\int_{0}^{T} \delta (t) dt =0.
		\label{convergence in mean: inequality}
	\end{equation}
	\label{convergence in mean: lemma}
\end{lem}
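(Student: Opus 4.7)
The plan is to run a localized virial/Morawetz argument in the spirit of Duyckaerts--Merle and Duyckaerts--Roudenko, exploiting the compactness of the trajectory modulo translations (Lemma~\ref{concentration compactness}) and the sublinear growth of $x(t)$ given by (\ref{mass center: behavior at infinity}).

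First, I would introduce a radial cutoff $\phi \in C^\infty(\mathbb{R}^N)$ with $\phi(x)=\tfrac{1}{2}|x|^2$ for $|x|\le 1$, $\phi$ constant for $|x|\ge 2$, and $\phi''\le 1$, and set $\phi_R(x)=R^2\phi(x/R)$. Define the localized virial
\begin{equation*}
z_R(t) \;=\; 2\,\Im \int \nabla\phi_R(x)\cdot\nabla u(t,x)\,\overline{u(t,x)}\,dx.
\end{equation*}
A direct computation, analogous to (\ref{second derivative of virial: radial}) but for a first-order virial, yields
\begin{equation*}
\dot z_R(t) \;=\; -8 s_c(p-1)\,\delta(t) \;+\; A_R(u(t)),
\end{equation*}
where $A_R(u(t))$ collects the same boundary-type terms as in (\ref{remaining term: radial case}) and is supported in the region $\{|x|\ge R\}$ (together with the Hartree-type double integral away from the diagonal in that region). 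In particular, on that support we control $A_R$ by the mass, kinetic energy, and the nonlocal interaction energy localized to $|x|\ge R$.

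Next, I would pass from $|x|\ge R$ to $|x-x(t)|\ge R - |x(t)|$. By the precompactness of the set $K$ in (\ref{trajetory of u}), for every $\varepsilon>0$ there exists $R_0=R_0(\varepsilon)$ such that
\begin{equation*}
\sup_{t\ge 0}\Bigl(\|\nabla u(t)\|_{L^2(|x-x(t)|\ge R_0)}^2 + \|u(t)\|_{L^2(|x-x(t)|\ge R_0)}^2 + \|(|\cdot|^{-(N-\gamma)}\ast |u(t)|^p)|u(t)|^p\|_{L^1(|x-x(t)|\ge R_0)}\Bigr) \;\le\; \varepsilon,
\end{equation*}
where the Hartree tail is controlled by splitting the convolution as in the radial argument of Section~\ref{subsection: radial solutions} and using Hardy--Littlewood--Sobolev plus the boundedness of $\|u\|_{H^1}$. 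Combining this with the explicit bounds on $\nabla\phi_R$ and $\Delta^2\phi_R$ yields
\begin{equation*}
|A_R(u(t))| \;\le\; C\varepsilon \quad\text{whenever } R \;\ge\; R_0 + |x(t)|.
\end{equation*}

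Now I would choose $R=R(T,\varepsilon)$ along the time interval $[0,T]$. Using (\ref{mass center: behavior at infinity}), for each $\varepsilon>0$ there is $T_0$ with $|x(t)|\le \varepsilon t$ for all $t\ge T_0$, so $\sup_{t\in[0,T]}|x(t)| = o(T)$. Set $R(T) = R_0(\varepsilon) + \sup_{t\in[0,T]}|x(t)|$. Integrating $\dot z_R = -8s_c(p-1)\delta + A_R$ over $[0,T]$,
\begin{equation*}
8 s_c(p-1)\int_0^T \delta(t)\,dt \;\le\; |z_R(T)| + |z_R(0)| + C\varepsilon T \;\le\; C R(T)\,\|u\|_{L^\infty_t H^1_x}^2 + C\varepsilon T,
\end{equation*}
since $|z_R|\lesssim R\,\|u\|_2\|\nabla u\|_2 \lesssim R$. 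Dividing by $T$ and letting $T\to\infty$ with $\varepsilon$ fixed gives $\limsup_{T\to\infty}\tfrac{1}{T}\int_0^T\delta(t)dt\le C\varepsilon/(8s_c(p-1))$, and then $\varepsilon\to 0$ closes the argument.

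The main obstacle I anticipate is rigorously controlling the Hartree tail term in $A_R$, because the convolution ties local behavior to global behavior, precisely the difficulty emphasized in the introduction. The remedy is the same two-region split used in Section~\ref{subsection: radial solutions}: the piece with both variables in $\{|x-x(t)|\ge cR\}$ is handled by HLS and the precompactness estimate, while the piece with one variable close to $x(t)$ and the other far is handled by the fact that $|x-y|\sim |y-x(t)|$ there, converting it again to an integral of $|u|^p$ over the far region. The rest of the plan is bookkeeping.
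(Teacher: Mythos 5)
Your proof is correct and follows essentially the same route as the paper: you build a localized virial, observe that the error term $A_R$ is controlled by the $H^1$ and Hartree mass outside $\{|x-x(t)|\ge R-|x(t)|\}$, use precompactness of $K$ to make that tail uniformly small, and use the sublinear growth of $x(t)$ to pick $R=R(T)$ with $R(T)/T\to 0$. The only cosmetic differences are that you work directly with the first-order quantity $z_R$ (the paper's $\dot y_R$) rather than writing $y_R$ and $\ddot y_R$, and that you choose $R(T)=R_0(\varepsilon)+\sup_{[0,T]}|x(t)|$ whereas the paper takes $R=R_0(\varepsilon)+\varepsilon T+1$ and splits $[0,T]$ at the time $t_0(\varepsilon)$ after which $|x(t)|\le\varepsilon t$; both choices give $\tfrac{1}{T}\int_0^T\delta\lesssim\varepsilon+o_T(1)$.

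One small remark on your anticipated obstacle: once you have precompactness of $K$ in $H^1$, the smallness of the localized Hartree energy $\int_{|x-x(t)|\ge R_0}(|\cdot|^{-(N-\gamma)}\ast|u|^p)|u|^p\,dx$ follows directly from Hardy--Littlewood--Sobolev applied to the full convolution together with the uniform smallness of the $H^1$ tail; the two-region split used in Section~\ref{subsection: radial solutions} is needed there to get a quantitative decay rate in $R$ without a compactness hypothesis, and is not required here, which is also how the paper handles it in (\ref{smallness of remaining term away from mass center}).
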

\begin{proof}
	Here we use the localized virial quantity (\ref{localized virial: radial case}). We have already known that $\dot{y}_R(t)$ satisfies (\ref{first derivative of virial: radial case}) and $\ddot{y}_R(t)$ satisfies (\ref{second derivative of virial: radial}) with
	\begin{align}
		A_R(u(t))& = 4 \Re \sum_{i\not= j} \int_{\mathbb{R}^N} \frac{\partial^2 \varphi_R}{\partial x_i \partial x_j} \frac{\partial u}{\partial x_i}  \frac{\partial \bar{u}}{\partial x_j} dx \notag  + \sum_{i=1}^N \int_{\mathbb{R}^N} \left( \frac{\partial^2 \varphi_R}{\partial x_i^2} -2\right) \Big| \frac{\partial u}{\partial x_i} \Big|^2 dx - \int_{\mathbb{R}^N}  \left( \Delta^2 \varphi_R \right)  |u|^2 dx \notag\\
		& \qquad+ \left( \frac{2}{p} -2 \right)  \int_{\mathbb{R}^N} \left(\Delta \varphi_R  -2 N \right) \left( |\cdot|^{-3} * |u|^2 \right) |u|^2 dx \notag \\
		&  \qquad - \frac{2}{p}   \iint_{\mathbb{R}^N \times \mathbb{R}^N} |u(x)|^p |u(y)|^p \nabla_y \cdot  \left( \frac{(\nabla \varphi_R (x) -2x)-(\nabla \varphi_R(y)- 2y)}{|x-y|^{N-\gamma}} \right) dxdy.
		\label{virial: remaining term}
	\end{align}
	Note that if $|x| \le R$,
	\[
	\frac{\partial^2 \varphi_R}{\partial x_i \partial x_j}(x)= (\Delta^2 \varphi_R)(x)= \frac{\partial^2 \varphi_R}{\partial x_i^2}(x) -2 =2N-\Delta\varphi_R(x) =0, \quad 2x- \nabla \varphi_R (x)=0,
	\]
	it means
	\begin{equation}
		|A_R(u(t))| \lesssim \int_{|x| \ge R} |\nabla u|^2 +\frac{1}{R^2}|u| ^2 + \left( |\cdot|^{-(N-\gamma)} * |u|^p \right) |u|^p dx.
		\label{virial: remaining term estimate}
	\end{equation}
	With the compactness of $K$, $\forall \varepsilon >0$, $\exists R_0(\varepsilon)>0$ sufficiently large, such that
	\begin{equation}
		\int_{|x-x(t)| \ge R_0(\varepsilon)} |\nabla u|^2 +|u| ^2 + \left( |\cdot|^{-(N-\gamma)} * |u|^p \right) |u|^p dx < \varepsilon.
		\label{smallness of remaining term away from mass center}
	\end{equation}
	\indent By (\ref{mass center: behavior at infinity}), for $ \varepsilon >0$ given above, $\exists \;  t_0(\varepsilon) >0$ such that 
	\begin{equation}
		\Big| \frac{x(t)}{t} \Big| < \varepsilon, \text{ i.e. }  |x(t) | \le \varepsilon t, \quad \forall t \ge t_0(\varepsilon).
		\label{behavior of mass center for large t}
	\end{equation}
	Note that
	\begin{equation}
		\Big|\frac{1}{T} \int_{t_0(\varepsilon)}^T ( 8 s_c(p-1) \delta (t) +A_R(u(t)) dt \Big| 
		= \Big| \frac{1}{T}\int_{t_0(\varepsilon)}^T \ddot{y}_R (t) dt\Big| 
		 \le  \frac{CR}{T},
		\label{estimate on the second derivative of virial quantity: convergence in mean}
	\end{equation}
	where the last inequality is followed by
	\[
	|\dot{y}_R(t)|= \Big| 2 R \Im \int \nabla \varphi \left( \frac{x}{R} \right) \cdot \nabla u  \bar{u} dx\Big| \le R \| \nabla \varphi \|_\infty \| \nabla u \|_2 \| u \|_2 \le CR \text{ uniformly in } t>0.
	\]
	Now we have to choose an appropriate $R$ to ensure the smallness of $A_R(u(t))$ on $[t_0(\varepsilon), T]$ and here we let $R=R_0(\varepsilon) +\varepsilon T+1$, then by (\ref{virial: remaining term estimate}), (\ref{smallness of remaining term away from mass center}) and (\ref{behavior of mass center for large t}),
	\begin{align}
		|A_R(u(t))| 
		& \lesssim \int_{|x| \ge R} |\nabla u|^2 +\frac{1}{R^2}|u| ^2 + \left( |\cdot|^{-(N-\gamma)} * |u|^p \right) |u|^p dx \notag \\
		& \le \int_{|x-x(t)| \ge R_0(\varepsilon)} |\nabla u|^2 +|u| ^2 + \left( |\cdot|^{-(N-\gamma)} * |u|^p \right) |u|^p dx <\varepsilon.
		\label{estimate on AR(u(t)): convergence in mean}
	\end{align}
	Together with (\ref{estimate on the second derivative of virial quantity: convergence in mean}) and (\ref{estimate on AR(u(t)): convergence in mean}),
	\begin{equation*}
		\frac{1}{T} \int_0^T \delta(t) dt
		=\frac{1}{T} \int_0^{t_0 (\varepsilon)} \delta(t) dt  + \frac{1}{T} \int_{t_0(\varepsilon)}^{T} \delta(t) dt 
		 \lesssim \frac{1}{T} \int_0^{t_0 (\varepsilon)} \delta(t) dt + \frac{R_0(\varepsilon)+1}{T} +\varepsilon.
	\end{equation*}
	Letting $T \to \infty$ on both sides, (\ref{convergence in mean: inequality}) immediately follows from the arbitrariness of $\varepsilon>0$.
\end{proof}

As a corollary of Lemma \ref{convergence in mean: lemma},  $\delta(t)$ vanishes at infinity up to a subsequence.
\begin{cor}
	Let $u$ be the a solution of (\ref{Hartree equation}) satisfying (\ref{condition: below threshold}), then there exists a time sequence $\{t_n\}_{n=1}^\infty$ tending to $+\infty$, such that $\delta(t_n) \to 0$.
	\label{delta tends to 0 with a time sequence}
\end{cor}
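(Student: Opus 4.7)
The plan is to deduce the corollary directly from Lemma \ref{convergence in mean: lemma} by a standard contradiction argument based on the Cesàro mean. If no such sequence $\{t_n\}$ exists, then $\liminf_{t \to \infty} \delta(t) > 0$, so there would exist $\varepsilon_0 > 0$ and $T_0 > 0$ such that $\delta(t) \ge \varepsilon_0$ for all $t \ge T_0$.

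Under this hypothesis, I would split the Cesàro average for any $T > T_0$ as
\[
\frac{1}{T} \int_0^T \delta(t)\, dt = \frac{1}{T} \int_0^{T_0} \delta(t)\, dt + \frac{1}{T} \int_{T_0}^T \delta(t)\, dt \ge \frac{1}{T} \int_{T_0}^T \varepsilon_0 \, dt = \varepsilon_0 \, \frac{T - T_0}{T}.
\]
Sending $T \to \infty$, the right-hand side tends to $\varepsilon_0 > 0$, which contradicts Lemma \ref{convergence in mean: lemma} stating that this mean tends to $0$. Hence $\liminf_{t \to \infty} \delta(t) = 0$, and so a sequence $t_n \to +\infty$ with $\delta(t_n) \to 0$ can be extracted.

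There is essentially no obstacle here since the non-negativity of $\delta$ (built into its definition (\ref{delta: definition, the distance}) as an absolute value) together with the convergence in mean is precisely the hypothesis of this textbook lemma. The only care needed is to note that $\delta(t)$ is nonnegative and locally integrable (which follows from the fact that $u \in C([0,\infty); H^1)$ makes $\delta$ continuous), so splitting the integral as above is justified and the contradiction is immediate.
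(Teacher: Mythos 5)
Your argument is correct and is exactly the standard deduction that the paper leaves implicit when it presents this corollary as an immediate consequence of Lemma \ref{convergence in mean: lemma}: nonnegativity of $\delta$ forces $\liminf_{t\to\infty}\delta(t)=0$ from the vanishing Cesàro mean, and a subsequence realizing the liminf does the rest. Nothing is missing.
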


Moreover, the behavior of $\delta(t)$ can be controlled by the mass center $x(t)$ in the following sense:
\begin{lem}[Virial-type estimates on $\delta$] Under the Assumption \ref{assumption: uniqueness of the ground state} and Assumption \ref{assumption: nondegeneracy of the ground state}, there exists a constant $C >0$ such that if $0 \le \sigma < \tau$ 
	\begin{equation}
		\int_\sigma^\tau \delta(t) dt \le C \left( 1+ \sup_{\sigma\le  t \le \tau} |x(t)|\right) \left( \delta(\sigma) + \delta(\tau) \right).
		\label{virial-type estimates on delta}
	\end{equation}	
	\label{lem: virial type estimates on delta}
\end{lem}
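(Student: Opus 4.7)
\medskip
\noindent\textbf{Proof plan for Lemma \ref{lem: virial type estimates on delta}.}

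The plan is to use the localized virial identity from subsection \ref{subsection: radial solutions} with a cut-off scale $R$ adapted to the trajectory. I would set
\[
R \;=\; R_{0} + \sup_{\sigma\le t\le\tau}|x(t)|
\]
for a large constant $R_{0}\ge 1$ to be fixed later, independently of $\sigma,\tau$. By (\ref{second derivative of virial: radial}), $\ddot y_{R}(t) = -8s_{c}(p-1)\delta(t) + A_{R}(u(t))$, and integrating on $[\sigma,\tau]$ gives
\[
8s_{c}(p-1)\int_{\sigma}^{\tau}\delta(t)\,dt \;=\; \bigl[\dot y_{R}(\sigma)-\dot y_{R}(\tau)\bigr] + \int_{\sigma}^{\tau} A_{R}(u(t))\,dt.
\]

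\emph{Boundary terms.} Since $M[u]=M[Q]$ and $E[u]=E[Q]$, the Cauchy--Schwarz-type bound (\ref{Cauchy-Schwarz type inquality}) applies with $\varphi=\varphi_{R}$ to yield $|\dot y_{R}(t)|^{2}\le C\delta(t)^{2}\int|\nabla\varphi_{R}|^{2}|u|^{2}\,dx\le CR^{2}\delta(t)^{2}\|Q\|_{2}^{2}$, and hence $|\dot y_{R}(t)|\le CR\,\delta(t)$. Therefore
\[
|\dot y_{R}(\sigma)-\dot y_{R}(\tau)|\;\le\;CR\bigl(\delta(\sigma)+\delta(\tau)\bigr)\;\le\;C\bigl(1+\sup_{[\sigma,\tau]}|x(t)|\bigr)\bigl(\delta(\sigma)+\delta(\tau)\bigr),
\]
which is the target form.

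\emph{Controlling $A_{R}$.} The crux is to show that, for $R_{0}$ sufficiently large,
\[
|A_{R}(u(t))|\;\le\;4s_{c}(p-1)\,\delta(t),\qquad\forall\,t\in[\sigma,\tau],
\]
so that the $A_R$-integral is absorbed on the left-hand side. I split according to whether $t\in D_{\delta_{0}}$. For $t\notin D_{\delta_{0}}$ we have $\delta(t)\ge \delta_{0}$, and the pre-compactness of the trajectory $K$ (Lemma \ref{concentration compactness}) gives, for any preassigned $\eta>0$, a radius $R_{0}(\eta)$ such that the right-hand side of (\ref{virial: remaining term estimate}) is $<\eta$ whenever $R\ge R_{0}(\eta)+|x(t)|$. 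Choosing $\eta \le 4s_c(p-1)\delta_{0}$ then forces $|A_{R}(u(t))|\le 4s_{c}(p-1)\delta(t)$. For $t\in D_{\delta_{0}}$, I would use the modulation decomposition (\ref{modulation: decomposition of u}) with the identification $X(t)=x(t)$ from (\ref{mass center: modified version}), writing $u(t)=e^{i(t+\theta(t))}[(1+\alpha(t))Q+h(t)](\cdot-x(t))$ with $\|h(t)\|_{H^{1}}^{2}+\alpha(t)^{2}\simeq\delta(t)$ (Lemma \ref{lemma: the estimates of modulation parameters}), and expand $A_{R}$ around the translated soliton, exploiting the basic observation that on an exact translated soliton $e^{is}Q(\cdot-x_{0})$ the weight $y_{R}(t)$ is time-independent, so $A_{R}$ \emph{vanishes identically there}. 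Combining this cancellation with the exponential decay of $Q$ and $\nabla Q$ (Lemma \ref{lemma: exponential decay of ground state and its derivatives}) on $|x-x(t)|\ge R_{0}$ produces, schematically, $|A_{R}(u(t))|\lesssim e^{-cR_{0}/2}(\|h\|_{H^{1}}+|\alpha|) + \|h\|_{H^{1}}^{2}\lesssim e^{-cR_{0}/2}\delta(t)^{1/2}+\delta(t)$.

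\emph{Main obstacle.} The delicate point is the linear-in-$(h,\alpha)$ term in the expansion of $A_{R}$ on $D_{\delta_{0}}$: it only gives $\delta(t)^{1/2}$ and is not obviously absorbable. My intended remedy is to exploit the orthogonality relations built into the modulation ($h\in G_{\perp}$, cf.\ (\ref{orthogonal condition: orthogonal to Delta Q}), (\ref{orthogonal condition: orthogonal to nabla Q and iQ})), which should annihilate the leading part of the linear contribution against the exponentially small weight on $|x-x(t)|\ge R_{0}$, upgrading the bound to $|A_{R}(u(t))|\le Ce^{-cR_{0}}\delta(t)^{1/2}\cdot\delta(t)^{1/2}+C\delta(t)\lesssim\delta(t)$ once $R_{0}$ is large. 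Once the $|A_{R}|\le 4s_{c}(p-1)\delta(t)$ bound is established in both cases, plugging back into the integrated identity and absorbing yields $4s_{c}(p-1)\int_{\sigma}^{\tau}\delta\,dt \le CR(\delta(\sigma)+\delta(\tau))$, i.e.\ (\ref{virial-type estimates on delta}).
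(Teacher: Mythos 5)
Your overall strategy --- localize the virial at scale $R\asymp 1+\sup_{[\sigma,\tau]}|x(t)|$, integrate $\ddot y_R$, absorb $A_R$ against $\delta$ by splitting into $t\in D_{\delta_0}$ (modulation) versus $t\notin D_{\delta_0}$ (concentration compactness), and bound the boundary terms by $|\dot y_R(t)|\le CR\,\delta(t)$ --- is the paper's proof of Lemma~\ref{lem: virial type estimates on delta}. Your treatment of the boundary terms via the Cauchy--Schwarz-type bound~(\ref{Cauchy-Schwarz type inquality}) with $\varphi=\varphi_R$ is in fact a mild simplification of the paper, which instead argues in two cases (for $\delta(t)\ge\delta_0$ the crude bound $|\dot y_R|\le CR\lesssim R\,\delta(t)/\delta_0$ suffices; for $t\in D_{\delta_0}$ a modulation expansion is used); both routes yield the same $|\dot y_R|\le CR\,\delta$. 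A minor sign remark: in this section $\|\nabla u\|_2<\|\nabla Q\|_2$, so~(\ref{second derivative of virial: radial}) reads $\ddot y_R=+8s_c(p-1)\delta+A_R$, with the opposite sign from Section~\ref{Convergence to Q above the threshold}; this does not affect the absorption since you estimate $|\dot y_R|$.

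The genuine flaw is in the modulation estimate you quote. You write $\|h(t)\|_{H^1}^2+\alpha(t)^2\simeq\delta(t)$, but Lemma~\ref{lemma: the estimates of modulation parameters} gives the \emph{linear} equivalences $|\alpha(t)|\simeq\|h(t)\|_{H^1}\simeq\delta(t)$: the quantity $\delta(t)=\bigl|\,\|\nabla u\|_2^2-\|\nabla Q\|_2^2\,\bigr|$ carries a genuine first-order fluctuation in the radial ($\alpha Q$) direction and is linear, not quadratic, in the displacement. Once this is corrected, the linear-in-$(\alpha,h)$ part of $A_R(Q+v)-A_R(Q)$ with $v=\alpha Q+h$ is already $O(e^{-cR_0}\delta(t))$ --- exactly the bound recorded in~(\ref{radial case: estimate on AR(u(t)): near the ground state}) --- and your ``main obstacle'' does not arise: there is no $\delta^{1/2}$ term to upgrade. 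This matters because the remedy you propose would not work in any case: the orthogonality conditions~(\ref{orthogonal condition: orthogonal to Delta Q}) and~(\ref{orthogonal condition: orthogonal to nabla Q and iQ}) are integrals over all of $\mathbb{R}^N$, while the linear contribution to $A_R$ is weighted by a cutoff supported in $\{|x-x(t)|\ge R\}$, so they do not deliver the cancellation you would need. Replace $\delta^{1/2}$ by $\delta$ throughout and your outline becomes a complete proof.
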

\begin{proof}
	As for the localized virial quantity $y_R$ as defined in (\ref{localized virial: radial case}), we wish $\delta$ to control the behavior of $\ddot{y}_R$, requiring us to estimate $A_R(u(t))$. We divide the argument into the following two cases, one for $\delta(t) \le \delta_0$ and another one for $\delta(t) > \delta_0$, while the former can be dealt by modulation and the latter relies on the concentration compactness. Precisely, as for the former case, by (\ref{modulation: decomposition of u}), Lemma \ref{lemma: the estimates of modulation parameters}, Lemma \ref{lemma: exponential decay of ground state and its derivatives}, Lemma \ref{lemma: estimate on J(z) and K(z)} and the fact that for any fixed $\theta_0, X_0$, $A_R(e^{i \theta_0} e^{it} Q(\cdot + X_0)) =0$ for any $R,t$, we can repeat the  same argument as what in (\ref{radial case: estimate on AR(u(t)): near the ground state}) to get
	\begin{align*}
		|A_R(u(t))|
		&= \Big| A_R(u)- A_R \left( e^{i(t+ \theta(t))} Q(x -X(t)) \right) \Big| \\
		& \lesssim e^{-CR_0} \delta(t) + \delta(t)^2+ \delta(t)^{2p}, \quad \text{ for any } R \ge R_0 + |X(t)|.
	\end{align*}
	Then we choose $R_0>0$ sufficiently large and $\delta_0 \ll 1$, such that
	\[
	\forall R \ge R_0 + |X(t)|= R_0+ |x(t)| \quad \Rightarrow \quad  |A_R(u(t))| \le 4s_c(p-1) \delta(t),\;  \forall t \in D_{\delta_0}.
	\]
	As for the latter case, if $\delta(t) \ge \delta_0$, then by (\ref{virial: remaining term}), (\ref{virial: remaining term estimate}) and (\ref{smallness of remaining term away from mass center}),
	\begin{align*}
		|A_R(u(t))|
		& \le C \int_{|x|\ge R} \left( |\nabla u|^2 +|u|^2 + \left( |\cdot|^{-(N-\gamma)} * |u|^p \right) |u|^p \right) dx \\
		& \le C \int_{|x-x(t)| \ge R-|x(t)|}  \left( |\nabla u|^2 +|u|^2 + \left( |\cdot|^{-(N-\gamma)} * |u|^p \right) |u|^p \right) dx < C \varepsilon, \; \forall R \ge |x(t)| + R_0(\varepsilon),
	\end{align*}
    where $R_0(\varepsilon)$ is the constant chosen in (\ref{smallness of remaining term away from mass center}). Therefore, if we let $\varepsilon= \frac{4s_c(p-1) \delta_0}{C}>0$, then there exists $R_1=R(\varepsilon)>0$ such that
	\[
	\forall R \ge R_1 + |x(t)| \Rightarrow |A_R(u(t))| \le C \varepsilon =4s_c(p-1) \delta_0 \le 4s_c(p-1) \delta(t), \forall t\notin D_{\delta_0}.
	\]
	Consequently, if we let $R_2= \max \{ R_0,R_1\}$, then
	\begin{equation*} 
	    |A_R(u(t))| \le 4s_c(p-1)  \delta(t), 	\;  \forall R \ge R_2(1+|x(t)|) , \; \forall t \ge 0.
		\label{bound on remaining term of virial}
	\end{equation*}
	In particular, if we let $R=R_2 \left( 1+ \sup_{\sigma\le  t \le \tau} |x(t)| \right) $, then
	\[
	\ddot{y}_R(t) = 8s_c (p-1)\delta(t) +A_R(u(t)) \ge 4s_c(p-1) \delta(t), \quad \forall t \in [\delta, \tau],
	\]
	which indicates that
	\[
	\int_\sigma^\tau \delta(t) dt \lesssim \int_\sigma^\tau \ddot{y}_R(t) dt = \dot{y}_R(\tau) -\dot{y}_R(\sigma).
	\]
	It remains for us to estimate $\dot{y}_R(t)= 2 R \Im \int_{\mathbb{R}^N} \nabla \varphi\left( \frac{x}{R} \right) \cdot \nabla u  \bar{u}  dx$. If $\delta(t) \ge \delta_0$, then
	\begin{equation*}
		|\dot{y}_R(t)|
		 \le 2 R \| \nabla \varphi\|_\infty \| \nabla u \|_2 \| u \|_2 
		 \le C R \| Q \|_2 \left( \delta (t) + \|\nabla Q \|_2^2 \right)^\frac{1}{2} 
		 \le CR \delta(t).
	\end{equation*}
	If $ t\in D_{\delta_0}$, then by (\ref{modulation: decomposition of u}),
	\begin{align*}
		\dot{y}_R(t) 
		&=  2R \Im \int_{\mathbb{R}^N} \nabla \varphi \left( \frac{x+X(t)}{R} \right) \cdot  \nabla Q \bar{v}  dx   + 2R \Im \int_{\mathbb{R}^N} \nabla \varphi \left( \frac{x+X(t)}{R} \right) \cdot \nabla v Q dx   \\
		&\quad + 2R \Im \int_{\mathbb{R}^N} \nabla \varphi \left( \frac{x+X(t)}{R} \right) \cdot \nabla v \bar{v} dx,  \text{ where } v =\alpha Q +h,
	\end{align*}
which yields $|\dot{y}_R(t)| \le CR \left( \delta(t) + \delta(t)^2 \right) \le CR \delta(t)$ by Lemma \ref{lemma: the estimates of modulation parameters}. Consequently, combining the argument above, we immediately get
	\[
	\int_\sigma^\tau \delta(t) dt \lesssim R \left(\delta(\sigma)+ \delta(\tau)\right) = R_2\left( 1+ \sup_{\sigma\le  t \le \tau} |x(t)| \right) \left( \delta(\sigma) + \delta(\tau) \right).
	\]
\end{proof}

\begin{lem}[Control of the variations of $x(t)$] Under the Assumption \ref{assumption: uniqueness of the ground state} and Assumption \ref{assumption: nondegeneracy of the ground state}, there exists a constant $C>0$ such that
	\begin{equation}
		\forall \sigma, \tau >0 \text{ with } \sigma + 1 \le \tau, \quad |x(\tau)- x(\sigma)| \le C \int_\sigma^\tau \delta (t) dt.
		\label{mass center: the behavior can be control by delta}
	\end{equation}
	\label{control of the variations of mass center}
\end{lem}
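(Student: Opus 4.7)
The plan is to partition $[\sigma,\tau]$ according to whether $\delta(t)$ lies below or above the modulation threshold $\delta_0$ of Section \ref{Modulation}, and to control the variation of $x$ on each piece by a different mechanism. Define the ``good'' set $G=\{t\in[\sigma,\tau]:\delta(t)<\delta_0\}\subset D_{\delta_0}$ and the ``bad'' set $B=[\sigma,\tau]\setminus G$.

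On $G$, the identification $x(t)=X(t)$ from (\ref{mass center: modified version}) combined with the modulation bound $|\dot X(t)|\lesssim\delta(t)$ of Lemma \ref{modulation parameters: estimate on their time-derivatives} yields, upon integrating along the connected components of $G$, a total contribution of at most $\int_G\delta(t)\,dt\le\int_\sigma^\tau\delta(t)\,dt$ to $|x(\tau)-x(\sigma)|$.

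On $B$, modulation is unavailable, and I instead exploit the pre-compactness of the trajectory $K$ through the uniform continuity estimate (\ref{mass center: formally uniform continuity}): $|x(s)-x(t)|\le C$ whenever $|s-t|\le 2$. Since $\delta\ge\delta_0$ on $B$, the Lebesgue measure is bounded by $|B|\le\delta_0^{-1}\int_\sigma^\tau\delta(t)\,dt$. On a maximal bad component $[a,b]$ with $b-a>1$, subdividing into $\lceil b-a\rceil$ pieces of length $\le 2$ and iterating (\ref{mass center: formally uniform continuity}) gives $|x(b)-x(a)|\le 2C(b-a)\le(2C/\delta_0)\int_a^b\delta(t)\,dt$.

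The main obstacle is controlling the combined contribution of the (possibly many) short bad components of length $\le 1$, on each of which (\ref{mass center: formally uniform continuity}) only furnishes $|x(b)-x(a)|\le C$ while $\int_a^b\delta$ can be arbitrarily small. To handle these I plan to use a uniform modulus of continuity for the function $t\mapsto\delta(t)$ coming from pre-compactness of $K$ in $H^1$ together with continuity of the Schr\"{o}dinger flow: there exists $\eta=\eta(\delta_0)>0$ such that $\delta\ge 3\delta_0/4$ on every $\eta$-neighborhood of an endpoint of $B$ that lies in the interior of $[\sigma,\tau]$. Hence each short bad component is flanked by a segment of length $\gtrsim\eta$ on which $\delta\gtrsim\delta_0$, bounding the number of short bad components by $C_{\delta_0}\int_\sigma^\tau\delta(t)\,dt+O(1)$; the residual $O(1)$ is absorbed via the standing hypothesis $\tau\ge\sigma+1$ by observing that whenever $\int_\sigma^\tau\delta$ is smaller than a fixed constant the bad set has total length bounded by a small multiple of $1$, so $[\sigma,\tau]$ is essentially contained in $D_{\delta_0}$ and the desired bound reduces to the modulation estimate up to a bounded number of applications of (\ref{mass center: formally uniform continuity}). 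Assembling all pieces yields $|x(\tau)-x(\sigma)|\lesssim\int_\sigma^\tau\delta(t)\,dt$.
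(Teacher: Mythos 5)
Your overall strategy—decomposing $[\sigma,\tau]$ into a good set $G$ where modulation applies and a bad set $B$ where only the crude bound (\ref{mass center: formally uniform continuity}) is available, and then using a uniform modulus of continuity of $t\mapsto\delta(t)$ to count the bad pieces—is genuinely different from the paper's. The paper instead proves a dichotomy (its (\ref{delta dichotomy: behavior on a bounded interval})): there is a $\delta_2>0$ so that on every interval of length $2$, either $\delta\ge\delta_2$ everywhere or $\delta<\delta_0$ everywhere. With that dichotomy the estimate on a unit window $[\sigma,\tau]$, $\tau\in[\sigma+1,\sigma+2]$, is immediate (either $\int_\sigma^\tau\delta\ge\delta_2\gtrsim C\ge|x(\tau)-x(\sigma)|$ by (\ref{mass center: formally uniform continuity}), or the whole window lies in $D_{\delta_0}$ and modulation applies), and the general case follows by concatenating windows. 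Both the dichotomy and your modulus of continuity follow from the same compactness-plus-continuous-dependence argument, so neither approach is cheaper at the input level; but the dichotomy delivers a much cleaner structure and avoids the combinatorics you are forced into.

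However, your proposal as written has two genuine gaps. First, the count of short bad components fails: the claim that each short bad component is ``flanked by a segment of length $\gtrsim\eta$ on which $\delta\gtrsim\delta_0$'' does not, by itself, bound the number of such components by $C_{\delta_0}\int_\sigma^\tau\delta+O(1)$, because two short bad components separated by a gap of length $\ll\eta$ share a single flanking region, and arbitrarily many components can accumulate inside a set of measure $O(\eta)$. One must first merge bad components whose mutual gaps are $<2\eta$ into clusters (on which $\delta\ge 3\delta_0/4$ throughout, again by your uniform continuity), apply (\ref{mass center: formally uniform continuity}) once per sub-interval of length $\le 2$ of a cluster rather than once per component, and count clusters, not components—the flanking argument is valid for clusters because their flanks are pairwise disjoint. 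Second, the absorption of the residual $O(1)$ is not correct as stated. Small $\int_\sigma^\tau\delta$ forces $|B|$ to be small, but a small bad set can still be nonempty and can still consist of many components; ``a bounded number of applications of (\ref{mass center: formally uniform continuity})'' therefore still produces an additive $O(1)$ that is not dominated by $\int_\sigma^\tau\delta$ when the latter is small, which is exactly the regime you are trying to handle. The correct statement, which your argument needs and which does use $\tau\ge\sigma+1$, is stronger: if $B\neq\emptyset$, then (for $\eta<1/3$) at least one cluster has a flanking interval of length $\ge\eta$ inside $[\sigma,\tau]$, so $\int_\sigma^\tau\delta\ge(3\delta_0/4)\eta$; contrapositively, $\int_\sigma^\tau\delta<(3\delta_0/4)\eta$ forces $B=\emptyset$ and hence the modulation estimate alone suffices with no $O(1)$ at all. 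Both gaps are repairable along the lines just indicated, but as written the proof does not close.
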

\begin{proof}
	First we claim that there exists a constant $\delta_2 >0$ such that $\forall \tau \ge 0$,
	\begin{equation}
		\inf_{t \in [\tau, \tau+2]} \delta(t) \ge \delta_2 \; \text{ or } \sup_{t \in [\tau,\tau+2]} \delta(t) < \delta_0.
		\label{delta dichotomy: behavior on a bounded interval}
	\end{equation}
	If not, there exist two sequences $\{t_n \} , \{t_n'\}$ such that
	\[
	\delta(t_n) \to 0, \quad \delta(t_n') \ge \delta_0, \quad |t_n -t_n'| \le 2.
	\]
	After extracting a subsequence, we have $t_n' -t_n \to t_0 \in [-2,2]$. By the compactness of $K$, there exists $v_1 \in H^1$ such that $u(\cdot + x(t_n), t_n) \to v_1$ in $H^1(\mathbb{R}^N)$, then together with the fact that $\delta(t_n) \to 0$,
	\[
	M[u]=M[v_1]=M[Q],  \; E[u]= E[v_1]= E[Q], \; \| \nabla v_1 \|_{L^2}  = \|\nabla Q \|_2,
	\]
	which indicates that $v_1=e^{i \theta_0 } Q(\cdot -x_0)$ for some $(\theta_0, x_0) \in \mathbb{R}\backslash 2\pi \mathbb{Z} \times \mathbb{R}^N$ by Lemma \ref{lemma: behavior of gradient of solution}. Moreover, it is easy to see that $e^{it} e^{i \theta_0} Q(\cdot -x_0)$ is the solution to (\ref{Hartree equation}) with initial data $v_1$. By the continuous dependence of solution to (\ref{Hartree equation}) onto the initial data,
	\[
	u(\cdot +x(t_n), t_n+t_0) \to e^{i t_0} e^{i \theta_0} Q(\cdot-x_0).
	\]  
	Thus together with the continuity of the flow,
	\[
	u(\cdot + x(t_n), t_n') = u (\cdot +x(t_n), t_n + t_n' -t_n) \to  e^{it_0} e^{i \theta_0} Q(\cdot-x_0).
	\]
	Since $|t_n -t_n'| \le 2$, $|x(t_n) -x(t_n')| \le C< \infty$ uniformly by (\ref{mass center: formally uniform continuity}), so there exists $x_1$ such that $x(t_n)- x(t_n') \to x_1$ and
	\[
	u(\cdot + x(t_n'), t_n') = u(\cdot + x(t_n) +x(t_n') - x(t_n), t_n') \to e^{it_0} e^{i \theta_0} Q(\cdot - x_0 -x_1) \text{ in } H^1.
	\] 
	Therefore,
	\[
	\delta(t_n') =  \big| \| \nabla u(t_n') \|_2^2 - \| \nabla Q \|_2^2  \big| \to 0,
	\]
	which contradicts to our assumption $\delta(t_n') \ge \delta_0, \forall n \in \mathbb{N}$ and therefore we have proved (\ref{delta dichotomy: behavior on a bounded interval}).
	
	\indent Next, we claim that (\ref{mass center: the behavior can be control by delta}) holds when $\tau \le \sigma + 2$. In fact,  if $\inf_{t \in [\tau, \tau+2]} \delta(t) \ge \delta_2$, then by (\ref{mass center: formally uniform continuity}),
	\[
	\int_\sigma^\tau \delta(t) dt \ge\left( \inf_{t \in [\tau, \tau+2]} \delta(t) \right)  (\tau-\sigma)  \ge \delta_2( \tau -\sigma) \ge \delta_2 \gtrsim |x(\tau)- x(\sigma)|.
	\]
	If $\sup_{t \in [\tau, \tau+2]} \delta(t) <\delta_0$, then by Lemma \ref{modulation parameters: estimate on their time-derivatives} and Corollary \ref{mass center: modified version},
	\[
	|x(\tau) -x(\sigma)| = |X(\tau) -X(\sigma)| \le \int_\sigma^\tau |\dot{X}(t)| dt \lesssim \int_{\sigma}^\tau \delta(t)dt.
	\]
	When it comes to the general case for $\tau \ge \sigma+2$, we can divide $[\sigma, \tau]$ into intervals of length at least $1$ and at most $2$ and then stick the inequalities together to get (\ref{mass center: the behavior can be control by delta}) without the assumption $\tau \le \sigma +2$.
\end{proof}

\begin{proof}[Proof of Theorem \ref{convergence to Q below the threshold}]
	By Corollary \ref{delta tends to 0 with a time sequence}, there exists $\{t_n\}_{n \ge 1}$ such that
	\begin{equation}
	\delta(t_n) \to 0, \quad t_n \to \infty \text{ and } 1+ t_n \le t_{n+1}.
	\label{assumption on tn}
	\end{equation}
	Then for any fixed $N >0$, $\forall n \ge N+1$,
	\[
	|x(t)- x(t_N)| \le C \int_{t_N}^t \delta(s)ds \le C \int_{t_N}^{t_n} \delta(t) dt \le C \left( \delta(t_n) + \delta(t_N) \right)  \left( 1+ \sup_{t \in [t_N,t_n]} |x(t)|\right), \; \forall t \in [t_N, t_n],
	\]
	which implies that
	\[
	\sup_{t \in [t_N,t_n]} |x(t)| \le |x(t_N)| +C \left( \delta(t_n)+ \delta(t_N) \right) \left( 1+ \sup_{t\in [t_N, t_n]} |x(t)|  \right).
	\]
	By the assumption on $\{t_n\}_{n \ge 1}$ as shown in (\ref{assumption on tn}), we can choose $N \ge 1$ sufficiently large such that $\forall n \ge N+1$, $C\left( \delta(t_n) + \delta(t_N) \right) \le \frac{1}{2}$, then
	\[
	\sup_{t \in [t_N,t_n]} |x(t)| \le C |x(t_N)| + \frac{1}{2}< \infty, \; \forall n \ge N+1.
	\]
	By the arbitrariness of $n \ge N+1$ and the continuity of $x(t)$ on $\overline{\mathbb{R}^+}$, $x(t)$ is uniformly bounded on $[0,+\infty)$ and thus by Lemma \ref{lem: virial type estimates on delta},
	\[
	\int_\sigma^{t_n} \delta(t) dt \lesssim \delta(\sigma) + \delta(t_n) 
	\]
	uniformly in $n$. Letting $n \to \infty$,
	\[
	\int_{\sigma}^\infty \delta(t) dt \lesssim \delta(\sigma),
	\]
	which ensures $\int_{\sigma}^\infty \delta(t)dt \le C e^{-ct}$ by Gronwall's inequality. Then Theorem \ref{convergence to Q below the threshold} simply follows from Lemma \ref{auxilliary lemma}.
\end{proof}

\section{Uniqueness and completing the main theorem}
\label{uniqueness: section}
\subsection{Exponential Strichartz estimates for solutions to linearized equation}
Before we proceed with the argument, we remark that the case for $p=2$ is easy to deal with because we can expand the nonlinear term directly. As for $p>2$, the question becomes tougher since we encounter fractional power, in which case we can not expand nonlinear term directly anymore, and it requires us to analyze more carefully, especially for $p \in (2,3)$, which causes the low regularity of nonlinear term. Thus in the later discussion, we mainly focus on $p>2$. As for $p=2$, we just mention the underlying difficulties in later discussion.

Let $h$ satisfy $u=e^{it}(Q+h)$, then $h$ satisfies the following linearized equation
\begin{equation}
	i \partial_t h + \Delta h -h + V(h) + R(h)=0,
	\label{linearized equation: complex version}
\end{equation}
where $V(h)$ is the linear term defined by
\begin{align}
	V(h)& =  p \left( |\cdot|^{-(N-\gamma)} * (Q^{p-1} \Re h) \right) Q^{p-1} \notag \\
	&\quad + \left( |\cdot|^{-(N-\gamma)}* Q^p \right) \left( \left( \frac{p}{2} -1 \right) Q^{p-2} \bar{h} + \frac{p}{2} Q^{p-2} h \right)
	\label{linear term of linearized equation}
\end{align}
and $R(h)$ is as in (\ref{linearized equation: nonlinear term}). We remark here that the notation $h$ in this section is a little confusing with (\ref{modulation: decomposition of u}), but this section is not involved in the decomposition (\ref{modulation: decomposition of u}), so we no longer use other notations in this section to distinguish them anymore.

For any time interval $I \subset \mathbb{R}$, together with Lemma \ref{lemma: Strichartz estimate on nonlinear term: difference version for p in (2,3)}, Lemma \ref{lemma: Strichartz estimate on nonlinear term: difference version for p ge 3} and Remark \ref{remark: we can bound the nonliear by L2 Strichartz norm with gradient},
\begin{equation}
	\| \left\langle \nabla \right\rangle R(h) \|_{S'(I,L^2)} 
	\lesssim
	\begin{cases}  
		|I|^{\alpha_1} \| \left\langle \nabla \right\rangle h \|_{S(I,L^2)}^2 + |I|^{\alpha_2} \| \left\langle \nabla \right\rangle h \|_{S(I,L^2)}^{p-1}  + \| \left\langle \nabla \right\rangle h \|_{S(I,L^2)}^{2p-1}, &  p \ge 3, \\
		|I|^\beta \| \left\langle \nabla \right\rangle h \|_{S(I,L^2)}^{p-1} + \| \left\langle \nabla \right\rangle h \|_{S(I,L^2)}^{2p-1}, &  p \in (2,3),
	\end{cases}
	\label{Strichartz estimates on nonlinear term}
\end{equation}
for some $\alpha_1, \alpha_2, \beta >0$. As for the linear term $V(h)$, by Lemma \ref{lemma: Strichartz estimate on nonlinear term}, we obtain that
\begin{equation}
	\| \left\langle \nabla \right\rangle V(h) \|_{S'(I,L^2)} 
	\lesssim | I|^{\gamma} \| \left\langle \nabla \right\rangle h \|_{S (I , L^2)} 
	\label{Strichartz estimates on linear term}
\end{equation}
for some $\gamma >0$.
Then by Lemma \ref{lemma: Strichartz estimates}, (\ref{Strichartz estimates on nonlinear term}) and (\ref{Strichartz estimates on linear term}),
\begin{align*}
	\| \left\langle \nabla \right \rangle h \|_{S(I,L^2)}
	&= \Bigg\| \left\langle \nabla \right \rangle \left[ e^{i(t'-t)(\Delta -1)} h(t) - i \int_t^{t'} e^{i(t'-s) (\Delta -1)} \left( -V(h)-R(h) \right)(s) ds \right] \Bigg\|_{S(I, L^2)} \\
	& \lesssim \| h(t) \|_{H^1} + \| \left\langle \nabla \right \rangle V(h) \|_{S'(I,L^2)}  + \| \left\langle \nabla \right \rangle R(h) \|_{S'(I ,L^2)} \\
	& \lesssim \| h(t) \|_{H^1} +  \tau_0^{\gamma}  \| \left\langle \nabla \right\rangle h \|_{S (I , L^2)} \\
	& \qquad
	+ \begin{cases}  
		\tau_0^{\alpha_1} \| \left\langle \nabla \right\rangle h \|_{S(I,L^2)}^2 + \tau_0^{\alpha_2} \| \left\langle \nabla \right\rangle h \|_{S(I,L^2)}^{p-1} +  \| \left\langle \nabla \right\rangle h \|_{S(I,L^2)}^{2p-1}, &  p \ge 3, \\
		\tau_0^\beta \| \left\langle \nabla \right\rangle h \|_{S(I,L^2)}^{p-1} + \| \left\langle \nabla \right\rangle h \|_{S(I,L^2)}^{2p-1}, &  p \in (2,3),
	\end{cases}
\end{align*}
where we let $I=(t, t+ \tau_0)$. If we assume $\| h(t) \|_{H^1(\mathbb{R}^N)} \lesssim e^{-ct}$, then, by continuity argument, we can choose $\tau_0 \ll 1$ such that
\[
\big\| \left\langle \nabla \right\rangle h(t) \big\|_{S(I,L^2)}  \lesssim \big\| h(t) \big\|_{H^1} \lesssim e^{-ct},
\]
where the constant is independent of the choice of interval $I$. Then it yields
\begin{equation*}
	\| \left\langle \nabla \right \rangle h \|_{S((t, \infty),L^2)} 
	\le \sum_{n =0} ^\infty \| \left\langle \nabla \right \rangle h \|_{S((t+n \tau_0, t+ (n+1) \tau_0),L^2)} 
	\lesssim \sum_{n=0}^\infty e^{-c(t+ n \tau_0)} \lesssim e^{-ct}.
\end{equation*}
Consequently, we have the following exponential Strichartz estimate:
\begin{lem}[Exponential Strichartz estimate]
	For $h$ the solution to (\ref{linearized equation: complex version}), if $\| h(t) \|_{H^1} \lesssim e^{-ct}, \; \forall t \ge
	T$ for some $c >0$, then
	\[
	\| \left\langle \nabla \right \rangle h \|_{S((t, \infty),L^2)} \lesssim e^{-ct}, \quad \forall  t  \ge T.
	\]
	\label{lemma: exponential Strichartz estimates}
\end{lem}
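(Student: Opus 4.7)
The plan is to run a standard Strichartz/Duhamel bootstrap on short time windows and then sum geometrically in time. Fix $t \ge T$ and let $I = (t, t+\tau_0)$ for a parameter $\tau_0 > 0$ to be chosen momentarily (independently of $t$). Applying $\langle\nabla\rangle$ to the Duhamel representation
\[
h(t') = e^{i(t'-t)(\Delta-1)} h(t) + i \int_t^{t'} e^{i(t'-s)(\Delta-1)} \bigl( V(h)(s) + R(h)(s) \bigr) ds
\]
and invoking the Kato--Strichartz estimates of Lemma \ref{lemma: Strichartz estimates} together with (\ref{critical Strichartz norm can be bounded by H1 Strichartz norm}), I would reduce the task on $I$ to controlling $\|\langle\nabla\rangle V(h)\|_{S'(I,L^2)}$ and $\|\langle\nabla\rangle R(h)\|_{S'(I,L^2)}$. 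These are given, respectively, by the bounds (\ref{Strichartz estimates on linear term}) and (\ref{Strichartz estimates on nonlinear term}), which yield
\[
\|\langle\nabla\rangle h\|_{S(I,L^2)} \le C \|h(t)\|_{H^1} + C \tau_0^{\gamma} \|\langle\nabla\rangle h\|_{S(I,L^2)} + \mathcal{N}\bigl(\tau_0, \|\langle\nabla\rangle h\|_{S(I,L^2)}\bigr),
\]
where $\mathcal{N}$ collects the super-linear terms (quadratic, $(p-1)$-th power and $(2p-1)$-th power), with positive powers of $\tau_0$ in front of the sub-critical contributions.

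Next I would fix $\tau_0$ small enough (uniformly in $t \ge T$) so that $C\tau_0^\gamma \le 1/4$, and then use a continuity/bootstrap argument on $\|\langle\nabla\rangle h\|_{S(I,L^2)}$: by the hypothesis $\|h(t)\|_{H^1} \lesssim e^{-ct}$, the initial datum piece is small for $t$ large, and the super-linear terms $\mathcal{N}$ can be absorbed into the left-hand side as long as the norm stays in a suitable range. Standard continuity on the time-interval length gives
\[
\|\langle\nabla\rangle h\|_{S(I,L^2)} \le C' \|h(t)\|_{H^1} \le C'' e^{-ct},
\]
with $C''$ independent of $t$ (this is where the uniform choice of $\tau_0$ matters).

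Finally, partitioning $(t, \infty) = \bigcup_{n\ge 0} (t+n\tau_0, t+(n+1)\tau_0)$, I would apply the previous step on each window (noting that the hypothesis $\|h(s)\|_{H^1} \lesssim e^{-cs}$ holds at every $s = t + n\tau_0 \ge T$) and sum:
\[
\|\langle\nabla\rangle h\|_{S((t,\infty), L^2)} \le \sum_{n=0}^{\infty} \|\langle\nabla\rangle h\|_{S((t+n\tau_0, t+(n+1)\tau_0), L^2)} \lesssim \sum_{n=0}^\infty e^{-c(t+n\tau_0)} \lesssim e^{-ct}.
\]
The only delicate point I anticipate is verifying that the implicit constant in the Strichartz bound is uniform across the slabs; this reduces to the uniform choice of admissible pairs in Definition \ref{rmk: definition of Strichartz pair} and to the fact that the constants in (\ref{Strichartz estimates on nonlinear term})--(\ref{Strichartz estimates on linear term}) only depend on $Q$ and on the window length $\tau_0$, hence not on the base point $t$. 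The main obstacle is therefore not conceptual but bookkeeping: ensuring that the bootstrap in the window $I$ closes in the regime where $\|\langle\nabla\rangle h\|_{S(I,L^2)}$ is of order $e^{-ct}$ rather than merely small, which is exactly why both the linear-in-$h$ piece gets a positive power of $\tau_0$ and the nonlinear pieces are super-linear.
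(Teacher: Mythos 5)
Your proposal is correct and takes essentially the same route as the paper: Duhamel on a short window $(t,t+\tau_0)$, Kato–Strichartz plus the bounds (\ref{Strichartz estimates on linear term}) and (\ref{Strichartz estimates on nonlinear term}) to close via continuity with $\tau_0$ chosen uniformly small, and then geometric summation over the slabs $(t+n\tau_0, t+(n+1)\tau_0)$. The subtleties you flag — the positive power of $\tau_0$ on the linear piece, the super-linearity of $R(h)$, and the $t$-independence of the implicit constants — are precisely the points the paper's argument relies on.
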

\begin{rmk}
	We can extract the argument above to conclude the result as follows: let $h$ satisfy $\partial_t h + \mathcal{L} h = R$ with $\| h(t) \|_{H^1} \lesssim e^{-ct}$ and $\| \left\langle \nabla \right \rangle R \|_{S'((t,\infty),L^2)} \lesssim e^{-c't} $ for some $c'  \ge c >0$, then $\| \left\langle \nabla \right \rangle h \|_{S((t,+ \infty),L^2)} \lesssim e^{-ct}$.
	\label{remark: exponential Strichartz estimate}
\end{rmk}

\subsection{Improving the rate of decay}
\begin{lem}
	Under the Assumption \ref{assumption: uniqueness of the ground state} and Assumption \ref{assumption: nondegeneracy of the ground state}, consider the solution $h$ to
	\begin{equation}
		\partial_t h + \mathcal{L} h= R, \quad (x,t) \in \mathbb{R}^N \times (t_0, +\infty),
		\label{linearized equation: improved case}
	\end{equation}
	with
	\begin{equation}
		\| h(t)  \|_{H^1} \lesssim e^{-c_0 t}, \quad \| \left\langle \nabla \right \rangle R \|_{S'((t,\infty),L^2)}  \lesssim e^{-c_1 t}, \text{ and } c_0 < c_1.
		\label{assumption on the decay: improved case} 
	\end{equation}
	$\bullet$ If $c_0 < c_1 \le e_0$ or $e_0 < c_0 < c_1$, then $\| h(t) \|_{H^1} + \| \left\langle \nabla \right\rangle h\|_{S((t, + \infty),L^2)} \lesssim e^{-c_1^- t}$. \\
	$\bullet$ If $c_0 \le e_0 <c_1$, then there exists $A \in \mathbb{R}$ such that
	\[
	\| h(t)-Ae^{-e_0 t} \mathcal{Y}_+  \|_{H^1(\mathbb{R}^N) } + \| \left\langle \nabla \right\rangle \left( h(t)- Ae^{-e_0 t} \mathcal{Y}_+ \right) \|_{S((t, + \infty),L^2)} \lesssim e^{-c_1^- t}.
	\]
	\label{lemma: improve the decay of solution to linearized equation}
\end{lem}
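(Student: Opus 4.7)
The plan is to decompose $h$ along the spectral components of $\mathcal{L}$ identified in Proposition \ref{proposition: spectral information of linearized operator L} and analyze each mode separately. Write
\[
h(t) = a_+(t)\mathcal{Y}_+ + a_-(t)\mathcal{Y}_- + g(t) + h_\perp(t),
\]
where $g(t) \in \ker\mathcal{L}$, the scalar coefficients $a_\pm$ are extracted via the bilinear form $B$ (using $B(\mathcal{Y}_+, \mathcal{Y}_-)\neq 0$ and $B(\mathcal{Y}_\pm,\mathcal{Y}_\pm)=0$), and the remainder $h_\perp$ lives in the invariant subspace $G_\perp'$ of Proposition \ref{proposition: corecivity of Phi acting on apce with codimension 6} on which $\Phi$ is coercive. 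These projections are time-independent and commute with $\mathcal{L}$, so the equation $\partial_t h + \mathcal{L}h = R$ decouples into scalar ODEs
\[
a_+' + e_0 a_+ = r_+, \qquad a_-' - e_0 a_- = r_-, \qquad g' = r_{\ker},
\]
and the reduced equation $\partial_t h_\perp + \mathcal{L}h_\perp = R_\perp$ on $G_\perp'$. Pointwise decay of the scalar sources $r_\pm,r_{\ker}$ at rate $e^{-c_1 t}$ follows from testing $R$ against the Schwartz eigenfunctions $\mathcal{Y}_\mp$ and the Schwartz basis of $\ker\mathcal{L}$ on unit-length time intervals, combined with H\"older in time and the $S'$-bound on $R$.

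Next I would solve each scalar ODE. The unstable mode must be integrated from $+\infty$ using the imposed $H^1$ decay of $h$, giving
\[
a_-(t) = -\int_t^\infty e^{e_0(t-s)} r_-(s)\,ds, \qquad |a_-(t)| \lesssim e^{-c_1 t},
\]
and analogously $g(t)=-\int_t^\infty r_{\ker}(s)\,ds$ yields $|g(t)|\lesssim e^{-c_1 t}$. The stable mode $a_+$ exhibits the dichotomy in the statement. When $c_1 \le e_0$, forward integration $a_+(t)=e^{-e_0(t-t_0)}a_+(t_0)+\int_{t_0}^t e^{-e_0(t-s)}r_+(s)\,ds$ gives $|a_+(t)| \lesssim e^{-c_1^- t}$, the $^-$ accounting for the resonant polynomial factor that arises precisely when $c_1 = e_0$. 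When $c_1 > e_0$, the integrability of $e^{e_0 s}r_+(s)$ lets me define $A \triangleq \lim_{t\to\infty} e^{e_0 t}a_+(t)$, after which
\[
a_+(t) - A e^{-e_0 t} = -\int_t^\infty e^{-e_0(t-s)} r_+(s)\,ds
\]
decays at rate $e^{-c_1 t}$; and when furthermore $c_0 > e_0$, the a priori bound $|a_+(t)| \lesssim e^{-c_0 t}$ forces $A = 0$, recovering the first alternative of the lemma.

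For the transverse piece $h_\perp$ the crucial observation is that $B(\mathcal{L}\phi,\phi)=0$ for every $\phi$, a short computation using the self-adjointness of $L_\pm$ (the two cross terms $\int L_+\phi_1\,L_-\phi_2$ cancel after integration by parts). Hence $\Phi$ is conserved by the homogeneous linearized flow, and combined with the coercivity $\Phi(h_\perp)\simeq \|h_\perp\|_{H^1}^2$ on $G_\perp'$ this makes the restriction of $e^{-t\mathcal{L}}$ to $G_\perp'$ a bounded group. Solving $\partial_t h_\perp+\mathcal{L}h_\perp = R_\perp$ backward from $+\infty$ (using $\|h_\perp(T)\|_{H^1}\to 0$ inherited from $\|h(T)\|_{H^1}\to 0$) by Duhamel and invoking Lemma \ref{lemma: Strichartz estimates} gives
\[
\|h_\perp(t)\|_{H^1} + \|\langle\nabla\rangle h_\perp\|_{S((t,+\infty),L^2)} \lesssim \|\langle\nabla\rangle R_\perp\|_{S'((t,+\infty),L^2)} \lesssim e^{-c_1 t}.
\]
Adding the four pieces produces the stated pointwise $H^1$ bound (with or without the resonant mode $Ae^{-e_0 t}\mathcal{Y}_+$ depending on the case), and Remark \ref{remark: exponential Strichartz estimate} applied to $h - Ae^{-e_0 t}\mathcal{Y}_+$—which satisfies the same equation because $\partial_t(e^{-e_0 t}\mathcal{Y}_+)+\mathcal{L}(e^{-e_0 t}\mathcal{Y}_+)=0$ and $\mathcal{Y}_+\in\mathcal{S}$—promotes this to the Strichartz estimate. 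The main obstacle I expect is the conversion between the $S'$-type control of $R$ and the pointwise/$H^1$ control needed for the scalar ODE analysis and for the coercive energy argument; this is handled by the unit-interval Strichartz–H\"older argument indicated above, exploiting crucially the Schwartz regularity of $\mathcal{Y}_\pm$ and of the kernel generators.
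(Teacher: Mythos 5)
Your overall spectral decomposition, the ODE analysis for $\alpha_\pm$ (stable and unstable modes, integrating the unstable mode from $+\infty$, defining $A$ as the limit of $e^{e_0 t}\alpha_+(t)$, the $^-$ loss at resonance $c_1=e_0$), and the computation $B(\mathcal{L}\phi,\phi)=0$ all match the paper. The serious gap is in your treatment of $h_\perp$. You assert that $G_\perp'$ is ``the invariant subspace'' of $\mathcal{L}$ and write a ``reduced equation'' $\partial_t h_\perp + \mathcal{L}h_\perp = R_\perp$, but $G_\perp'$ is \emph{not} invariant under $-\mathcal{L}$. The conditions $B(\mathcal{Y}_\pm,h)=0$ are preserved, since $B(\mathcal{Y}_\pm,\mathcal{L}h)=-B(\mathcal{L}\mathcal{Y}_\pm,h)=\mp e_0 B(\mathcal{Y}_\pm,h)$, but the $L^2$-orthogonality $\langle h, (\partial_{x_j}Q,0)\rangle=\langle h,(0,Q)\rangle=0$ is not: we have $\langle \mathcal{L}h,(\partial_{x_j}Q,0)\rangle = \langle h, \mathcal{L}^*(\partial_{x_j}Q,0)\rangle = -\langle h_2, L_-\partial_{x_j}Q\rangle$, and $L_-\partial_{x_j}Q\neq 0$ (only $L_+\partial_{x_j}Q=0$). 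So the projection onto $G_\perp'$ does not commute with $\mathcal{L}$, $h_\perp$ does not satisfy a decoupled equation, and the ``bounded group on $G_\perp'$'' assertion has no basis.

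There is a second, independent gap. Even if one somehow had a bounded evolution on a suitable invariant subspace, your passage from the Duhamel formula to $\|h_\perp(t)\|_{H^1}+\|\langle\nabla\rangle h_\perp\|_{S((t,\infty),L^2)}\lesssim \|\langle\nabla\rangle R_\perp\|_{S'((t,\infty),L^2)}$ cites Lemma \ref{lemma: Strichartz estimates}, which is the Kato--Strichartz estimate for the \emph{free} Schr\"odinger propagator $e^{it\Delta}$, not for $e^{-t\mathcal{L}}$. The linearized propagator has a potential, and no dispersive or Strichartz estimate for it is established anywhere in the paper; the boundedness of the conserved coercive functional $\Phi$ gives only an $L^\infty_t H^1_x$ bound, which does not suffice for the Duhamel--Strichartz step you invoke.

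The paper avoids both problems by not treating $h_\perp$ through a reduced equation at all. Instead it computes $\partial_t\Phi(h)=2B(R,h)$, bounds $\int_n^{n+1}|B(h,R)|\,ds \lesssim \|\langle\nabla\rangle h\|_{S((n,\infty),L^2)}\|\langle\nabla\rangle R\|_{S'((n,\infty),L^2)}\lesssim e^{-(c_0+c_1)n}$, deduces $|\Phi(h(t))|\lesssim e^{-(c_0+c_1)t}$, and then isolates $\Phi(h_\perp)=\Phi(h)-2\alpha_+\alpha_-$ to get only the intermediate rate $\|h_\perp\|_{H^1}\lesssim e^{-\frac{c_0+c_1}{2}t}$ (similarly for $\beta_j$), which it improves to $c_1^-$ by iterating the whole scheme with $c_0$ replaced by $\tfrac{c_0+c_1}{2}$. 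To repair your proof you would either have to implement that iterative energy argument, or establish genuine Strichartz estimates for $e^{-t\mathcal{L}}$ restricted to an actual invariant subspace (e.g., the orthogonal complement of the discrete spectrum including the generalized kernel), which is a considerably heavier input than the paper uses.
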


\begin{proof}
	By Remark \ref{remark: exponential Strichartz estimate}, it suffices to consider the behavior of  $\| h(t) \|_{H^1}$. First, we decompose $h$ into
	\begin{equation}
		h(t)= \alpha_+(t) \mathcal{Y}_+ + \alpha_-(t) \mathcal{Y}_- + \sum_{j=0}^N \beta_j(t) Q_j + h_\perp(t),\quad h_\perp \in G_\perp',
		\label{decomposition of v: improved case}
	\end{equation}
	where $Q_0 = \frac{iQ} {\| Q\|_2}, \; Q_j = \frac{\partial_{x_j} Q}{\| \partial_{x_j} Q \| _2}, \; 1 \le j \le N$, and out of simplicity, we can also normalize the eigenfunctions $\mathcal{Y}_\pm$ such that $B(\mathcal{Y}_+, \mathcal{Y}_-)=1$. Then
	\begin{equation} 
		\alpha_+(t)= B(h(t), \mathcal{Y}_-), \quad \alpha_-(t)=B(h(t), \mathcal{Y}_+),
		\label{alpha(pm): definition}
	\end{equation}
	\begin{equation}
		\beta_j(t)= \left\langle h(t), Q_j \right\rangle - \alpha_+(t) \left\langle \mathcal{Y}_+, Q_j \right\rangle - \alpha_-(t) \left\langle \mathcal{Y}_-, Q_j \right\rangle.
		\label{beta(j): definition}
	\end{equation}

	\noindent \textit{The estimates on} $\alpha_\pm$.
	First, by Proposition \ref{proposition: spectral information of linearized operator L}, we have the decay of $\alpha_\pm(t)$ at infinity,
	\begin{equation}
		|\alpha_\pm|
		=|B(h, \mathcal{Y}_\mp) | 
		 \lesssim \Big| \frac{1}{2} \int (L_+ h_1) \mathcal{Y}_1 dx \Big| + \Big| \frac{1}{2} \int (L_- h_2) \mathcal{Y}_2 dx  \Big|
		  \lesssim \| h(t) \|_{L_x^2}  \lesssim e^{-c_0 t}.
		\label{estimate of alpha(pm)}
	\end{equation}
	 Next, we want to improve the decay of $\alpha_\pm$. As for $\alpha_+$, noting that $\alpha_+$ satisfies the following differential equation
	\begin{equation*}
		\dot{\alpha}_+(t) 
		= \partial_t B(h,\mathcal{Y}_-) 
		= B(\partial_t h, \mathcal{Y}_-) 
		= B(-\mathcal{L} h + R , \mathcal{Y}_-)  
		= -e_0 \alpha_+ + B(R, \mathcal{Y}_-),
	\end{equation*}
	we get
	\begin{equation}
	\partial_t \left( e^{e_0 t} \alpha_+\right) = e^{e_0 t}B(R, \mathcal{Y}_-),
	\label{derivative of alpha+: equation}
	\end{equation}
	which means that
	\[
	e^{e_0 t} \alpha_+(t) = \alpha_+(0) + \int_0^t e^{e_0 s} B(R, \mathcal{Y}_-) ds.
	\]
	
	If $e_0 \ge c_1>c_0$, then by using the estimate that
	\begin{equation}
		 \int_n^{n+1} e^{e_0 s} \big|B(R, \mathcal{Y}_-) \big| ds  \lesssim e^{e_0 n} \| R \|_{S' \left( (n, +\infty),L^2 \right)} \le e^{(e_0 -c_1) n}, 
		\label{improving decay: estimate on each bounded interval of nonlinear term}
	\end{equation}
	we conclude that
	\begin{equation*}
		\Bigg| \int_0^t e^{e_0 s} B(R, \mathcal{Y}_-) ds \Bigg| \le \sum_{n=0}^{[t]}  \int_n^{n+1} e^{e_0 s} \big| B(R , \mathcal{Y}_-) \big| ds  \lesssim \sum_{n=0}^{[t]} e^{(e_0 -c_1) n} \lesssim 
		\begin{cases}
		e^{(e_0-c_1) t}, & \text{ if } e_0 >c_1, \\
		t, & \text{ if } e_0 =c_1,
		\end{cases}
	\end{equation*}
    and thus $|\alpha_+ (t)| \lesssim e^{-c_1^- t}$.
    
    If $e_0 < c_0 < c_1$, then (\ref{estimate of alpha(pm)}) implies that $e^{e_0 t} \alpha_+(t) \to 0$ as $t \to \infty$ and thus by (\ref{derivative of alpha+: equation}) we have
	\[
	e^{e_0 t} \alpha_+(t) = \int_t^\infty e^{e_0 s} B(R , \mathcal{Y}_-) ds.
	\]
	By the same argument as what in (\ref{improving decay: estimate on each bounded interval of nonlinear term}),
	\begin{align*}
		|e^{e_0 t}\alpha_+ (t) |
		\lesssim \sum_{n=[t]}^\infty \Bigg| \int_n^{n+1}  e^{e_0 s} B(R , \mathcal{Y}_-) ds \Bigg| 
		\le \sum_{n=[t]}^\infty e^{(e_0 - c_1) n } \lesssim e^{(e_0-c_1) t}, 
	\end{align*}
	which means that $|\alpha_+(t)| \lesssim e^{-c_1 t}$.
	
	If $c_0 \le e_0 < c_1$, then
	\begin{equation*}
		A= \alpha_+(0) + \int_0^\infty e^{e_0 s} B(R, \mathcal{Y}_-) ds \in \mathbb{R}
	\end{equation*}
	by
	\begin{equation*}
		\Bigg| \int_0^\infty e^{e_0 s} B(R, \mathcal{Y}_-) ds \Bigg|
		\le \sum_{n=0}^\infty \Bigg| \int_n^{n+1} e^{e_0 s} B(R, \mathcal{Y}_-) ds  \Bigg| \lesssim \sum_{n=0}^\infty e^{(e_0 - c_1) n } < \infty.
	\end{equation*}
   As for $A$ given above, we then obtain that
	\begin{equation*}
		|e^{e_0 t}\alpha_+(t)- A|
		 = \Bigg| \int_t^\infty e^{e_0 s} B(R, \mathcal{Y}_-) ds \Bigg| 
		 \lesssim \sum_{n=[t]}^\infty e^{(e_0-c_1) n } \lesssim e^{(e_0 -c_1) t}.
	\end{equation*}
	Hence $\alpha_+(t)$ satisfies
	\begin{equation}
		\begin{cases}
			|\alpha_+ (t)| \lesssim e^{-c_1^- t}, & \text{if } c_0 < c_1 \le e_0 \text{ or } e_0 < c_0 < c_1, \\
			|\alpha_+(t) - A e^{-e_0 t} | \lesssim e^{-c_1 t}, & \text{if } c_0 \le e_0 < c_1.
		\end{cases}
		\label{the estimates on alpha+}
	\end{equation}
	
	Similarly, as for $\alpha_-$,
	\[
	\partial_t \left( e^{-e_0 t} \alpha_- \right) = e^{-e_0 t} B(R , \mathcal{Y}_+) \quad \Rightarrow \quad e^{-e_0 t} \alpha_-(t) = \int_t^\infty e^{-e_0 s} B(R,  \mathcal{Y}_+) ds.
	\]
	By the same argument as what for $\alpha_+$ when $e_0 < c_0 < c_1$,
	\begin{equation}
		|\alpha_-(t)| \lesssim e^{e_0 t} \sum_{n=[t]}^\infty \Bigg| \int_{n}^{n+1} e^{-e_0 s} B(R, \mathcal{Y}_+)(s) ds \Bigg| \lesssim e^{e_0 t} \sum_{n=[t]}^\infty e^{-(e_0 + c_1) n} \lesssim e^{-c_1 t}.
		\label{improve the decay: the estimate on alpha-}
	\end{equation}
	
	\noindent \textit{The estimates on the remaining terms and completing the proof of the case for $A=0$.} First, since $B(\mathcal{L} f, g)= - B(f, \mathcal{L}g)$, we have $B(\mathcal{L} h, h)=0$, which further implies that
	\[
	\partial_t \Phi(h) = 2 B(\partial_t h , h) = 2B(-\mathcal{L} h +R, h) = 2B(R, h).
	\]
	Moreover, by the assumption $\| h(t) \|_{H^1} \lesssim e^{-c_0 t}$ and Remark \ref{remark: exponential Strichartz estimate}, we obtain that
	\begin{align*}
		\int_{n}^{n+1} B(h, R) ds
		&= \frac{1}{2} \int_n^{n+1} \int_{\mathbb{R}^N} (L_+ h_1)  R_1 dxds + \frac{1}{2} \int_n^{n+1} \int_{\mathbb{R}^N}(L_- h_2) R_2 dx ds \\
		& \lesssim \| \left\langle \nabla \right\rangle h \|_{S((n,+\infty),L^2)}  \| \left\langle \nabla \right\rangle R \|_{S'((n,+\infty),L^2)}  \\
		& \lesssim e^{-(c_0+ c_1)n},
	\end{align*}
	which then implies that
	\begin{equation*}
		|\Phi(h)| = \Bigg| \int_t^\infty 2B(R, h) ds \Bigg| \lesssim e^{-(c_0 +c_1)t}.
	\end{equation*}
	Furthermore, by decomposition of $h$ in (\ref{decomposition of v: improved case}),
	\[
	\Phi(h)=B(h,h)= 2 \alpha_+ \alpha_- + B(h_\perp, h_\perp),
	\]
	by (\ref{corecivity of Phi acting on h}) and (\ref{estimate of alpha(pm)}),
	\[
	\| h_ \perp(t) \|_{H_x^1}^2 \simeq B(h_\perp, h_ \perp) = \Phi(h) - 2 \alpha_+ \alpha_- \lesssim e^{-(c_0 + c_1) t} + e^{-2c_1^- t} \lesssim e^{-(c_0 + c_1) t}
	\]
	for large $t$, i.e.
	\begin{equation} 
		\| h_ \perp(t) \|_{H^1} \lesssim e^{-\frac{c_0 + c_1}{2} t}, \quad \forall t \ge  t_0 \text{ for some } t_0 >0.
		\label{the estimate on v(perp)}
	\end{equation}
	Besides, by (\ref{beta(j): definition}),
	\begin{align*}
		\dot{\beta}_j(t)
		&= \left\langle  \dot{h}(t), Q_j \right\rangle - \dot\alpha_+(t) \left\langle \mathcal{Y}_+, Q_j \right\rangle - \dot\alpha_-(t) \left\langle \mathcal{Y}_-, Q_j \right\rangle \\
		& = \left\langle - \mathcal{L} h +R - \dot{\alpha}_+ \mathcal{Y}_+ - \dot{\alpha}_- \mathcal{Y}_-, Q_j \right\rangle \\
		& =(-\dot{\alpha}_+ - e_0 \alpha_+) \left\langle \mathcal{Y}_+, Q_j \right\rangle + (- \dot{\alpha}_- + e_0 \alpha_-) \left\langle \mathcal{Y_-}, Q_j \right\rangle - \left\langle \mathcal{L} h_\perp, Q_j \right\rangle + \left\langle R, Q_j \right\rangle,
	\end{align*}
    then together with the $\beta_j (t)\to 0$ as  $t \to \infty$,
    \begin{align}
    	|\beta_j(t)|
    	&= \Big| \int_t^\infty \dot{\beta}_j (s) ds \Big| \notag \\
    	& \lesssim \int_t^\infty \big|B(R, \mathcal{Y}_ \pm) \big| + \| h_\perp(s) \|_{H^1} + \big| \left\langle R, Q_j \right\rangle \big|  ds \notag\\
    	& \lesssim \sum_{n=0}^\infty \| \left\langle \nabla \right\rangle R \|_{S'((t+n, t+n+1), L^2)} + \| h_\perp \|_{L_s^\infty H_x^1(t+n,t+n+1)} \notag\\
    	& \lesssim \sum_{n=0}^\infty \left( e^{-c_1(t+n)} + e^{-\frac{c_0+c_1}{2}(t+n)} \right) \lesssim e^{-\frac{c_0 +c_1}{2} t}, \quad \forall t \ge t_0.
    	\label{the estimate on beta(j)}
    \end{align}
	Combining (\ref{decomposition of v: improved case}), (\ref{the estimates on alpha+}), (\ref{improve the decay: the estimate on alpha-}), (\ref{the estimate on v(perp)}) and (\ref{the estimate on beta(j)}), we obtain that
	\[
	\| h(t) \|_{H^1} \lesssim e^{-\frac{c_0+c_1}{2} t}, \quad \forall t \ge t_0.
	\]
	Taking $c_0 \to \frac{c_0+c_1}{2}$ and $c_1 \to c_1$ and repeating the argument again and again, we then improve the decay to $	\| h(t) \|_{H^1} \lesssim e^{-c_1^- t}, \quad \forall t \ge t_0.$
	
	\noindent \textit{Complete the proof.} It remains for us to check the case for $A \not = 0$, i.e. when $c_0 \le e_0 < c_1$. In fact, we notice that the function
	\[
	\widetilde{h}(t)=h(t)- A e^{-e_0 t} \mathcal{Y}_+
	\]
	satisfies the equation
	\[
	\partial_t \widetilde{h} + \mathcal{L} \widetilde{h} = R
	\]
	with $\| \widetilde{h}(t) \|_{H^1(\mathbb{R}^N)} \lesssim e^{-c_0 t},$ and $|\widetilde{\alpha}_+(t)| = B(\widetilde{h}(t), \mathcal{Y}_-) \lesssim e^{-c_1 t}$ by (\ref{the estimates on alpha+}). Repeating the previous argument  on $\widetilde{h}$, we immediately get the desired conclusion.
\end{proof}

\subsection{Existence of special solutions}
\begin{prop}
	Under the Assumption \ref{assumption: nondegeneracy of the ground state}. Let $A \in \mathbb{R}$, then there exists a sequence $(\mathcal{Z}_j^A)_{j \ge 1} \subset \mathcal{S} $ such that $\mathcal{V}_k^A \triangleq \sum_{j=1}^k e^{-j e_0 t} \mathcal{Z}_j^A$ satisfies
	\begin{equation}
		\partial_t \mathcal{V}_k^A + \mathcal{L} \mathcal{V}_k^A = i R(\mathcal{V}_k^A) + O(e^{-(k+1) e_0 t}) \text{ in } \mathcal{S}(\mathbb{R}^N).
		\label{approximation solutions}
	\end{equation}
	\label{proposition: approximation solution}
\end{prop}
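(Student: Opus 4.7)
My plan is to construct the functions $\mathcal{Z}_j^A$ inductively on $j$ (equivalently, inductively on $k$), solving a transport-type equation at each order in the small parameter $e^{-e_0 t}$. For the base case $k=1$, I would set $\mathcal{Z}_1^A = A\mathcal{Y}_+$. Since $\mathcal{L}\mathcal{Y}_+ = e_0\mathcal{Y}_+$ by Proposition \ref{proposition: spectral information of linearized operator L}, the linear part vanishes:
\[
\partial_t(Ae^{-e_0 t}\mathcal{Y}_+) + \mathcal{L}(Ae^{-e_0 t}\mathcal{Y}_+) = 0,
\]
and since $R$ in (\ref{linearized equation: nonlinear term}) vanishes to at least second order in $h$, one has $R(\mathcal{V}_1^A) = O(e^{-2e_0 t})$ in $\mathcal{S}$, establishing (\ref{approximation solutions}) for $k=1$.

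For the inductive step, suppose $\mathcal{V}_k^A$ satisfies (\ref{approximation solutions}). I would make the ansatz $\mathcal{V}_{k+1}^A = \mathcal{V}_k^A + e^{-(k+1)e_0 t}\mathcal{Z}_{k+1}^A$ with $\mathcal{Z}_{k+1}^A \in \mathcal{S}$ to be chosen. Plugging in and using the induction hypothesis,
\[
\partial_t \mathcal{V}_{k+1}^A + \mathcal{L}\mathcal{V}_{k+1}^A - iR(\mathcal{V}_{k+1}^A) = e^{-(k+1)e_0 t}\bigl(\mathcal{L} - (k+1)e_0\bigr)\mathcal{Z}_{k+1}^A - e^{-(k+1)e_0 t} F_{k+1}^A + O(e^{-(k+2)e_0 t}),
\]
where $F_{k+1}^A \in \mathcal{S}$ is the coefficient of $e^{-(k+1)e_0 t}$ in the expansion of the error from the $k$-th step together with the difference $iR(\mathcal{V}_{k+1}^A) - iR(\mathcal{V}_k^A)$. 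Since $(k+1)e_0 \geq 2e_0$ lies outside $\sigma(\mathcal{L})$ by Proposition \ref{proposition: spectral information of linearized operator L}, the operator $\mathcal{L} - (k+1)e_0$ is invertible, and by (\ref{resolvent operator maps Schwartz class into Schwartz class}) the resolvent preserves $\mathcal{S}$. Thus I can define
\[
\mathcal{Z}_{k+1}^A = \bigl(\mathcal{L} - (k+1)e_0\bigr)^{-1} F_{k+1}^A \in \mathcal{S},
\]
which cancels the $e^{-(k+1)e_0 t}$ term and leaves a remainder of size $O(e^{-(k+2)e_0 t})$ in $\mathcal{S}$.

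The main obstacle is the rigorous identification of $F_{k+1}^A$ and the control of the remainder, in particular when $p \in (2,3)$ so that $R$ is only Hölder differentiable rather than smooth. The convolution structure of $R$ given in (\ref{linearized equation: nonlinear term}) involves $|Q+h|^p$ and $|Q+h|^{p-2}(Q+h)$; for $h = \mathcal{V}_{k+1}^A$ uniformly small (for $t$ large, which is the regime of interest), I would Taylor expand these nonlinearities around $Q$ to order $k+1$ in $h$, using the exponential decay of derivatives of $Q$ from Lemma \ref{lemma: exponential decay of ground state and its derivatives} together with Lemma \ref{lemma: estimate on J(z) and K(z)} for a Hölder-type control of the remainder when $p \in (2,3)$. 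Each multilinear term in the expansion, when evaluated on $\mathcal{V}_k^A = \sum_{j\leq k} e^{-je_0 t}\mathcal{Z}_j^A$, produces a finite sum of monomials $e^{-\ell e_0 t}$ with coefficients that are products of convolutions against Schwartz functions weighted by $Q$-factors (hence Schwartz), so collecting the $e^{-(k+1)e_0 t}$ coefficient yields $F_{k+1}^A \in \mathcal{S}$, and all other terms are absorbed in $O(e^{-(k+2)e_0 t})$; controlling the Hölder remainder in every Schwartz seminorm is the technical step that requires the most care.
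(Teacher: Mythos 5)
Your base case and the overall template — put $\mathcal{Z}_1^A = A\mathcal{Y}_+$, then invert $\mathcal{L} - (k+1)e_0$ at each step using (\ref{resolvent operator maps Schwartz class into Schwartz class}) — match the paper. But there is a genuine gap at the single point you yourself flag as the hard one, namely the order-$(k+1)$ expansion of $R$ when $p \in (2,3)$.

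You propose to ``Taylor expand these nonlinearities around $Q$ to order $k+1$ in $h$, using \dots\ Lemma \ref{lemma: estimate on J(z) and K(z)} for a H\"older-type control of the remainder when $p\in(2,3)$.'' This cannot work as stated: for $p\in(2,3)$ the map $s\mapsto |s|^{p-2}s$ is only $C^1$ near $s=0$, and H\"older estimates on $J$, $K$ and their first derivatives (which is all that Lemma \ref{lemma: estimate on J(z) and K(z)} supplies) give you at most a first- or second-order expansion with a H\"older remainder — not an expansion to arbitrary order $k+1$, which is what the induction needs. The crucial observation you are missing, and which the paper uses, is that the compositions $J(z)=|1+z|^p-(1+\tfrac{p}{2}z+\tfrac{p}{2}\bar z)$ and $K(z)=|1+z|^{p-2}(1+z)-(1+\tfrac{p}{2}z+\tfrac{p-2}{2}\bar z)$ are \emph{real-analytic on the disk} $\{|z|<1\}$ (because $1+z$ is bounded away from $0$ there), with normally convergent power series in $(z,\bar z)$ on $\{|z|\le \tfrac12\}$. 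Writing $\omega = Q^{-1}h$ then turns $R(h)$ into the genuine power series (\ref{expansion of nonlinear term}) in $\omega,\bar\omega$ — no finite Taylor truncation, no H\"older remainder — valid as soon as $|Q^{-1}\mathcal{V}_k^A|\le\tfrac12$. Collecting the $e^{-(k+1)e_0 t}$ coefficient from an actual convergent series is what makes $F_{k+1}\in\mathcal{S}$ and the remainder $O(e^{-(k+2)e_0 t})$ in every Schwartz seminorm.

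A related point you gloss over: ``$\mathcal{V}_{k+1}^A$ uniformly small for $t$ large'' is not enough, because $Q$ decays exponentially and you need $|Q^{-1}\mathcal{V}_k^A|\le\tfrac12$ pointwise. This forces you to carry through the induction the quantitative statement $\|Q^{-1}e^{\eta|x|}\partial^\alpha \mathcal{Z}_j^A\|_{L^\infty}<\infty$ for some $\eta>0$, i.e.\ that each $\mathcal{Z}_j^A$ decays strictly faster than $Q$ — guaranteed at the base case by Lemma \ref{lemma: exponential decay of ground state and its derivatives}(ii) and propagated by Lemma \ref{lemma: exponential decay of ground state and its derivatives}(iii). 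Without making this induction hypothesis explicit, the small-ratio condition that legitimizes the series expansion is unverified.
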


\begin{rmk}
	Let $U_k^A= e^{it} (Q + \mathcal{V}_k^A)$, then by the proposition above, $U_k^A$ satisfies
	\begin{equation}
		i \partial_t U_k^A + \Delta U_k^A + \left( |\cdot|^{-(N-\gamma)} * \big|U_k^A \big|^p  \right) \big|U_k^A \big|^{p-2} U_k^A = O(e^{-(k+1) e_0 t}) \text{ in } \mathcal{S} (\mathbb{R}^N).
		\label{the construction of Q(k,A)}
	\end{equation}
	\label{remark: approximation solution}
\end{rmk}
\begin{proof}[Proof of Proposition \ref{proposition: approximation solution}.] As for $ p>2$, we follow the proof of \cite[Proposition $4.1$]{campos2020threshold} and  \cite[Lemma $6.1$]{duyckaerts2009dynamic} .  As for $J(z),K(z)$ defined in (\ref{J(z): definition}) and (\ref{K(z): definition}), they are both real-analytic on the disk $\Big\{ z \in \mathbb{C}\big| |z| <1 \Big\}$ and satisfy
	\[
	J(z)= \sum_{j_1 + j_2 \ge 2} a_{j_1 j_2}  z^{j_1} \bar{z}^{j_2}, \quad K(z) = \sum_{k_1 + k_2 \ge 2} b_{k_1 k_2} z^{k_1} \bar{z}^{k_2}
	\]
	with normal convergence of the series and all its derivatives for $|z| \le \frac{1}{2}$. Thus if $|\omega| \triangleq |Q^{-1} h| \le \frac{1}{2}$, we obtain that
	\begin{align}
		R(h)
		& = \left( |\cdot|^{-(N-\gamma)} * \left( Q^p \sum_{j_1+ j_2 \ge 2} a_{j_1 j_2} \omega^{j_1} \bar{\omega}^{j_2} \right)  \right) Q^{p-1} \left( \sum_{k_1+ k_2 \ge 2} b_{k_1 k_2} \omega^{k_1} \bar{\omega}^{k_2} + \left( 1+ \frac{p}{2} \omega + \frac{p-2}{2} \bar{\omega} \right) \right) \notag \\
		& \quad + \left( |\cdot|^{-(N-\gamma)} * \left( Q^p \left(1+ \frac{p}{2} \omega + \frac{p}{2} \bar{\omega} \right) \right) \right) Q^{p-1} \left( \sum_{k_1 + k_2 \ge 2} b_{k_1 k_2} \omega^{k_1} \bar{\omega}^{k_2} \right) \notag \\
		& \quad + \left( |\cdot|^{-(N-\gamma)} * \left( Q^p \left( \frac{p}{2} \omega + \frac{p}{2} \bar{\omega} \right) \right) \right) Q^{p-1} \left( \frac{p}{2} \omega+ \frac{p-2}{2} \bar{\omega} \right) \notag \\
		& = \sum_{j_1+ j_2 + k_1+ k_2 \ge 2} c_{j_1 j_2 k_1 k_2} \left( |\cdot|^{-(N-\gamma)} * \left( Q^p \omega^{j_1} \bar{\omega}^{j_2} \right) \right) Q^{p-1} \omega^{k_1} \bar{\omega}^{k_2},
		\label{expansion of nonlinear term}
	\end{align}	
where $R(h)$ is defined in (\ref{linearized equation: nonlinear term}). Denote $\varepsilon_k= \partial_t \mathcal{V}_k^A + \mathcal{L} \mathcal{V}_k^A  - iR(\mathcal{V}_k^A )$. As for $k=1$, let $\mathcal{Z}_1^A= A \mathcal{Y}_+$, then $\mathcal{V}_1^A = e^{-e_0 t} \mathcal{Z}_1^A$ and $\omega = Q^{-1} \mathcal{V}_1^A= e^{-e_0 t} Q^{-1} \mathcal{Y}_+$. Since the decay rate of $\mathcal{Y}_+$ is faster than $Q$ (see Lemma \ref{lemma: exponential decay of ground state and its derivatives}),  $R(\mathcal{V}_1^A)$ can be expanded as in (\ref{expansion of nonlinear term}) for large $t$, then
	\begin{equation*}
		|\varepsilon_1| 
		=  \Big| \partial_t \mathcal{V}_1^A + \mathcal{L} \mathcal{V}_1^A  - iR(\mathcal{V}_1^A ) \Big| 
		= \big| iR(\mathcal{V}_1^A) \big| 
		\lesssim e^{-2e_0t}, \quad \text{ for } t \ge t_0 \gg 1.
	\end{equation*}
	Similarly, by Lemma \ref{lemma: exponential decay of ground state and its derivatives} again, it can be also easy to check that
	\begin{equation*}
		|x|^\alpha | \partial^\beta \varepsilon_1| \lesssim e^{-2e_0 t}, \quad \forall \alpha, \beta \in \mathbb{Z}^N, \; \forall  t \ge t_0(\alpha, \beta)
	\end{equation*}
    and 
    \begin{equation*}
    	\| Q^{-1} e^{\eta |x|} \partial^\alpha \mathcal{Z}_1^A \|_{L^\infty} < \infty, \quad \forall \alpha \in \mathbb{Z}_{\ge 0}^\mathbb{N} \text{ for some } 0< \eta \ll 1. 
    \end{equation*}
	
	As for general case, we use induction. In fact, we assume that $\mathcal{Z}_1^A, \mathcal{Z}_2^A, ..., \mathcal{Z}_k^A \in \mathcal{S}(\mathbb{R}^N)$ have been constructed and satisfy
	\begin{equation}
	\| Q^{-1} e^{\eta |x|} \partial^\alpha \mathcal{Z}_j \|_{L^\infty} < \infty, \; \forall j \le k, \; \forall \alpha \in \mathbb{Z}_{\ge 0} ^N,
	\label{constrction of special function: induction assumption}
	\end{equation}
	then $\varepsilon_k = \partial_t \mathcal{V}_k^A + \mathcal{L} \mathcal{V}_k^A - iR(\mathcal{V}_k^A)= O(e^{-(k+1) e_0 t})$ in $\mathcal{S}(\mathbb{R}^N)$ and $|Q^{-1} \mathcal{V}_k^A | \le \frac{1}{2}$ for large $t$. Hence by (\ref{expansion of nonlinear term}) again,
	\begin{equation*}
		-i R(\mathcal{V}_k^A)= \sum_{j=1}^{k+1} e^{-je_0 t} F_j + O(e^{-(k+2) e_0 t}) \text{ in } \mathcal{S}(\mathbb{R}^N), \; \forall t > t_0 \gg 1,
	\end{equation*}
	 for some $F_j \in \mathcal{S}(\mathbb{R}^N), \forall 1 \le j \le k+1$. Furthermore, by induction assumption (\ref{constrction of special function: induction assumption}), $F_j$ also satisfies
	 \begin{equation}
	\| Q^{-1} e^{\eta |x|} \partial^\beta F_j  \|_{L^\infty} < \infty, \; \forall \beta \in \mathbb{Z}_{\ge 0}^N, \quad \forall 1 \le j \le k+1.
	\label{construction of special function: properties on Fj}
\end{equation}
	 Thus
	\begin{equation*}
		\varepsilon_k = \sum_{j=1}^{k} e^{-je_0 t} \left( F_j + \left( \mathcal{L} - j e_0 \right) \mathcal{Z}_j^A \right)+ e^{-(k+1) e_0 t} F_{k+1} +O(e^{-(k+2) e_0 t}) = O\left(e^{-(k+1) e_0 t} \right),
	\end{equation*}
	which then implies that
	\begin{equation*}
		\varepsilon_k = e^{-(k+1) e_0t}F_{k+1} + O(e^{-(k+2) e_0 t}).
	\end{equation*}
	Let $\mathcal{Z}_{k+1}^A \triangleq - \left( \mathcal{L} - (k+1) e_0 \right)^{-1}  F_{k+1}$, then by (\ref{resolvent operator maps Schwartz class into Schwartz class}), (\ref{construction of special function: properties on Fj}) and Lemma \ref{lemma: exponential decay of ground state and its derivatives},  $Z_{k+1}^A \in \mathcal{S}(\mathbb{R}^N)$ and $Z_{j}^A$ satisfies the induction assumption (\ref{constrction of special function: induction assumption}) for $j \le k+1$. Consequently,
	\begin{align*}
		\varepsilon_{k+1}
		& = - i  \left( R(\mathcal{V}_{k+1}^A) - R(\mathcal{V}_k^A) \right)  + \varepsilon_k  + e^{-(k+1) e_0 t} \left( \mathcal{L} -(k+1) e_0 \right) \mathcal{Z}_{k+1}^A \\
		& = -i \left( R(\mathcal{V}_{k+1}^A ) - R(\mathcal{V}_k^A) \right) + O\left( e^{-(k+2)  e_0 t}  \right).
	\end{align*}
    Using (\ref{expansion of nonlinear term}) again for large $t$,
	\begin{align*}
		R(\mathcal{V}_{k+1}^A ) - R(\mathcal{V}_k^A)
		& = \sum_{j_1+ j_2 + k_1+ k_2 \ge 2} c_{j_1 j_2 k_1 k_2} \left( |\cdot|^{-(N-\gamma)} * \left( Q^p \mathcal{V}_{k+1}^{j_1} \overline{\mathcal{V}_{k+1}^A}^{j_2} \right) \right) Q^{p-1} \mathcal{V}_{k+1}^{k_1} \overline{\mathcal{V}_{k+1}^A}^{k_2} \\
		& - \sum_{j_1+ j_2 + k_1+ k_2 \ge 2} c_{j_1 j_2 k_1 k_2} \left( |\cdot|^{-(N-\gamma)} * \left( Q^p \mathcal{V}_{k}^{j_1} \overline{\mathcal{V}_{k}^A}^{j_2} \right) \right) Q^{p-1} \mathcal{V}_{k}^{k_1} \overline{\mathcal{V}_{k}^A}^{k_2}  \\
		& = O\left(e^{-(k+2) e_0 t} \right),
	\end{align*}
	 we then conclude that $\varepsilon_{k+1}= O(e^{-(k+2) e_0t})$ in $\mathcal{S}(\mathbb{R}^N)$, and the construction is completed for $p >2$.
	
	As for $p=2$, since we can't get the parallel information about $\mathcal{Y}_\pm$ as shown in Lemma \ref{lemma: exponential decay of ground state and its derivatives}$(ii)$ which is suitable for $p>2$, it is not appropriate to simply follow the argument above to construct approximation solutions when $p=2$. However, the even power $p=2$ enables us to expand $|f+g|^2$ directly and we needn't to use the expansion (\ref{expansion of nonlinear term}) anymore, which heavily relies on a priori assumption $| \omega | \triangleq |Q^{-1} h | \le \frac{1}{2}$. Therefore, it is easier to handle the case for $p=2$ and the proof is almost essentially the same as  \cite[Proposition $3.1$]{miao2015dynamics} and \cite[ Proposition $3.4$]{duyckaerts2010threshold}.
\end{proof}

Next, we construct the special solution with threshold mass-energy by a fixed point argument.
\begin{prop}[Construction of special solutions near the approximation solution] Under the Assumption \ref{assumption: nondegeneracy of the ground state}, let $A \in \mathbb{R}$, there exists $k_0 >0$ and $t_0 >0$ such that $\forall k \ge k_0$, there exists a solution $U^A$ to (\ref{Hartree equation}) such that
	\begin{equation} 
		\| \left\langle \nabla \right\rangle(U^A - U_{l(k)}^A) \|_{S((t, + \infty),L^2)} \le e^{-(k+ \frac{1}{2}) e_0 t},\quad \forall t \ge t_0,
		\label{constrction of Q(A)}
	\end{equation}
	where $l(k)= 
	\begin{cases}
		\max \Big\{k, \left( k + \frac{1}{2} \right)\frac{1}{p-2}  \Big\}, & p \not= 2, \\
		k , & p=2.
	\end{cases} $\\
	Furthermore, $U^A$ is the unique solution to (\ref{Hartree equation}) satisfying (\ref{constrction of Q(A)}) for large $t$. Finally, $U^A$ is independent of $k$ and satisfies
	\begin{equation}
		\| U^A(t) - e^{it} Q - A e^{(i-e_0) t} \mathcal{Y}_+ \|_{H^1} \lesssim  e^{-2 e_0 t}.
		\label{Q(A): the property 1}
	\end{equation}
	\label{proposition: construction of Q(A)}
\end{prop}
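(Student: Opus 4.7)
I would look for $U^A$ as a perturbation of the approximate solution $U^A_{l(k)}$, writing $U^A = U^A_{l(k)} + r$. By Remark~\ref{remark: approximation solution}, the correction has to satisfy
\begin{equation*}
(i\partial_t + \Delta) r = -\bigl[N(U^A_{l(k)}+r) - N(U^A_{l(k)})\bigr] + \mathcal{E}_{l(k)}, \qquad \mathcal{E}_{l(k)} = O(e^{-(l(k)+1)e_0 t}) \text{ in } \mathcal{S}(\mathbb{R}^N),
\end{equation*}
where $N(u) = (|\cdot|^{-(N-\gamma)}*|u|^{p})|u|^{p-2}u$. Because I want forward-in-time decay, I would impose the Duhamel identity from $+\infty$,
\begin{equation*}
r(t) = -i\int_t^{+\infty} e^{i(t-s)\Delta}\Bigl\{N(U^A_{l(k)}+r) - N(U^A_{l(k)}) - \mathcal{E}_{l(k)}\Bigr\}(s)\,ds =: \Phi(r)(t),
\end{equation*}
and construct $r$ as a fixed point of $\Phi$. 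Peeling off the value at $e^{it}Q$ decomposes $N(U^A_{l(k)}+r)-N(U^A_{l(k)})$ into: the linearized part $V(e^{-it}r)$ from Section~\ref{The linearized equation and properties of linearized operator}; corrections of that linearization produced by $\mathcal{V}^A_{l(k)}$ (hence carrying factors of $e^{-e_0 t}$, or $e^{-(p-2)e_0 t}$ in the fractional range); and a purely nonlinear remainder controlled by the difference Strichartz bounds of Appendix~\ref{Appendix: Strichartz estimates}.

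\textbf{Fixed point.} Fix $T\gg1$ to be chosen, and let
\begin{equation*}
\mathcal{B}_T = \Bigl\{\, r : \; \|\langle\nabla\rangle r\|_{S([\tau,+\infty),L^2)} \le e^{-(k+\tfrac12)e_0 \tau} \text{ for every } \tau\ge T \,\Bigr\},
\end{equation*}
complete in the obvious weighted metric. I would show $\Phi:\mathcal{B}_T\to\mathcal{B}_T$ is a contraction by applying Lemma~\ref{lemma: Strichartz estimates} together with Remark~\ref{remark: exponential Strichartz estimate} on each unit interval $[n,n+1]$ with $n\ge T$ and then summing a geometric series in $n$. The source $\mathcal{E}_{l(k)}$ contributes at most $Ce^{-(l(k)+1)e_0 n}$ on $[n,n+1]$ in the dual Strichartz norm, dominated by $\tfrac13 e^{-(k+\tfrac12)e_0 n}$ as soon as $l(k)\ge k$. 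The $V$-type linear-in-$r$ contribution is bounded by $C\|\langle\nabla\rangle r\|_{S([n,n+1],L^2)}$ on each unit interval via~(\ref{Strichartz estimates on linear term}) with $|I|=1$, absorbed by the exponential weight once $T$ is large; the $\mathcal{V}^A_{l(k)}$-driven corrections of that linear part carry an extra factor $e^{-(p-2)e_0 n}$, again small for $T$ large. The remaining genuinely nonlinear piece is estimated by~(\ref{Strichartz estimates on nonlinear term}) and, in $\mathcal{B}_T$, contributes $\lesssim \|r\|^{p-1}+\|r\|^2+\|r\|^{2p-1}\ll \|r\|$. Contractivity is obtained by the same estimates applied to $r_1-r_2$, with the Hölder-type nonlinear difference depending only on $\|r_1\|+\|r_2\|$.

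\textbf{Main obstacle.} The core technical difficulty is the range $p\in(2,3)$: the nonlinearity is only Hölder of order $p-2$ beyond its linear part, so the mean-value theorem is unavailable and both the expansion of $N(U^A_{l(k)}+r)-N(U^A_{l(k)})$ and the Lipschitz difference controlling the contraction have to be carried out via the fractional difference bounds quoted in Appendix~\ref{Appendix: Strichartz estimates}. The precise prescription $l(k)=\max\{k,(k+\tfrac12)/(p-2)\}$ comes from this: after combining the Hölder exponent $p-2$ with the decay $e^{-(l(k)+1)e_0 t}$ inherited from $\mathcal{V}^A_{l(k)}$, one still needs to beat the target rate $e^{-(k+\tfrac12)e_0 t}$, which forces $(p-2)(l(k)+1)e_0 \ge (k+\tfrac12)e_0$; for $p=2$ the even-power expansion in Proposition~\ref{proposition: approximation solution} is exact and $l(k)=k$ suffices.

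\textbf{Conclusion.} The fixed point provided by the contraction is the desired $U^A$. Uniqueness among solutions of~(\ref{constrction of Q(A)}) for large $t$ follows at once, and independence of $k$ is a corollary of that uniqueness, since a solution with rate $e^{-(k+\tfrac12)e_0 t}$ automatically satisfies the weaker bound for every smaller $k\ge k_0$. To deduce~(\ref{Q(A): the property 1}), pick $k\ge 2$ and decompose
\begin{equation*}
U^A - e^{it}Q - Ae^{(i-e_0)t}\mathcal{Y}_+ = \bigl(U^A - U^A_{l(k)}\bigr) + e^{it}\sum_{j=2}^{l(k)} e^{-je_0 t}\mathcal{Z}^A_j\,;
\end{equation*}
the first term is $O_{H^1}(e^{-(k+\tfrac12)e_0 t})$ via the $L^\infty_t H^1_x$ component of the Strichartz norm in~(\ref{constrction of Q(A)}), while the second is $O_{H^1}(e^{-2e_0 t})$ because each $\mathcal{Z}^A_j\in\mathcal{S}$ by Proposition~\ref{proposition: approximation solution}.
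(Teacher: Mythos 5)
Your architecture matches the paper's: both pass to the variable $w = e^{-it}(U^A - U^A_{l(k)}) = h - \mathcal{V}^A_{l(k)}$, use a Duhamel integral from $+\infty$, and run a fixed-point argument in an exponentially weighted Strichartz space, invoking the difference estimates of Appendix~\ref{Appendix: Strichartz estimates}. The reduction of~(\ref{Q(A): the property 1}) to the triangle inequality with $\mathcal{V}^A_{l(k)} - Ae^{-e_0 t}\mathcal{Y}_+ = \sum_{j\ge 2} e^{-je_0t}\mathcal{Z}^A_j$ for $k\ge 2$, and the deduction of independence of $k$ from uniqueness, are also exactly the paper's steps.

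However, your fixed-point space $\mathcal{B}_T$, built on the single norm $\|\langle\nabla\rangle r\|_{S([\tau,\infty),L^2)}\le e^{-(k+\frac12)e_0\tau}$, is not adequate for $p\in(2,3)$, and this is precisely what the paper flags as the main obstacle. The $S'(L^2)$ difference estimate of Lemma~\ref{lemma: Strichartz estimate on nonlinear term: difference version for p in (2,3)} contains the term $\|f-g\|_{S(\dot H^{s_c})}^{p-2}(\cdots)$ with exponent $p-2<1$. If you run the contraction in the $\langle\nabla\rangle S(L^2)$ metric, the best you can extract from it is $d(M(r_1),M(r_2))\lesssim \varepsilon\, d(r_1,r_2)^{p-2}$, which is \emph{not} a contraction no matter how small $\varepsilon$ or how late $T$ is: for $d(r_1,r_2)\ll 1$ the right-hand side dominates $d(r_1,r_2)$. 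Saying that "the Hölder-type nonlinear difference depends only on $\|r_1\|+\|r_2\|$" does not repair this; the coefficient can be made small, but the exponent remains $p-2$. The paper's fix is a genuinely two-tier space $X^k$: the $S(\dot H^{s_c})$ component carries the \emph{strictly faster} weight $e^{(k+\frac12)\frac{1}{p-2}e_0 t}$, the ball is defined by both bounds, but the \emph{metric} uses only the $S(\dot H^{s_c})$ component. Contractivity then comes from the $S'(\dot H^{-s_c})$ difference estimate~(\ref{the estimates on nonliear term: critical space}), which \emph{is} Lipschitz in $\|f-g\|_{S(\dot H^{s_c})}$; and the faster weight on the $S(\dot H^{s_c})$ side makes the $\|w\|_{S(\dot H^{s_c})}^{p-2}$ factor decay at rate $(k+\frac12)e_0$, which is exactly what is needed to close the ball bound in the $\langle\nabla\rangle S(L^2)$ component. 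This double-norm structure, and the choice of which norm furnishes the metric, is the missing idea in your proposal.

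Relatedly, your derivation of $l(k)=\max\{k,(k+\frac12)/(p-2)\}$ from the inequality $(p-2)(l(k)+1)\ge k+\frac12$ is not the mechanism. The power $p-2$ never acts on the source term $\varepsilon_{l(k)}$; it acts on the fixed point $w$ itself in the nonlinear difference estimate. What actually forces the large $l(k)$ is that, once the $S(\dot H^{s_c})$ component of the ball carries the weight $e^{(k+\frac12)\frac{1}{p-2}e_0 t}$, the error $\varepsilon_{l(k)}=O(e^{-(l(k)+1)e_0 t})$ must itself be bounded in the dual critical norm at that faster rate, i.e., one needs $l(k)+1 > (k+\frac12)/(p-2)$. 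So the constraint on $l(k)$ is downstream of the two-norm construction you omitted, not an independent consequence of Hölder continuity of the nonlinearity.
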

\begin{proof}
	Let $h$ be the solution to the linearized equation
	\begin{equation*}
		\partial_t h + \mathcal{L} h = i R(h),
	\end{equation*}
	which is equivalent to the equation
	\begin{equation*}
		i \partial_t h + \Delta h - h + S(h)=0, \text{ where } S(h)= V(h) +R(h) \text{ is defined in } (\ref{linearized equation: complex version}).
	\end{equation*}
	By (\ref{approximation solutions}), the function $w \triangleq h-\mathcal{V}_{l(k)}^A$ satisfies the equation
	\[
	i \partial_t w + \Delta w - w + S(\mathcal{V}_{l(k)}^A + w) - S(\mathcal{V}_{l(k)}^A)  = -\varepsilon_{l(k)} = O(e^{-(l(k)+1) e_0 t}),
	\]
	and $w$ is given by the equation
	\[
	w(t)=M(w)(t),
	\]
	where
	\begin{equation}
		M(w)(t) = - i \int_t^{+ \infty} e^{i(t-s) (\Delta -1)} \left[ S(\mathcal{V}_{l(k)}^A + w) - S(\mathcal{V}_{l(k)}^A) + \varepsilon_{l(k)} \right](s) ds.
		\label{definition of mapping in contraction}
	\end{equation}
	
	Next, we begin to construct the special function for  $p \in (2,3)$. As shown in Lemma \ref{lemma: Strichartz estimate on nonlinear term: difference version for p in (2,3)}, if we only use $S(I,\left\langle \nabla \right\rangle L^2)$ norm, we face a term $\| \left\langle \nabla \right\rangle (f-g) \|_{S'(L^2)}^{p-2}$ with power $p-2 < 1$, which makes contraction unable to work, and we turn to  $S( I, \dot{H}^{s_c} )$ norm for help. Precisely speaking, for $k \ge 1$ and $t_k$ fixed later, we define the working space by
	\begin{align*}
		& B^{k} \triangleq \left\lbrace  w \in X^k: \| w \|_{X^k} \le 1 \right\rbrace, \\
		& 	X^k \triangleq \Bigg\{  w:  \left\langle \nabla \right\rangle w   \in S([t_k, +\infty), L^2), w \in S([t_k, + \infty), \dot{H}^{s_c} ) \text{ and }
		\| w \|_{X^k} < \infty, \\ 
		& \qquad \qquad   \text{ where }\| w \|_{X^k} = \sup_{t \ge t_k} \|  w  \|_{S([t, + \infty), \dot{H}^{s_c}) } e^{(k+\frac{1}{2}) \frac{1}{p-2} e_0 t}+ \sup_{t \ge t_k} \| \left\langle \nabla \right\rangle w \|_{S( [t, + \infty), L^2)} e^{(k+\frac{1}{2}) e_0 t} \Bigg\} 
	\end{align*}
	equipped with the metric
	\[
	d_k(u,v)= \sup_{t \ge t_k} \|  u-v  \|_{S([t, + \infty), \dot{H}^{s_c}) } e^{(k+\frac{1}{2}) \frac{1}{p-2} e_0 t}.
	\]
	By the uniqueness of weak limitation, it is easy to see that $(B^k, d_k)$ is a complete metric space. We will show that $M: B^k \to B^k$ and $M$ is a contraction. For any fixed $w \in B^k$, by Strichartz estimate, we can see that
	\begin{align*}
		\| \left\langle \nabla \right\rangle M(w) \|_{S([t,+ \infty)), L^2)} 
		& \lesssim  \big\| \left\langle \nabla \right\rangle \left( S(\mathcal{V}_{l(k)}^A+w) - S(\mathcal{V}_{l(k)}^A) \right) \big\|_{S'([t,+ \infty)), L^2)}  + \| \left\langle \nabla \right\rangle \varepsilon_{l(k)} \|_{S' ([t,+ \infty)), L^2)}  \\
		& \lesssim   \big\| \left\langle \nabla \right\rangle \left( R(\mathcal{V}_{l(k)}^A+w) - R(\mathcal{V}_{l(k)}^A) \right) \big\|_{S'([t,+ \infty)), L^2)}  + \big\| \left\langle \nabla \right\rangle  V(w) \big\|_{S'([t,+ \infty)), L^2)} \\ 
		& \quad +  \| \left\langle \nabla \right\rangle \varepsilon_{l(k)} \|_{S'([t,+ \infty)), L^2)}  .
	\end{align*}
	As for the estimate of the nonlinear term, by (\ref{the estimates on nonliear term: H1 sense}) and Remark \ref{remark: we can bound the nonliear by L2 Strichartz norm with gradient},
	\begin{equation*}
		\Big \| \left\langle \nabla  \right\rangle (R(\mathcal{V}_{l(k)}^A + w ) - R(\mathcal{V}_{l(k)}^A ) ) \Big\|_{S'(I_n, L^2)} \lesssim e^{-(k + \frac{3}{2}) e_0 n}  \text{ for } I_n=[n,n+1],
	\end{equation*}
    where the constant $C>0$ is independent of $n \in \mathbb{Z}^+$. Sticking the intervals together, we obtain that
	\begin{align}
		& \quad\big\| \left\langle \nabla \right\rangle \left( R(\mathcal{V}_{l(k)}^A+w) - R(\mathcal{V}_{l(k)}^A) \right) \big\|_{S'([t, +\infty), L^2)}  \notag\\
		& \lesssim \sum_{n=0}^\infty \big\| \left\langle \nabla \right\rangle \left( R(\mathcal{V}_{l(k)}^A+w) - R(\mathcal{V}_{l(k)}^A) \right) \big\|_{S'([t+n, t+n+1], L^2)}  \notag\\
		& \lesssim \sum_{n=0}^\infty e^{-\left( k +\frac{3}{2}\right) e_0 (t+n)} \lesssim e^{-\left( k + \frac{3}{2} \right) e_0 t}.
		\label{contraction: nonlinear term}
	\end{align}
    \begin{rmk}
    Before we continue proceeding with the proof, we emphasize that when handling $\big \| \left\langle \nabla  \right\rangle (R(\mathcal{V}_{l(k)}^A + w ) - R(\mathcal{V}_{l(k)}^A ) ) \big\|_{S'([t,+\infty), L^2)}$, we face the term $\| w \|_{S(I_n, \dot{H}^{s_c})}^{p-2}$ again, whose low power is likely to bring us another obstacle. Precisely, if the decay of $\| w \|_{S([t,+\infty), \dot{H}^{s_c})}$ is just $e^{-(k+\frac{1}{2}) e_0 t}$, then it is hard to get enough dacay of $\big \| \left\langle \nabla  \right\rangle (R(\mathcal{V}_{l(k)}^A + w ) - R(\mathcal{V}_{l(k)}^A ) ) \big\|_{S'([t,+\infty), L^2)}$, hence $\| w \|_{S([t,+\infty),\dot{H}^{s_c})}$ should have faster decay, and here we wish its decay rate to be  $e^{-\left(k+\frac{1}{2}\right) \frac{1}{p-2} e_0 t}$ (see definition of working space $X^k$).
    \end{rmk}
    
	As for the estimate of the linear term $\| \left\langle \nabla \right\rangle V(w) \|_{S'([t, +\infty), L^2)}  $, by (\ref{Strichartz estimates on linear term}),
	\begin{equation*}
		\| \left\langle \nabla \right\rangle V(w) \|_{S'([t, t+ \tau_0 ), L^2)} 
		\le C \tau_0^\gamma \| \left\langle \nabla \right\rangle w \|_{S([t,t+ \tau_0), L^2 )} \le C \tau_0^\gamma e^{-(k+\frac{1}{2}) e_0 t},
	\end{equation*}
	where the constant $C>0$ is independent of the choice of $\tau_0 >0$, similarly, it yields
	\begin{align}
		\| \left\langle \nabla \right\rangle V(w) \|_{{S'([t, +\infty), L^2)} } 
		& \lesssim \sum_{n=0}^\infty  \| \left\langle \nabla \right\rangle V(w) \|_{S'([t + n \tau_0 , t+ n \tau_0 + \tau_0 ), L^2)}  \notag \\
		& \le C \sum_{n=0}^\infty  \tau_0^\gamma e^{-(k+\frac{1}{2}) e_0 (t+ n \tau_0)} \notag \\
		& = \frac{C \tau_0^\gamma }{1-e^{-(k+\frac{1}{2}) e_0 \tau_0}} e^{-(k+\frac{1}{2}) e_0 t}.
		\label{contraction: linear term}
	\end{align}
	Furthermore, as for the remaining term, by Proposition \ref{proposition: approximation solution},
	\begin{equation}
		\| \left\langle \nabla \right\rangle \varepsilon_{l(k)} \|_{S(t, +\infty)} \lesssim_k e^{-(l(k)+1) e_0 t}.
		\label{contraction: remaining term}
	\end{equation}
	Combining with (\ref{contraction: nonlinear term}), (\ref{contraction: linear term}) and (\ref{contraction: remaining term}),
	\begin{equation*}
		e^{(k+\frac{1}{2}) e_0 t} \| \left\langle \nabla \right\rangle M(w) \|_{S((t,+\infty),L^2)}
		\le C e^{-e_0 t} + \frac{C \tau_0^\gamma }{1-e^{-(k+\frac{1}{2}) e_0 \tau_0}} + C_k e^{-\frac{1}{2} e_0 t}.
	\end{equation*}
	Choosing $\tau_0 \ll 1$ and then letting $k \ge 1$ and $t_k>0$ sufficiently large, we can get
	\begin{equation}
		e^{(k+\frac{1}{2}) e_0 t} \| \left\langle \nabla \right\rangle M(w) \|_{S((t,+\infty),L^2)} \le \frac{1}{4}, \quad \forall w \in B^k, \; \forall t \ge t_k.
		\label{contraction 1}
	\end{equation}
	Similarly, by (\ref{the estimates on nonliear term: critical space}), we can repeat the  previous argument to check that
	\begin{align}
		\Big\| R(\mathcal{V}_{l(k)}^A + w)  -R(\mathcal{V}_{l(k)}^A)  \Big\|_{S'((t,+\infty), \dot{H}^{-s_c})} 
		&\lesssim e^{-(k + \frac{1}{2}) \frac{1}{p-2} e_0 t} e^{-e_0 t},
		\label{contraction: nonlinear term critical case}
	\end{align}
	
	\begin{align}
		\| V(w) \|_{{S'([t, +\infty), \dot{H}^{-s_c})} } 
		& \lesssim \frac{C \tau_0^\mu }{1-e^{-(k+\frac{1}{2}) \frac{1}{p-2} e_0 \tau_0}}  e^{-(k+\frac{1}{2}) \frac{1}{p-2} e_0 t}
		\label{contraction: linear term critical case}
	\end{align}
	and 
	\begin{equation}
		\| \left\langle \nabla \right\rangle \varepsilon_{l(k)} \|_{S([t, +\infty), \dot{H}^{-s_c})} \lesssim_k e^{-(l(k)+1) e_0 t},
		\label{contraction: remaining term 2 critical case}
	\end{equation}
	thus
	\begin{equation*}
		e^{(k+\frac{1}{2}) \frac{1}{p-2} e_0 t} \|  M(w) \|_{S'((t,+\infty) , \dot{H}^{-s_c})} \le C e^{-e_0 t}  +  \frac{C \tau_0^\mu }{1-e^{-(k+\frac{1}{2}) e_0 \frac{1}{p-2} \tau_0}}  + C_k e^{-\frac{1}{2} e_0 t}.
	\end{equation*}
	Similarly, after choosing $\tau_0 \ll 1$ and letting $k \ge 1$ and $t_k>0$ sufficiently large,
	\begin{equation}
		e^{(k+\frac{1}{2}) \frac{1}{p-2} e_0 t} \|  M(w) \|_{S((t,+\infty) , \dot{H}^{s_c})} \le \frac{1}{4}, \quad \forall w \in B^k, \; \forall t \ge t_k.
		\label{contraction 2}
	\end{equation}
	Therefore, together with (\ref{contraction 1}) and (\ref{contraction 2}), $\| M(w) \|_{X^k} \le \frac{1}{2} < 1, \; \forall  h \in B^k$ for $k \gg 1$ and $t_k \gg 1$.
	
	With the same argument as above, $M$ is also a contraction in $(B^k, d_k)$. Then we get a unique solution $w$ on $(t_k,+\infty)$, and $U^A \triangleq e^{it} \left(Q +  \mathcal{V}_{l(k)}^A+ w \right)$ satisfies
	\[
	\| \left\langle \nabla \right\rangle (U^A -U_{l(k)}^A) \|_{S([t,+\infty), L^2)} \le e^{-(k+\frac{1}{2}) e_0 t}, \quad \forall t \ge t_k
	\]
	and 
	\[
	\| U^A - U_{l(k)}^A \|_{S'((t,+\infty), \dot{H}^{s_c})} \le e^{-(k+ \frac{1}{2}) \frac{1}{p-2} e_0 t}, \quad  t \ge t_k.
	\]
	Moreover, by local well-posedness of (\ref{Hartree equation}), the uniqueness given by contraction and the embedding relationship
	\[
	X^{k'} \Bigg|_{(t_{k'}, + \infty)} \subset X^k \Bigg|_{(t_{k'}, + \infty)}, \quad \forall k ' \ge k,
	\]
	there exists $k_0 \gg 1$ and $t_0 \gg 1$ such that the solution $U^A$ constructed above are coincident with each other for any $k \ge k_0$. By Duhamel formula (\ref{definition of mapping in contraction}) and Strichartz estimate
	\begin{equation*} 
		\Big\| U^A - e^{it} Q - e^{it} \mathcal{V}_{l(k)}^A(t) \Big\|_{H^1} 
		= \| w(t) \|_{H^1} \lesssim e^{-(k+\frac{1}{2}) e_0 t}, \; \forall t \ge t_0.
	\end{equation*}
	Then together with the fact that
	\[
	\Big\| \mathcal{V}_{l(k)}^A(t) -  A e^{-e_0 t} \mathcal{Y}_+  \Big\|_{H^1} \lesssim e^{-2e_0 t},
	\]
	which follows from the construction of $\mathcal{V}_{l(k)}^A$ in Proposition \ref{proposition: approximation solution}, we obtain (\ref{Q(A): the property 1}) immediately. 
	
	As for $p \ge 3$ or $p=2$, since we have more regularity for $|\cdot|^{p}$ in this case, we do not need $S(I,\dot{H}^{s_c})$ anymore, and we can introduce a simpler working space as follows:
	\begin{align*}
		& B^{k} \triangleq \left\lbrace w \in X^k: \| w \|_{X^k} \le 1 \right\rbrace, \\
		& 	X^k \triangleq \Big\{  w:  \left\langle \nabla \right\rangle w  \in S([t_k, +\infty), L^2 ): 
		\| w \|_{X^k} = \sup_{t \ge t_k} \| \left\langle \nabla \right\rangle w  \|_{S( [t, + \infty), L^2)} e^{(k+\frac{1}{2}) e_0 t}  < \infty \Big\},
	\end{align*} 
	equipped with the metric
	\[
	d_k(u,v)=  \| u-v \|_{X^k}.
	\]
	With almost the same argument as above, we can also get the desired special functions.
\end{proof}

\begin{rmk} \label{rmk: construction of special functions}
	By Proposition \ref{proposition: construction of Q(A)} shown above, we can construct special solutions $Q^\pm $ on $\overline{\mathbb{R}^+}$ in Theorem \ref{theorem: construction of special functions}. Precisely speaking, if we let $A= \pm 1$ in Proposition \ref{proposition: construction of Q(A)}, then there exist solutions $U^{\pm 1}$ on $[t_0, + \infty)$ such that
	\begin{equation*}
		\| U^ {\pm 1} -e^{it} Q  \mp e^{it} e^{-e_0 t} \mathcal{Y}_+ \|_{H^1} \le C e^{-2e_0t}, \quad \forall t \ge t_0,
	\end{equation*}
    and $U^\pm$ satisfy
    \begin{equation*}
    	\| \nabla U^{ \pm 1} (t) \|_2^2 = \| \nabla Q \|_2^2 \pm 2 e^{-e_0 t} \int \nabla Q \cdot \nabla \mathcal{Y}_1 + O(e^{-2e_0 t}), \quad t \to \infty.
    \end{equation*}
   Without loss of generality, we assume $\int \nabla Q \cdot \nabla \mathcal{Y}_1 >0$, then
   \[
   \| \nabla U^{+1} (t) \|_2 > \| \nabla Q \|_2 , \text{ and } \| \nabla U^{-1} (t) \|_2 < \nabla Q \|_2, \quad \forall t \ge t_0 \text{ for } t_0 \gg 1.
   \]
   Letting
	\begin{equation}
		Q^{\pm}(x,t) \triangleq e^{-it_0 } U^{\pm 1} (x,t+t_0), \; t \ge 0,
		\label{definition of special function}
	\end{equation}
	we get two solutions satisfying
	\begin{equation*}
		M[Q^\pm]=M[Q], \; E[Q^\pm] = E[Q], \; \| \nabla Q^+(0) \|_2 > \| \nabla Q \|_2, \; \| \nabla Q^-(0) \|_2 < \| \nabla Q \|_2,
	\end{equation*}
	and
	\begin{equation}
	\| Q^{\pm} -e^{it} Q \|_{H^1} \le C e^{-e_0 t}, \quad t \ge 0.
	\label{convergence of Qpm in the positive time direction}
	\end{equation}
\end{rmk}

\begin{proof}[Complete the proof of Theorem \ref{theorem: construction of special functions}.] As for $Q^+$ constructed in Remark \ref{rmk: construction of special functions}, by Theorem \ref{theorem: convergence of u to soliton Q above the threshold} and (\ref{convergence of Qpm in the positive time direction}), $Q^+$ satisfies the properties in Theorem \ref{theorem: construction of special functions}. As for $Q^-$, it remains to check that $Q^-$ scatters in negative time direction. By contradiction, we assume that $Q^-$ does not scatter in negative time, then $t \to \overline{Q^-}(x, -t)$ globally exists in positive time but does not scatter in positive time. Repeating the argument of Theorem \ref{convergence to Q below the threshold} onto $\overline{Q^-}(x, -t)$, there exists a parameter $x(t)$ for $t \in \mathbb{R}$ such that
	\[
	\widetilde{K} = \{ Q^-(\cdot + x(t), t),  \; t \in \mathbb{R} \}
	\]
	has a compact closure in $H^1$, and $x(t)$ is bounded and $\delta(t) \to 0, \; t \to \pm \infty$. Then a simple adjustment of Lemma \ref{lem: virial type estimates on delta} yields
	\begin{equation*}
		\int_\sigma^\tau \delta(t) dt \le C \left[ 1+ \sup_{\sigma \le t \le \tau} |x(t)| \right] \left(  \delta(\sigma) + \delta (\tau)\right) \le C \left( \delta(\sigma) + \delta(\tau) \right).
	\end{equation*}
	Letting $\sigma \to -\infty$ and $\tau \to +\infty$, we get that $\int_{\mathbb{R}} \delta(t) dt=0$, thus, $\delta(t) =0 $ for all $t$, which contradicts to the fact that $\| \nabla Q^-(0) \|_2 < \| \nabla Q \|_2$. 
\end{proof}

\subsection{Rigidity}
Before we complete the Theorem \ref{theorem: classification of threshold solutions at zero momentum case}, we still have to deal with a problem involved in uniqueness.
\begin{prop}[Rigidity]
	Under the Assumtion \ref{assumption: uniqueness of the ground state} and Assumption \ref{assumption: nondegeneracy of the ground state}, let $u$ be the solution to (\ref{Hartree equation}) on $[t_0, +\infty)$ such that $M[u]=M[Q], E[u] =E[Q]$ and
	\begin{equation}
		\| u -e^{it} Q \|_{H^1} \le C e^{-ct} , \quad \forall t \ge t_0
	\end{equation}
	for some $c,C>0$. Then there exists $A \in \mathbb{R}$ such that $u=U^A$, where $U^A$ is defined in Proposition \ref{proposition: construction of Q(A)}.
	\label{proposition with help for uniqueness}
\end{prop}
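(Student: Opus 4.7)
Writing $u=e^{it}(Q+h)$, the function $h$ solves the linearized equation $\partial_t h+\mathcal{L}h=iR(h)$ with the a priori bound $\|h(t)\|_{H^1}\lesssim e^{-ct}$. The basic plan is to bootstrap the decay of $h$ via Lemma~\ref{lemma: improve the decay of solution to linearized equation}, match $h$ with the formal asymptotic series $\sum_{j\ge 1} e^{-je_0t}\mathcal{Z}_j^A$ produced by Proposition~\ref{proposition: approximation solution}, and then invoke the uniqueness part of the fixed point construction in Proposition~\ref{proposition: construction of Q(A)} to conclude $u=U^A$.

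First, I iterate Lemma~\ref{lemma: improve the decay of solution to linearized equation} to sharpen the decay of $h$. The nonlinear term $R(h)$ is at least quadratic in $h$ (cubic/$p$-th order correction in the $p\in(2,3)$ case via the H\"older estimates on $J(z),K(z)$ from Appendix~B), so by Lemma~\ref{lemma: exponential Strichartz estimates} and the estimates underlying~(\ref{Strichartz estimates on nonlinear term}) one obtains $\|\langle\nabla\rangle R(h)\|_{S'((t,\infty),L^2)}\lesssim e^{-\min(2,p)\,c\,t}$, which is strictly faster than $e^{-ct}$. Lemma~\ref{lemma: improve the decay of solution to linearized equation} then upgrades the decay of $h$ from $e^{-ct}$ to $e^{-c'^{-} t}$ for some $c'>c$; iterating this bootstrap finitely many times one crosses the threshold $e_0$. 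At the iteration where the source decay first exceeds $e_0$, the second bullet of Lemma~\ref{lemma: improve the decay of solution to linearized equation} yields a real number $A$ such that
\[
\|h(t)-Ae^{-e_0 t}\mathcal{Y}_+\|_{H^1}+\|\langle\nabla\rangle(h-Ae^{-e_0 t}\mathcal{Y}_+)\|_{S((t,\infty),L^2)}\lesssim e^{-c_1^- t}
\]
for some $c_1>e_0$.

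Next, I compare $h$ with the approximate solution $\mathcal{V}_k^A=\sum_{j=1}^k e^{-je_0 t}\mathcal{Z}_j^A$ coming from Proposition~\ref{proposition: approximation solution}. Setting $g_k\triangleq h-\mathcal{V}_k^A$, one has $\mathcal{Z}_1^A=A\mathcal{Y}_+$ by construction, so the previous step says $\|g_1(t)\|_{H^1}\lesssim e^{-c_1^- t}$. Moreover $g_k$ satisfies $\partial_t g_k+\mathcal{L}g_k=i(R(h)-R(\mathcal{V}_k^A))+\varepsilon_k$ where $\varepsilon_k=O(e^{-(k+1)e_0 t})$ in $\mathcal{S}$. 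Using the Strichartz-type estimates from Appendix~B for the difference of nonlinearities (which gain factors that are powers of $\|g_k\|$ and of $\|h\|+\|\mathcal{V}_k^A\|$), the source term for $g_k$ enjoys decay strictly better than $g_k$ itself. A second induction, again powered by Lemma~\ref{lemma: improve the decay of solution to linearized equation}, therefore upgrades the decay of $g_k$ successively: once $g_{k-1}$ has decay faster than $(k-\tfrac12)e_0$, one gets source decay faster than $(k+\tfrac12)e_0$, and Lemma~\ref{lemma: improve the decay of solution to linearized equation} (in its first bullet, since we are already above $e_0$) promotes $g_k$ itself to decay faster than $(k+\tfrac12)e_0$. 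Iterating, for every $k\ge k_0$ one obtains
\[
\|\langle\nabla\rangle(h-\mathcal{V}_{l(k)}^A)\|_{S((t,\infty),L^2)}\le e^{-(k+\tfrac12)e_0 t},\qquad t\ge t_0',
\]
where $l(k)$ is the index appearing in Proposition~\ref{proposition: construction of Q(A)} (and the analogous $S(\dot H^{s_c})$ bound is obtained in parallel when $p\in(2,3)$, exactly as in the contraction argument). In other words, the function $u=e^{it}(Q+h)$ satisfies the very same decay condition~(\ref{constrction of Q(A)}) that defines $U^A$.

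Finally, I apply the uniqueness clause of Proposition~\ref{proposition: construction of Q(A)}: both $u$ and $U^A$ satisfy~(\ref{constrction of Q(A)}) for the same $A$ and all sufficiently large $k$, hence $u=U^A$ on $[t_0',\infty)$, and by the uniqueness of the forward Cauchy problem for~(\ref{Hartree equation}) in $H^1$ they coincide on all of $[t_0,\infty)$. The main obstacle is step two, the iterative matching with the asymptotic series: one must carefully track that the source term for $g_k$ loses at least one factor of $e^{-e_0 t}$ compared with $g_{k-1}$ (which for $p\in(2,3)$ requires the fractional H\"older estimates of Appendix~B rather than a naive Taylor expansion), and one must verify that the choice of $A$ from the first bootstrap is exactly the $A$ for which the resonant obstruction $(\mathcal{L}-e_0)\mathcal{Z}_1^A=0$ is satisfied, so that the induction actually closes at each level.
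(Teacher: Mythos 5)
The student's Step 1 and Step 3 follow the paper exactly: bootstrap the decay of $h$ through Lemma~\ref{lemma: improve the decay of solution to linearized equation} until it crosses $e_0$, extract the constant $A$, and finish by the uniqueness clause of Proposition~\ref{proposition: construction of Q(A)}. The genuine difference is in Step~2. The paper compares $h$ with $h^A\triangleq e^{-it}U^A-Q$, the \emph{exact} solution already built in Proposition~\ref{proposition: construction of Q(A)}: since both $h$ and $h^A$ have been shown to approach $Ae^{-e_0 t}\mathcal{Y}_+$ with a rate strictly better than $e_0$, the difference $h-h^A$ solves the linearized equation with source $R(h)-R(h^A)$, which is genuinely superlinear in the small difference, so one iterates Lemma~\ref{lemma: improve the decay of solution to linearized equation} to push the decay of $h-h^A$ up to the rate needed in Step~3 and then invokes uniqueness. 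You instead compare $h$ with the formal partial sums $\mathcal{V}_{l(k)}^A$ and run a second induction on $k$. This is in the same spirit, but it is less economical: the source for $g_k=h-\mathcal{V}_{l(k)}^A$ now also carries the remainder $\varepsilon_{l(k)}$, and the whole contraction machinery of Proposition~\ref{proposition: construction of Q(A)} (choice of $l(k)$, the two-norm ball $B^k$, the parallel $S(\dot H^{s_c})$ bookkeeping) effectively has to be rebuilt rather than simply cited.

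There is also a concrete gap in your Step~2 induction as stated. You claim that once $g_{k-1}$ decays faster than $(k-\tfrac12)e_0$, the source for $g_k$ decays faster than $(k+\tfrac12)e_0$. But for $p\in(2,3)$ the dominant contribution in Lemma~\ref{lemma: Strichartz estimate on nonlinear term: difference version for p in (2,3)} is the H\"older term
\[
\|g_k\|_{S(\dot H^{s_c})}^{p-2}\bigl(\|\left\langle\nabla\right\rangle h\|+\|\left\langle\nabla\right\rangle\mathcal{V}_{l(k)}^A\|\bigr)\bigl(\dots+\|Q\|_{S(\dot H^{s_c})}^p\bigr),
\]
which gives decay roughly $(p-2)a+e_0$ when $g_k$ has a priori decay $e^{-at}$. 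Since $p-2<1$, setting $a=(k-\tfrac12)e_0$ does \emph{not} yield $(k+\tfrac12)e_0$: the gain per step is only $(p-2)a+e_0-a$, which is negative for large $k$, so the naive induction does not close. You acknowledge the need to carry the $S(\dot H^{s_c})$ bound in parallel, but you never show how that parallel bound is initialized and propagated, and in particular you would need to start directly at the index $l(k)=\max\{k,(k+\tfrac12)/(p-2)\}$ so that both $\varepsilon_{l(k)}$ and the $\dot H^{-s_c}$ source decay fast enough — this is precisely what drives the definition of $l(k)$ in Proposition~\ref{proposition: construction of Q(A)} and cannot be skipped. Finally, the worry you raise about ``matching the $A$ from the first bootstrap with the $A$ of the resonance $(\mathcal{L}-e_0)\mathcal{Z}_1^A=0$'' is a non-issue: Step~1 produces $A$, and the series $\mathcal{V}_k^A$ (and the solution $U^A$) are then \emph{built} from that same $A$ with $\mathcal{Z}_1^A=A\mathcal{Y}_+$ by construction, so there is nothing to match.
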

\begin{proof}
	Letting $u=e^{it} (Q+h)$, $h$ satisfies
	\[
	\partial_t h + \mathcal{L} h =R(h)
	\]
	and $\| h \|_{H^1(\mathbb{R}^N)} \le Ce ^{-ct}, \; \forall t \ge t_0$.  By exponential Strichartz estimate shown in Lemma \ref{lemma: exponential Strichartz estimates},
	\begin{equation*}
		\| h(t) \|_{H^1} + \| \left\langle \nabla \right\rangle h \|_{S((t,+\infty),L^2)} \le Ce^{-ct}, \; \forall t \ge t_0.
	\end{equation*}
	\textit{Step 1.} Improve the decay to
	\begin{equation}
		\| h(t) \|_{H^1} + \| \left\langle \nabla \right\rangle h \|_{S((t,+\infty),L^2)} \le Ce^{-e_0^-t}, \; \forall t \ge t_0.
		\label{exponential decay of v: sharp rate}
	\end{equation}
	By (\ref{Strichartz estimates on nonlinear term}),
	\[
	\| \left\langle \nabla \right\rangle R(h) \|_{S'((t,+\infty),L^2)} 
	\lesssim \sum_{n=0}^\infty \| \left\langle \nabla \right\rangle R(h) \|_{S'((t+n , t+ n+1),L^2)} \lesssim
	\begin{cases}
		e^{-(p-1) c t},  & p \in (2,3), \\
		e^{-2ct}, & p =2 \text{ or } p \ge 3.
	\end{cases}
	\]
	Then $h$ and $R$ satisfy the conditions of Lemma \ref{lemma: improve the decay of solution to linearized equation} with 
	\[c_0 =c, \quad c_1=\tau c, \text{ where } \tau \triangleq \begin{cases}
		p-1>1, & p \in (2,3), \\
		2, & p=2 \text{ or } p \ge 3.
	\end{cases}
	\]
	If $ \tau c > e_0$, then (\ref{exponential decay of v: sharp rate}) is completed. If not, we get that
	\begin{equation*}
		\| h(t) \|_{H^1} + \| \left\langle \nabla \right\rangle h \|_{S((t,+\infty),L^2)} \le Ce^{-\tau c^- t}, \; \forall t \ge t_0,
	\end{equation*}
	and then (\ref{exponential decay of v: sharp rate}) follows from an iteration argument. And as a by-product, by Lemma \ref{lemma: improve the decay of solution to linearized equation} again, there exists $A \in \mathbb{R}$ such that
	\begin{equation*}
		\Big\| h(t) - A^{-e_0 t} \mathcal{Y_+} \Big\|_{H^1} + \Big\| \left\langle \nabla \right\rangle \left( h(t)- Ae^{-e_0 t} \mathcal{Y}_+ \right) \Big\|_{S((t, + \infty),L^2)} \lesssim e^{-\tau e_0^- t},\; \forall t \ge t_0.
	\end{equation*}
	\textit{Step 2.} As for $U^A$ constructed in Proposition \ref{proposition: construction of Q(A)}, where $A$ is as in \textit{Step 1}, if $U^A \triangleq e^{it} (Q + h^A)$, then we claim
	\begin{equation}
		\| h^A(t) - h(t) \|_{H^1} +  \Big\| \left\langle \nabla \right\rangle (h^A-h) \Big\|_{S((t,+ \infty),L^2)} \lesssim e^{-\gamma t}, \; \forall t \ge t_0, \; \forall \gamma > 0 .
		\label{exponential decreasing of hA-h with arbitrary order}
	\end{equation}
	In fact, by Remark \ref{remark: exponential Strichartz estimate} and (\ref{Q(A): the property 1}), $h^A$ satisfies
	\begin{equation*}
		\Big\| h^A(t) - A^{-e_0 t} \mathcal{Y_+} \Big\|_{H^1} + \Big\| \left\langle \nabla \right\rangle \left( h^A(t)- Ae^{-e_0 t} \mathcal{Y}_+ \right) \Big\|_{S((t, + \infty),L^2)} \lesssim e^{-2 e_0^- t},\; \forall t \ge t_0,
	\end{equation*}
   then $h^A - h$ satisfies the equation
	\begin{equation*}
		\partial_t \left( h^A - h \right) + \mathcal{L} \left( h^A- h \right) =R(h^A) - R(h),
	\end{equation*}
	and
	\begin{equation*}
		\| h^A - h \|_{H^1} +  \Big\| \left\langle \nabla \right\rangle (h^A-h) \Big\|_{S((t,+ \infty),L^2)} \lesssim e^{- \tau e_0^- t}, \; \forall t \ge t_0.
	\end{equation*}
	If $p \in (2,3)$, then Lemma \ref{lemma: Strichartz estimate on nonlinear term: difference version for p in (2,3)} and Remark \ref{remark: we can bound the nonliear by L2 Strichartz norm with gradient},
	\begin{equation*}
		\| \left\langle \nabla \right\rangle (R(h^A) - R(h)) \|_{S'((n,n+1) ,L^2)}   \lesssim e^{-\lambda e_0^- n} \text{ for } \lambda = (p-2)(p-1) +1 > p-1 =\tau >1,
	\end{equation*}
	which means
	\begin{align*}
		\| \left\langle \nabla \right\rangle (R(h^A)- R(h)) \|_{S'((t,+ \infty),L^2)}
		& \lesssim \sum_{n=0}^\infty  \| \left\langle \nabla \right\rangle (R(h^A)- R(h)) \|_{S'((t+n,t+ n+1),L^2)} \\
		& \lesssim \sum_{n=0}^\infty e^{- \lambda e_0^-(t+n)}  \lesssim e^{-\lambda e_0^- t}, \quad \forall t \ge t_0,
	\end{align*}
	Using Lemma \ref{lemma: improve the decay of solution to linearized equation} with $c_0 = \tau e_0^- >e_0$ and $c_1 = \lambda e_0^- $, and repeating the process over and over again, we immediately get (\ref{exponential decreasing of hA-h with arbitrary order}).
	
	When it comes to $p=2$ or $p >3$, (\ref{exponential decreasing of hA-h with arbitrary order}) also follows from the analogous argument.
	
	\noindent \textit{Step 3.} For any $k \in \mathbb{Z}^+$, bacause $\gamma$ is arbitrarily chosen, letting $\gamma= (k_0+1) e_0$ in (\ref{exponential decreasing of hA-h with arbitrary order}),  we get
	\[
	\Big\| \left\langle \nabla \right\rangle \left( u - e^{it} Q - e^{it} \mathcal{V}_{l(k_0)} \right) \Big\|_{S((t,+\infty), L^2)} \le e^{-(k_0+\frac{1}{2}) e_0 t} \text{ for large } t,
	\]
	then $u=U^A$ follows from the uniqueness shown in Proposition \ref{proposition: construction of Q(A)}.
\end{proof}
\subsection{Complete the proof of Theorem \ref{theorem: classification of threshold solutions at zero momentum case}}
Finally, we are devoted to the proof of Theorem \ref{theorem: classification of threshold solutions at zero momentum case}. 

As for $U^A$ constructed in Proposition \ref{proposition: construction of Q(A)}, if $A\not =0$, then $U^A= Q^+$ (if $A > 0 $) or $Q^-$ (if $A < 0$) up to some symmetries. In fact, by the construction of $Q^\pm$,
\begin{equation*}
	\Big\|  Q^\pm (t) -  e^{it} Q \mp e^{-e_0 t_0} e^{(i-e_0) t} \mathcal{Y}_+ \| _{H^1} = O(e^{-2e_0 t}), \quad t \ge 0.  
\end{equation*}
thus $\forall \; t_1 \in \mathbb{R}$,
\begin{equation*}
	e^{-it_1} Q^\pm (t+t_1, x)
	= e^{it}Q \pm e^{-e_0(t_0 +t_1) } e^{-e_0 t} e^{it} \mathcal{Y}_+ + O(e^{-2e_0 t}) \text{ in } H^1.
\end{equation*}
When $A>0$, let $t_1=-t_0- e_0^{-1} \log A$, then
\begin{equation*}
	e^{-it_1} Q^+ (t+t_1, x)= e^{it} Q + A e^{it} e^{-e_0 t} \mathcal{Y}_+ +  O(e^{-2 e_0 t}) \text{ in } H^1.
\end{equation*}
Consequently, by the rigidity shown in Proposition \ref{proposition with help for uniqueness}, there exists $\widetilde{A} \in \mathbb{R}$ such that $ e^{-it_1} Q^+ (t+t_1, x) = U^{\widetilde{A}}$, and by (\ref{Q(A): the property 1}),
\begin{equation*}
	e^{it} Q + A e^{it} e^{-e_0 t} \mathcal{Y}_+ +  O(e^{-2 e_0 t})
	=e^{-it_1} Q^+ (t+t_1, x)
	=e^{it} Q + \widetilde{A} e^{it} e^{-e_0 t} \mathcal{Y}_+ +  O(e^{-2 e_0 t}) 
	\text{ in } H^1.
\end{equation*}
Comparing the first term with the last term, we obtain that $A=\widetilde{A}$. Consequently, $U^A(t,x)= U^{\widetilde{A}} (t,x)= e^{-it_1} Q^+ (t+ t_1, x)$. The case for $A<0$ also follows from the same argument.

Let $u$ be a solution to (\ref{Hartree equation}) satisfying the conditions in Theorem \ref{theorem: classification of threshold solutions at zero momentum case}, then $M[u]^\frac{1-s_c}{s_c}E[u]=M[Q]^\frac{1-s_c}{s_c}E[Q]$. Rescaling $u$ if necessary, we may assume
\begin{equation*}
	M[u]=M[Q], \quad E[u]=E[Q].
\end{equation*}
The classification of threshold solutions to (\ref{Hartree equation}) then immediately follows from Lemma \ref{lemma: behavior of gradient of solution}, Theorem \ref{theorem: convergence of u to soliton Q above the threshold}, Theorem \ref{convergence to Q below the threshold}, Proposition \ref{proposition with help for uniqueness} and the argument above.

\appendix
\section{Ground state}
\label{Appendix: ground state}
As for the existence of ground state, we can find it by minimizing a Weinstein-type functional. More precisely, for $p \ge 2$ and $\gamma \in (0,N)$, by Hardy-Littlewood-Sobolev inequality, H\"{o}lder inequality and Sobolev embedding, one concludes that
\begin{equation}
	\int_{\mathbb{R}^N} \left( |\cdot|^{-(N-\gamma)} * |u|^p \right) |u|^p dx \le C \| \nabla u \|_{L^2(\mathbb{R}^N)}^{Np-(N+\gamma)} \| u \|_{L^2(\mathbb{R}^N)}^{N+\gamma -(N-2)p}.
	\label{Hardy-Littlewood-Sobolev inequality for 5D Hartree equation}
\end{equation}
We want to determine the sharp constant for (\ref{Hardy-Littlewood-Sobolev inequality for 5D Hartree equation}), which motivates us to minimize the following Weinstein-type functional
\begin{equation}
	Z[u]=  \frac{\| u \|_2^{(N+\gamma)-(N-2)p} \| \nabla u \|_2^{Np-(N+\gamma)}}{\int \left( |\cdot|^{-(N-\gamma)} * |u|^p \right) |u|^p dx}.
	\label{Weinstein-type functional: definition}
\end{equation}
By rearrangement inequalities \cite[Theorem $3.4$, Theorem $3.7$]{lieb2001analysis} and  Strauss Lemma, the infimum of (\ref{Weinstein-type functional: definition}) can be attained at a radial and strictly positive function. The readers can refer to Lemma $4.1$ in \cite{arora2019global} for details. And the result is stated as below:
\begin{prop}[\cite{arora2019global}. existence of ground state $Q$]
	The minimizer of the problem 
	\begin{equation}
		C_{GN}^{-1} \triangleq	\inf_{0 \not= u \in H^1(\mathbb{R}^N)} Z[u]
		\label{minimizing problem}
	\end{equation}
	can be attained at a radial, positive function $Q$. Moreover, after scaling if necessary, $Q$ satisfies the equation (\ref{ground state: equation}).
	\label{proposition: existence of ground state}
\end{prop}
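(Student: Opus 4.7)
The plan is to establish the existence via the direct method combined with Schwarz symmetrization, and then derive the Euler--Lagrange equation by computing variations and rescaling. First, I would take a minimizing sequence $\{u_n\} \subset H^1(\mathbb{R}^N)\setminus\{0\}$ for the functional $Z$ defined in \eqref{Weinstein-type functional: definition}, i.e.\ $Z[u_n] \to C_{GN}^{-1}$. Since $Z$ is invariant under the scaling $u \mapsto \lambda u(\mu \cdot)$, I can normalize so that $\|u_n\|_2 = \|\nabla u_n\|_2 = 1$ for all $n$; this is the standard way to remove the two degrees of freedom in $Z$.

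Next I would pass to the symmetric decreasing rearrangement $u_n^*$. By the Riesz rearrangement inequality \cite[Theorem 3.7]{lieb2001analysis} the Hartree functional $\int (|\cdot|^{-(N-\gamma)} * |u|^p)|u|^p\,dx$ does not decrease under $u \mapsto u^*$, while $\|u_n^*\|_2 = \|u_n\|_2$ and the Polya--Szeg\"o inequality gives $\|\nabla u_n^*\|_2 \le \|\nabla u_n\|_2$. Hence $Z[u_n^*] \le Z[u_n]$, so I may assume each $u_n$ is nonnegative, radial and radially decreasing. By the Strauss lemma, radial $H^1$ functions are compactly embedded in $L^s(\mathbb{R}^N)$ for $s \in (2, 2^*)$, and by Hardy--Littlewood--Sobolev together with H\"older this compactness transfers to the nonlocal functional, so after extracting a subsequence I get $u_n \rightharpoonup Q$ in $H^1$ with $\int(|\cdot|^{-(N-\gamma)}*u_n^p)u_n^p\,dx \to \int(|\cdot|^{-(N-\gamma)}*Q^p)Q^p\,dx$. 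The main obstacle here is to guarantee that the weak limit $Q$ is nonzero, which is where radial compactness is crucial: since $\int(|\cdot|^{-(N-\gamma)}*u_n^p)u_n^p\,dx$ stays bounded below (by the bounded denominator of $Z[u_n]$ and bounded numerator), and the norms $\|u_n\|_2, \|\nabla u_n\|_2$ are fixed, the integral does not vanish, hence $Q \not\equiv 0$.

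Then I would combine weak lower semicontinuity of $\|\cdot\|_2$ and $\|\nabla\cdot\|_2$ with the established strong convergence of the nonlocal term to conclude
\[
Z[Q] \le \liminf_{n\to\infty} Z[u_n] = C_{GN}^{-1},
\]
so equality holds and $Q$ is a minimizer. Being the strong limit of nonnegative radial decreasing functions, $Q$ inherits these properties, and strict positivity follows from the maximum principle applied to the Euler--Lagrange equation.

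Finally, for the Euler--Lagrange equation, I would compute $\frac{d}{d\varepsilon}\big|_{\varepsilon=0} Z[Q+\varepsilon\varphi] = 0$ for all test $\varphi$. This yields a relation of the form
\[
a(-\Delta Q) + b Q - c\bigl(|\cdot|^{-(N-\gamma)} * Q^p\bigr) Q^{p-1} = 0
\]
with positive constants $a,b,c$ determined by $\|Q\|_2, \|\nabla Q\|_2$ and the nonlocal integral. Using the two-parameter scaling $Q(x) \mapsto \lambda Q(\mu x)$ I can normalize two of these coefficients to be $1$, which gives the stated equation \eqref{ground state: equation}. Strict positivity and smoothness of $Q$ then come from standard elliptic regularity and the strong maximum principle applied to this equation.
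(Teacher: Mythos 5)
Your proposal follows exactly the route the paper indicates (and which is carried out in detail in Arora--Roudenko): normalize a minimizing sequence by the two-parameter scaling invariance of $Z$, pass to symmetric decreasing rearrangements using the Riesz rearrangement and P\'olya--Szeg\H{o} inequalities, extract a strong limit via the Strauss lemma and the compact embedding $H^1_{\mathrm{rad}}\hookrightarrow L^s$ for $s\in(2,2^*)$, and then derive the Euler--Lagrange equation and rescale it to the normalized form \eqref{ground state: equation}. The argument is correct; the only small bookkeeping point worth making explicit is that after rearrangement one should re-normalize $\|u_n^*\|_2=\|\nabla u_n^*\|_2=1$ (which is possible since the rearranged sequence is still minimizing) so that the nonvanishing argument for the limit applies cleanly.
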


\begin{lem}[Pohozhaev identity]
	As for $Q$ given above, we have
	\begin{equation}
		\int \left( |\cdot|^{-(N-\gamma)} * Q^p   \right) Q^p dx = \frac{2p}{\gamma+2p -N(p-1)} \| Q\|_2^2= \frac{2p}{N(p-1)-\gamma} \| \nabla Q \|_2^2.
		\label{Pohozhaev equality}
	\end{equation}
\end{lem}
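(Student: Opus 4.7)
The plan is to derive two independent scalar identities from the Euler--Lagrange equation \eqref{ground state: equation} and then solve a $2\times 2$ linear system for $\|Q\|_2^2$ and $\|\nabla Q\|_2^2$ in terms of the convolution integral. Since $Q\in H^1$ is a strictly positive Schwartz-type function (by the exponential decay recalled in Appendix \ref{Appendix: ground state}), all integrations by parts below are justified without boundary terms.

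First, multiply \eqref{ground state: equation} by $Q$ and integrate over $\mathbb{R}^N$ to obtain the \emph{Nehari identity}
\begin{equation*}
\|\nabla Q\|_2^2 + \|Q\|_2^2 = \int \bigl(|\cdot|^{-(N-\gamma)} * Q^p\bigr) Q^p\, dx.
\end{equation*}
Next, I would run the Pohozhaev scaling. Set $Q_\lambda(x) = Q(\lambda x)$ for $\lambda>0$ and consider the action
\[
S(u) = \tfrac{1}{2}\|\nabla u\|_2^2 + \tfrac{1}{2}\|u\|_2^2 - \tfrac{1}{2p}\int \bigl(|\cdot|^{-(N-\gamma)} * |u|^p\bigr) |u|^p\, dx.
\]
A change of variables yields $\|\nabla Q_\lambda\|_2^2 = \lambda^{2-N}\|\nabla Q\|_2^2$, $\|Q_\lambda\|_2^2 = \lambda^{-N}\|Q\|_2^2$, and, using the $(N-\gamma)$-homogeneity of the Riesz kernel, $\int(|\cdot|^{-(N-\gamma)}*Q_\lambda^p)Q_\lambda^p\,dx = \lambda^{-(N+\gamma)}\int(|\cdot|^{-(N-\gamma)}*Q^p)Q^p\,dx$. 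Since $Q$ is a critical point of $S$, differentiating $S(Q_\lambda)$ at $\lambda=1$ gives
\begin{equation*}
\frac{N-2}{2}\|\nabla Q\|_2^2 + \frac{N}{2}\|Q\|_2^2 = \frac{N+\gamma}{2p}\int \bigl(|\cdot|^{-(N-\gamma)} * Q^p\bigr) Q^p\, dx.
\end{equation*}

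Finally, I substitute the Nehari identity into the Pohozhaev identity to eliminate the convolution integral. This yields a linear relation between $\|Q\|_2^2$ and $\|\nabla Q\|_2^2$; explicitly,
\[
\bigl[(N+\gamma)-p(N-2)\bigr]\|\nabla Q\|_2^2 = \bigl[pN-(N+\gamma)\bigr]\|Q\|_2^2,
\]
so $\|\nabla Q\|_2^2 = \frac{N(p-1)-\gamma}{\gamma+2p-N(p-1)}\|Q\|_2^2$. Plugging this back into the Nehari identity gives both stated expressions. Note that both denominators are strictly positive in the inter-range regime by \eqref{parameter: range 1}, so no singular divisions occur. The only mildly delicate point is the Pohozhaev computation for the nonlocal term, which is handled cleanly by the scaling argument above (equivalent to, but shorter than, integrating against $x\cdot\nabla Q$), and this is the step I would write out in most detail.
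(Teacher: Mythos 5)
Your proof is correct and, despite the cosmetic difference, takes essentially the same route as the paper: the Nehari identity you derive is exactly the paper's ``multiply by $Q$ and integrate'' step, and your scaling derivative $\frac{d}{d\lambda}\big|_{\lambda=1}S(Q_\lambda)=\langle S'(Q),\,x\cdot\nabla Q\rangle=0$ produces literally the same Pohozhaev relation the paper obtains by pairing \eqref{ground state: equation} against $x\cdot\nabla Q$. The linear algebra at the end matches, and both denominators $\gamma+2p-N(p-1)$ and $N(p-1)-\gamma$ are indeed positive in the inter-range regime \eqref{parameter: range 1}, so the division is legitimate.
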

\begin{proof}
	Multiplying $x \cdot \nabla Q$ with both sides of
	\[
	-\Delta Q + Q -\left( |\cdot|^{-(N-\gamma)}*Q^p \right)Q^{p-1} =0,
	\]
	integrating on $\mathbb{R}^N$, by some careful calculation, we have
	\[
	-\frac{N-2}{2} \| \nabla Q \|_2^2 -\frac{N}{2} \| Q\|_2^2 +\frac{N+\gamma}{2p} \int \left( |\cdot|^{-(N-\gamma)}* Q^p \right)Q^{p} dx=0.
	\]
	Moreover, if we multiply $Q$ with both sides and integrate on $\mathbb{R}^N$, then
	\[
	\| \nabla Q \|_2^2 + \| Q \|_2^2 - \int \left( |\cdot|^{-(N-\gamma)}* Q^p \right)Q^p dx =0.
	\]
	Hence (\ref{Pohozhaev equality}) immediately follows from the two equations above.
\end{proof}

As for the behavior of ground state at infinity, V.Moroz and J. Van Schaftingen \cite{moroz2013groundstates}  shows the decay rate at infinity as follows:
\begin{prop}[\cite{moroz2013groundstates}. Exponential decay of ground state]
	Let $N \in \mathbb{N}_*, \gamma \in (0,N)$ and  $p \in (1, \infty)$. Assume that $\frac{N-2}{N+\gamma} < \frac{1}{p} < \frac{N}{N+\gamma}$. Let $Q \in W^{1,2} (\mathbb{R}^N)$ be a nonnegative ground state of
	\[
	-\Delta Q + Q = \left( |\cdot|^{-(N-\gamma)} * |Q|^p \right) |Q|^{p-2} Q, \quad \text{in } \mathbb{R}^N.
	\]
	If $p>2$,
	\begin{equation*}
		\lim_{|x| \to \infty} Q(x) |x|^{\frac{N-1}{2}} e^{|x|} \in (0,\infty);
	\end{equation*}
	If $p=2$,
	\begin{equation*}
		\lim_{|x| \to \infty} Q(x) |x|^{-\frac{N-1}{2}} \exp\int_\nu^{|x|}\sqrt{1- \frac{\nu^{N-\gamma}}{s^{N-\gamma}}} ds \in (0,\infty).
	\end{equation*}
	\label{proposition: exponential decay of ground state}
\end{prop}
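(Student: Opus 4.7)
The plan is to derive the asymptotic behavior from the integral-equation representation of $Q$, treating the two cases $p>2$ and $p=2$ separately since the effective potential decays very differently. Throughout, write the equation as $(-\Delta+1)Q = V(x)Q$ with $V(x) = (|\cdot|^{-(N-\gamma)}*Q^p)Q^{p-2}$. Let $G_1$ denote the Bessel (Yukawa) kernel satisfying $(-\Delta+1)G_1=\delta$, so that $G_1(x)\sim c_N |x|^{-(N-1)/2}e^{-|x|}$ as $|x|\to\infty$ and $Q = G_1 *(VQ)$.

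First I would establish that $Q$ is smooth, positive, radial, and $Q(x)\to 0$ as $|x|\to\infty$, using the moving-plane characterization of minimizers together with a bootstrap on the integral equation via Hardy--Littlewood--Sobolev. A standard sub-solution/comparison argument then gives an initial polynomial decay rate: since $V\in L^\infty \cap L^{s}$ for appropriate $s$, iterating $Q = G_1*(VQ)$ improves decay at each step and eventually yields $Q(x)\lesssim \langle x\rangle^{-M}$ for every $M$ in the relevant range. In particular $Q^p \in L^1\cap L^\infty$ and the convolution $|\cdot|^{-(N-\gamma)}*Q^p$ is bounded.

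For $p>2$, the key observation is that $V(x)\to 0$ as $|x|\to\infty$: the factor $Q^{p-2}$ vanishes at infinity while the Hartree convolution is bounded. Once $V(x)\leq \varepsilon$ outside a large ball, one constructs sub- and super-solutions of the form $c_\pm G_1(x)$ on $\{|x|\geq R\}$: the super-solution works because $(-\Delta+1-\varepsilon)(c_+ G_1)\geq 0$ for small $\varepsilon$, while a sub-solution of the same shape is obtained from positivity and the maximum principle. This yields two-sided bounds $c_- G_1(x)\leq Q(x)\leq c_+ G_1(x)$ for large $|x|$. To upgrade these bounds to a genuine limit, I would insert these pointwise estimates back into $Q = G_1*(VQ)$, use dominated convergence and the precise asymptotics of $G_1$ to show $|x|^{(N-1)/2}e^{|x|}Q(x)$ converges to a strictly positive, finite constant.

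The main obstacle is the case $p=2$. Now $Q^{p-2}\equiv 1$ and $V(x)=|\cdot|^{-(N-\gamma)}*Q^2$ only decays like $\|Q\|_2^2\,|x|^{-(N-\gamma)}$; so $V$ does \emph{not} go to zero fast enough to be treated as a perturbation of $(-\Delta+1)$, and the exponential rate must be corrected. The plan is to use the radial reduction $u(r)=r^{(N-1)/2}Q(r)$, which satisfies
\[
-u''(r) + \Bigl[1 - \tfrac{\nu^{N-\gamma}}{r^{N-\gamma}} + O(r^{-(N-\gamma)-1})\Bigr]u(r) = 0,
\]
with $\nu^{N-\gamma}$ determined by $\|Q\|_2^2$ and the leading term in the expansion of the Newtonian-type convolution at infinity. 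A Liouville--Green (WKB) analysis of this ODE, together with a careful construction of sub- and super-solutions built from the WKB ansatz $\exp\bigl(-\int_\nu^r \sqrt{1-\nu^{N-\gamma}/s^{N-\gamma}}\,ds\bigr)$, produces matching asymptotics. Reinserting the resulting pointwise estimate into the convolution $|\cdot|^{-(N-\gamma)}*Q^2$ shows that the leading coefficient is self-consistent, and a fixed-point/contraction argument on the ratio $Q(r)/Q_{\mathrm{WKB}}(r)$ then gives the desired limit in $(0,\infty)$. The subtlety is twofold: ensuring the remainder in the WKB expansion is integrable (this uses $\gamma\in(0,N)$ with $N-\gamma>0$), and bootstrapping the asymptotic of $V$ itself, since $V$ is determined by the very decay one is trying to prove.
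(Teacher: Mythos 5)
Proposition~\ref{proposition: exponential decay of ground state} is not proved in this paper; it is quoted as background from Moroz--Van Schaftingen \cite{moroz2013groundstates}. Your sketch reconstructs essentially the same argument as the cited reference: first establish regularity and polynomial decay of $Q$ by iterating $Q=G_1*(VQ)$; then, for $p>2$, exploit that the effective potential $V=(|\cdot|^{-(N-\gamma)}*Q^p)Q^{p-2}$ tends to zero at infinity, so that comparison with the Bessel kernel $G_1$ plus dominated convergence in the integral equation gives the precise Yukawa asymptotics; and for $p=2$, observe that $V$ only decays like $|x|^{-(N-\gamma)}$ with leading coefficient proportional to $\|Q\|_2^2$, reduce to the radial ODE for $u(r)=r^{(N-1)/2}Q(r)$, and perform a Liouville--Green/WKB analysis with sub- and super-solutions. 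So the approach is not genuinely different, and as a proof plan it is sound.

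One substantive observation, however: carrying out your own WKB reduction to the end gives $u(r)\sim c\,\exp\bigl(-\int_\nu^r\sqrt{1-\nu^{N-\gamma}/s^{N-\gamma}}\,ds\bigr)$, hence
\[
Q(x)\,|x|^{+\frac{N-1}{2}}\exp\int_\nu^{|x|}\sqrt{1-\tfrac{\nu^{N-\gamma}}{s^{N-\gamma}}}\,ds\;\longrightarrow\;c\in(0,\infty),
\]
which is what Moroz--Van Schaftingen actually prove and also matches the $p>2$ case in the degenerate limit. The paper's rendering of the $p=2$ asymptotic carries a factor $|x|^{-\frac{N-1}{2}}$, which would force the displayed quantity to tend to $0$ rather than to a positive constant; that appears to be a transcription error in the paper, and your version has the correct sign.
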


Next, we study the decay of any order of derivative of ground state $Q$, and we need the following simple auxiliary lemma. Using the formula $\partial_r = \frac{x}{|x|} \cdot \nabla$ and induction method, we obtain the following estimate:
\begin{lem}
	If $f\in W^{k,1}(\mathbb{R}^N)$ and $g \in L^1(\mathbb{R}^N)$ are two radial functions, then $(f*g) (x)$ are also radial, and
	\begin{equation}
		\Big|\partial_r^k \left( (f*g)(x)  \right) \Big|  \lesssim  \sum_{j=1}^k \frac{\left( |D^j f| * |g| \right) (x)}{|x|^{k-j}}, \quad \forall x \in \mathbb{R}^N.
		\label{derivative of convolution in radial direction}
	\end{equation}
\end{lem}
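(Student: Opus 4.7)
The plan is to argue by induction on $k$, exploiting the identity $\partial_r = \hat{n}\cdot\nabla$ with $\hat{n}:=x/|x|$ together with $\nabla(f*g)=(\nabla f)*g$. First, the radiality of $F:=f*g$ is routine: for any rotation $R\in O(N)$, the substitution $y\mapsto Ry$ in $(f*g)(Rx)=\int f(y)\,g(Rx-y)\,dy$ combined with the radiality of $f$ and $g$ yields $(f*g)(Rx)=(f*g)(x)$, so $\partial_r F$ is well-defined and agrees with $\hat{n}\cdot\nabla F$ on $\mathbb{R}^N\setminus\{0\}$.

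I would then establish by induction on $k$ the representation
$$\partial_r^k(f*g)(x) \;=\; \sum_{j=1}^{k}\sum_{|I|=j} \frac{c^{(k)}_{I,j}(\hat{n})}{|x|^{\,k-j}}\,(\partial^I f * g)(x),$$
where each $c^{(k)}_{I,j}(\hat{n})$ is a polynomial in the components of $\hat{n}$, hence uniformly bounded on $\mathbb{R}^N\setminus\{0\}$. The base case $k=1$ reads $\partial_r(f*g)(x)=\sum_i \hat{n}_i\,(\partial_i f * g)(x)$. For the inductive step, apply $\partial_r=\hat{n}\cdot\nabla$ to each summand of the hypothesis and distribute via the product rule. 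The crucial computation is $\partial_\ell\hat{n}_m = |x|^{-1}(\delta_{\ell m}-\hat{n}_\ell\hat{n}_m)$, so differentiating either the $\hat{n}$-coefficient or the $|x|^{-(k-j)}$ factor multiplies by an additional $1/|x|$ while preserving $|I|=j$, producing a term with denominator $|x|^{(k+1)-j}$; differentiating the convolution factor $(\partial^I f * g)$ raises $|I|$ to $j+1$ while leaving $|x|^{-(k-j)}=|x|^{-((k+1)-(j+1))}$ intact. Either way, the resulting term fits the claimed template at level $k+1$.

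Taking absolute values and using $|c^{(k)}_{I,j}(\hat{n})|\lesssim 1$ together with $|(\partial^I f * g)(x)|\leq(|\partial^I f|*|g|)(x)\leq(|D^j f|*|g|)(x)$ for $|I|=j$ yields \eqref{derivative of convolution in radial direction}. The argument is essentially bookkeeping, so there is no serious conceptual obstacle; the one observation worth flagging is that the cancellation $\sum_\ell \hat{n}_\ell\,\partial_\ell\hat{n}_m = 0$ would collapse the representation to the cleaner identity $\partial_r^k F = \sum_{|I|=k}\hat{n}^I\partial^I F$ valid for any smooth $F$, yielding the tighter bound $|\partial_r^k(f*g)(x)|\lesssim(|D^k f|*|g|)(x)$ with no lower-order terms. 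The looser form stated in the lemma is what a straightforward induction produces without exploiting this cancellation, and is already sufficient for its downstream uses in the appendix.
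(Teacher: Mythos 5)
Your proof is correct and matches the approach the paper itself gestures at (the identity $\partial_r = \tfrac{x}{|x|}\cdot\nabla$ combined with induction on $k$, using $\nabla(f*g) = (\nabla f)*g$ and $\partial_\ell\hat n_m = |x|^{-1}(\delta_{\ell m}-\hat n_\ell\hat n_m)$). Your closing observation is a genuine improvement: since $\sum_\ell \hat n_\ell\,\partial_\ell\hat n_m = 0$, the operator identity $\partial_r^k = \sum_{\ell_1,\dots,\ell_k}\hat n_{\ell_1}\cdots\hat n_{\ell_k}\partial_{\ell_1}\cdots\partial_{\ell_k}$ holds exactly, so one actually has $\bigl|\partial_r^k(f*g)\bigr| \lesssim \bigl(|D^k f|*|g|\bigr)$ with no lower-order terms; the lemma as stated is the looser but still correct conclusion that a bookkeeping induction (ignoring this cancellation) delivers, and suffices for the uses in Appendix A. One small wording slip worth noting: in the inductive step you say that differentiating the $\hat n$-coefficient "multiplies by an additional $1/|x|$"; as you yourself observe afterward, after contracting with $\hat n_\ell$ and summing, that contribution in fact vanishes rather than merely gaining a factor of $|x|^{-1}$ — this does not affect the validity of the argument since a zero coefficient is still of the claimed form.
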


\begin{lem}
	As for $Q$ given above. If $p > 2$, then for any muti-index $\alpha \in \mathbb{Z}_{\ge 0}^N$, the following estimates hold: \\
	$(i)$ $ \Big\| Q^{-1} \partial^\alpha Q \Big\|_{L^\infty} < \infty$, \\
	$(ii)$ $ \Big\| Q^{-1} e^{\eta |x|}  \partial^\alpha \mathcal{Y}_\pm \Big\|_{L^\infty} <\infty$ for some $0 < \eta \ll 1$,\\
	$(iii)$ $\Big\| Q^{-1} e^{\eta |x|} \partial^\alpha \left[ \left( \mathcal{L} -\lambda \right)^{-1} f \right] \Big\|_{L^\infty} < + \infty$ for every $\lambda \in \mathbb{R} \setminus \sigma(\mathcal{L})$ and every $ f \in \mathcal{S}(\mathbb{R}^N)$ such that $\Big\| Q^{-1} e^{\eta |x|} \partial^\beta f \Big\|_{L^\infty} <+ \infty$ for some $0 < \eta < \Re \sqrt{1+ \lambda i}$ and any $\beta \in \mathbb{Z}_{\ge 0}^N$.
	\label{lemma: exponential decay of ground state and its derivatives}
\end{lem}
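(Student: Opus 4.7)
The plan is to prove all three parts by induction on $|\alpha|$, relying on a common strategy: rewrite the relevant equation so that the principal part is $-\Delta + 1$ (or its square, in parts (ii)--(iii)), whose Green's function is explicit and decays exponentially, then combine the exponential decay of the Bessel-type kernel with the a priori decay of $Q$ (Proposition~\ref{proposition: exponential decay of ground state}) and the inductive hypothesis on lower-order derivatives to close the estimate by elliptic comparison.

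For part (i), I would proceed by induction on $|\alpha|$. The base case $|\alpha|=0$ is trivial. For the inductive step, apply $\partial^\alpha$ to the ground state equation $(-\Delta+1)Q=VQ^{p-1}$ with $V=|\cdot|^{-(N-\gamma)}*Q^p$. Using Leibniz and the identity $\partial^\beta V = |\cdot|^{-(N-\gamma)}*\partial^\beta(Q^p)$ (obtained by integration by parts since $Q^p$ is smooth and rapidly decaying), one separates out the top-order pieces $(p-1)VQ^{p-2}\partial^\alpha Q$ and $p(|\cdot|^{-(N-\gamma)}*(Q^{p-1}\partial^\alpha Q))Q^{p-1}$, which together combine with $(-\Delta+1)\partial^\alpha Q$ to reconstitute $L_+\partial^\alpha Q$. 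The remaining terms $F_\alpha$ only involve $\partial^\beta Q$ with $|\beta|<|\alpha|$, and are controlled by $CVQ^{p-1}$ via the inductive hypothesis --- here the hypothesis $p>2$ is essential, because the factor $Q^{p-2}$ appearing in the derivatives of $Q^{p-1}$ tends to zero at infinity, so the lower-order Leibniz contributions are genuinely subleading compared to $VQ^{p-1}$. One then writes $L_+\partial^\alpha Q = F_\alpha$ as an equation whose principal part is $-\Delta+1$ plus a short-range potential, represents $\partial^\alpha Q$ via the Bessel kernel $G$ of $(-\Delta+1)^{-1}$ (which is pointwise positive and decays like $|x|^{-(N-1)/2}e^{-|x|}$), and applies elliptic comparison: since $|F_\alpha|\lesssim VQ^{p-1}$ and $Q = G*(VQ^{p-1})$, one gets $|\partial^\alpha Q|\lesssim G*(VQ^{p-1}) = CQ$, with a bootstrap / iteration closing the argument when the top-order terms are transferred to the left-hand side using the smallness of $V, Q^{p-2}$ at infinity.

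For part (ii), the eigenequations $\mathcal{L}\mathcal{Y}_\pm=\pm e_0\mathcal{Y}_\pm$ decouple into a scalar equation whose principal part is $(-\Delta+1)^2+e_0^2 = [(-\Delta+1)+ie_0][(-\Delta+1)-ie_0]$; the Green's functions $G_\pm$ of these factors decay like $e^{-\Re\sqrt{1\pm ie_0}\,|x|}=e^{-\beta_0|x|}$ with $\beta_0=\Re\sqrt{1+ie_0}>1$ (this is exactly the kernel estimate carried out inside the proof of Proposition~\ref{proposition: spectral information of linearized operator L}). Iterating the convolution-kernel representation as was done there, but now tracking the exponential weight, yields $|\mathcal{Y}_\pm(x)|\lesssim e^{-\beta|x|}$ for any $\beta<\beta_0$, hence $|\mathcal{Y}_\pm|/Q\lesssim e^{-(\beta-1)|x|}$ for small $\eta:=\beta-1>0$. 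For higher derivatives, I would differentiate the eigenequation and repeat, using part (i) to control the variable coefficients that depend on $Q$; the hypothesis $p>2$ again guarantees subleading behavior of the derivative terms. Part (iii) is an inhomogeneous version of (ii): the resolvent $(\mathcal{L}-\lambda)^{-1}$ is realized by the same decoupled second-order operator with principal part $(-\Delta+1)^2+\lambda^2$, whose Green's function decays at rate $\Re\sqrt{1+i\lambda}$, and the assumption $\eta<\Re\sqrt{1+i\lambda}$ guarantees the $e^{\eta|x|}$ weight survives convolution with that kernel; the weighted bounds on $\partial^\beta f$ feed directly through Leibniz, and parts (i)--(ii) handle the variable coefficients.

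The main obstacle is the chicken-and-egg feature in part (i): after Leibniz expansion of $\partial^\alpha(VQ^{p-1})$, there remain genuine top-order terms involving $\partial^\alpha Q$ itself, both via $V(p-1)Q^{p-2}\partial^\alpha Q$ and via the convolution term coming from $\partial^\alpha V$. The resolution is precisely to recognize that these combine to form $L_+\partial^\alpha Q$ (not just $(-\Delta+1)\partial^\alpha Q$) and to exploit the short-range nature of the potential in $L_+$ so that the iteration in weighted $L^\infty$ closes --- a standard Agmon-type trick, but one that requires $p>2$ in an essential way so that $Q^{p-2}$ decays at infinity; this is also why the $p=2$ case would need a separate treatment and why the statement explicitly restricts to $p>2$.
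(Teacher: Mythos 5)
Your proposal for part (i) takes a genuinely different route from the paper, and the difference matters. The paper explicitly disclaims the Green's-function bootstrap you outline: in the introduction it states that since no a priori control on $|\partial^\alpha Q|$ relative to $Q$ is available, ``it seems not rigorous to simply follow the idea from \cite[Corollary 3.8]{campos2020threshold}'' --- which is essentially your argument. Instead, the paper exploits the radial structure of $Q$. It differentiates the ground-state equation in $r$, invokes the strict radial monotonicity of $|\cdot|^{-(N-\gamma)}*Q^p$ (from \cite[Proposition 2.2]{zexing2022priori}) to conclude that $-\partial_r Q\geq 0$ is a supersolution of the operator $-\Delta+1+\tfrac{N-1}{r^2}-(p-1)VQ^{p-2}$, checks that $r^{-N}e^{-r}$ is a subsolution for $r\geq R_0$, and applies the classical elliptic comparison theorem to get the \emph{lower} bound $-\partial_r Q(r)\geq \tfrac{r^{-N}e^{-r}}{R^{-N}e^{-R}}(-\partial_r Q)(R)$. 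Integrating this from $R$ to $\infty$ produces $Q(R)\gtrsim(-\partial_r Q)(R)$ directly, with no iteration and no a priori decay assumption on $\partial_r Q$. Higher orders follow by induction through the radial ODE $(\ref{ground state: equation, radial case})$ and the convolution-derivative formula $(\ref{derivative of convolution in radial direction})$, then the bound is transferred to all multi-index derivatives using $Q\in C^\infty$ and $Q>0$.

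The concrete issue in your version is the self-referential term you must absorb. For $|\alpha|=1$ you have $L_+\partial_{x_j}Q=0$, so $F_\alpha\equiv 0$ and the entire estimate must come from the fixed-point identity $\partial_{x_j}Q=G*(W\,\partial_{x_j}Q)$; your claim ``$|F_\alpha|\lesssim VQ^{p-1}$ and $Q=G*(VQ^{p-1})$, hence $|\partial^\alpha Q|\lesssim CQ$'' is vacuous here. Starting from the $L^\infty$ bound that elliptic regularity supplies and iterating, each pass only improves the exponential rate by $p-2$, so when $2<p<3$ you need $\lceil 1/(p-2)\rceil$ rounds, and along the way you accumulate polynomial prefactors from the Bessel kernel's $|x|^{-(N-1)/2}e^{-|x|}$ tail that you must then match to $Q$'s own prefactor. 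None of this is fatal, but you need to spell it out, and the paper chose the radial comparison precisely to sidestep these bookkeeping delicacies. Your treatment of parts (ii) and (iii) through the factorization $(-\Delta+1)^2+e_0^2$ and the explicit decay of $(-\Delta+1\mp ie_0)^{-1}$ is consistent with what the paper actually does (it cites \cite[Corollary 3.8]{campos2020threshold} there), so those parts are fine.
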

\begin{proof}
	As for $(i)$, we do not know whether $Q$ is in Schwartz class from the published papers, hence we can not use the method in \cite[Corollary $3.8$]{campos2020threshold} directly, which heavily relies on the a priori upper bound of $\partial^\alpha Q$. Instead, we use the  comparison theorem together with the fact that $Q$ is radially symmetric. Taking gradient onto both sides of (\ref{ground state: equation}), since $Q$ is radial, we obtain that
	\[
	\left( -\Delta + 1 + \frac{N-1}{r^2} - \left( p-1 \right)  \left( |\cdot|^{N-\gamma} * Q^p \right)Q^{p-2} \right) \left( -\partial_r Q \right) = - \partial_r \left[ |\cdot|^{-(N-\gamma)} * Q^p  \right] Q^{p-1}.
	\]
	Note that $Q$ is a positive, radial decreasing function on $\mathbb{R}^N$, by an estimate on $\partial_r \left[ |\cdot|^{-(N-\gamma)} * Q^p \right]$ (see \cite[Proposition $2.2$]{zexing2022priori}),  $|\cdot|^{-(N-\gamma)} * Q^p$ is strictly radially decreasing, which means that
	\[
	\left( -\Delta + 1 + \frac{N-1}{r^2} - \left( p-1 \right)  \left( |\cdot|^{N-\gamma} * Q^p \right)Q^{p-2} \right) \left( -\partial_r Q \right) \ge 0.
	\]
	Observing that
	\begin{align*}
		\left(-\Delta +1 + \frac{N-1}{r^2} - (p-1) \left( |\cdot|^{-(N-\gamma)} * Q^p \right) Q^{p-2} \right) (r^{-N} e^{-r})  \le 0, \quad \forall r \ge R_0 \gg 1,
	\end{align*}
	by the classical elliptic comparison theorem, we find that
	\begin{equation*}
		-(\partial_r Q) (r)\ge \frac{r^{-N} e^{-r}}{R^{-N} e^{-R}} (-\partial_r Q)(R), \quad \forall r \ge R \ge R_0,
	\end{equation*}
	which implies
	\begin{equation*}
		Q(R)= \int_R^\infty (-\partial_r Q)(s) ds \ge \int_R^\infty  \frac{s^{-N} e^{-s}}{R^{-N} e^{-R}} (-\partial_r Q ) (R) ds \gtrsim (-\partial_r Q)(R), \quad \forall R \ge R_0,
	\end{equation*}
	and it concludes $(i)$ for $|\alpha| =1$. As for $|\alpha|=2$, it immediately follows from the equation
	\begin{equation}
		-\partial_r^2  Q - \frac{N-1}{r} \partial_r Q  + Q - \left( |\cdot|^{-(N-\gamma)} * Q^p \right) Q^{p-1} =0.
		\label{ground state: equation, radial case}
	\end{equation}
	And (\ref{ground state: equation, radial case}) further implies that the higher order derivatives of $Q$ can be controlled by the lower ones.  By this intuition, as for more general case, we assume that $| \partial_r^l Q(r) | \lesssim Q(r)$ holds for $ l \le k+1$, and we will prove the validity of the estimate for $l=k+2$. In fact, taking $\partial_r^k$ onto both sides of (\ref{ground state: equation, radial case}), 
	\begin{equation}
		-\partial_r^{k+2} Q - \partial_r^k \left( \frac{N-1}{r} \partial_r Q \right) + \partial_r^k Q - \partial_r^k \left[ \left( |\cdot|^{-(N-\gamma)} * Q^p \right) Q^{p-1} \right] =0,
		\label{ground state: k-th derivative radial case}
	\end{equation}
	which, by induction assumption and (\ref{derivative of convolution in radial direction}), implies that
	\[
	\Bigg| \partial_r^k \left( \frac{1}{r} \partial_r Q   \right) \Bigg| \lesssim \sum_{j=0}^k  r^{-(k-j+1)} | \partial_r^{j+1} Q | \lesssim Q(r), \quad \forall \; r  \ge 1
	\]
	and
	\begin{align*}
		\Big| \partial_r^k \left[ \left( |\cdot|^{-(N-\gamma)} * Q^p \right) Q^{p-1} \right] \Big| 
		& = \Big| \sum_{j=0}^k c_j \partial_r^j \left( |\cdot|^{-(N-\gamma)} * Q^p \right) \partial_r^{k-j} \left( Q^{p-1} \right) \Big| \\
		& \lesssim \sum_{j=0}^k \sum_{i=1}^j \frac{|\cdot|^{-(N-\gamma)} * |D^i (Q^p)|}{|x|^{j-i}}  \Big| \partial_r^{k-j} \left( Q^{p-1} \right) \Big| \lesssim Q(r), \quad \forall \; r \ge 1,
	\end{align*}
	hence $| \partial_r^{k+2} Q (r)| < Q(r) , \; \forall \; r \ge 1$. Then together with the fact that $Q \in C^\infty$ (see \cite[Theorem $3$]{moroz2013groundstates}) and $Q$ is strictly positive, we obtain that $|\partial_r^{k+2} Q| \lesssim Q$ on $\mathbb{R}^N$. Thus we have completed the proof of $(i)$. 
	
	As for $(ii)$ and $(iii)$, the proof is almost the same as \cite[Corollary $3.8$]{campos2020threshold}. 
\end{proof}

\begin{rmk}
	As for the decay of any order derivative of the ground state $Q$ for $p=2$, after repeating the same argument above, we can also get the desired conclusion, i.e.  $\| Q^{-1} \partial^\alpha Q \|_{L^\infty} < \infty$, $\forall \alpha \in \mathbb{Z}_{\ge 0}^N$. However, since $ \left( |\cdot|^{-(N-\gamma)} * Q^2 \right) $ only decays with rate $|x|^{-(N-\gamma)}$ at infinity, we cannot follow the argument in \cite{campos2020threshold} to extend $(ii)$ and $(iii)$ to the case for $p=2$ temporarily.
 \end{rmk}

\section{The estimates on nonlinear term $R(h)$}
\label{Appendix: Strichartz estimates}
\begin{lem}
	If $p > 2$, the function
	\begin{equation}
		J(z)= |1+z|^p - \left( 1+ \frac{p}{2} z + \frac{p}{2} \bar{z} \right), \quad z \in \mathbb{C}
		\label{J(z): definition}
	\end{equation}
	satisfies the following properties: \\
	$(i)$ $J(z) \in C^1(\mathbb{C})$; \\
	$(ii)$ $J_z(0)= J_{\bar{z}} (0)= 0$; \\
	$(iii)$ $|J(z_1)- J(z_2) | \lesssim \left( |z_1|+ |z_2| + |z_1|^{p-1} + |z_2|^{p-1} \right) |z_1 -z_2|, \; \forall z_1, z_2 \in \mathbb{C}$; \\
	$(iv)$ $|J_z(z_1) -J_z(z_2)|, |J_{\bar{z}} (z_1) -J_{\bar{z}} (z_2) | \lesssim \left( 1+ |z_1|^{p-2} + |z_2|^{p-2} \right)|z_1-z_2|, \; \forall z_1, z_2 \in \mathbb{C}$.
	
	\noindent Similarly, the function
	\begin{equation}
		K(z)= |1+z|^{p-2} (1+z) - \left( 1+ \frac{p}{2} z + \frac{p-2}{2} \bar{z} \right), \quad z \in \mathbb{C}
		\label{K(z): definition}
	\end{equation}
	satisfies the following properties: \\
	$(i)$ $K(z) \in C^1(\mathbb{C})$; \\
	$(ii)$ $K_z(0)= K_{\bar{z}} (0)= 0$; \\
	$(iii)$ $|K(z_1)- K(z_2)| \lesssim \begin{cases}
		\left( |z_1| +|z_2| + |z_1|^{p-2} + |z_2|^{p-2} \right) |z_1- z_2|, & \text{ if } p \ge 3, \\
		\left( |z_1|^{p-2} + |z_2|^{p-2} \right) |z_1-z_2|, & \text{ if } p \in (2,3),
	\end{cases} $ \\
	$(iv)$ $|K_z(z_1)-K_z(z_2)|, |K_{\bar{z}} (z_1)-K_{\bar{z}} (z_2)| \lesssim \begin{cases}
		\left( 1+ |z_1|^{p-3} + |z_2|^{p-3} \right)|z_1 -z_2|, & \text{ if } p \ge 3, \\
		|z_1-z_2|^{p-2}, & \text{ if } p \in (2,3).
	\end{cases}$
	\label{lemma: estimate on J(z) and K(z)}
\end{lem}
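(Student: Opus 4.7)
\textbf{Proposal for Lemma \ref{lemma: estimate on J(z) and K(z)}.} My plan is to reduce everything to explicit Wirtinger-derivative computations, then derive (iii) from (iv) by integration along a segment. First I would write $|1+z|^{p} = ((1+z)(1+\bar z))^{p/2}$ and differentiate using the chain rule to get
\[
J_z(z) = \tfrac{p}{2}\bigl(|1+z|^{p-2}(1+\bar z) - 1\bigr), \qquad J_{\bar z}(z) = \tfrac{p}{2}\bigl(|1+z|^{p-2}(1+z) - 1\bigr),
\]
and analogously
\[
K_z(z) = \tfrac{p}{2}\bigl(|1+z|^{p-2} - 1\bigr), \qquad K_{\bar z}(z) = \tfrac{p-2}{2}\bigl(|1+z|^{p-4}(1+z)^2 - 1\bigr).
\]
For $p>2$, the factor $|w|^{p-2}$ (and $|w|^{p-4}|w|^2$) is continuous at $w=0$, so $J$ and $K$ are $C^1$ on $\mathbb{C}$, giving (i); plugging $z=0$ into the formulas gives (ii) immediately.

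The core work is (iv). Setting $f(w)=|w|^{p-2}\bar w$ and $g(w)=|w|^{p-2}$, a direct computation in polar coordinates shows $|\nabla f(w)|\lesssim |w|^{p-2}$ and $|\nabla g(w)|\lesssim |w|^{p-3}$. For $J_z, J_{\bar z}$ the bound $|\nabla f|\lesssim |w|^{p-2}$ is locally integrable for every $p>2$, so the fundamental theorem of calculus on the segment joining $1+z_2$ to $1+z_1$ gives
\[
|J_z(z_1) - J_z(z_2)| \lesssim \sup_{t\in[0,1]} |1+ tz_1 + (1-t)z_2|^{p-2} \,|z_1 - z_2| \lesssim (1 + |z_1|^{p-2} + |z_2|^{p-2})|z_1 - z_2|,
\]
and similarly for $J_{\bar z}$. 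For $K_z, K_{\bar z}$ with $p\ge 3$ the same mean-value argument using $|\nabla g|\lesssim |w|^{p-3}$ yields the Lipschitz bound with $|z|^{p-3}$. The main obstacle is the range $p\in(2,3)$ for $K$: here $|w|^{p-3}$ is not locally integrable, and $K_z, K_{\bar z}$ are only Hölder. I would handle this with the elementary inequality $\bigl||a|^{\alpha}-|b|^{\alpha}\bigr|\le |a-b|^\alpha$ for $\alpha\in(0,1)$, applied with $\alpha=p-2$, to get $|K_z(z_1)-K_z(z_2)|\lesssim |z_1-z_2|^{p-2}$; for $K_{\bar z}$ the analogous bound for $\phi(w)=|w|^{p-4}w^2$ follows by splitting into cases $|w_1-w_2|\ge\frac{1}{2}|w_2|$ (use $|\phi(w)|=|w|^{p-2}$ directly) versus $|w_1-w_2|<\frac{1}{2}|w_2|$ (use MVT on the segment, which is bounded away from $0$, combined with $|w_2|\ge|w_1-w_2|$ to convert the $|w|^{p-3}$ factor into $|w_1-w_2|^{p-3}$).

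Finally, (iii) follows from (iv) combined with (ii). Using $J_z(0)=J_{\bar z}(0)=0$ and property (iv), one gets the pointwise bound $|J_z(z)|+|J_{\bar z}(z)|\lesssim |z|+|z|^{p-1}$; integrating $J(z_1)-J(z_2)=\int_0^1(J_z(z_t)(z_1-z_2)+J_{\bar z}(z_t)\overline{(z_1-z_2)})\,dt$ along $z_t = tz_1+(1-t)z_2$ yields (iii) for $J$. The analogous computation for $K$ uses $|K_z(z)|+|K_{\bar z}(z)|\lesssim |z|+|z|^{p-2}$ in the case $p\ge 3$ and the sharper $\lesssim |z|^{p-2}$ in the case $p\in(2,3)$, matching the two cases of (iii). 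The only step requiring real care is the Hölder analysis of $K$ for $p\in(2,3)$ outlined above; everything else is bookkeeping from the explicit formulas.
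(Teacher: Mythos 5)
Your proposal is correct and follows essentially the same route as the paper: computing Wirtinger derivatives explicitly, using the segment-integral form of the mean value theorem to deduce (iii) from (iv), and invoking the Hölder continuity of $z\mapsto|z|^{\alpha}$ with $\alpha=p-2\in(0,1)$ to handle the low-regularity case $p\in(2,3)$ for $K$. The paper omits the computational details, whereas you spell them out (including the two-case split for $\phi(w)=|w|^{p-4}w^{2}$), but the underlying ideas and the structure of the argument coincide.
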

\begin{proof}
The results are easily to be checked with the help of
	\begin{equation*}
		J(z_1) - J(z_2)
		= \int_0^1 J_z (s z_1 + (1-s)z_2)  \left(z_1 -z_2\right) + J_{\bar{z}}\left( sz_1 + (1-s)z_2 \right) \left( \overline{z_1 - z_2}\right) ds.
	\end{equation*}
    We omit the details here and just emphasize that when we consider the estimates on $K(z)$ and its derivative for $p \in (2,3)$, the mean value theorem is invalid and we use an important fact that the function $z \mapsto |z|^\alpha$ is H\"{o}lder continuous with order $\alpha$ for $\alpha \in (0,1)$.
\end{proof}

\begin{lem}
	For any $u_i \in S\left(I , \left\langle \nabla \right\rangle L^2 \right), 1 \le i \le 5$, we have
	\begin{align}
		& \quad \| \left( |\cdot|^{-(N-\gamma)} * (|u_1|^{p-2} u_2 u_3) \right) |u_4|^{p-2} |u_5| \|_{S'(L^2)} \notag\\
		&\lesssim \min_{j=2,3,5} \left\lbrace \| u_1 \|_{S(\dot{H}^{s_c} )}^{p-2}  \| u_4 \|_{S(\dot{H}^{s_c} )}^{p-2}  \| u_j \|_{S(L^2)} \prod_{i ,k \not \in \{  1,4,j \}} \| u_i \|_{S(\dot{H}^{s_c} )} \| u_k\|_{S(\dot{H}^{s_c} )} \right\rbrace 
		\label{estimates of nonliear term: dual L2 space}
	\end{align}
	and
	\begin{align}
		& \quad \| \left( |\cdot|^{-(N-\gamma)} * (|u_1|^{p-2} u_2 u_3) \right) |u_4|^{p-2} |u_5| \|_{S'(\dot{H}^{-s_c})} \notag\\
		&\lesssim  \| u_1 \|_{S(\dot{H}^{s_c} )}^{p-2}  \| u_2 \|_{S(\dot{H}^{s_c})} \| u_3 \|_{S(\dot{H}^{s_c} )}  \| u_4 \|_{S(\dot{H}^{s_c} )}^{p-2}  \| u_5 \|_{S(\dot{H}^{s_c} )}.
		\label{estimates of nonlinear term: dual Hsc space}
	\end{align}
	\label{lemma: Strichartz estimate on nonlinear term}
\end{lem}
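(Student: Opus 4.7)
The plan is to combine the Hardy--Littlewood--Sobolev inequality on the convolution with H\"{o}lder in both space and time, exploiting the observation that the spatial Lebesgue index $r_1 = \frac{2Np}{N+\gamma}$ is shared between $S(\dot{H}^{s_c}) = L_t^{q_2} L_x^{r_1}$ and one component of $S(L^2)$, namely $L_t^{q_1} L_x^{r_1}$. This shared index lets me place every one of the $2p-1$ factors $u_i$ (counted with multiplicity $|u_1|^{p-2}, u_2, u_3, |u_4|^{p-2}, u_5$) in $L_x^{r_1}$ uniformly, so that the spatial H\"{o}lder balance is identical in both target inequalities; the two estimates then differ only in how I distribute the time exponents among the factors.

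For the spatial part, a direct H\"{o}lder computation gives $\| |u_1|^{p-2} u_2 u_3 \|_{L_x^{c}}$ with $\frac{1}{c} = \frac{p}{r_1} = \frac{N+\gamma}{2N}$. Hardy--Littlewood--Sobolev then yields $\| |\cdot|^{-(N-\gamma)} * (|u_1|^{p-2} u_2 u_3) \|_{L_x^{a}}$ with $\frac{1}{a} = \frac{1}{c} - \frac{\gamma}{N} = \frac{N-\gamma}{2N}$. Pairing this with $\| |u_4|^{p-2} u_5 \|_{L_x^{b}}$, where $\frac{1}{b} = \frac{p-1}{r_1}$, gives a total spatial exponent
\[
\frac{1}{a} + \frac{1}{b} = \frac{N-\gamma}{2N} + \frac{(p-1)(N+\gamma)}{2Np} = \frac{2pN - N - \gamma}{2Np} = \frac{1}{r_1'},
\]
which is simultaneously the spatial index of $S'(L^2)$ and $S'(\dot{H}^{-s_c})$. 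Thus the spatial part of both estimates is settled in one stroke.

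For the temporal part of \eqref{estimates of nonliear term: dual L2 space}, I place one distinguished factor $u_j$ (with $j \in \{2,3,5\}$) in $L_t^{q_1}$ and the remaining $2p-2$ factors in $L_t^{q_2}$, which produces norms $\|u_j\|_{L_t^{q_1} L_x^{r_1}} \le \|u_j\|_{S(L^2)}$ and the remaining $\|u_i\|_{L_t^{q_2} L_x^{r_1}} = \|u_i\|_{S(\dot{H}^{s_c})}$. The total time exponent is
\[
\frac{1}{q_1} + \frac{2p-2}{q_2} = \frac{1 + s_c(p-1)}{2p} + \frac{(2p-2)(1-s_c)}{2p} = \frac{2p-1 - s_c(p-1)}{2p} = \frac{1}{q_1'},
\]
matching $S'(L^2) = L_t^{q_1'} L_x^{r_1'}$. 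For \eqref{estimates of nonlinear term: dual Hsc space} I put every factor in $L_t^{q_2}$, and
\[
\frac{2p-1}{q_2} = \frac{(2p-1)(1-s_c)}{2p} = \frac{1}{q_3'}
\]
matches $S'(\dot{H}^{-s_c}) = L_t^{q_3'} L_x^{r_1'}$.

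The arguments are essentially exponent bookkeeping, so the main technical point to verify is that the Strichartz pairs $(q_1,r_1)$, $(q_2,r_1)$, together with the dual pairs $(q_1',r_1')$ and $(q_3',r_1')$, satisfy the admissibility conditions \eqref{restriction on Strichartz pair} and \eqref{restriction on dual Strichartz pair} under the parameter assumption \eqref{parameter: range 1}; this is exactly the content of Definition \ref{rmk: definition of Strichartz pair}. The restriction $j \in \{2,3,5\}$ in the minimum is natural: the factors $u_1$ and $u_4$ enter with the fractional power $p-2$ (when $p \notin \mathbb{Z}$), and it would not make sense to single out a non-integer power of one of them to carry the $S(L^2)$ norm. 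The symmetry in the inner convolution argument ($u_1 \leftrightarrow u_2 \leftrightarrow u_3$ up to conjugation) and the fact that $u_5$ multiplies the whole expression explain why the minimum is taken over these three indices.
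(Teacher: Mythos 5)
Your proposal is correct and follows exactly the approach the paper intends: the paper's proof simply invokes H\"older, Hardy--Littlewood--Sobolev, and Definition \ref{rmk: definition of Strichartz pair}, and your computation supplies precisely the exponent bookkeeping that the paper leaves to the reader, verifying that the spatial indices sum to $1/r_1'$ and the temporal indices to $1/q_1'$ (resp.\ $1/q_3'$). The key observation that $r_1$ is common to $L_t^{q_1}L_x^{r_1}\subset S(L^2)$ and $S(\dot{H}^{s_c})=L_t^{q_2}L_x^{r_1}$, so that the spatial H\"older balance is identical in both target estimates, is exactly what makes the paper's one-line proof work.
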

\begin{proof}
	As a matter of fact, by H\"{o}lder inequality and Hardy-Littlewood-Sobolev inequality and Definition \ref{rmk: definition of Strichartz pair}, it is easily to check that
	\begin{align*}
		& \quad \| \left( |\cdot|^{-(N-\gamma)} * (|u_1|^{p-2} u_2 u_3) \right) |u_4|^{p-2} |u_5| \|_{L_t^{q_1'} L_x^{r_1'}} \notag\\
		&\lesssim \min_{j=2,3,5} \left\lbrace \| u_1 \|_{L_t^{q_2}L_x^{r_1}}^{p-2}  \| u_4 \|_{L_t^{q_2} L_x^{r_1}}^{p-2}  \| u_j \|_{L_t^{q_1} L_x^{r_1}} \prod_{i ,k \not \in \{  1,4,j \}} \| u_i \|_{L_t^{q_2}L_x^{r_1}} \| u_k\|_{L_t^{q_2}L_x^{r_1}} \right\rbrace,
	\end{align*}
	which is (\ref{estimates of nonliear term: dual L2 space}). Similarly, we can also get  (\ref{estimates of nonlinear term: dual Hsc space}).
\end{proof}

With the help of Lemma \ref{lemma: estimate on J(z) and K(z)} and Lemma \ref{lemma: Strichartz estimate on nonlinear term}, we are going to estimate the Strichartz norm of $ \left\langle \nabla  \right\rangle \left( R(f)- R(g) \right)$.
\begin{lem} Let $I$ be a bounded interval on $\mathbb{R}$. If  $p \in (2,3)$, then for any $f, g \in S(I, \left\langle \nabla \right\rangle L^2)$,
	\begin{align}
		& \quad \Big \| \left\langle \nabla  \right\rangle (R(f) - R(g) ) \Big\|_{S'(I, L^2)} \notag \\
		& \lesssim \| \left\langle \nabla \right\rangle (f-g) \|_{S(I,L^2)} \left(  \| f \|_{S(I, \dot{H} ^{s_c})}^{2p-2} +  \| g \|_{S(I, \dot{H} ^{s_c})}^{2p-2} +  \| Q \|_{S(I, \dot{H} ^{s_c})}^{2p-3} \left(  \| f \|_{S(I, \dot{H} ^{s_c})} +  \| g \|_{S(I, \dot{H} ^{s_c})} \right)\right) \notag   \\
		& +  \| f-g \|_{S(I, \dot{H} ^{s_c})} \left( \| \left\langle \nabla \right\rangle f \|_{S(I,L^2)} + \| \left\langle \nabla \right\rangle g \|_{S(I,L^2)}  \right)  \left(  \| f \|_{S(I, \dot{H} ^{s_c})}^{2p-3} +  \| g \|_{S(I, \dot{H} ^{s_c})}^{2p-3} +  \| Q \|_{S(I, \dot{H} ^{s_c})}^{2p-3} \right) \notag  \\
		& +  \| f-g \|_{S(I, \dot{H} ^{s_c})}^{p-2}  \left( \| \left\langle \nabla \right\rangle f \|_{S(I,L^2)} + \| \left\langle \nabla \right\rangle g \|_{S(I,L^2)}  \right)  \left(  \| f \|_{S(I, \dot{H} ^{s_c})}^{p} +  \| g \|_{S(I, \dot{H} ^{s_c})}^{p} +  \| Q \|_{S(I, \dot{H} ^{s_c})}^{p}  \right)
		\label{the estimates on nonliear term: H1 sense}
	\end{align}
	and
	\begin{align}
		& \quad \| R(f)- R(g) \|_{S'(I , H^{-s_c})} \notag \\
		& \lesssim \| f -g \|_{S(I, \dot{H} ^{s_c})} \left(  \| f \|_{S(I, \dot{H} ^{s_c})}^{2p-2} +  \| g \|_{S(I, \dot{H} ^{s_c})}^{2p-2} +  \| Q \|_{S(I, \dot{H} ^{s_c})}^{2p-3} \left(  \| f \|_{S(I, \dot{H} ^{s_c})}  +  \| g \|_{S(I, \dot{H} ^{s_c})}    \right) \right).
		\label{the estimates on nonliear term: critical space}
	\end{align}
	\label{lemma: Strichartz estimate on nonlinear term: difference version for p in (2,3)}
\end{lem}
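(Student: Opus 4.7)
The plan is to express $R(f)-R(g)$ through the building blocks $J,K$ of Lemma \ref{lemma: estimate on J(z) and K(z)} and then apply Lemma \ref{lemma: Strichartz estimate on nonlinear term} term by term. Setting $F(u)=(|\cdot|^{-(N-\gamma)}*|u|^p)|u|^{p-2}u$, one has $R(h)=F(Q+h)-F(Q)-dF(Q)[h]$, so by the fundamental theorem of calculus
\[
R(f)-R(g)=\int_0^1\bigl(dF(Q+sf+(1-s)g)-dF(Q)\bigr)[f-g]\,ds.
\]
The operator $dF(u)[v]$ naturally decomposes into a piece in which the variation acts on $|u|^p$ inside the convolution (governed by $J'$, whose derivatives satisfy the Lipschitz-type bounds of Lemma \ref{lemma: estimate on J(z) and K(z)}(iv) since $J\in C^2$) and a piece in which it acts on $|u|^{p-2}u$ outside (governed by $K'$, which for $p\in(2,3)$ is only Hölder continuous of order $p-2$). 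Writing the arguments of $J$ and $K$ as $w=(sf+(1-s)g)/Q$ and subtracting the value at $Q$, one produces differences of the form $J'(w)-J'(0)$ and $K'(w)-K'(0)$ to which Lemma \ref{lemma: estimate on J(z) and K(z)} directly applies.

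For the $S'(\dot H^{-s_c})$ estimate \eqref{the estimates on nonliear term: critical space} no spatial derivative is taken. I would use the pointwise bounds of Lemma \ref{lemma: estimate on J(z) and K(z)}(iii)–(iv) to write each integrand as a product of the schematic form appearing in Lemma \ref{lemma: Strichartz estimate on nonlinear term}, namely $(|\cdot|^{-(N-\gamma)}*(|u_1|^{p-2}u_2u_3))|u_4|^{p-2}u_5$, with the $u_i$ drawn from $\{Q,\,sf+(1-s)g,\,f-g\}$, and then apply \eqref{estimates of nonlinear term: dual Hsc space}. Exactly one factor absorbs $\|f-g\|_{S(\dot H^{s_c})}$; the remaining four slots together contribute a total homogeneity of $2p-2$ in $\|f\|_{S(\dot H^{s_c})},\|g\|_{S(\dot H^{s_c})},\|Q\|_{S(\dot H^{s_c})}$, which matches the claimed right-hand side since $R$ is $(2p-1)$-homogeneous.

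For \eqref{the estimates on nonliear term: H1 sense} the $\langle\nabla\rangle$ is distributed by the product rule and by the chain rule when it lands on $J,K,J',K'$. When the derivative falls on $Q$ or on one of the linear factors $f,g,f-g$, no Hölder issue arises and one obtains, via Lemma \ref{lemma: Strichartz estimate on nonlinear term} together with \eqref{critical Strichartz norm can be bounded by H1 Strichartz norm}, the first two lines of the claimed bound. The main obstacle—and the reason to separate the case $p\in(2,3)$ from $p\ge3$—is when $\langle\nabla\rangle$ must be carried through a factor of the form $K'(w_f)-K'(w_g)$. Here the mean-value theorem fails and Lemma \ref{lemma: estimate on J(z) and K(z)}(iv) only provides $|K_z(z_1)-K_z(z_2)|+|K_{\bar z}(z_1)-K_{\bar z}(z_2)|\lesssim|z_1-z_2|^{p-2}$. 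After the chain rule one therefore produces a factor $|f-g|^{p-2}\,|\nabla(Q+sf+(1-s)g)|$, multiplied by additional $(|f|+|g|+Q)^{p}$ coming from the remaining occurrences of $|u|^p$ and $|u|^{p-2}u$. Feeding this into Lemma \ref{lemma: Strichartz estimate on nonlinear term}, with $|f-g|^{p-2}$ distributed into the $S(\dot H^{s_c})$-type slots so as to contribute a total power $p-2$, yields precisely the third line of \eqref{the estimates on nonliear term: H1 sense}.

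The remaining work is purely bookkeeping: one checks term by term that the Strichartz pairs from Definition \ref{rmk: definition of Strichartz pair} satisfy the Hölder exponent balance required by Lemma \ref{lemma: Strichartz estimate on nonlinear term}, that each resulting product has total homogeneity $2p-1$, and that no extra $|I|$ weight is needed because the exponents are scale-matched. Summing the finitely many contributions produces \eqref{the estimates on nonliear term: H1 sense} and \eqref{the estimates on nonliear term: critical space}. The central technical point throughout is the handling of the Hölder-only estimate in Lemma \ref{lemma: estimate on J(z) and K(z)}(iv) for $p\in(2,3)$, which is responsible for the unusual $\|f-g\|_{S(\dot H^{s_c})}^{p-2}$ factor and ultimately forces the author's later use of the mixed $\langle\nabla\rangle L^2$/$\dot H^{s_c}$ norms in the fixed-point construction of Proposition \ref{proposition: construction of Q(A)}.
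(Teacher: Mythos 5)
Your overall strategy (decompose into $J$, $K$ pieces, push through Lemma~\ref{lemma: Strichartz estimate on nonlinear term}, isolate the Hölder-only bound on $K'$ as the source of the $\|f-g\|^{p-2}_{S(\dot H^{s_c})}$ factor) is the right high-level picture, and the observation about why $p\in(2,3)$ is the hard case is exactly the paper's point. However, the fundamental-theorem-of-calculus parametrization you set up at the start is not compatible with the $\langle\nabla\rangle$ step and conceals a real gap.

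Concretely: once you write $R(f)-R(g)=\int_0^1 \bigl(dF(u_s)-dF(Q)\bigr)[f-g]\,ds$ with $u_s=Q+sf+(1-s)g$, the integrand at each fixed $s$ contains the single-argument object $K'(w_s)-K'(0)$ with $w_s=Q^{-1}(sf+(1-s)g)$, not the two-argument difference $K'(w_f)-K'(w_g)$. Applying $\nabla$ inside the $s$-integral therefore asks for $\nabla K'(w_s)=K''(w_s)\nabla w_s$, i.e.\ a second derivative of $|u|^{p-2}u$, which does \emph{not} exist (even in $L^\infty_{\rm loc}$) for $p\in(2,3)$; the Hölder bound $|K'(z_1)-K'(z_2)|\lesssim|z_1-z_2|^{p-2}$ controls increments of $K'$, not its gradient, so it cannot rescue this step. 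Your sentence producing ``$K'(w_f)-K'(w_g)$ after the chain rule'' does not come from the $s$-parametrization you wrote down; it comes from a different, direct algebraic split. That is precisely what the paper does: $R(f)-R(g)$ is written as a finite sum of products in which the problematic pieces appear as genuine first-order differences $|Q+f|^{p-2}(Q+f)-|Q+g|^{p-2}(Q+g)$ and $Q^{p-1}\bigl(K(Q^{-1}f)-K(Q^{-1}g)\bigr)$; taking $\nabla$ of those differences yields $K'(Q^{-1}f)-K'(Q^{-1}g)$ times a gradient of a single function, and only then is the Hölder bound invoked. To repair your proof, replace the FTC step by the direct six-term decomposition of $R(f)-R(g)$ (or at minimum take $\nabla$ \emph{before} introducing any interpolation parameter), so that the Hölder estimate is applied to differences and never to an alleged $K''$.

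The $S'(\dot H^{-s_c})$ estimate \eqref{the estimates on nonliear term: critical space}, which carries no derivative, is not affected by this issue and your outline there is consistent with the paper's argument.
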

\begin{proof}
	As a matter of fact, by (\ref{linearized equation: nonlinear term}), we obtain that
	\begin{align*}
		R(f) -R(g)
		& = \left( |\cdot|^{-(N-\gamma)} * \left( Q^p \left( J(Q^{-1} f) - J(Q^{-1} g) \right) \right) \right) |Q+f|^{p-2} (Q+f)  \\
		& + \left( |\cdot|^{-(N-\gamma)} * \left( Q^p J(Q^{-1} g) \right) \right) \left( |Q+ f| ^{p-2} (Q+  f) - |Q + g|^{p-2} (Q+ g) \right) \\
		& + \left( |\cdot|^{-(N-\gamma)} * \left( \frac{p}{2} Q^{p-1} \left(f-g\right) + \frac{p}{2} Q^{p-1} \left( \overline{f-g} \right) \right) \right) Q^{p-1} K(Q^{-1} f) \\
		& + \left( |\cdot|^{-(N-\gamma)} * \left( Q^p + \frac{p}{2} Q^{p-1} g + \frac{p}{2} Q^{p-1} \bar{g} \right) \right) Q^{p-1} \left( K(Q^{-1} f) - K(Q^{-1} g) \right) \\
		& + \left( |\cdot|^{-(N-\gamma)} * \left( \frac{p}{2} Q^{p-1} (f-g) + \frac{p}{2} Q^{p-1} (\overline{f-g}) \right) \right) \left( \frac{p}{2} Q^{p-2} f + \frac{p-2}{2} Q^{p-2} \bar{f} \right) \\
		& + \left( |\cdot|^{-(N-\gamma)} *\left(  \frac{p}{2} Q^{p-1} g + \frac{p}{2} Q^{p-1} \bar{g} \right) \right) \left( \frac{p}{2} Q^{p-2}\left( f-g \right) + \frac{p-2}{2} Q^{p-2} ( \overline{f-g})  \right),
	\end{align*}
	where $J,K$ are two special functions defined in Lemma \ref{lemma: estimate on J(z) and K(z)}, then we immediately get (\ref{the estimates on nonliear term: critical space}) and
	\begin{align*}
		& \quad  \| R(f)- R(g) \|_{S'(I, L^2)} \notag \\
		& \lesssim \| f-g \|_{S(I,L^2)} \left(  \| f \|_{S(I, \dot{H} ^{s_c})}^{2p-2} +  \| g \|_{S(I, \dot{H} ^{s_c})}^{2p-2} +  \| Q \|_{S(I, \dot{H} ^{s_c})}^{2p-3} \left(  \| f \|_{S(I, \dot{H} ^{s_c})} +  \| g \|_{S(I, \dot{H} ^{s_c})} \right)\right).
	\end{align*} 
	When we take gradient onto  $R(f)- R(g)$, bearing in mind that any term with a gradient can only be bounded by its $S(I, \left\langle \nabla \right\rangle  L^2)$ norm, all the terms can be controlled by the first two terms on the right hand side of (\ref{the estimates on nonliear term: H1 sense}) except for the case when the gradient is left onto
	\[
	|Q+f|^{p-2} (Q+f) - |Q+g|^{p-2} (Q+g) \text{ and } Q^{p-1} \left( K(Q^{-1} f) - K(Q^{-1} g) \right),
	\]
	requiring us to add the third term on the right hand side of (\ref{the estimates on nonliear term: H1 sense}) to dominate their behaviors.
\end{proof}
When $p \ge 3$, similarly we also obtain that
\begin{lem} 
	If  $p \ge 3$, then for any $f, g \in S(I, \left\langle \nabla \right\rangle L^2)$, we have
	\begin{align}
		& \quad \Big \| \left\langle \nabla  \right\rangle (R(f) - R(g) ) \Big\|_{S'(I, L^2)} \notag \\
		& \lesssim \| \left\langle \nabla \right\rangle (f-g) \|_{S(I,L^2)} \left(  \| f \|_{S(I, \dot{H} ^{s_c})}^{2p-2} +  \| g \|_{S(I, \dot{H} ^{s_c})}^{2p-2} +  \| Q \|_{S(I, \dot{H} ^{s_c})}^{2p-3} \left(  \| f \|_{S(I, \dot{H} ^{s_c})} +  \| g \|_{S(I, \dot{H} ^{s_c})} \right)\right) \notag   \\
		& +  \| f-g \|_{S(I, \dot{H} ^{s_c})} \left( \| \left\langle \nabla \right\rangle f \|_{S(I,L^2)} + \| \left\langle \nabla \right\rangle g \|_{S(I,L^2)}  \right)  \left(  \| f \|_{S(I, \dot{H} ^{s_c})}^{2p-3} +  \| g \|_{S(I, \dot{H} ^{s_c})}^{2p-3} +  \| Q \|_{S(I, \dot{H} ^{s_c})}^{2p-3} \right).
		\label{the estimates on nonliear term: H1 sense for p ge 3}
	\end{align}
	\label{lemma: Strichartz estimate on nonlinear term: difference version for p ge 3}
\end{lem}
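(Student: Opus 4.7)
The plan is to mirror the proof of Lemma \ref{lemma: Strichartz estimate on nonlinear term: difference version for p in (2,3)} but exploit the improved regularity of $J$ and $K$ afforded by the assumption $p\ge 3$. First, I would reuse the pointwise decomposition of $R(f)-R(g)$ already written out in the proof of the $p\in(2,3)$ case: six convolution-type terms involving $J(Q^{-1}f)-J(Q^{-1}g)$, the cross differences $|Q+f|^{p-2}(Q+f)-|Q+g|^{p-2}(Q+g)$, $K(Q^{-1}f)-K(Q^{-1}g)$, and the linear-in-$(f-g)$ pieces multiplied by $Q^{p-1}$ or $Q^{p-2}$. Nothing in that algebraic identity uses $p<3$, so the same expansion is available verbatim.

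For the first (ungradient) contribution to $\|\langle\nabla\rangle(R(f)-R(g))\|_{S'(I,L^2)}$ I would apply the multilinear bound (\ref{estimates of nonliear term: dual L2 space}) of Lemma \ref{lemma: Strichartz estimate on nonlinear term} to each of the six pieces, always choosing the index $j$ in the minimum so that the factor measured in $S(L^2)$ is the one carrying $f-g$. Using the Lipschitz-type estimates $|J(z_1)-J(z_2)|\lesssim(|z_1|+|z_2|+|z_1|^{p-1}+|z_2|^{p-1})|z_1-z_2|$ and the $p\ge 3$ branch $|K(z_1)-K(z_2)|\lesssim(|z_1|+|z_2|+|z_1|^{p-2}+|z_2|^{p-2})|z_1-z_2|$ from Lemma \ref{lemma: estimate on J(z) and K(z)}, together with $|Q^{-1}\partial^{\alpha}Q|\lesssim 1$ from Lemma \ref{lemma: exponential decay of ground state and its derivatives}, this yields a bound by $\|\langle\nabla\rangle(f-g)\|_{S(I,L^2)}$ multiplied by the polynomial in $\|f\|_{S(\dot H^{s_c})}$, $\|g\|_{S(\dot H^{s_c})}$ and $\|Q\|_{S(\dot H^{s_c})}$ appearing in the first line of (\ref{the estimates on nonliear term: H1 sense for p ge 3}).

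When the gradient lands on a difference factor such as $|Q+f|^{p-2}(Q+f)-|Q+g|^{p-2}(Q+g)$ or $Q^{p-1}(K(Q^{-1}f)-K(Q^{-1}g))$, I would use the chain and product rules followed by the refined estimates $|K_z(z_1)-K_z(z_2)|,\ |K_{\bar z}(z_1)-K_{\bar z}(z_2)|\lesssim(1+|z_1|^{p-3}+|z_2|^{p-3})|z_1-z_2|$ from Lemma \ref{lemma: estimate on J(z) and K(z)}(iv). This provides a genuinely Lipschitz bound in $f-g$, so that after applying Lemma \ref{lemma: Strichartz estimate on nonlinear term} with the minimum index chosen to place $\nabla f$ or $\nabla g$ in the $S(L^2)$ slot and $f-g$ in an $S(\dot H^{s_c})$ slot, together with the embedding (\ref{critical Strichartz norm can be bounded by H1 Strichartz norm}), one recovers exactly the second line of (\ref{the estimates on nonliear term: H1 sense for p ge 3}). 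The derivatives falling on the pure $Q$ factors contribute terms of the same form via $|\nabla Q|\lesssim Q$.

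The expected main obstacle is quite mild. In the $p\in(2,3)$ argument the derivative of $K$ was only H\"older continuous of order $p-2$, which forced the appearance of the extra term $\|f-g\|_{S(\dot H^{s_c})}^{p-2}(\|\langle\nabla\rangle f\|_{S(L^2)}+\|\langle\nabla\rangle g\|_{S(L^2)})(\|f\|_{S(\dot H^{s_c})}^p+\|g\|_{S(\dot H^{s_c})}^p+\|Q\|_{S(\dot H^{s_c})}^p)$ in (\ref{the estimates on nonliear term: H1 sense}). Under $p\ge 3$ this loss disappears because $K_z,K_{\bar z}$ are Lipschitz with controlled polynomial growth, so the genuine difficulty reduces to careful multilinear bookkeeping: making sure each factor ends up in either $S(L^2)$ or $S(\dot H^{s_c})$ compatibly with the Hardy--Littlewood--Sobolev/H\"older chain in Lemma \ref{lemma: Strichartz estimate on nonlinear term}. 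Since only finitely many term types arise and each is handled by the same template, no new analytic ingredient beyond what Lemma \ref{lemma: estimate on J(z) and K(z)} and Lemma \ref{lemma: Strichartz estimate on nonlinear term} already provide should be needed.
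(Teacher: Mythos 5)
Your proposal is correct and matches what the paper intends: the paper states Lemma~\ref{lemma: Strichartz estimate on nonlinear term: difference version for p ge 3} with the phrase ``similarly we also obtain'' and leaves the details to the reader, so filling in the $p\ge 3$ case by the same six-term decomposition of $R(f)-R(g)$, using the Lipschitz (rather than H\"older) bounds for $K_z,K_{\bar z}$ in Lemma~\ref{lemma: estimate on J(z) and K(z)}(iv) and the multilinear estimate of Lemma~\ref{lemma: Strichartz estimate on nonlinear term}, is exactly the intended argument. You correctly identify that the sole structural difference from the $p\in(2,3)$ proof is the disappearance of the $\|f-g\|_{S(I,\dot H^{s_c})}^{p-2}$ term, and your bookkeeping of which factor goes into $S(L^2)$ versus $S(\dot H^{s_c})$ is consistent with the claimed right-hand side.
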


\begin{rmk}
	As widely used in Section \ref{uniqueness: section}, by (\ref{critical Strichartz norm can be bounded by H1 Strichartz norm}), we can replace all the terms involving $S(I, \dot{H}^{s_c})$ norm on the right hand side of (\ref{the estimates on nonliear term: H1 sense}),(\ref{the estimates on nonliear term: critical space}) and (\ref{the estimates on nonliear term: H1 sense for p ge 3}) with $\| \left\langle \nabla \right\rangle \cdot \|_{S(L^2)}$.
	\label{remark: we can bound the nonliear by L2 Strichartz norm with gradient}
\end{rmk}

\bibliographystyle{siam}
\bibliography{Bib-1}

\end{document}